\def\Z{{\mathbb Z}}
\def\F{{\mathbb F}}
\def\N{{\mathbb N} } 
\def\C{{\mathcal C}}
\def\M{{\mathcal M}}
\def\D{{\mathcal D}}
\def\P{{\mathcal P}}
\def\E{{\mathbb E} }
\def\R{{\mathbb R}}
\def\smash{\widget}
\let\smash\temp
\let\wedge\vee
\newcommand\isom{\mathrel{\stackon[-0.1ex]{\makebox*{\scalebox{1.08}{\AC}}{=\hfill\llap{=}}}{{\AC}}}}
\newcommand\visom{\rotatebox[origin=cc] {90} {$ \isom $}}
\declaretheorem[numberwithin=section]{theorem}
\declaretheorem[sibling=theorem]{definition}
\declaretheorem[sibling=theorem]{corollary}
\declaretheorem[sibling=theorem]{lemma}
\declaretheorem[sibling=theorem]{proposition}
\declaretheorem[style = remark, sibling=theorem]{remark}
\def\n{ \langle n \rangle }
\def\0{ \langle 0 \rangle }
\def\1{ \langle 1 \rangle }
\def\2{ \langle 2 \rangle }
\title[On splitting the $BP\2$-cooperations algebra]{On a spectrum-level splitting of the $BP \langle 2 \rangle$-cooperations algebra}
\author{Elizabeth Ellen Tatum}
\address{Department of Mathematics, University of Illinios at Urbana-Champaign, Urbana, IL}
\email{etatum2@illinois.edu}
\newtheorem*{thm:intro:eiso}{\Cref{eiso}}
\newtheorem*{thm:intro:bakerLazarev}{\Cref{prop:bakerlazarev2}}
\newtheorem*{thm:intro:relASS}{\Cref{relASS}}
\newtheorem*{thm:intro:main}{\Cref{thm:main}}
\newtheorem*{thm:intro:liftif}{\Cref{liftif}}
\newtheorem*{thm:intro:leftiso}{\Cref{leftiso}}
\newtheorem*{thm:dominic:splitting2}{\Cref{thm:dominic:splitting2}}
\newtheorem*{thm:bp:split}{\Cref{thm:bp split}}
\newtheorem*{prop:zero:line}{\Cref{prop:zero_line}}
\newtheorem*{prop:iso}{\Cref{propiso}}
\newtheorem*{thm:intro:liftif2}{\Cref{liftif2}}
\newtheorem*{thm:intro:liftif3}{\Cref{liftif3}}
\begin{document}

\maketitle

\begin{abstract}
    In the 1980s, Mahowald and Kane used Brown-Gitler spectra to construct splittings of $bo \smash bo$ and $BP\1 \smash BP\1$. These splittings helped make it feasible to do computations using the $bo$- and $BP\1$-based Adams spectral sequences. In this paper, we construct an analogous splitting for $BP \langle 2 \rangle \smash BP\langle 2 \rangle$. 

\end{abstract}

\tableofcontents

\section{Introduction}
The main result of this paper is a splitting of $BP\2 \smash BP\2$ in terms of Brown-Gitler spectra (\cref{thm:main}). We will start by giving some context for this problem in \cref{motivation}. Then in \cref{outline}, we give a summary of the rest of the paper, and outline our construction of this splitting.  

\subsection{Motivation}\label{motivation}
\subsubsection{The n-truncated Brown-Peterson spectra}

Fix a prime $p$. The Brown-Peterson spectrum, denoted $BP$, is the complex-oriented cohomology theory associated to the universal $p$-typical formal group law. For each chromatic height $n$, there exists a closely related spectrum called the $n$-truncated Brown-Peterson spectrum, denoted $BP\n$. The spectrum $BP\n$ carries information about the formal group laws of height at most $n$. 

At low heights, these spectra have some other familiar names. The height zero spectrum $BP\0$ is the Eilenberg-Maclane spectrum for the integers localized at $p$, $H\Z_{(p)}$. The height one spectrum $BP\1$ is the connective $k$-theory spectrum $bu_{(2)}$ when $p=2$, and the Adams $l$-summand when $p$ is odd. The height two spectrum $BP\2$ carries information about topological modular forms. Specifically, $tmf_{1}(3)$ is an $E_{\infty}$-form of $BP\2$ at the prime $2$ \cite{lawson2012commutativity}. At the prime $3$, a spectrum of topological automorphic forms with a certain level structure is an $E_{\infty}$-form of $BP\2$ \cite{hill2010automorphic}. Although it is not known whether there are any $E_{\infty}$ structures for $BP\2$ at primes $p\ge 5$, the spectrum $BP\2$ is a summand of $tmf$, the connective spectrum for topological modular forms. 

One of the tools that can be used to extract information from $BP\n$ and related spectra about the homotopy groups of the sphere is called the $BP\n$-based Adams spectral sequence. The $BP\n$-based Adams spectral sequence is a member of a family of spectral sequences called the $R$-based Adams spectral sequences.

\subsubsection{The $R$-based Adams spectral sequence.} The Adams spectral sequence is a tool that can be used to approximate the homotopy groups of a connective spectrum $X$, such as the sphere. If $R$ is a ring spectrum satisfying certain conditions, then there is an $R$-based Adams spectral sequence 

\[ E_{1}^{*,*}(X,R) = \pi_{*}(R \smash R^{\bullet} \smash X) \Rightarrow \pi_{*}\widehat{X_{R}},\]
where $\widehat{X_{R}}$ denotes the $R$-completion of $X$.
If $R$ is a flat ring spectrum (eg, if $R_{*}R$ splits as a wedge sum of suspensions of $\pi_{*}R$ itself), then the $E_{2}$-page has a nice algebraic description, specifically $E_{2}^{*,*}(X,R) = Ext_{R_{*}R}(R_{*}, R_{*}X)$.

Both the classical Adams spectral sequence and the Adams-Novikov Spectral Sequence are examples of such spectral sequences, for $R = H\F_{p}$ and $R = MU$, respectively. Although their $E_{2}$-pages are algebraic objects, they are still very complex and do not have a closed-form description. Furthermore, it becomes very difficult to compute differentials when $X = S^{0}$. Despite the difficulty of computing those differentials, these two spectral sequences are essential tools for studying the homotopy groups of the spheres. (A list of references and computations can be found in \cite{green_book}). More recently, Isaksen, Wang, and Xu have extended those computations through the $90$-stem \cite{isaksen_wang_xu} at the prime $2$. 

\subsubsection{Background on $v_{2}$-periodicity }
Another approach is to study these homotopy groups one height at a time. The $K(n)$-local homotopy groups of the sphere give an approximation of $\pi_{*}S$ containing information about $v_{n}$-periodicity. At height $2$, this approach has been well-developed. In \cite{shimomura_wang}, Shimomura and Wang computed the homotopy groups of the $K(2)$-local sphere at the prime $p=3$. In \cite{ghmr}, Goerss, Henn, Mahowald, and Rezk improved the conceptual framework and organization of these computations by constructing a resolution of the $K(2)$-local sphere at the prime $3$, and explicitly describing the homotopy groups of its fibers. In \cite{ShimomuraYabe}, Shimomura and Yabe computed the homotopy groups of the $E(2)$-local sphere for primes $p >3$. In \cite{behrens2012homotopy}, Behrens gave a more conceptual description of these groups, and used this description to compute the $K(2)$-local homotopy groups of the sphere at these primes. These $K(2)$-local homotopy groups provide a partial understanding of $v_{2}$-periodicity, but more is left to understand about the $v_{2}$-periodic structure of $\pi_{*}S$, and its interactions with the $v_{1}$ and $v_{0}$-periodicity. The $BP\2$-based Adams spectral sequence is one tool that could potentially be used to improve this understanding.

\subsubsection{The $bo$-based and $BP\n$-based Adams spectral sequences}
Consider the $BP\n$-based Adams spectral sequence \[E_{r}(X, BP\n) \Longrightarrow \pi_{*}\widehat{X_{BP\n}} .\] Note that $\widehat{X_{BP\2}}$ is actually equivalent to the $p$-localization $\widehat{X_{(p)}}$ \cite[Thm.~2.2.13]{green_book}. The $BP\langle 1 \rangle$-based Adams spectral sequence highlights the $v_{1}$-periodicity and makes it easier to extract information about the $v_{1}$-periodic structure. It is expected that the $BP\langle 2 \rangle$-based Adams spectral sequence will display $v_{2}$-periodicity in a similar way. Throughout the rest of this paper, we will assume that everything is $p$-completed.

Initially, using a $BP\n$-based Adams spectral sequence rather than the classical or Adams-Novikov spectral sequence might not seem like such a good idea- the spectra $BP\n$ are not flat, and computing the $E_{2}$-page is very difficult. However, Mahowald's work on the $bo$-based Adams spectral sequence \cite{mahowald_splitting} at the prime $p=2$ indicated that such spectral sequences could be very useful, despite the difficulty in finding their $E_{2}$-pages. The spectrum $bo$ is also not flat, and describing the $E_{2}$-page of the $bo$-based Adams spectral sequence is very difficult. However, Mahowald showed that, in a certain range, the $bo$-based Adams spectral sequence collapses at the $E_{2}$-page. These computations gave a great deal of insight into the $v_{1}$-periodic structure of the homotopy groups of the sphere, and were sufficient to prove the Telescope Conjecture at height one for the prime $2$. Gonzalez showed in \cite{gonzalez} that the $BP\langle 1 \rangle$-based Adams spectral sequence behaves in a similar way at odd primes, making it a useful tool for describing $v_{1}$-periodicity at odd primes. 

In further work, Lellman and Mahowald computed the $v_{1}$-periodic portion of the $E_{2}$-page of the $bo$-based Adams spectral sequence \cite{lellmann_mahowald}. In \cite{donald_davis}, Davis computed the $v_{1}$-torsion piece of the $bo$-based Adams spectral sequence through the $20$-stem. Combined with Lellman and Mahowald's computations, this gave a complete description of the $bo$-based Adams spectral sequence through the $20$-stem. Recent work by Beaudry-Behrens-Bhattacharya-Culver-Xu \cite{bbbcx_paper} has extended these computations through the $40$-stem. 

In order to use a $BP\n$-based Adams spectral sequence, we need a good understanding of $BP\langle n \rangle\smash BP\n$. One way to do this is to split $BP\langle n \rangle\smash BP\n$ into more manageable pieces, specifically as a sum of finitely generated $BP\n$-modules. These splittings were essential for the computations described above. This is where Brown-Gitler spectra come into the picture.\\

\subsubsection{Brown-Gitler spectra.} In the 1970's, Brown and Gitler constructed a family of spectra realizing certain sub-comodules of the dual Steenrod algebra at the prime $p=2$ \cite{brown_gitler}. Cohen then constructed the corresponding family for odd primes. Subsequently, Cohen \cite{Cohen}, Goerss-Jones-Mahowald \cite{goerss_jones_mahowald}, Klippenstein \cite{klippenstein}, and Shimamoto \cite{shimamato} constructed finite spectra realizing the analogous family of sub-comodules of $H_{*}BP\n$, for $0 \le n \le 2$. These spectra are collectively known as generalized Brown-Gitler spectra. The original Brown-Gitler spectra were constructed for studying immersions of manifolds, but these spectra and their generalizations have been used for many other interesting applications in the years since. One application is to produce decompositions of $BP\langle n \rangle\smash BP\n$, as well as smash products of other related spectra, into simpler pieces.

We use a filtration on the dual Steenrod algebra $A_{*}$, known as the weight filtration and denoted $wt$, to define the Brown-Gitler sub-comodules of the dual Steenrod algebra. At the prime $2$, the Steenrod algebra has the form \[A_{*} \cong \F_{p}[\bar{\xi}_{1}, \bar{\xi}_{2}, \ldots]. \] At odd primes, the dual Steenrod algebra has the form
\[A_{*} \cong \F_{p}[\bar{\xi}_{1}, \bar{\xi}_{2}, \ldots] \otimes E(\bar{\tau}_{0}, \bar{\tau}_{1}, \ldots). \] At any prime, we can define the weight filtration, by setting $wt(\bar{\xi}_{i}) = wt(\bar{\tau}_{i}) = p^{i}$ for all $i$, and $wt(xy) = wt(x)wt(y)$. The comultiplication $\Delta: A_{*} \rightarrow A_{*} \otimes A_{*}$ does not increase the weight.

The Brown-Gitler comodule $B_{0}(k)$ is defined to be the sub-comodule of $A_{*}$ generated by monomials of weight at most $k$. Since $H_{*}BP\n$ is a comodule subalgebra of $A_{*}$, the weight filtration extends to $H_{*}BP\n$ for each $n$. Similarly, there is a family of height $n$ Brown-Gitler comodules $\{B_{n}(k)|k \in \N\}$, where $B_{n}(k)$ is the sub-comodule of $H_{*}BP\n$ generated by monomials of weight at most $k$. (Note that in this choice of notation, the Brown-Gitler comodules $B_{n}(pk), \ldots, B_{n}(pk + p-1)$ are the same for a fixed $n \ge 1$). 

The original Brown-Gitler spectra, which we will denote $H_{k}$, are finite spectra such that $H_{k} = B_{-1}(k)$. These spectra were constructed by Brown-Gitler at the prime $p=2$ \cite{brown_gitler} and by Cohen at odd primes \cite{Cohen}. In \cite{shimamato}, Shimamato constructed the integral Brown-Gitler spectra $\{H\Z_{k}\}$, finite spectra such that $H_{*}H\Z_{k} \cong B_{0}(k)$. Goerss-Jones-Mahowald constructed the height one Brown-Gitler spectra in \cite{goerss_jones_mahowald}. The height one Brown-Gitler spectra include the family of finite spectra $\{l_{k}\}$ such that $H_{*}l_{k} \cong B_{1}(k)$. At the prime $2$, there is another family of Brown-Gitler spectra at height one, the $bo$-Brown Gitler spectra. The homology $H_{*}bo$ is also a subalgebra of the dual Steenrod algebra, and so the weight filtration can be applied to $H_{*}bo$. The finite spectrum $bo_{k}$ realizes the weight $\le k$ component of $H_{*}bo$. In \cite{klippenstein}, Klippenstein constructed a family of finite spectra $\{ BP\2_{k} \}$, realizing the comodules $\{B_{2}(k)\}$. It is not known whether spectra realizing $B_{n}(k)$ exist for higher heights $n \ge 3$.\\ 

\subsubsection{Splittings}
At each prime $p$ and each height $n\ge 0$, there exists an isomorphism of $A_{*}$
-comodules 

\begin{equation}\label{eqn6} H_{*}BP\langle n \rangle\otimes H_{*}BP\langle n \rangle \cong \bigoplus_{k=0}^{\infty} H_{*}BP\langle n \rangle\otimes \Sigma^{qk}B_{n-1}(k)   .\end{equation}

Analogous isomorphisms exist for $bo$ and $tmf$ at the prime $2$, specifically

\begin{equation} H_{*}bo \otimes H_{*}bo \cong \bigoplus_{k=0}^{\infty} H_{*}bo \otimes \Sigma^{qk}B_{0}(k) \end{equation}

\begin{equation}\label{tmf} H_{*}tmf \otimes H_{*}tmf  \cong \bigoplus_{k=0}^{\infty} H_{*}tmf \otimes \Sigma^{qk}B_{1}(k) .\end{equation}

At low heights, the Brown-Gitler spectra have been used to realize these splittings. It is well-known that there is a splitting \begin{equation*}\label{eqn1} BP\langle 0 \rangle \smash BP\langle 0 \rangle \simeq \vee_{k=0}^{\infty}BP\langle 0 \rangle \smash \Sigma^{qk}H_{k} ,\end{equation*} where $q = 2(p-1)$. In the 1980's, Mahowald constructed the following splitting of $bo \smash bo$ in terms of integral Brown-Gitler spectra \cite[Thm~2.4]{mahowald_splitting}:

\begin{equation}\label{eqn2} bo \smash bo \simeq \bigvee_{k=0}^{\infty} bo \smash \Sigma^{qk}H\Z_{k} .\end{equation}

Subsequently, Kane used Mahowald's methods to construct an analogous splitting for $BP\langle 1 \rangle \smash BP\langle 1 \rangle$ at odd primes \cite[Thm~11.1]{kane}: 

\begin{equation}\label{eqn3} BP\langle 1 \rangle \smash BP\langle 1 \rangle \simeq \bigvee_{k=0}^{\infty} BP\langle 1 \rangle \smash \Sigma^{qk}H\Z_{k} .\end{equation}

This might lead one to guess that all of these splittings are realizable when the relevant Brown-Gitler spectra exist. However, things get more interesting at height $2$: Davis and Mahowald demonstrated that the isomorphism for $tmf$ (\ref{tmf}) is not realizable, by showing that the homotopy groups of $tmf \smash tmf$ and $\bigvee_{k=0}^{\infty}\Sigma^{qk}tmf \smash bo_{k} $ are not isomorphic \cite{davis_mahowald}. In spite of this, it was still conjectured that at the height $n=2$, the splitting (\ref{eqn6}) could be realized for all primes. 

In particular, Culver provided strong evidence towards the $2$-primary version of this conjecture in \cite{dominic_even}, as well as the odd-primary version in \cite{dominic_odd}. He showed that $BP\langle 2 \rangle \smash BP\langle 2 \rangle$ splits as 

\begin{equation}\label{eqn4} BP\langle 2 \rangle \smash BP\langle 2 \rangle \simeq C \vee V ,\end{equation}

\noindent where $V$ is a wedge sum of Eilenberg-Maclane spectra realizing $H\F_{p}$, and $\pi_{*}C$ is $v_{2}$-torsion free and concentrated in even degrees \cite{dominic_even, dominic_odd}. Furthermore, he used this splitting to show the following isomorphism of homotopy groups: 

\begin{equation}\label{eqn5} \pi_{*}\Big(BP\langle 2 \rangle \smash BP\langle 2 \rangle\Big) \cong \pi_{*}\Big(\bigvee_{k=0}^{\infty}\Sigma^{qk}BP\langle 2 \rangle \smash l_{k}\Big). \end{equation}

The main result of this paper is that we can indeed realize the splitting (\ref{eqn5}).

\begin{thm:intro:main}
At all primes $p$, there exists a splitting (up to $p$-completion)
\[BP\langle 2 \rangle \smash BP\langle 2 \rangle \simeq \bigvee_{k=0}^{\infty} \Sigma^{qk}BP\langle 2 \rangle \smash l_{k}.\]
\end{thm:intro:main}

\subsection{Summary of the paper}\label{outline}

Now we will outline our approach to the construction of the splitting of \cref{thm:main}. A very brief summary is as follows. In \cref{background}, we start by recalling some helpful algebraic results. In \cref{sec:bakerLazarev}, we recall Baker-Lazarev's relative Adams spectral sequence in the category of $BP\2$-modules, and then we reduce the problem of constructing the splitting of \cref{thm:main} to showing that a certain family of classes survive this spectral sequence. In \cref{spec:aux}, we discuss a universal coefficient spectral sequence and a hypercohomology spectral sequence. In \cref{square}, we construct a square of spectral sequences relating these auxiliary spectral sequences to the relative Adams spectral sequence. We then use this square in \cref{sec:analysis:po} to show that the necessary classes do indeed survive the relative Adams spectral sequence, proving \cref{thm:main}. The remainder of this section gives a more detailed outline.

We start by recalling some well-known algebra results in \cref{sec:equiv} that will be useful throughout this paper. In \cref{sec:BPn}, we recall the homology of $BP\n$. In \cref{maps}, we recall the following splitting of $H_{*}BP\n$ in terms of Brown-Gitler comodules.

\begin{thm:intro:eiso}\cite[Lemma~4.8]{dominic_odd}\cite[Prop~3.3]{dominic_even}
Let $n \ge 0$. Then there exists a family of maps \[\{ \theta_{k}: \Sigma^{qk}B_{n-1}(k) \rightarrow H_{*}BP\n |\ k \in \N \}\] such that their sum 
\[\bigoplus\limits_{k=0}^{\infty} \theta_{k}:\bigoplus\limits_{k=0}^{\infty}\Sigma^{qk}B_{n-1}(k) \rightarrow H_{*}BP\n    \]
is an isomorphism of $E(n)_{*}$-comodules.
\end{thm:intro:eiso}

Note that this is only an isomorphism of $E(n)_{*}$-comodules (not an $A_{*}$-comodule isomorphism). So we cannot realize this exact decomposition on the level of spectra. However, at low heights, we can use Baker-Lazarev's relative Adams spectral sequence to leverage this decomposition to the spectrum-level splitting of \cref{thm:main} at the height $n=2$.  

Before discussing the relative Adams spectral sequence, we begin \cref{sec:BP2mod} by recalling the category of $R$-module spectra as discussed in \cite[Section~III.3]{ekmm}. Let $R$ be an $S$-algebra ($A_{\infty}$ ring spectrum). Let $M$ be a right $R$-module, and let $N$ be a left $R$-module. We let $M \smash_{R} N$ denote the coequalizer \cite[Def~III.3.1]{ekmm}
\[
\begin{tikzcd}
{M  \smash R  \smash N} \arrow[r, shift right] \arrow[r, shift left] & {M \smash N}  \arrow[r] & {M \smash_{R} N}.\\
\end{tikzcd}
\]

Let $[M,N]^{R}$ denote the group of homotopy classes of $R$-module maps from $M$ to $N$, and let $H_{*}^{R}M = \pi_{*}(H \smash_{R} M)$. In \cite{bakerlazarev}, Baker-Lazarev introduce the following Adams spectral sequence in the category of $R$-modules:

\begin{thm:intro:bakerLazarev}\cite[Prop~2.1]{bakerlazarev}
    Let $L,M$ be $R$-modules, and let $E$ be a commutative ring spectrum with $E_{*}^{R}E$ flat as a left or right $E_{*}$-module. If $E_{*}^{R}L$ is projective as an $E_{*}$-module, then there is an Adams spectral sequence with \[ E_{2}^{s,t} = Ext_{E_{*}^{R}E}^{s,t}(E_{*}^{R}L, E_{*}^{R}M).\] 
\end{thm:intro:bakerLazarev}

In the case $R = BP\2$, $E = H$, $M = BP\2 \smash l_{k}$, and $N = BP\2 \smash BP\2$, we can use the other results of \cref{sec:BP2mod} to show that this spectral sequence is strongly convergent, and can be written in the following form. 

\begin{thm:intro:relASS}\label{intro:relASS}
For all $k \in \N$, there exists a strongly convergent relative Adams spectral sequence
\[ {}^{ASS}E_{2}^{s,t} = Ext_{E(2)_{*}}(H_{*}\Sigma^{qk}l_{k}, H_{*}BP\2) \Longrightarrow [BP\2 \smash \Sigma^{qk}l_{k}, BP\2 \smash BP\2]^{BP\2}. \]
\end{thm:intro:relASS}

In \cref{sec:ass}, we explain how we will use this Adams spectral sequence to lift the $E(2)$-module splitting of \cref{eiso} to the spectrum-level splitting of \cref{thm:main}. Note that we can think of the map \[\theta_{k}: H_{*}l_{k} \rightarrow H_{*}BP\2 \smash BP\2\] as a class in ${}^{ASS}E_{2}^{0,0}$. To lift the map, we need to show that $\theta_{k}$ survives the spectral sequence. For dimension reasons, it is impossible for any differential to hit $\theta_{k}$. So if we can show that the differential leaving $\theta_{k}$ is always zero, then we will be able to conclude that $\theta_{k}$ survives the spectral sequence. Note that the Adams differential $d_{r}$ has degree $(r,r-1)$. We call classes in degree $E_{2}^{r,r-1}$ the \textit{potential obstructions} to lifting $\theta_{k}: H_{*}B_{n}(k) \rightarrow H_{*}BP\n$. 

Suppose that for each $k$, the map $\theta_{k}$ does indeed survive the relative Adams spectral sequence. Then we will have constructed a family of $BP\2$-module maps 

\[\widetilde{\theta_{k}}: BP\2 \smash l_{k} \rightarrow BP\2 \smash BP\2\] 

whose sum induces the isomorphism $\bigoplus_{k=0}^{\infty}\theta_{k}$ on $H_{*}^{BP\2}$-homology. Then it will follow immediately from \cref{whitehead} that \[\bigvee\widetilde{\varphi_{k}}: \bigvee\limits_{k=0}^{\infty} BP\2 \smash l_{k} \rightarrow BP\2 \smash BP\2 \]
is a homotopy equivalence up to $p$-completion. 

It is interesting to note that at height $n=1$, there are no potential obstructions to lifting the analogous maps $\theta_{k}: H_{*}B_{0}(k) \rightarrow H_{*}BP\1 $ to maps $\widetilde{\theta_{k} }: BP\1 \smash B_{0}(k) \rightarrow BP\1 \smash BP\1$. This recovers Kane's splitting (\ref{eqn5}). However, at height $2$, there are many potential obstructions, and we will spend the remainder of the paper demonstrating that they are not actual obstructions. (There is an analogous spectral sequence to \cref{main:relASS} at height $n=3$. However, we do not yet know whether the potential obstructions at that height survive the spectral sequence. At height $n >3$, it is not yet known whether the necessary Brown-Gitler spectra exist). 

In \cref{sec:ucss} we will introduce Robinson's universal coefficient spectral sequence. We will use this auxiliary spectral sequence to show that the potential obstructions survive the Adams spectral sequence. This is an adaptation of the technique used by Klippenstein to construct a splitting of $bu \smash BP\n$ in terms of finitely generated $bu$-modules (\ref{klippenstein}).

Specifically, we will compare the \textit{universal coefficient spectral sequence}

\begin{equation}\label{intro:ucss}
E_{2}^{u,v} = Ext_{BP\2_{*}}\big( BP\2_{*}l_{k}, BP\2_{*}BP\2 \big) \Longrightarrow [l_{k}, BP\2 \smash BP\2 ] 
\end{equation}

to the relative Adams spectral sequence 
\begin{equation}\label{main:relASS} E_{2}^{s,t} = Ext_{E(2)_{*}}(H_{*}l_{k}, H_{*}BP\2) \Longrightarrow [BP\2 \smash l_{k}, BP\2 \smash BP\2]^{BP\2}\end{equation}
to show that the potential obstructions survive the relative Adams spectral sequence. Klippenstein explicitly computed the $E_{2}$-pages of the universal coefficient spectral sequence and the Adams spectral sequences needed to construct the splitting of \cref{klippenstein}. However, both the $E_{2}$-pages of the Adams spectral sequence (\ref{main:relASS}) and the universal coefficient spectral sequence (\ref{intro:ucss}) are more complicated to compute than their height one analogues. So instead of completely computing these $Ext$ terms and directly comparing them, we will use another spectral sequence called the \textit{hypercohomology spectral sequence} as a bridge between them. 

In \cref{sec:hss}, we will review Cartan-Eilenberg's hypercohomology spectral sequence. Let $G(-)$ denote the right-derived functor $G(-) = \R Hom_{E(2)_{*}}(\F_{p}, -)$, and let $\P(2) = G(\F_{p})$. We will use the equivalence of categories of \cref{cor:1:equiv} to present the following special case of the universal coefficient spectral sequence 

\begin{equation}\label{intro:kmhss}
\begin{split}\bigoplus \limits_{ \substack{r= r_{2} - r_{1} \\ t = t_{2} - t_{1} } }Ext_{\P(2)}^{q}( \big( Ext_{E(2)_{*}}^{r_{1}, t_{1}}(\F_{p}, H_{*}l_{k}), Ext_{E(2)_{*}}^{r_{2}, t_{2}}(\F_{p}, H_{*}BP\2) \big)\\
\Longrightarrow H^{ r + q, t}\R Hom_{\P(2)}\big( G(H_{*}l_{k}), G(H_{*}BP\2 ) \big).
\end{split}
\end{equation}

Then we will use Koszul duality to show the following corollary.

\begin{restatable*}[]{corollary}{mhss}
There exists an isomorphism
\[ H^{ u, t}\R Hom_{\P(2)}\big( G(H_{*}l_{k}), G(H_{*}BP\2) \big) \cong Ext_{E(2)_{*}}^{ u, t }\big(H_{*}l_{k}, H_{*}BP\2 \big). \]
\end{restatable*}

So we can think of this particular hypercohomology spectral sequence as one that converges to the $E_{2}$-page of the Adams spectral sequence.

We begin \cref{sec:dominic:splitting} by recalling Culver's splitting of $BP\2 \smash BP\2$ (\ref{eqn4}). It follows immediately from his analysis that $BP\2 \smash l_{k}$ splits in an analogous way, so we can state the following theorem. 
\begin{thm:dominic:splitting2}
If $X = BP\2$ or $X = l_{k}$, then then there exists a splitting 

\[ BP\2 \smash X \simeq C_{X} \vee V_{X},\]

such that $V_{X}$ is a sum of suspensions of $\F_{p}$-Eilenberg-Maclane spectra, and $Ext_{A_{*}}(\F_{p}, H_{*}C_{X})$ is $v_{2}$-torsion free and concentrated in even $(t-s)$-degrees. This splitting is unique up to equivalence.
\end{thm:dominic:splitting2}

We will use a relative Adams spectral sequence to upgrade this to a $BP\2$-module splitting.

\begin{thm:bp:split}   
The splitting of \cref{thm:dominic:splitting2} is a $BP\2$-module splitting. 
\end{thm:bp:split}

{In \cref{sec:leftiso}, we will use this splitting to extend Culver's analysis \cite{dominic_odd} of the Adams spectral sequence converging to $BP\2_{*}BP\2$. In \cref{sec:35}, we will use this analysis to show that the $E_{2}$-page of the hypercohomology spectral sequence (\ref{intro:kmhss}) and the $E_{2}$-page of the universal coefficient spectral sequence (\ref{intro:ucss}) are isomorphic, as stated in the following lemma. }

\begin{thm:intro:leftiso}
There exists an isomorphism
\[ Ext_{\P(2)}^{u,*}\big(  Ext_{E(2)_{*}} ( \F_{p}, H_{*}l_{k}), Ext_{E(2)_{*}} ( \F_{p}, H_{*}BP\2) \big) \cong Ext_{BP\2_{*}}^{u,*}\big(BP\2_{*}l_{k}, BP\2_{*}BP\2  \big) .\]

\end{thm:intro:leftiso}

We can express these two results as the following square relating the Adams spectral sequence and the universal coefficient spectral sequence. (This square is not necessarily commutative, but that is not an issue- it will still provide sufficient information about the relationship between the Adams spectral sequence (\ref{main:relASS}) and the universal coefficient spectral sequence (\ref{intro:ucss}) for our purposes.) 

\begin{equation}
\begin{tikzcd}[ampersand replacement=\&]
\bigoplus\limits_{\substack{r=r_{2}-r_{1} \\ t = t_{2} - t_{1}} }Ext_{\P(2)}^{u}\big( Ext_{E(2)_{*}}^{r_{1}, t_{1}} ( \F_{p}, H_{*}l_{k}), Ext_{E(2)_{*}}^{r_{2}, t_{2}} ( \F_{p}, H_{*}BP\2) \big) \arrow[Rightarrow, r, "HSS"] \arrow[d, phantom, "\visom"]
\&
Ext_{E(2)}^{r+u,t}\big(H_{*}l_{k}, H_{*}BP\2  \big)\arrow[d, Rightarrow, "ASS"]\\
Ext_{BP\2_{*}}^{u,t-r}\big(BP\2_{*}l_{k}, BP\2_{*} BP\2 \big)\arrow[r, Rightarrow, "UCSS"]
\&
\left[  l_{k}, BP\2 \smash BP\2 \right]_{t-r-u} \\
\end{tikzcd}
\end{equation}

In \cref{sec:analysis:po}, we will use the square to analyze the potential obstructions. We will start by using Culver's splitting from \cref{sec:dominic:splitting} to show that we can actually look at a smaller square: Let $C$, $V$ denote the summands $C_{BP\2}, V_{BP\2}$ and let $C_{k}$, $V_{k}$ denote the summands $C_{l_{k}}, V_{l_{k}}$ in the splitting of \cref{thm:dominic:splitting2}. We can use these $BP\2$-module splittings to decompose the $E_{2}$-page of the Adams spectral sequence (\ref{main:relASS}) as as a sum of four components. For degree reasons, any potential obstructions must be contained in the summand $Ext_{E(2)_{*}}(H_{*}^{BP\2}C_{k}, H_{*}^{BP\2}C)$. So we will restrict our attention to the following square. 

\begin{equation}
    \begin{tikzcd}[ampersand replacement=\&]\label{intro:smallsquare}
Ext_{\P(2)}\big( Ext_{E(2)_{*}} ( \F_{p}, H_{*}^{BP\2}C_{k}), Ext_{E(2)_{*}} ( \F_{p}, H_{*}^{BP\2}C) \big) \arrow[Rightarrow, r, "HSS"] \arrow[d, phantom, "\visom"]
\&
Ext_{E(2)}\big(H_{*}^{BP\2}C_{k}, H_{*}^{BP\2}C  \big)\arrow[Rightarrow, d, "ASS"]\\
Ext_{BP\2_{*}}\big(\pi_{*}C_{k}, \pi_{*}C \big)\arrow[Rightarrow, r, "UCSS"]
\&
\left[C_{k}, C\right]^{BP\2}\\
\end{tikzcd}
\end{equation}

We start to analyze this square by identifying a vanishing line in the hypercohomology spectral sequence. 

\begin{prop:zero:line}
For all $u > 2$,
\[  Ext_{\P(2)}^{u,*}\big( Ext_{E(2)_{*}} ( \F_{p}, H_{*}^{BP\2}C_{k}), Ext_{E(2)_{*}} ( \F_{p}, H_{*}^{BP\2}C) \big) = 0. \]
\end{prop:zero:line}

It follows that every class on the line $Ext_{\P(2)}^{u=1,*}$ survives the universal coefficient spectral sequence. We can combine this observation with the isomorphism between the $E_{2}$-pages of the hypercohomology spectral sequence and the universal coefficient spectral sequence (\cref{leftiso}) to prove the following.

\begin{prop:iso}
The odd $(t-s)$-degree component of the $E_{2}$-page of the Adams spectral sequence is isomorphic to the $(u=1)$-line of the universal coefficient spectral sequence, that is,
\[ \bigoplus\limits_{t-s \text{ odd}} Ext_{E(2)_{*}}^{s,t} \big(  H_{*}^{BP\2}C_{k}, H_{*}^{BP\2}C \big) \cong Ext_{BP\2_{*}}^{u=1,*}(\pi_{*}C_{k}, \pi_{*}C). \]
\end{prop:iso}

\cref{prop:zero_line} and the isomorphism of \cref{leftiso} imply that $Ext_{BP\2_{*}}^{u=1,*}$ must survive the universal coefficient spectral sequence. Combined with \cref{propiso}, this implies the following theorem. 

\begin{restatable*}[]{theorem}{oddsurvives}
Let $x \in   \bigoplus\limits_{t-s \text{ odd}} Ext_{E(2)_{*}}^{s,t} \big(  H_{*}^{BP\2}C_{k}, H_{*}^{BP\2}C \big)$. Then $x$ survives the Adams spectral sequence \[Ext_{E(2)_{*}}^{*,*} \big(  H_{*}^{BP\2}C_{k}, H_{*}^{BP\2}C \big) \Longrightarrow [C_{k}, C]^{BP\2}.\]
\end{restatable*}

Note that the potential obstructions to lifting $\theta_{k}$ are all contained in odd $(t-s)$-degree, so the following corollary is immediate. 
\begin{restatable*}[]{corollary}{posurvives}
Let $x$ be a potential obstruction to lifting $\theta_{k}$ in the Adams spectral sequence 
\[ {}^{ASS}E_{2}^{s,t} = Ext_{E(2)_{*}}^{s,t}\big( H_{*}l_{k}, H_{*}BP\2 \big) \Longrightarrow [BP\2 \smash l_{k}, BP\2 \smash BP\2 ]^{BP\2}.\]
Then $x$ is not a boundary, that is, there is no $y \in {}^{ASS}E_{r}$ such that $d_{r}(y) = x$.
\end{restatable*}

So indeed the maps $\theta_{k}$ can be lifted, and we can construct our splitting.

\subsection*{Acknowledgments}
I would like to thank my thesis advisor, Vesna Stojanoska, for suggesting this problem for my thesis, and for her guidance and support throughout this project. I am also grateful to the other members of my thesis committee, Charles Rezk, Matt Ando, and Jeremiah Heller, for helpful conversations and feedback. Thank you to Dominic Culver for many valuable discussions about $BP\2$-cooperations. Thank you also to Dylan Wilson, for a very informative email about Koszul duality, and to Paul VanKoughnett and Noah Riggenbach for helpful conversations about $\E_{n}$-commutative algebras.

\section{Background}\label{background}
In \cref{sec:equiv}, we will start by reviewing some basic facts about comodules over finite graded coalgebras which will be used throughout this paper. In \cref{sec:BPn}, we will recall the homology of $BP\n$. In \cref{maps}, we will discuss the well-known $E(n)_{*}$-comodule splitting \[H_{*}BP\n \cong \bigoplus \limits_{k=0}^{\infty} \Sigma^{qk} B_{n}(k).\]

\subsection{Equivalence of categories}\label{sec:equiv}

Let $\Gamma$ be a graded coalgebra over a field $k$, and let $\Gamma^{*}$ denote its dual graded algebra. Let $Comod_{\Gamma}$ denote the category of graded left $\Gamma$-comodules, and let $Mod_{\Gamma^{*}}$ denote the category of graded left $\Gamma^{*}$-modules. We will use the following functor from $Comod_{\Gamma}$ to $Mod_{\Gamma^{*}}$.

\begin{lemma} \cite[Lemma~0.3.3]{palmieri2001stable}\label{lem:1:montgomery1}
Let $\Gamma$ be a graded coalgebra over a field $k$, and let $\Gamma^{*}$ be its dual graded algebra. 
Let $\langle, \rangle: \Gamma^{*} \otimes \Gamma \rightarrow k$ be the evaluation map. That is, if $\alpha \in \Gamma^{*}$ and $x \in \Gamma$, then $\langle \alpha, x \rangle = \alpha(x) $. Let $(M, \psi: M \rightarrow \Gamma \otimes M)$ be a graded left $\Gamma$-comodule. Let $\mu_{M}: \Gamma^{*} \otimes M \rightarrow M$ be the composition

\[
\begin{tikzcd}[ampersand replacement=\&]
\mu_{M}: \ \Gamma^{*} \otimes M \arrow[r, " 1 \otimes \psi"]
\&
\Gamma^{*} \otimes \Gamma \otimes M \arrow[r, "{\langle ,\rangle  \otimes 1}"]
\& 
k \otimes M \cong M \\
\end{tikzcd}
.\]
Then $(M, \mu_{M})$ is a graded left $\Gamma^{*}$-module. Furthermore, the assignment $(M, \psi_{M}) \mapsto (M, \mu_{M})$ defines a functor $\varphi: Comod_{\Gamma} \rightarrow Mod_{\Gamma^{*}} $.

\end{lemma}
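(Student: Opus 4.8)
The plan is to verify directly that $(M,\mu_M)$ satisfies the two axioms of a graded left $\Gamma^*$-module, namely associativity $\mu_M \circ (m \otimes 1) = \mu_M \circ (1 \otimes \mu_M)$, where $m\colon \Gamma^* \otimes \Gamma^* \to \Gamma^*$ is the multiplication dual to the comultiplication $\Delta$ of $\Gamma$, and unitality $\mu_M \circ (\eta \otimes 1) = \mathrm{id}_M$, where $\eta\colon k \to \Gamma^*$ is the unit dual to the counit $\varepsilon$ of $\Gamma$. Both are statements about composites of linear maps, so the strategy is to unwind $\mu_M$ in each case using the definition, and then convert the coassociativity and counit identities for the comodule $(M,\psi)$ into the desired module identities by dualizing. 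Once the object assignment is known to land in $Mod_{\Gamma^*}$, functoriality is essentially formal: a morphism of $\Gamma$-comodules $f\colon M \to N$ is a $k$-linear map commuting with the coaction maps, and I would check that the same $f$ commutes with $\mu_M$ and $\mu_N$, which follows by pasting the naturality square for $f$ against the diagram defining $\mu$.

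First I would set up notation carefully, since the whole proof is a diagram chase and the bookkeeping of which tensor factors the evaluation pairing acts on is the only real content. Using the comodule coassociativity $(\Delta \otimes 1)\circ \psi = (1 \otimes \psi)\circ \psi$, I would write out $\mu_M \circ (1 \otimes \mu_M)$ as the composite $\Gamma^* \otimes \Gamma^* \otimes M \to \Gamma^* \otimes \Gamma^* \otimes \Gamma \otimes \Gamma \otimes M \to M$, where the first arrow applies $\psi$ twice and the second contracts using $\langle,\rangle$ on both $(\Gamma^*,\Gamma)$ pairs; similarly $\mu_M \circ (m \otimes 1)$ becomes $\Gamma^* \otimes \Gamma^* \otimes M \to \Gamma^* \otimes \Gamma^* \otimes \Gamma \otimes M \to M$ where $\psi$ is applied once and then the pairing $\langle m(-),-\rangle$ is used, which by definition of $m$ as the dual of $\Delta$ equals $\langle - \otimes -, \Delta(-)\rangle$. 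Comparing the two via coassociativity of $\psi$ gives the result. The unit axiom is the shorter of the two: $\mu_M \circ (\eta \otimes 1)$ unwinds, using that $\eta(1) \in \Gamma^*$ is the counit $\varepsilon$, to $(\varepsilon \otimes 1)\circ \psi$, which is $\mathrm{id}_M$ by the counit axiom for the comodule.

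The main obstacle, such as it is, is purely organizational rather than mathematical: keeping the three- and four-fold tensor factors in the right order while the evaluation map ``reaches past'' the copies of $M$ (and, in the associativity check, past an extra copy of $\Gamma^*$) to contract the correct $\Gamma$-factor. I would handle this cleanly by drawing the relevant commuting diagrams and invoking naturality of the pairing $\langle,\rangle$ to slide it through identity maps on the other factors, rather than tracking elements. Since this lemma is quoted from \cite[Lemma~0.3.3]{palmieri2001stable}, I would most likely present only the diagram setup and the two key identities and refer to the cited source for the remaining routine verifications; but if a self-contained argument is wanted, the two diagram chases above complete it, with functoriality following from the naturality of every map involved in the definition of $\mu$.
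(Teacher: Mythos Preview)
Your proposal is correct and in fact more detailed than what the paper does: the paper gives no proof at all for this lemma, simply stating it with the citation to \cite[Lemma~0.3.3]{palmieri2001stable} and moving on. Your direct verification of the module axioms via the comodule coassociativity and counit identities, together with the functoriality check, is the standard argument and is exactly what one would expect to find in the cited source.
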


When $\Gamma$ is finite, the functor $\varphi$ is an equivalence of categories.

\begin{corollary}\cite[Lemma~0.3.3]{palmieri2001stable}\label{cor:1:equiv}
Suppose that $\Gamma$ is a finite graded coalgebra over a field $k$. Then $\varphi: Comod_{\Gamma} \rightarrow Mod_{\Gamma^{*}} $ is an equivalence of categories. 
\end{corollary}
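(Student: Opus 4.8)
The plan is to construct an explicit inverse functor $\psi \colon Mod_{\Gamma^{*}} \to Comod_{\Gamma}$ and check that $\psi \circ \varphi$ and $\varphi \circ \psi$ are naturally isomorphic to the respective identity functors. The construction of $\psi$ mirrors the construction of $\varphi$: given a graded left $\Gamma^{*}$-module $(N, \mu_{N} \colon \Gamma^{*} \otimes N \to N)$, one wants to build a coaction $\psi_{N} \colon N \to \Gamma \otimes N$. The finiteness hypothesis enters precisely here. Because $\Gamma$ is finite (meaning finite-dimensional over $k$ in each degree, and I would first pin down whether ``finite'' means total finite-dimensionality or degreewise-finite-dimensionality with a boundedness or locally-finite condition — this matters for the double-dual step), the natural evaluation map $\Gamma \to (\Gamma^{*})^{*}$ is an isomorphism, and one has a coevaluation map $k \to \Gamma^{*} \otimes \Gamma$ (or its graded/dual variant) dual to $\langle\,,\rangle$. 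Then $\psi_{N}$ is defined as the composite $N \cong k \otimes N \to \Gamma \otimes \Gamma^{*} \otimes N \xrightarrow{1 \otimes \mu_{N}} \Gamma \otimes N$, using the coevaluation in the first map. I would then verify the coassociativity and counit axioms for $\psi_{N}$ by dualizing the associativity and unit axioms of $\mu_{N}$, which is a diagram chase using the triangle identities relating evaluation and coevaluation.

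Next I would check functoriality of $\psi$ (a $\Gamma^{*}$-linear map is automatically $\Gamma$-colinear for the induced structures — again a short diagram chase) and then the two natural isomorphisms. For $\varphi \circ \psi \cong \mathrm{id}$, starting from $(N, \mu_{N})$, one forms $\psi_{N}$ and then reconstructs a $\Gamma^{*}$-action via the recipe of \Cref{lem:1:montgomery1}; the claim is that this recovers $\mu_{N}$ on the nose, which reduces to one triangle identity $(\langle\,,\rangle \otimes 1)(1 \otimes \mathrm{coev} \otimes 1) = \mathrm{id}$ on $\Gamma^{*}$. For $\psi \circ \varphi \cong \mathrm{id}$, one runs the dual argument using the other triangle identity, here crucially invoking that $\Gamma \xrightarrow{\sim} (\Gamma^{*})^{*}$ so that no information is lost passing to the dual action and back. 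The identity natural transformation (identity on underlying graded $k$-vector spaces) will serve as the natural isomorphism in both cases, so naturality squares are trivially commutative once the underlying-space identifications are checked.

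The main obstacle — and really the only nonformal point — is the coevaluation map: in the infinite-dimensional graded setting the element ``$\sum_i e_i \otimes e^i$'' for a basis $\{e_i\}$ of $\Gamma$ with dual basis $\{e^i\}$ of $\Gamma^{*}$ lives in the completed tensor product $\Gamma^{*} \widehat{\otimes} \Gamma$, not in $\Gamma^{*} \otimes \Gamma$, so the composite defining $\psi_N$ does not obviously land in the uncompleted $\Gamma \otimes N$. This is exactly why the hypothesis is that $\Gamma$ is finite. If ``finite'' means total finite-dimensionality the issue evaporates. If it means ``of finite type'' (finite-dimensional in each degree) together with the graded pieces being bounded below, one argues degree by degree: for fixed total degree only finitely many basis elements contribute, so the putative image of any homogeneous $n \in N$ is a finite sum. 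I would make this precise as a lemma (the coevaluation is well-defined as a map $N \to \Gamma \otimes N$ for $\Gamma$ of finite type and bounded below, or finite-dimensional) and cite it at the one place it is used; everything downstream is then formal categorical nonsense via the triangle identities. Since the result is attributed to \cite[Lemma~0.3.3]{palmieri2001stable}, an alternative and shorter write-up would simply invoke that reference and remark that the inverse is the evident dualization functor, well-defined because of the finiteness assumption.
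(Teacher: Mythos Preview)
Your proposal is correct and substantially more detailed than what the paper does: the paper provides no proof at all, merely citing \cite[Lemma~0.3.3]{palmieri2001stable} and stating the result. Your own final sentence anticipates exactly this, and that is indeed the paper's entire treatment.

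Your explicit construction via coevaluation is the standard argument and is sound; the only point worth tightening is your discussion of what ``finite'' means. In the paper's application $\Gamma = E(2)_* = E(\bar\tau_0,\bar\tau_1,\bar\tau_2)$ is genuinely finite-dimensional over $\F_p$, so the coevaluation issue you flag evaporates in the easiest possible way and no degreewise or bounded-below argument is needed.
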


We are particularly interested in the case where $\Gamma = E(2)_{*}:= E(\bar{\tau}_{0}, \bar{\tau}_{1},  \bar{\tau}_{2})$, and $\Gamma^{*} \cong E(2):= E(Q_{0}, Q_{1},  Q_{2})$. The equivalence of categories between $E(2)$-modules and $E(2)_{*}$-comodules will be used throughout the rest of the paper.\\

We will also use the following results relating projective, free, and injective modules and comodules. The first is immediate from the equivalence of categories.

\begin{lemma}\cite[Prop~4]{doi}\label{doi}
A module $M$ is projective(resp. injective) as an $E(2)_{*}$-comodule if and only if $M$ is projective (resp. injective) as an $E(2)_{*}$-comodule.
\end{lemma}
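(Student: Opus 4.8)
The plan is to obtain the statement as a formal consequence of the equivalence of categories $\varphi\colon Comod_{E(2)_{*}} \to Mod_{E(2)}$ supplied by \cref{cor:1:equiv} (I read the lemma as asserting that projectivity and injectivity match up under $\varphi$, i.e.\ $M$ is projective, resp.\ injective, as an $E(2)_{*}$-comodule if and only if it is projective, resp.\ injective, as an $E(2)$-module). The essential point is that ``projective'' and ``injective'' are purely categorical properties of an object of an abelian category: $P$ is projective exactly when every epimorphism onto $P$ splits, equivalently when $Hom(P,-)$ is exact, and dually for injectives. An equivalence of abelian categories is exact, carries epimorphisms to epimorphisms and monomorphisms to monomorphisms, and induces isomorphisms on all $Hom$-groups; hence it preserves and reflects both properties. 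Applying this to $\varphi$ gives the claim.

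Concretely I would proceed in three short steps. First, record that $Comod_{E(2)_{*}}$ and $Mod_{E(2)}$ are abelian categories and that $\varphi$ is exact: on underlying graded vector spaces and on morphisms it is the identity, so it manifestly preserves kernels, cokernels, and short exact sequences, and finiteness of $E(2)_{*}$ is exactly what \cref{cor:1:equiv} needs in order to upgrade $\varphi$ to a fully faithful, essentially surjective functor. Second, translate a projectivity problem across the equivalence: given a surjection $f\colon A \twoheadrightarrow B$ of $E(2)$-modules and a map $g\colon \varphi(P) \to B$, the data $(f,g)$ is the image under $\varphi$ of a surjection and a map of comodules (by essential surjectivity and fullness), a lift exists upstairs since $P$ is a projective comodule, and its image is the required lift downstairs; the converse direction uses a quasi-inverse of $\varphi$. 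Third, run the dual argument with monomorphisms $B \hookrightarrow A$ to handle the injective case.

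Honestly there is no real obstacle here; the content is bookkeeping. The only point that deserves a sentence of care is to confirm that the notion of ``projective $E(2)_{*}$-comodule'' used elsewhere in the paper is categorical projectivity, that is, lifting against \emph{all} comodule epimorphisms, and that epimorphisms of $E(2)_{*}$-comodules are precisely the surjective comodule maps, so that the two sides of $\varphi$ are comparing the same thing; both facts are standard for comodules over a finite-dimensional coalgebra over a field, since that category is equivalent to modules over the finite-dimensional algebra $E(2)$. I would also add, although it is not needed for this lemma, the remark that $E(2) = E(Q_{0},Q_{1},Q_{2})$ is an exterior, hence Frobenius, algebra, so projective and injective $E(2)$-modules in fact coincide; this is what makes the parenthetical ``(resp.\ injective)'' phrasing natural and is used freely in the later sections.
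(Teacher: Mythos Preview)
Your proposal is correct and matches the paper's approach: the paper states just before this lemma that it ``is immediate from the equivalence of categories,'' and otherwise defers to the cited reference, so your expansion via \cref{cor:1:equiv} is exactly the intended argument. You also correctly diagnosed the typo in the statement (comodule versus $E(2)$-module).
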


\begin{lemma}\cite[Prop.~12.8]{margolis_book}\label{free:proj:inj} 
Let $M$ be a module over $E(2)$ which is finitely generated in each degree. Then the following are equivalent:
\begin{enumerate}
\item $M$ is free;
\item $M$ is projective;
\item $M$ is injective.
\end{enumerate}
\end{lemma}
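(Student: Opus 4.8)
The plan is to first record the structure of $E(2)$: it is the exterior algebra $E(Q_0,Q_1,Q_2)$ over $\F_p$, hence a finite-dimensional, connected, graded-commutative $\F_p$-algebra, and in fact a finite-dimensional sub-Hopf-algebra of the Steenrod algebra (the subalgebra on the first three Milnor primitives). Two structural features do all the work. First, $E(2)$ is \emph{local}: its augmentation ideal $I=(Q_0,Q_1,Q_2)$ is the unique maximal graded ideal and is nilpotent, indeed $I^4=0$. Second, $E(2)$ is \emph{self-injective}, i.e.\ a graded Frobenius (Poincar\'e duality) algebra — this holds because every finite-dimensional Hopf algebra is Frobenius, or directly because exterior algebras over a field are; concretely $\mathrm{Hom}_{\F_p}(E(2),\F_p)\cong\Sigma^{d}E(2)$ as left $E(2)$-modules, where $d=|Q_0|+|Q_1|+|Q_2|$, since graded-commutativity makes the Nakayama automorphism trivial. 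Granting these, the three implications follow by standard arguments; (1)$\Rightarrow$(2) is immediate, and I would carry out the other two as below.

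For (2)$\Rightarrow$(1) I would run graded Nakayama. Since $M$ is finitely generated in each degree, $M/IM$ is locally finite; choose a homogeneous $\F_p$-basis and lift it to homogeneous elements of $M$, producing a map $\pi\colon F\to M$ from the free module on these generators. By construction $\mathrm{coker}(\pi)=I\cdot\mathrm{coker}(\pi)$, so $\mathrm{coker}(\pi)=I^{4}\,\mathrm{coker}(\pi)=0$ by nilpotence of $I$, and $\pi$ is onto. Since $M$ is projective, $\pi$ splits, $F\cong M\oplus K$; reducing mod $I$ and using that $F/IF\to M/IM$ was arranged to be an isomorphism gives $K/IK=0$, so $K=IK=I^{4}K=0$, and $M\cong F$ is free. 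Note this uses only local finiteness of $M$ together with finiteness of $E(2)$, so the hypothesis is exactly what is consumed.

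For (2)$\Leftrightarrow$(3) I would use self-injectivity. From $\mathrm{Hom}_{\F_p}(E(2),\F_p)\cong\Sigma^{d}E(2)$ one gets that $E(2)$ is injective over itself; since $E(2)$ is graded-Noetherian, direct sums of shifted copies of $E(2)$ remain injective, so every free $E(2)$-module is injective, and hence so is every projective module, being a summand of a free one. Conversely, every $E(2)$-module $M$ embeds into an injective module, which by the previous sentence can be taken to be a summand of a free module; if $M$ is itself injective this embedding splits, so $M$ is a summand of a free module, hence projective. (Equivalently one invokes the standard fact that over a finite-dimensional self-injective algebra the projective and injective modules coincide; the graded, locally-finite version is \cite[Ch.~12]{margolis_book}.) Together with (1)$\Leftrightarrow$(2) this finishes, and via \cref{cor:1:equiv} and \cref{doi} it also yields the same three-way equivalence for locally finite $E(2)_*$-comodules.

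The step I expect to require the most care is (2)$\Leftrightarrow$(3): one must be attentive to the finiteness conventions in force (the graded dual $\mathrm{Hom}_{\F_p}(-,\F_p)$ swaps "bounded below" for "bounded above" and direct sums for direct products), and one must verify the Frobenius identity $\mathrm{Hom}_{\F_p}(E(2),\F_p)\cong\Sigma^{d}E(2)$ as $E(2)$-modules, not merely as graded vector spaces. All modules occurring in this paper — $H_*l_k$, $H_*BP\2$, and the summands coming from Culver's splitting — are bounded below and of finite type, so these points cause no trouble in the applications; keeping careful track of them for general $M$ is precisely why it is cleanest to cite \cite[Prop.~12.8]{margolis_book}.
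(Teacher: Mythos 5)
The paper offers no proof of this lemma at all --- it is quoted directly from Margolis \cite[Prop.~12.8]{margolis_book} --- so there is no in-paper argument to compare yours against; what you have written is the standard proof, and it is essentially correct. The implication (2)$\Rightarrow$(1) by graded Nakayama using $I^4=0$ is fine (and, as you note, uses only nilpotence of the augmentation ideal), and basing (2)$\Leftrightarrow$(3) on the Frobenius property of the finite exterior algebra $E(2)$ is the right idea. The one place where your write-up leans on the conclusion rather than proving it is the direction (3)$\Rightarrow$(2): you assert that $M$ embeds into an injective module ``which by the previous sentence can be taken to be a summand of a free module,'' but the previous sentence only establishes that free (hence projective) modules are injective --- it does not produce an embedding of $M$ into a free module, which is the actual content needed. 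For the Frobenius algebra $E(2)$ this is supplied by the coinduced embedding $M\hookrightarrow \mathrm{Hom}_{\F_p}(E(2),M)\cong \mathrm{Hom}_{\F_p}(E(2),\F_p)\otimes_{\F_p}M\cong \Sigma^{d}E(2)\otimes_{\F_p}M$, the target being free; the middle isomorphism and the preservation of local finiteness both use that $E(2)$ is finite-dimensional, so the graded dual is computed degree by degree and the bounded-below/bounded-above issue you worry about does not arise. With that step made explicit, your splitting argument closes the loop, and the transfer to $E(2)_*$-comodules via \cref{cor:1:equiv} and \cref{doi} is exactly how the paper uses the lemma.
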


The following corollary is immediate. 

\begin{corollary}
Let $M,N$ be $E(2)_{*}$-comodules. Then
\[ Ext_{E(2)_{*}}^{s,t}(M,N) \cong Ext_{E(2)}^{s,t}(M,N).\]
\end{corollary}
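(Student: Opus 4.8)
The plan is to deduce the isomorphism purely formally from the equivalence of categories established in \Cref{cor:1:equiv}. The coalgebra $E(2)_{*} = E(\bar\tau_{0},\bar\tau_{1},\bar\tau_{2})$ is finite over $\F_{p}$, so \Cref{cor:1:equiv} provides an equivalence $\varphi\colon Comod_{E(2)_{*}} \to Mod_{E(2)}$ that is the identity on underlying bigraded $\F_{p}$-vector spaces, merely reinterpreting a coaction as a module action. Being an equivalence of abelian categories, $\varphi$ is automatically exact, and by \Cref{doi} it carries injective comodules to injective modules (exactness of $\varphi$ and of a quasi-inverse already forces this). I would also recall at this point that $Comod_{E(2)_{*}}$ has enough injectives --- a standard fact for comodules over a coalgebra over a field, which in any case transfers across $\varphi$ from $Mod_{E(2)}$.

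With this in place, the computation is immediate. Choose an injective resolution $N \to I^{\bullet}$ in $Comod_{E(2)_{*}}$; then $Ext_{E(2)_{*}}^{s,t}(M,N)$ is the internal-degree-$t$ part of $H^{s}\mathrm{Hom}_{E(2)_{*}}(M, I^{\bullet})$. Applying $\varphi$ termwise produces an injective resolution $\varphi N \to \varphi I^{\bullet}$ in $Mod_{E(2)}$, and the natural isomorphism $\mathrm{Hom}_{E(2)_{*}}(A,B) \cong \mathrm{Hom}_{E(2)}(\varphi A, \varphi B)$ (in every internal degree) identifies the two cochain complexes. Hence $Ext_{E(2)_{*}}^{s,t}(M,N) \cong H^{s,t}\mathrm{Hom}_{E(2)}(\varphi M, \varphi I^{\bullet})$, which is $Ext_{E(2)}^{s,t}(\varphi M, \varphi N)$ once one observes that $Ext$ over the ring $E(2)$ may equally be computed from an injective resolution of the second variable (by the usual balancing of $Ext$, valid since $Mod_{E(2)}$ has enough projectives and injectives; alternatively $E(2)$ is Frobenius, so the distinction between projective and injective resolutions disappears). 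Since $\varphi M = M$ and $\varphi N = N$ as bigraded vector spaces, this is exactly the asserted isomorphism, and it respects both the homological grading $s$ and the internal grading $t$.

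I do not expect any genuine obstacle here: the statement is formal given \Cref{cor:1:equiv}. The only care needed is bookkeeping --- verifying that the comodule and module versions of $Ext^{s,t}$ are normalized with matching grading conventions, and that the switch between ``injective resolution of the target'' and ``projective resolution of the source'' is legitimate on both sides. \Cref{free:proj:inj} is not needed for this corollary, though it is the natural companion result making the common class of projective--injective objects explicit.
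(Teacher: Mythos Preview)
Your argument is correct and follows essentially the same route as the paper: transfer the computation of $Ext$ across the equivalence $\varphi$ of \Cref{cor:1:equiv}, using \Cref{doi} to know that projectives/injectives are preserved and the equivalence to identify the Hom complexes. The only cosmetic difference is that the paper resolves $M$ projectively while you resolve $N$ injectively; this is a harmless dualization and not a different idea.
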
 

\begin{proof}
Consider a projective resolution

\[ 0 \leftarrow M \leftarrow P^{0} \leftarrow P^{1} \leftarrow \cdots \]
of $E(2)_{*}$-comodules. By \cref{doi}, each $P^{i}$ is also a projective $E(2)$-module. By the equivalence of categories,
\[Hom_{E(2)_{*}}(P^{i}, N) \cong Hom_{E(2)}(P^{i}, N) .\] 
So indeed $Ext_{E(2)_{*}}^{s,t}(M,N) \cong Ext_{E(2)}^{s,t}(M,N)$.
\end{proof}

\subsection{The action of $E(n)$ on $H_{*}BP\n$}\label{sec:BPn}
Let $A_{*}$ denote the dual Steeenrod algebra, ie $A_{*} = H_{*}H$. Let $Q_{i}$ denote the $i^{th}$ Milnor primitive, and let $E(n)$ denote the subalgebra $E(Q_{0}, Q_{1}, \ldots, Q_{n})$ of the Steenrod algebra. 

We will start by reviewing the action of $E(n) = E(Q_{0}, Q_{1}, \ldots Q_{n})$ on $A//E(n)_{*}$. In \cref{action:odd}, we explicitly describe the action at odd primes, and in \cref{action:even}, we explicitly describe the action at $p=2$. Then we will recall the well-known fact that $H_{*}BP\n \cong A//E(n)_{*}$ at all primes. 

\subsubsection{The odd-primary case}\label{action:odd}
At odd primes,
\[ A_{*} \cong \F_{p}[\xi_{1}, \xi_{2}, \ldots ] \otimes E(\tau_{0}, \tau_{1}, \ldots), \] 

where $|\bar{\xi}_{i}| = 2p^{i} - 2$ and $|\bar{\tau}_{i}| = 2p^{i} - 1$.

The homology of $BP\n$ is most easily described using the conjugates of these usual generators. Let $\bar{\xi}_{k}$ denote the conjugate of $\xi_{k}$, and let $\bar\tau_{k}$ denote the conjugate of $\tau_{k}$. 

Let $\Delta: A_{*} \rightarrow A_{*} \otimes A_{*}$ be the comultiplication map. Then 
for all $k \ge 0$,

\begin{equation}\Delta(\bar{\xi}_{k}) = 1 \otimes \bar{\xi}_{k} + \Sigma_{j=1}^{k-1}\bar{\xi}_{j} \otimes \bar{\xi}_{k-j}^{p^{j}} + \bar{\xi}_{k} \otimes 1\end{equation}

\begin{equation}\Delta(\bar{\tau}_{k}) = 1 \otimes \bar{\tau}_{k} + \Sigma_{j=1}^{k-1}\bar{\tau}_{j} \otimes \bar{\xi}_{k-j}^{p^{j}} + \bar{\tau}_{k} \otimes 1 .\end{equation}

Let $Q_{i}$ denote the $i^{th}$ Milnor primitive, and let $E(n)$ denote the subalgebra $E(Q_{0}, Q_{1}, \ldots, Q_{n})$ of the Steenrod algebra. 

At odd primes, $Q_{i}$ is the dual of $\bar{\tau}_{i}$.

For all $n \ge 0$, we can define an $E(n)_{*}$-coaction \[\psi: A_{*} \rightarrow E(n)_{*} \otimes A_{*}\] to be the composition

\[ 
\begin{tikzcd}
 \psi: A_{*} \arrow[r, "\Delta"]
 &
  A_{*} \otimes A_{*} \arrow[r, two heads]
 &
 E(n)_{*} \otimes A_{*}
\end{tikzcd}
  \]
where the second map is the natural projection. 

Specifically, \begin{equation}\psi(\bar{\xi}_{k}) = 1\otimes \bar{\xi}_{k}\end{equation}

\begin{equation}\psi(\bar{\tau}_{k}) = \sum\limits_{i=0}^{min(k,n)} \bar{\tau}_{i} \otimes \bar{\xi}_{k-i}^{p^{i}} + 1 \otimes \bar{\tau}_{k} .\end{equation}

Recall from \cref{cor:1:equiv} that there is an equivalence of categories $\varphi: Comod_{E(n)_{*}} \rightarrow Mod_{E(n)}$ for all $n \ge 0$. So we can instead consider the $E(n)$-module structures induced by $\psi$, which are sometimes easier to deal with.

\begin{proposition}
    Let 
\[\mu: E(n) \otimes A_{*} \rightarrow A_{*} \]

denote the $E(n)$ action induced by $\psi$ (\cref{lem:1:montgomery1}). 

At odd primes, 
\[ Q_{j}\bar{\tau}_{k} = \begin{cases}  \bar{\xi}_{k-j}^{p^{j}} & \text{ if } j\le k\\
 0 & \text { if } j > k\end{cases} \]

\[Q_{j}\bar{\xi}_{k} = 0 \text{ for all $k \in \N$}.\]

\end{proposition}

\begin{proof}
Recall that for all $j$, $Q_{j}$ is the dual of $\bar{\tau}_{k}$. 

So \begin{equation}
Q_{j}(\bar{\xi}_{k}) = \langle Q_{j},1 \rangle \otimes \bar{\xi}_{k} = 0  
\end{equation}
\begin{equation}
Q_{j}(\bar{\tau}_{k})  = \sum\limits_{i=0}^{min(k,n)} \langle Q_{j}, \bar{\tau}_{i} \rangle \bar{\xi}_{k-i}^{p^{i}} + \langle Q_{j}, 1 \rangle \otimes \bar{\tau}_{k} = \begin{cases}  \bar{\xi}_{k-j}^{p^{j}} & \text{ if } j\le k\\
 0 & \text { if } j > k
\end{cases}.
\end{equation}
\end{proof}

Recall that \[A//E(i)_{*} \cong \F_{p}[\bar{\xi}_{1}, \bar{\xi}_{2} \cdots ] \otimes E(\bar{\tau}_{i+1}, \bar{\tau}_{i+2} \cdots )\] is a sub-comodule algebra of $A_{*}$. So the action of $E(n)$ on $A//E(i)_{*}$ is also given by the formula above. 

\subsubsection{The $2$-primary case}\label{action:even}
At the prime $p=2$, 
\[ A_{*} \cong \F_{2}[\xi_{1}, \xi_{2}, \ldots ], \]

where $|\bar{\xi}_{i}| = 2p^{i} - 1$. As in the odd primary case, we will work with the conjugates $\bar{\xi}_{i}$ of the usual generators. The comultiplication map $\Delta: A_{*} \rightarrow A_{*} \otimes A_{*}$ has the same formula as in the odd-primary case. That is, 

\begin{equation}\Delta(\bar{\xi}_{k}) = 1 \otimes \bar{\xi}_{k} + \Sigma_{j=1}^{k-1}\bar{\xi}_{j} \otimes \bar{\xi}_{k-j}^{p^{j}} + \bar{\xi}_{k} \otimes 1\end{equation}

At the prime $p=2$, $Q_{i}$ is the dual of $\bar{\xi}_{i+1}$. Using the same technique as in the odd case, we arrive at the action of $Q_{i}$ on $A//E(n)_{*}$ at the prime $p=2$. At the prime $2$, 

\[ A//E(n)_{*} \cong \F_{p}[\bar{\xi_{1}}^{2}, \ldots ,\bar{\xi_{n+1}}^{2}, \bar{\xi}_{n+2}, \bar{\xi}_{n+3}, \ldots] .\]

\begin{proposition}
    Let 
\[\mu: E(n) \otimes A_{*} \rightarrow A_{*} \]

denote the $E(n)$ action induced by $\psi$ (\cref{lem:1:montgomery1}). 

At the prime $p=2$, \[Q_{j}\bar{\xi}_{k} =  \begin{cases}  \bar{\xi}_{k-j-1}^{2^{j+1}} & \text{ if } j < k\\
 0 & \text { if } j \ge k\end{cases}.\]
\end{proposition}

\subsubsection{The homology of $BP\n$}
Next we will recall the homology of $BP\n$. We start with the following result of S. Wilson.

\begin{theorem}\cite[Prop.~1.7]{wilson1975omega}
Let $n \ge 0$. Then 
\[ H^{*}BP\n \cong A//E(Q_{0}, Q_{1}, \ldots Q_{n}). \]
\end{theorem}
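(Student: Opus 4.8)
The plan is to argue by induction on $n$. The base case $n=0$ is the classical computation of Cartan and Serre: since $BP\0 \simeq H\Z_{(p)}$, one has $H^{*}BP\0 \cong H^{*}(H\Z_{(p)};\F_{p}) \cong A/A\beta$, and $A/A\beta = A//E(Q_{0})$ because the augmentation ideal of $E(Q_{0})$ is the principal left ideal generated by the Bockstein $Q_{0}=\beta$. For the inductive step I would use the defining cofibre sequence
\[ \Sigma^{2(p^{n}-1)}BP\n \xrightarrow{\ v_{n}\ } BP\n \xrightarrow{\ \pi\ } BP\langle n-1\rangle, \]
in which $v_{n}$ denotes multiplication by the (Araki or Hazewinkel) class $v_{n}\in\pi_{2(p^{n}-1)}BP\n$.

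First I would show that $v_{n}$ induces the zero map on mod-$p$ (co)homology. Since $v_{n}$ is a map of $BP\n$-modules, its composite with the Thom map $BP\n\to H\F_{p}$ is a $BP\n$-module map $\Sigma^{2(p^{n}-1)}BP\n\to H\F_{p}$, hence an element of $\pi_{2(p^{n}-1)}H\F_{p}=0$; so this composite is null and $v_{n}$ acts by $0$ on $H\F_{p}\smash(-)$. Consequently the long exact sequence in cohomology of the cofibre sequence breaks into short exact sequences of $A$-modules
\[ 0 \longrightarrow \Sigma^{2p^{n}-1}H^{*}BP\n \xrightarrow{\ \partial^{*}\ } H^{*}BP\langle n-1\rangle \xrightarrow{\ \pi^{*}\ } H^{*}BP\n \longrightarrow 0. \]

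Next I would compare this with an algebraic short exact sequence. By induction $H^{*}BP\langle n-1\rangle\cong A//E(Q_{0},\dots,Q_{n-1})=:M_{n-1}$, and since $A$ is free over $E(Q_{0},\dots,Q_{n})$ the module $M_{n-1}$ is free over the exterior algebra $E(Q_{n})$; hence multiplication by $Q_{n}$ gives a short exact sequence
\[ 0 \longrightarrow \Sigma^{2p^{n}-1}(M_{n-1}/Q_{n}M_{n-1}) \xrightarrow{\ Q_{n}\ } M_{n-1} \longrightarrow M_{n-1}/Q_{n}M_{n-1} \longrightarrow 0, \]
where $M_{n-1}/Q_{n}M_{n-1}\cong A//E(Q_{0},\dots,Q_{n})$ because $E(Q_{0},\dots,Q_{n})=E(Q_{0},\dots,Q_{n-1})\otimes E(Q_{n})$. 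The theorem follows once these two short exact sequences are identified: it suffices to know that $\pi^{*}$ annihilates $Q_{n}M_{n-1}$, so that $\pi^{*}$ factors through $M_{n-1}/Q_{n}M_{n-1}$, after which a Poincaré-series count — the recursion $P_{BP\langle n-1\rangle}(t)=(1+t^{2p^{n}-1})\,P_{BP\n}(t)$ together with the known series of $A//E(Q_{0},\dots,Q_{n})$ — forces the induced surjection $M_{n-1}/Q_{n}M_{n-1}\to H^{*}BP\n$ to be an isomorphism.

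The main obstacle is precisely this identification: one must show that the connecting operation $\Sigma^{2p^{n}-1}\pi\circ\partial$ acts on $M_{n-1}$ as a unit multiple of $Q_{n}$, equivalently that $Q_{n}$ acts as $0$ on $H^{*}BP\n$ — the statement that the $n$-th Milnor primitive detects $v_{n}$. I would deduce it either from Milnor's description of the dual Steenrod algebra together with the known Thom reduction of the generators $v_{n}$, or by reducing modulo $(p,v_{1},\dots,v_{n-1})$, where $Q_{n}$ appears visibly as the Bockstein attached to $v_{n}$; essentially all of the content of the theorem sits here. An alternative bookkeeping that avoids singling out the operation is to run the same induction in homology: $H_{*}BP\n$ is a sub-$A_{*}$-comodule algebra of $H_{*}BP\langle n-1\rangle\cong\F_{p}[\bar\xi_{1},\bar\xi_{2},\dots]\otimes E(\bar\tau_{n},\bar\tau_{n+1},\dots)$ containing $H_{*}BP=\F_{p}[\bar\xi_{1},\bar\xi_{2},\dots]$; the short exact sequence pins down its Poincaré series, and checking closure under the coaction identifies it with $\F_{p}[\bar\xi_{1},\bar\xi_{2},\dots]\otimes E(\bar\tau_{n+1},\bar\tau_{n+2},\dots)=A//E(n)_{*}$, which is dual to $A//E(Q_{0},\dots,Q_{n})$. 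I would use whichever of the two dualities makes the bookkeeping cleaner.
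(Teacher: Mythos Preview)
The paper does not prove this statement at all: it is quoted verbatim from Wilson \cite[Prop.~1.7]{wilson1975omega} and used as a black box, so there is no proof in the paper to compare your proposal against.

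Your inductive argument via the cofibre sequence
\[
\Sigma^{2p^{n}-2}BP\langle n\rangle \xrightarrow{\ v_{n}\ } BP\langle n\rangle \longrightarrow BP\langle n-1\rangle
\]
is the standard route, and you correctly locate the real content in identifying the connecting operation with $Q_{n}$ (equivalently, in showing $Q_{n}$ acts trivially on $H^{*}BP\langle n\rangle$). One step deserves a second look, though. From the nullity of the composite $\Sigma^{|v_{n}|}BP\langle n\rangle \xrightarrow{v_{n}} BP\langle n\rangle \xrightarrow{\mathrm{Thom}} H\F_{p}$ you conclude that ``$v_{n}$ acts by $0$ on $H\F_{p}\wedge(-)$.'' But that composite being null only says that $v_{n}$ maps to zero under $\pi_{*}BP\langle n\rangle \to \pi_{*}H\F_{p}$, which is trivially true since $\pi_{|v_{n}|}H\F_{p}=0$; it is not the same statement as the vanishing of the Hurewicz image of $v_{n}$ in $H_{*}BP\langle n\rangle$, which is what governs $(v_{n})_{*}$ on homology. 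At this point in the induction you do not yet know that $H_{*}BP\langle n\rangle\hookrightarrow A_{*}$, so you cannot close the gap by composing further with the Thom map. The cleanest repair is to observe that $v_{n}$ has Adams filtration $1$ (it is detected by $v_{n}\in\mathrm{Ext}^{1}_{A_{*}}(\F_{p},H_{*}BP\langle n\rangle)$), hence its image on the $0$-line is zero; alternatively, pull back along $BP\to BP\langle n\rangle$ and use that in $H_{*}(BP;\Z_{(p)})$ the Hazewinkel relation gives $v_{n}\equiv p\,m_{n}$ modulo decomposables, which vanishes mod $p$. With that patch your outline is sound.
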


\begin{lemma}\label{tensorcotensor}
Let $\Sigma$ be a Hopf algebra. Let $\psi_{N}: N \rightarrow \Sigma \otimes N$ be a left $\Sigma$-comodule which is finitely generated in each degree, and let $\psi_{M}: M \rightarrow M \otimes \Sigma$ be a right $\Sigma$-comodule which is also finitely generated in each degree. Then $(M \square_{\Sigma} N)^{*} \cong M^{*} \otimes_{\Sigma^{*}}N^{*}$.
\end{lemma}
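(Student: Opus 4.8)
### Proof proposal for \cref{tensorcotensor}

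The plan is to identify the cotensor product $M \square_\Sigma N$ as an equalizer (kernel of a difference of two maps) and then dualize, using that $\F_p$ is a field together with the finite-generation hypothesis to commute linear duals with the relevant (co)limits. Concretely, recall that $M \square_\Sigma N$ is defined as the equalizer of the two maps
\[
M \otimes N \rightrightarrows M \otimes \Sigma \otimes N,
\]
one being $\psi_M \otimes 1_N$ and the other $1_M \otimes \psi_N$ (where I use the right-comodule structure on $M$ and the left-comodule structure on $N$). Equivalently, $M \square_\Sigma N = \ker(\psi_M \otimes 1 - 1 \otimes \psi_N)$. On the algebra side, $M^* \otimes_{\Sigma^*} N^*$ is the coequalizer of
\[
M^* \otimes \Sigma^* \otimes N^* \rightrightarrows M^* \otimes N^*,
\]
using the right $\Sigma^*$-action on $M^*$ dual to $\psi_M$ and the left $\Sigma^*$-action on $N^*$ dual to $\psi_N$; equivalently $M^* \otimes_{\Sigma^*} N^* = \mathrm{coker}(a_M \otimes 1 - 1 \otimes a_N)$.

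First I would record the elementary linear-algebra facts I need: over a field, $(-)^*$ is exact and contravariant, so it sends kernels to cokernels, and for a finite-type graded vector space $V$ the natural map $V \to V^{**}$ is an isomorphism in each degree. Second, I would check that $(X \otimes Y)^* \cong X^* \otimes Y^*$ naturally when $X$ and $Y$ are finitely generated in each degree — this is where the hypotheses on $M$, $N$ (and hence $\Sigma$ in the relevant degrees, since the comodule structure maps land in bounded-below finite pieces) are used; the point is that the degreewise-finite tensor factors make the canonical map $X^* \otimes Y^* \to (X\otimes Y)^*$ an isomorphism. Third, I would verify that under these identifications the dual of $\psi_M \otimes 1 - 1 \otimes \psi_N$ is exactly the module-action difference $a_M \otimes 1 - 1 \otimes a_N$ defining $M^* \otimes_{\Sigma^*} N^*$ — this is just the statement that dualizing a comodule structure map gives the corresponding module structure map, which is precisely the content of \cref{lem:1:montgomery1} (applied on each side, for the left and right variants), unwound at the level of the structure maps rather than the objects.

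Putting these together: apply $(-)^*$ to the defining exact sequence
\[
0 \to M \square_\Sigma N \to M \otimes N \xrightarrow{\psi_M \otimes 1 - 1 \otimes \psi_N} M \otimes \Sigma \otimes N
\]
to obtain the exact sequence
\[
M^* \otimes \Sigma^* \otimes N^* \xrightarrow{a_M \otimes 1 - 1 \otimes a_N} M^* \otimes N^* \to (M \square_\Sigma N)^* \to 0,
\]
whose cokernel is by definition $M^* \otimes_{\Sigma^*} N^*$; hence $(M \square_\Sigma N)^* \cong M^* \otimes_{\Sigma^*} N^*$, naturally.

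I expect the main obstacle to be the finiteness bookkeeping in the second step: one must be careful that $M \square_\Sigma N$ and the tensor products appearing are themselves finite-type (so that no pathology like $V^{**} \neq V$ creeps in), and that $\Sigma$ only enters through degrees where everything is finite-dimensional — the comodule structure maps $\psi_M, \psi_N$ have image in $M \otimes \Sigma$ and $\Sigma \otimes N$, and in each fixed total degree only finitely many graded pieces of $\Sigma$ contribute, so the relevant portions of $\Sigma$ are finite-dimensional and $\Sigma^*$ behaves well there. Everything else is a formal consequence of exactness of linear duality over a field.
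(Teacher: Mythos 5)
Your proposal is correct and takes essentially the same route as the paper's own proof: both identify $M \square_{\Sigma} N$ as the equalizer of $\psi_{M}\otimes 1$ and $1 \otimes \psi_{N}$, apply linear duality over the field, and recognize the resulting coequalizer as the one defining $M^{*}\otimes_{\Sigma^{*}}N^{*}$. The paper states this more tersely; the finiteness bookkeeping you spell out (exactness of $(-)^{*}$ and the isomorphism $(X\otimes Y)^{*}\cong X^{*}\otimes Y^{*}$ for degreewise-finite graded vector spaces) is exactly what the paper leaves implicit.
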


\begin{proof}
First, note that $M \square_{\Sigma} N$ is defined to be the equalizer
\[
\begin{tikzcd}[ampersand replacement=\&]
M \square_{\Sigma} N \arrow[r] \arrow[r]
\&
M \otimes N \arrow[r, shift left = .5ex, "\psi_{M} \otimes 1"]\arrow[r, shift right = .5ex, "1 \otimes \psi_{N}" below]
\&
M \otimes \Sigma \otimes N
\end{tikzcd}
.\]

Let $\mu_{M}: \Sigma^{*} \otimes M^{*} \rightarrow M^{*}$ and $\mu_{\Gamma}: \Gamma^{*} \otimes \Sigma^{*} \rightarrow \Sigma^{*}$ be the dual actions induced by $\psi_{M}$ and $\psi_{\Gamma}$.

Taking the dual of the equalizer diagram above, we get a coequalizer
\[
\begin{tikzcd}
(M \square_{\Sigma} N)^{*}
&
M^{*} \otimes N^{*} \arrow[l]
&
M^{*} \otimes \Sigma^{*} \otimes N^{*} \arrow[l, shift right = .5ex, "\mu_{M}" above]\arrow[l, shift left = .5ex, "\mu_{N}" below]
\end{tikzcd}
.\]

This is exactly the coequalizer diagram that defines $M^{*} \otimes_{\Sigma^{*}} N^{*}$.

\end{proof}

\begin{remark}
Let $E(n)$ denote the $A$-subalgebra $E(Q_{0}, Q_{1}, \ldots, Q_{n})$. Recall that $A//E(n)$ is defined to be $A \otimes_{E(n)}\F_{p}$. So \cref{tensorcotensor} tells us that the dual of $A//E(n)$, denoted $A//E(n)_{*}$, is $A//E(n)_{*} \cong A_{*} \square_{E(n)_{*}}\F_{p}$.
\end{remark}

\begin{corollary}\label{hom:BPn}
The homology of $BP\n$ is $H_{*}BP\n \cong A//E(n)_{*}$.
\end{corollary}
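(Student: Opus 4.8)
The final statement to prove is \cref{hom:BPn}: that $H_{*}BP\n \cong A//E(n)_{*}$.

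\medskip

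The plan is to simply dualize Wilson's cohomology computation. The previous theorem (Wilson, \cite{wilson1975omega}) tells us that $H^{*}BP\n \cong A//E(n)$, where $E(n)$ denotes the subalgebra $E(Q_{0},\ldots,Q_{n})$ of the Steenrod algebra and $A//E(n) = A\otimes_{E(n)}\F_{p}$ by definition. The goal is to pass from this cohomology statement to the homology statement by taking $\F_{p}$-linear duals.

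\medskip

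First I would recall that since $BP\n$ is a connective spectrum of finite type, its mod $p$ homology and cohomology are finite-dimensional in each degree and are $\F_{p}$-linear duals of one another: $H_{*}BP\n \cong (H^{*}BP\n)^{*}$ as $A_{*}$-comodules (dual to the $A$-module structure). Combining with Wilson's theorem, $H_{*}BP\n \cong (A//E(n))^{*}$. Next I would invoke the remark immediately following \cref{tensorcotensor}, which already records the content we need: applying \cref{tensorcotensor} with $M = A$, $N = \F_{p}$, and $\Sigma = E(n)$ (note $A$ is finitely generated in each degree, as is $\F_{p}$), we get $(A//E(n))^{*} = (A\otimes_{E(n)}\F_{p})^{*} \cong A_{*}\square_{E(n)_{*}}\F_{p} =: A//E(n)_{*}$. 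Chaining the two isomorphisms yields $H_{*}BP\n \cong A//E(n)_{*}$, which is exactly the claim.

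\medskip

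The only point requiring a little care is checking that the isomorphism $H_{*}BP\n \cong (H^{*}BP\n)^{*}$ is one of $A_{*}$-comodules (equivalently, that \cref{tensorcotensor} is being applied in a way compatible with the comodule structures), but this is the standard duality between the $A$-module structure on cohomology and the $A_{*}$-comodule structure on homology for a finite-type spectrum, so there is no real obstacle here — the proof is essentially a one-line dualization of Wilson's result using \cref{tensorcotensor}.
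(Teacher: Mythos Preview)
Your proposal is correct and matches the paper's approach exactly: the paper states \cref{hom:BPn} as an immediate corollary, with the implicit argument being precisely the dualization of Wilson's cohomology computation via \cref{tensorcotensor} and the remark following it. There is nothing to add.
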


\subsection{The homology-level splitting}\label{maps}
Fix a prime $p$, and let $q = 2(p-1)$. Recall that for all $i \ge 0$, $A//E(i)_{*}$ is a sub-comodule algebra of $A_{*}$. We will let $A//E(-1)_{*}$ denote $A_{*}$.

Define a weight function $wt$ on the monomials of $A_{*}$ by setting $wt(\bar{\xi}_{j}) = wt(\bar{\tau}_{j}) = p^{j}$, and let $wt(xy) = wt(x) + wt(y)$.

Let the $j^{th}$ Brown-Gitler comodule $B_{i}(j)$ be the subspace of $A//E(i)_{*}$ generated by monomials of weight at most $j$, and let $M_{i}(j)$ be the subspace of $A//E(i)_{*}$ generated by monomials of weight exactly $j$. These subspaces have the following subcomodule structures.

\begin{lemma}\label{basic 1} \cite[Prop~4.6]{dominic_odd}, \cite[Prop~3.3]{dominic_even}

Let $i \ge -1$. Then for all $k \ge 0$, $M_{i}(k)$ is an $E(i)_{*}$-subcomodule of $A//E(i)_{*}$, and $B_{i}(k)$ is an $E(i+1)_{*}$-subcomodule of $A//E(i)_{*}$.
\end{lemma}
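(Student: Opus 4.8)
The plan is to pass through the equivalence of categories of \cref{cor:1:equiv}. Since $E(i)_*=E(\bar\tau_0,\dots,\bar\tau_i)$ is a finite graded coalgebra whose dual is $E(i)=E(Q_0,\dots,Q_i)$, and the functor $\varphi$ is the identity on underlying graded vector spaces, a graded subspace of $A//E(i)_*$ is an $E(i)_*$-subcomodule precisely when it is closed under the operators $Q_0,\dots,Q_i$, and it is an $E(i+1)_*$-subcomodule precisely when it is closed under $Q_0,\dots,Q_{i+1}$. So the entire statement reduces to understanding how each Milnor primitive $Q_j$ interacts with the weight filtration on $A//E(i)_* \cong \F_p[\bar\xi_1,\bar\xi_2,\dots]\otimes E(\bar\tau_{i+1},\bar\tau_{i+2},\dots)$.

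First I would record that $Q_j$ acts as a graded derivation on $A//E(i)_*$: the Milnor primitive $Q_j$ is primitive in $A$ and the $E(n)_*$-coaction is an algebra map, so the induced action is a derivation. Together with the formulas $Q_j\bar\xi_k=0$ and $Q_j\bar\tau_k=\bar\xi_{k-j}^{\,p^j}$ for $j\le k$ (and $0$ for $j>k$) recorded in \cref{sec:BPn}, this determines $Q_j$ on an arbitrary monomial $m=\bar\xi_1^{a_1}\bar\xi_2^{a_2}\cdots\bar\tau_{i+1}^{\varepsilon_{i+1}}\bar\tau_{i+2}^{\varepsilon_{i+2}}\cdots$: namely $Q_j(m)$ is a signed sum, over the indices $\ell$ with $\varepsilon_\ell=1$, of the terms $\pm(m/\bar\tau_\ell)\,Q_j(\bar\tau_\ell)$, the $\bar\xi$-factors contributing nothing. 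The key computation is then the weight bookkeeping: if $\ell>j$ then $wt(Q_j\bar\tau_\ell)=wt(\bar\xi_{\ell-j}^{\,p^j})=p^j\cdot p^{\ell-j}=p^\ell=wt(\bar\tau_\ell)$, so that term has the same weight as $m$; if $\ell=j$ then $Q_j(\bar\tau_\ell)=\bar\xi_0^{\,p^j}=1$ has weight $0$, so that term has weight $wt(m)-p^\ell<wt(m)$; and if $\ell<j$ the term is zero.

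Now I would split into the two cases of the lemma. For $j\le i$, every $\bar\tau_\ell$ occurring in a monomial of $A//E(i)_*$ has $\ell\ge i+1>j$, so we are always in the first case above and $Q_j$ preserves weight exactly; hence both $M_i(k)$ (weight exactly $k$) and $B_i(k)$ (weight at most $k$) are stable under $Q_0,\dots,Q_i$, which proves the claim for $M_i(k)$ and part of the claim for $B_i(k)$. For $j=i+1$, a monomial may contain the factor $\bar\tau_{i+1}$, and hitting it contributes a term of strictly smaller weight, while every other term preserves weight; so $wt(Q_{i+1}(m))\le wt(m)$ in all cases, and $B_i(k)$ --- being cut out by the condition weight $\le k$ --- is $Q_{i+1}$-stable as well. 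Combining, $B_i(k)$ is closed under $Q_0,\dots,Q_{i+1}$, which finishes the proof. (One could phrase the same argument on the comodule side using that the $E(n)_*$-coaction on $A//E(i)_*$ never raises the weight of the right tensor factor, which is the form in which this kind of statement appears in the introduction; I find the derivation version cleaner given the machinery of \cref{sec:BPn}.)

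I do not expect a serious obstacle; the only point requiring care is the boundary case $\ell=j$, where $Q_j$ sends $\bar\tau_j$ to $\bar\xi_0^{\,p^j}=1$ and strictly lowers the weight. This is exactly what prevents $M_i(k)$ from being $Q_{i+1}$-stable --- $Q_{i+1}$ can carry a weight-$k$ monomial divisible by $\bar\tau_{i+1}$ to one of weight $k-p^{i+1}$ --- and so accounts for the asymmetry in the statement between the $E(i)_*$-comodule structure on $M_i(k)$ and the $E(i+1)_*$-comodule structure on $B_i(k)$.
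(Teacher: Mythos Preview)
Your proof is correct. The paper does not give its own proof of this lemma; it simply cites \cite[Prop.~4.6]{dominic_odd}. Your argument via the equivalence of categories and the explicit weight bookkeeping for the derivation action of $Q_j$ on monomials is exactly the standard verification, and the key observation---that $Q_j$ preserves weight on $\bar\tau_\ell$ when $\ell>j$ and strictly lowers it when $\ell=j$---is precisely what drives the asymmetry between the $E(i)_*$-structure on $M_i(k)$ and the $E(i+1)_*$-structure on $B_i(k)$, as you correctly identify in your closing remark.
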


\begin{lemma}\label{lem:2:iso}\cite[Lemma~4.8]{dominic_odd}\cite[Prop~3.3]{dominic_even}
Let $i \ge -1$. Then for all $k\ge 0$, there exists an $E(i+1)$-module isomorphism  
\[\theta_{k}:  \Sigma^{qk}B_{i}(k) \rightarrow M_{i+1}(pk). 
\]
\end{lemma}

Note that $A//E(i)_{*} \cong_{E(i)_{*}} \bigoplus\limits_{k=0}^{\infty} M_{i}(pk)$, so by \cref{lem:2:iso} we have the following isomorphism. 

\begin{theorem}\label{eiso}\cite[Lemma~4.8]{dominic_odd}\cite[Prop~3.3]{dominic_even}
Let $n \ge 0$. Then there exists a family of maps \[\{ \theta_{k}: \Sigma^{qk}B_{n-1}(k) \rightarrow H_{*}BP\n |\ k \in \N \}\] such that their sum 
\[\bigoplus\limits_{k=0}^{\infty} \theta_{k}:\bigoplus\limits_{k=0}^{\infty}\Sigma^{qk}B_{n-1}(k) \rightarrow H_{*}BP\n    \]
is an isomorphism of $E(n)_{*}$-comodules.
\end{theorem}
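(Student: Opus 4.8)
The plan is to deduce \cref{eiso} directly from the module isomorphisms constructed in \cref{lem:2:iso}, transported across the equivalence of categories of \cref{cor:1:equiv}; essentially no new computation is needed, and the only point requiring a moment's attention is that the weight decomposition of $H_*BP\n$ is a decomposition of $E(n)_*$-comodules.

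First I would use \cref{hom:BPn} to identify $H_*BP\n$ with $A//E(n)_* \cong \F_p[\bar\xi_1, \bar\xi_2, \ldots]\otimes E(\bar\tau_{n+1}, \bar\tau_{n+2}, \ldots)$. Since $wt(\bar\xi_j) = p^j$ for $j \ge 1$ and $wt(\bar\tau_j) = p^j$ for $j \ge n+1$, every monomial of $A//E(n)_*$ has weight divisible by $p$, so that as graded vector spaces $A//E(n)_* = \bigoplus_{k\ge 0} M_n(pk)$. By \cref{basic 1} each $M_n(pk)$ is an $E(n)_*$-subcomodule of $A//E(n)_*$ --- concretely because the $E(n)_*$-coaction recalled in \cref{sec:BPn} preserves weight in the $A//E(n)_*$-tensor factor (for instance $wt(\bar\xi_{k-i}^{p^i}) = p^i\cdot p^{k-i} = p^k = wt(\bar\tau_k)$) --- so the above vector-space splitting is in fact a splitting of $E(n)_*$-comodules $A//E(n)_* \cong \bigoplus_{k\ge 0} M_n(pk)$.

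Next I would apply \cref{lem:2:iso} with $i = n-1$: since $n \ge 0$ we have $i \ge -1$, and \cref{basic 1} also tells us that $B_{n-1}(k)$ is an $E((n-1)+1)_* = E(n)_*$-subcomodule of $A//E(n-1)_*$. \cref{lem:2:iso} then provides, for each $k$, an isomorphism $\theta_k\colon \Sigma^{qk}B_{n-1}(k) \to M_n(pk)$ of $E(n)$-modules. Because the equivalence $\varphi\colon Comod_{E(n)_*} \to Mod_{E(n)}$ of \cref{cor:1:equiv} is the identity on underlying graded vector spaces and carries the comodule structures at hand to exactly these $E(n)$-module structures, each $\theta_k$ is simultaneously an isomorphism of $E(n)_*$-comodules. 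Forming the direct sum $\bigoplus_k \theta_k$ and postcomposing with the comodule identification $\bigoplus_k M_n(pk) \cong A//E(n)_* \cong H_*BP\n$ of the previous paragraph yields the asserted $E(n)_*$-comodule isomorphism $\bigoplus_k \Sigma^{qk}B_{n-1}(k) \to H_*BP\n$.

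The only real (and still minor) obstacle is the bookkeeping in the middle step: one must check that every weight occurring in $A//E(n)_*$ is a multiple of $p$ --- so that the comodules $M_n(j)$ with $p \nmid j$ do not contribute --- and that the weight grading is genuinely preserved by the $E(n)_*$-coaction. Granting that, the statement follows formally from \cref{lem:2:iso} and \cref{cor:1:equiv}.
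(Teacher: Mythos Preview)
Your proposal is correct and follows essentially the same route as the paper: the paper's argument is the single sentence ``Note that $A//E(i)_{*} \cong \bigoplus_{k=0}^{\infty} M_{i}(pk)$, so by \cref{lem:2:iso} we have the following isomorphism,'' and you have simply unpacked this, making explicit the divisibility-by-$p$ of all weights, the comodule nature of the weight decomposition via \cref{basic 1}, and the passage from $E(n)$-modules to $E(n)_*$-comodules via \cref{cor:1:equiv}.
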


In \cref{sec:BP2mod}, we will discuss Baker-Lazarev's relative Adams spectral sequence in the category of $BP\2$-modules (\cref{prop:bakerlazarev2}). Then in \cref{sec:ass}, we will explain how this spectral sequence can be used at height $n=2$ to lift \cref{eiso} to the splitting

\[ BP\2 \smash BP\2 \simeq \bigoplus\limits_{k=0}^{\infty} BP\2 \smash \Sigma^{qk}B_{1}(k).\]

\section{The relative Adams spectral sequence}\label{sec:bakerLazarev}

\subsection{The category of $BP\2$-modules}\label{sec:BP2mod}

\subsubsection{Definitions and basic results}

In this section we recall Baker-Lazarev's results on the category of $R$-modules, and in particular their adaptation of the Adams spectral sequence to this setting. We also use Katth\"an-Tilson's construction of the relative K\"unneth spectral sequence. Let $R$ be a homotopy commutative ring spectrum. We start by recalling the category of $R$-module spectra as discussed in EKMM. Let $R$ be an $S$-algebra ($A_{\infty}$ ring spectrum). Let $M$ be a right $R$-module, and let $N$ be a left $R$-module. We let $M \smash_{R} N$ denote the coequalizer \cite[Def~III.3.1]{ekmm}
\[
\begin{tikzcd}
{M  \smash R  \smash N} \arrow[r, shift right] \arrow[r, shift left] & {M \smash N}  \arrow[r] & {M \smash_{R} N}.\\
\end{tikzcd}
\]

Note that by definition, 
\begin{equation} \label{smash:id}
M \simeq M \smash_{BP\2} BP\2 
\end{equation}
\begin{equation} 
N \simeq BP\2 \smash_{BP\2} N
.\end{equation}

This $R$-module smash product commutes with the ordinary smash product, so that if $M$ is a right $R$-module, $N$ is an $R$-bimodule and $L$ is a left $R$-module,

\begin{equation} M \smash_{R} N \smash L \simeq M \smash N \smash_{R} L.\end{equation}\label{smash:commute}

Let $E$ be an $R$-algebra, and let $M$ be an $R$-module spectrum. Then the $E$-homology in the category of $R$-modules is defined to be \[E_{*}^{R}(M) = \pi_{*}(E \smash_{R} M).\] 

It will be helpful to note that 
\begin{equation}\label{compare homology}
    E_{*}^{R}\big(R \smash M) \cong E_{*}M.
\end{equation}

We will use the following special case of the K\"unneth formula for $R$-modules.

\begin{proposition}(K\"unneth formula for $R$-modules)\label{lem:kunneth}
    Let $Y$ be an $R$-module spectrum. Then 
    \[ H_{*}R \otimes H_{*}^{R}Y \cong H_{*}Y .\]
\end{proposition}

\begin{proof}
Let $X, Y$ be any $R$-module spectra. By \cite[Thm~IV.4.1]{ekmm}, there exists a K\"unneth spectral sequence 
\[ E_{2}^{*,*} \cong Tor^{H_{*}}(H_{*}X, H_{*}^{R}Y) \Longrightarrow H_{*}(X \smash_{R} Y) .\]
Note that $H_{*}R$ is flat over $H_{*}$, so the spectral sequence collapses at the $E_{2}$-page, with $E_{2}^{*,*} \cong H_{*}X \otimes H_{*}^{R}Y$. In the case $X = R$, this yields
\[ H_{*}R \otimes H_{*}^{R}Y \cong H_{*}Y.\]
\end{proof}

We will also need to use a version of the Whitehead theorem in the category of $R$-modules. 

\begin{proposition}(Whitehead Theorem for $R$-modules)\label{whitehead}
Let $X,Y$ be $p$-complete spectra which are also $BP\2$-modules. If $\varphi: X \rightarrow Y$ is a $BP\2$-module map such that $H_{*}^{BP\2}\varphi$ is an isomorphism, then $\varphi$ is a homotopy equivalence.
\end{proposition}

\begin{proof}
    Suppose that $X$ is a $p$-complete spectrum. Recall that $p$-complete spectra are $H$-local in the category of $S$-module spectra. We will show that $X$ is also $H$-local in the category of $BP\2$-modules. 
    
    Suppose $A$ is $H$-acyclic in the category of $BP\2$-modules, that is $H_{*}^{BP\2}A = 0$. Recall from \cref{lem:kunneth} that 
    \[ H_{*}A \cong H_{*}BP\2 \otimes H_{*}^{BP\2}A,\]
    so $H_{*}A = 0$. By \cite[Lemma~1.5]{bousfield}, $BP\2 \smash A$ is also $H$-acyclic in the ordinary category of $S$-modules. So
    \[ [A,X]^{BP\2} \simeq [BP\2 \smash A, X] = 0 .\]
    So $X$ is indeed $H$-local in the category of $BP\2$-modules.

So our map $\varphi: X \rightarrow Y$ of $p$-complete modules is in fact a map of $H$-local modules in the $BP\2$-module category. Let $F$ denote the fiber of $\varphi$. We are assuming that $H_{*}^{BP\2}\varphi$ is an isomorphism, so $F$ must be $H$-acylic in the category of $BP\2$-modules. By definition, $[F,X]^{BP\2} = 0$, and so $\varphi: X \rightarrow Y$ is an equivalence in the category of $BP\2$-modules. It follows that $\varphi$ is still an equivalence after forgetting the $BP\2$-module structure.
\end{proof}

\subsubsection{The relative Adams spectral sequence}

Now we will give an explicit description of $H_{*}^{BP\2}H$. In \cite{katthan2017homology}, K\"atthan-Tilson showed that the relative Kunneth spectral sequence constructed in \cite[IV.4.4]{ekmm} is multiplicative under certain conditions.

\begin{theorem}\cite[Thm~2.12]{katthan2017homology}
    Let $R$ be an $\mathbb{E}_{3}$ algebra, and let $A$ and $B$ be commutative $S$-algebras over $R$ which are cofibrant $R$-modules. Then the K\"unneth spectral sequence
    \[ Tor_{s}^{R_{*}}(A_{*}, B_{*}) \Longrightarrow \pi_{*}(A \smash B)\]
    is multiplicative.
\end{theorem}

Baker-Lazarev used the multiplicativity of the K\"unneth spectral sequence to prove the following theorem. The commutativity of the K\"unneth spectral sequence was originally stated as Theorem 2.12 of \cite{bakerlazarev}. Although a flaw was found in the proof of \cite[Thm~2.12]{bakerlazarev}, Tilson addressed this issue and gave an alternative proof for the case where $R$ is an $E_{\infty}$-algebra in \cite{tilson2016power}. Katth\"an and Tilson then generalized this result to $\E_{3}$-commutative algebras in \cite[Thm~2.12]{katthan2017homology}. 

Baker-Lazarev's statement, \cite[Prop~1.2]{bakerlazarev}, is given only for $R$ an $\E_{\infty}$-commutative algebra. However, their proof of \cite[Prop~1.2]{bakerlazarev} relies solely on the multiplicativity of the K\:unneth spectral sequence, so \cite[Thm~2.12]{katthan2017homology} allows us to restate their theorem here for $R$ an $\E_{3}$-commutative algebra. 

\begin{proposition}\label{prop:bakerlazarev1}\cite[Prop~1.2]{bakerlazarev}
Let $R$ be an $\mathbb{E}_{3}$-commutative algebra. Let $E = R/I$ be a regular quotient such that $R/I$ is a commutative $S$-algebra, and $I_{*}$ is generated by the regular sequence $u_{0}, u_{1}, \ldots$. Then as an $E_{*}-algebra$, $E_{*}^{R}E = E(\alpha_{i}| \ i \ge 0)$, where $|\alpha_{i}| = |u_{i}| + 1$. 
\end{proposition}
Recall that the mod-$p$ Eilenberg-Maclane spectrum $H$ can be described as a quotient of $BP\n$ by the regular sequence $(p,v_{1},\ldots, v_{n})$. In \cite{hahn2022redshift}, Hahn and Wilson showed that at each prime $p$ and for each height $n \in \N$, there exists an $\E_{3}$ $BP-$algebra structure on $BP\langle n \rangle $. So we can apply \cref{prop:bakerlazarev1} to describe $H_{*}^{BP\2}H$ as follows. 

\begin{proposition}\label{prop:bakerlazarev:ex:1}
As an $\F_{p}$-algebra, $H_{*}^{BP\n}H \cong E(n)_{*}$.
\end{proposition}

\begin{proof}
By \cref{prop:bakerlazarev1}, $H_{*}^{BP\n}H \cong E(\beta_{j}| \ j > n)$, where $|\beta_{j}| = |v_{j}| + 1$. Recall that $|v_{j}| = 2p^{j} - 1$. 

At odd primes, $E(n)_{*} = E( \bar{\tau}_{0}, \ldots \bar{\tau}_{n} )$, with $|\bar{\tau}_{i}| = 2p^{i}-1 = |v_{i}| + 1$. 

At the prime $p=2$, $E(n)_{*} = E(\bar{\xi}_{1}, \ldots, \bar{\xi}_{i+1})$ with $ |\bar{\xi}_{j} | = 2p^{j}-1 = |v_{j}| + 1$. 

Note that each prime, the dimensions of the generators of $E(n)$ and $H_{*}^{BP\2n}H$ match. So indeed $H_{*}^{BP\n}H \cong E(n)_{*}$.
\end{proof}

We will use Baker-Lazarev's adaptation of the Adams spectral sequence, the relative Adams spectral sequence in the category of $R$-modules. (In their paper, Baker and Lazarev work in the category of commutative $S$-algebras. However, their construction of the relative Adams spectral sequence only uses homotopy commutativity).

\begin{proposition}\label{prop:bakerlazarev2}\cite[Prop~2.1]{bakerlazarev}
    Let $L,M$ be $R$-modules, and let $E$ be a homotopy commutative ring spectrum with $E_{*}^{R}E$ flat as a left or right $E_{*}$-module. If $E_{*}^{R}L$ is projective as an $E_{*}$-module, then there is an Adams spectral sequence with \[ E_{2}^{s,t} = Ext_{E_{*}^{R}E}^{s,t}(E_{*}^{R}L, E_{*}^{R}M).\] 
\end{proposition}

So for the case $R = BP\2$ and $E = H$, we have the following Adams spectral sequence:
\[ E_{2}^{s,t} = Ext_{E(2)_{*}}^{s,t}(H_{*}^{BP\2}L, H_{*}^{BP\2}M)\]
for any $BP\2$-modules $L$ and $M$. This spectral sequence weakly converges to $[L, \widehat{M_{E}^{R}}]^{BP\2}$, where $\widehat{M_{E}^{R}}]^{BP\2}$ denotes the $E$-nilpotent completion of $M$ in the category of $R$-modules. 

Let $E^{\bullet}$ be the cosimplicial spectrum with $E^{k} = E^{\smash k}$. The coface maps are induced by the unit maps $S^{0} \rightarrow E$, and the degeneracies by the multiplication $E \smash E \rightarrow E$. The ordinary $E$-nilpotent completion of $M$ can be defined as the totalization of $E ^{\bullet} \smash M$. The $E$-nilpotent completion of $M$ in the category of $R$-modules is defined analogously.
\begin{definition}
    Let $E^{\bullet_{R}}$ be the cosimplicial spectrum with $E^{k} = E^{\smash_{R} k}$. The coface maps are induced by the unit maps $R \rightarrow E$, and the degeneracies by the multiplication $E \smash_{R} E \rightarrow E$. The $E$-nilpotent completion of $M$ in the category of $R$-modules is \[X_{E}^{R} = Tot\big(E ^{\bullet_{R}} \smash X\big).\]  

\end{definition}

(If we take $R$ to be the sphere spectrum $S$, we recover the ordinary $E$-nilpotent completion of $M$).

Let $A \overset{\alpha}{\rightarrow} B \rightarrow E$ be a diagram of $(-1)$-connected homotopy commutative $S$-algebras, and let $M$ be a $B$-module spectrum. Let $\widehat{M_{E}^{B}}$ denote the $E$-nilpotent completion of $M$ in the category of $B$-modules. Note that $\alpha: A \rightarrow B$ induces a natural $A$-module structure on $M$, so we let $\widehat{M_{E}^{A}}$ denote the $E$-nilpotent completion of $M$ in the category of $A$-modules. Note that for any pair of $B$-modules $M,N$, there is a natural homomorphism $M \smash_{A} N \rightarrow M \smash_{B} N$ induced by $\alpha$. So we can consider the induced homomorphism $\widehat{M_{E}^{A}} \rightarrow \widehat{M_{E}^{B}} $, and compare the two $E$-nilpotent completions.   

\begin{remark}
    In Carlsson's paper \cite{carlsson2008derived}, he works in the category of $\E_{\infty}$-commutative $S$-algebras. However, his proof of \cite[Thm~6.10]{carlsson2008derived} only relies on homotopy commutativity, so we restate the result here for homotopy commutative spectra. 
\end{remark}

\begin{theorem}\cite[Thm~6.10]{carlsson2008derived}
Let $A \overset{\alpha}{\rightarrow} B \rightarrow E$ be a diagram of $(-1)$-connected homotopy commutative $S$-algebras, and let $M$ be a $B$-module spectrum. Suppose that the homomorphism $\pi_{0}\alpha: \pi_{0}A \rightarrow \pi_{0}B$ is an isomorphism. Then the natural homomorphism $\widehat{M_{E}^{A}} \rightarrow \widehat{M_{E}^{B}}$ is a weak equivalence of spectra.
\end{theorem}

It follows that the $H$-nilpotent completion of a $BP\2$-module $M$  $\widehat{M_{H}^{BP\2}}$ is weakly equivalent to the ordinary $H$-nilpotent completion $\widehat{M_{H}^{S}}$. Since we are working up to $p$-completion throughout this paper, we will simply say that the spectral sequence 
\[ E_{2}^{s,t} = Ext_{E(2)_{*}}^{s,t}(H_{*}^{BP\2}L, H_{*}^{BP\2}M)\]
weakly converges to $BP\2$-module maps from $L$ to $M$, denoted $[L,M]^{BP\2}$. 

We wish to consider the case $L = BP\2 \smash l_{k}$, $M = BP\2 \smash BP\2$. Recall that for any $BP\2$-module $X$, \[H_{*}^{BP\2}BP\2 \smash X \cong H_{*}X,\]

so we can write the $E_{2}$-page of this spectral sequence as

\[ E_{2}^{s,t} \cong Ext_{E(2)_{*}}(H_{*}l_{k}, H_{*}BP\2).\]

Furthermore, note that $H_{*}l_{k}$ is a finite module, while $H_{*}BP\2$ is connective finitely generated in each degree. So by Boardman's convergence criterion \cite[Thm~7.1]{boardman1999conditionally}, we have the following strongly convergent spectral sequence.

\begin{proposition}
Let $n \ge 0$. The for all $k \in \N$, there exists a strongly convergent Adams spectral sequence
\[ E_{2}^{s,t} = Ext_{E(2)_{*}}(H_{*}l_{k}, H_{*}BP\2) \Longrightarrow [BP\2 \smash l_{k}, BP\2 \smash BP\2]. \]
\end{proposition}

\subsection{How we will use the relative Adams spectral sequence}\label{sec:ass}

Recall from \cref{eiso} that there exists a family of $E(2)_{*}$-comodule maps \[ \bigoplus\limits_{k=0}^{\infty} \theta_{k}: \Sigma^{qk}H_{*}l_{k} \longrightarrow H_{*}BP\2 \]
such that $\bigoplus\limits_{k=0}^{\infty}\theta_{k}$ is an isomorphism. 

By \cref{compare homology}, we can think of this as a family of $E(2)_{*}$-comodule maps
\[ \bigoplus\limits_{k=0}^{\infty} \theta_{k}: \Sigma^{qk}H_{*}^{BP\2}\big( BP\2 \smash l_{k} \big) \longrightarrow H_{*}^{BP\2}\big(BP\2 \smash BP\2 \big).\] 

We will use the following relative Adams spectral sequence to lift these maps to the level of spectra.

\begin{proposition}\label{relASS}
Let $n \ge 0$. Then for all $k \in \N$, there exists a strongly convergent Adams spectral sequence
\[ E_{2}^{s,t} = Ext_{E(n)_{*}}(H_{*}B_{n}(k), H_{*}BP\n) \Longrightarrow [BP\n \smash B_{n-1}(k), BP\n \smash BP\n]^{BP\n}. \]
\end{proposition}

\begin{proof}

By \cref{prop:bakerlazarev2}, there exists an Adams spectral sequence

\[ E_{2}^{s,t} = Ext_{E(2)_{*}}\big(\Sigma^{qk}H_{*}l_{k}, H_{*} BP\2 \big) \Longrightarrow [BP\2 \smash l_{k}, BP\2 \smash BP\2 ]^{BP\2} .\]

Note that $H_{*}{l}_{k}$ is finite, while $H_{*}BP\2 \smash BP\2$ is connective and finitely generated in each degree. It follows that $Ext_{A_{*}}^{s,t}(H_{*}l_{k}, H_{*}BP\2^{\otimes 2})$ is finite for each pair $(s,t)$. So by \cite[Thm~7.1]{boardman1999conditionally}, the spectral sequence strongly converges to $[BP\2 \smash l_{k}, BP\2 \smash BP\2 ]^{BP\2}$. 
\end{proof}

We can think of $\theta_{k}$ as a class in degree $E_{2}^{0,0}$. Suppose that we can show that each map $\theta_{k}$ survives its spectral sequence. Then we will have constructed a family of $BP\2$-module maps 

\[\widetilde{\theta_{k}}: BP\2 \smash l_{k} \rightarrow BP\2 \smash BP\2\] 

whose sum induces the isomorphism $\bigoplus_{k=0}^{\infty}\theta_{k}$ on $H_{*}^{BP\2}$-homology. Then it will follow immediately from \cref{whitehead} that \[\bigvee\widetilde{\varphi_{k}}: \bigvee\limits_{k=0}^{\infty} BP\2 \smash l_{k} \rightarrow BP\2 \smash BP\2 \]
is a homotopy equivalence up to $p$-completion. So we will spend the rest of the paper demonstrating that $\theta_{k}$ survives the spectral sequence. 

Note that ${\theta_{k}}$ is in degree$(s,t) = 0$, while the differential has degree $|d_{r}| = (r, r + 1)$. So the potential obstructions to lifting $\varphi_{k}$ are those classes found in $Ext_{E(2)_{*}}^{s,s+1}\big(\Sigma^{qk}H_{*}{l}_{k}, H_{*}BP\2 \big)$, where $s \ge 2$. We will show that all of these potential obstructions survive the spectral sequence.  

It is interesting to note that the analogous situation for $BP\1$, there are no potential obstructions to lifting these maps. So one can immediately recover Kane's splitting of $BP\1 \smash BP\1$ (\cref{eqn5}) using this method. However, there are many potential obstructions to lifting our family of maps 

\[\{ \theta_{k}: \Sigma^{qk}H_{*}{l}_{k} \rightarrow H_{*}BP\2\}.\] 

To deal with these obstructions, we will adapt the method used by Klippenstein to construct a decomposition of $bu \smash BP\n$ in terms of $bu$-modules generated by integral Brown-Gitler spectra. Specifically, he constructed the following splitting.

\begin{theorem} \cite[Thm~11]{klippenstein_splitting} \label{klippenstein}
Let $n \ge 0$, and let $p$ be an odd prime. Then there exists a splitting \[bu \smash BP\n \vee HV \simeq \bigvee\limits_{I \in \N^{n}} \Sigma^{d(I)} bu \smash H\Z_{\lfloor i_{n} \rfloor} \vee HV' \]
where $I = (i_{1}, \ldots, i_{n})$, $d(I) = |\bar{\xi}_{1}^{i_{1}}\bar{\xi}_{2}^{i_{2}}\cdots \bar{\xi}_{n}^{i_{n}} | $, and $HV, HV'$ are Eilenberg-Maclane spectra of $\F_{p}$-vector spaces.

\end{theorem}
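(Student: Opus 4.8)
The plan is to run, for $bu \smash BP\n$, the program that \cref{chapter:square} and \cref{chapter:conclusion} carry out for $BP\2 \smash BP\2$. Since $bu_{(p)}\simeq\bigvee_{i=0}^{p-2}\Sigma^{2i}BP\1$ for $p$ odd, I would first reduce to splitting $BP\1 \smash BP\n$ as a $BP\1$-module, the $bu$-statement following by smashing with this wedge decomposition and regrouping the summands; here $H_{*}BP\1 = A//E(1)_{*}$ keeps things clean. The starting point is a homology-level splitting: iterating the weight-filtration isomorphisms of \cref{lem:2:iso} — beginning from the $E(n)_{*}$-comodule decomposition $A//E(n)_{*} \cong \bigoplus_{k}\Sigma^{qk}B_{n-1}(k)$ of \cref{eiso}, then re-decomposing over $E(n-1)_{*},\dots,E(1)_{*}$ in turn, using \cref{basic 1} and the weight grading at each stage — gives an $E(1)_{*}$-comodule isomorphism
\[ A//E(n)_{*} \;\cong\; \bigoplus_{I \in \N^{n}} \Sigma^{d(I)}\, B_{0}(\lfloor i_{n}\rfloor), \]
which by \cref{cor:1:equiv} is equally an isomorphism of $E(1)$-modules. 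As with \cref{eiso}, the catch is that this is only $E(1)_{*}$-linear, not $A_{*}$-linear, so it is not yet realized by a map of spectra.

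Write $\theta_{I} : \Sigma^{d(I)} B_{0}(\lfloor i_{n}\rfloor) \to A//E(n)_{*}$ for its components. Via the extension-and-shearing construction of \cref{ext:shear}, each $\theta_{I}$ extends to an $A_{*}$-comodule map $\overline{\theta_{I}}$ into $A//E(1)_{*} \otimes A//E(n)_{*} \cong H_{*}(BP\1 \smash BP\n)$, and \cref{iso:shear} and \cref{cor:iso:shear} show that if every $\overline{\theta_{I}}$ lifts to a map of spectra $\widetilde{\theta_{I}} : \Sigma^{d(I)} H\Z_{\lfloor i_{n}\rfloor} \to BP\1 \smash BP\n$, then the composites
\[ BP\1 \smash \Sigma^{d(I)} H\Z_{\lfloor i_{n}\rfloor} \xrightarrow{\,1 \smash \widetilde{\theta_{I}}\,} BP\1 \smash BP\1 \smash BP\n \xrightarrow{\,\mu \smash 1\,} BP\1 \smash BP\n \]
assemble into a homology isomorphism, hence (as in \cref{liftif}) a homotopy equivalence. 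So everything comes down to lifting the $\overline{\theta_{I}}$; the Eilenberg-Maclane terms $HV$ and $HV'$ in the statement are the price paid when an $\overline{\theta_{I}}$ lifts only after stabilizing both sides by a wedge of suspensions of $H\F_{p}$ (for $n=1$ none are needed, recovering Kane's splitting \eqref{eqn3}; the failure to split on the nose for larger $n$ is a height-$2$ phenomenon, akin to the non-realizability of \eqref{tmf}).

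To lift $\overline{\theta_{I}}$ I would use the Adams spectral sequence
\[ Ext_{A_{*}}^{s,t}\big(\Sigma^{d(I)}H_{*}H\Z_{\lfloor i_{n}\rfloor},\;H_{*}(BP\1 \smash BP\n)\big) \;\Longrightarrow\; \big[\Sigma^{d(I)}H\Z_{\lfloor i_{n}\rfloor},\;BP\1 \smash BP\n\big]_{t-s}, \]
whose $E_{2}$-page, by change-of-rings and \cref{cor:1:equiv}, is $Ext_{E(1)_{*}}^{s,t}\big(\Sigma^{d(I)}H_{*}H\Z_{\lfloor i_{n}\rfloor},\,A//E(n)_{*}\big)$. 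The class $\theta_{I}$ sits in bidegree $(0,0)$; for degree reasons nothing can hit it, so $\theta_{I}$ lifts as soon as no differential leaves it, and for that it suffices that every potential obstruction — a class in which some $d_{r}\theta_{I}$, $r\ge 2$, could lie — survive the spectral sequence, hence fail to be a boundary.

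Establishing this is the hard part, and I expect it to be the main obstacle. Following Klippenstein, I would bring in Robinson's universal coefficient spectral sequence
\[ Ext_{BP\1_{*}}^{u,v}\big(BP\1_{*}H\Z_{\lfloor i_{n}\rfloor},\;BP\1_{*}BP\n\big) \;\Longrightarrow\; \big[\Sigma^{d(I)}H\Z_{\lfloor i_{n}\rfloor},\;BP\1 \smash BP\n\big], \]
identify the $BP\1_{*}$-modules $BP\1_{*}H\Z_{\lfloor i_{n}\rfloor} = \pi_{*}(BP\1 \smash H\Z_{\lfloor i_{n}\rfloor})$ and $BP\1_{*}BP\n = \pi_{*}(BP\1 \smash BP\n)$, and establish a horizontal vanishing line $Ext_{BP\1_{*}}^{u,\ast}=0$ for $u>2$ (since $BP\1_{*}\cong\Z_{(p)}[v_{1}]$ has global dimension $2$). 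Since this spectral sequence and the Adams spectral sequence above converge to the same homotopy groups, a direct comparison of their $E_{2}$-pages — which Klippenstein computes explicitly — then places each potential obstruction, up to the unavoidable Eilenberg-Maclane deficit, above the universal-coefficient vanishing line, forcing it to be a permanent cycle. Carrying this comparison out — pinning down $BP\1_{*}BP\n$, matching the two $E_{2}$-pages, and ruling out the Adams differentials — is where essentially all the work lies; once it is complete, the earlier steps produce the $BP\1$-module splitting, and smashing with $\bigvee_{i=0}^{p-2}\Sigma^{2i}$ and regrouping promotes it to the claimed splitting of $bu \smash BP\n$.
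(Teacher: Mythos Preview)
The paper does not prove \cref{klippenstein}; it is quoted as Klippenstein's result and only sketched in the two paragraphs following the statement. That sketch is: lift maps $\theta_{I}$ through the Adams spectral sequence \eqref{klipp:ass}, control the potential obstructions via the universal coefficient spectral sequence \eqref{klipp:ucss} over $bu_{*}$, use that $bu_{*}$ has global dimension $2$ so $Ext_{bu_{*}}^{u,\ast}=0$ for $u\ge 3$, and then compare the two $E_{2}$-pages by direct computation.

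Your proposal follows this same outline, with one organizational difference: you first reduce from $bu$ to $BP\1$ via the Adams splitting $bu_{(p)}\simeq\bigvee_{i=0}^{p-2}\Sigma^{2i}BP\1$, and then run the Adams/universal-coefficient comparison over $BP\1_{*}$ rather than $bu_{*}$. The paper's account has Klippenstein working over $bu_{*}$ directly. Either way the global dimension is $2$ and the argument proceeds identically, so this is a cosmetic rearrangement rather than a different route. Your description of where the Eilenberg--Maclane correction terms $HV,HV'$ enter is also consistent with the statement. In short, your plan matches the method the paper attributes to Klippenstein; there is nothing further in the paper to compare against.
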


\begin{subsubsection}{Why we chose to use a relative Adams spectral sequence}
To construct this splitting, Klippenstein used an Adams spectral sequence 
\begin{equation}\label{klipp:ass} E_{2}^{s,t} = Ext_{A_{*}}\big(\Sigma^{d(I)}H_{*}H\Z_{i_{n}}, bu \smash BP\n  \big) \Rightarrow [H\Z_{i_{n}}, bu \smash BP\n ]\end{equation} 

to construct maps \[\varphi_{I}: H\Z_{k} \rightarrow bu \smash BP\n \]

and then showed that the composite \[bu \smash BP\n \vee HV \simeq \bigvee\limits_{I \in \N^{n}} \Sigma^{d(I)} bu \smash H\Z_{\lfloor i_{n} \rfloor} \vee HV' \]
induces an isomorphism in homology. Note that after a change-of-rings, \[E_{2}^{s,t} = Ext_{A_{*}}\big(\Sigma^{d(I)}H_{*}H\Z_{i_{n}}, bu \smash BP\n  \big) \cong Ext_{E(1)_{*}}\big(\Sigma^{d(I)}H_{*}H\Z_{i_{n}}, H_{*}BP\n  \big). \]

We could have taken an analogous approach for constructing our splitting. Specifically, the $E(2)_{*}$-comodule homomorphism $\theta_{k}: H_{*}\Sigma^{qk}l_{k} \rightarrow H_{*}BP\2$ extends to an $A_{*}$-comodule homomorphism
\[ \overline{\theta_{k}}: H_{*}\Sigma^{qk}l_{k} \rightarrow H_{*}BP\2 \otimes H_{*}BP\2.\] 
It can then be shown algebraically that the composite
\[ 
\begin{tikzcd}
\bigoplus\limits_{k=0}^{\infty} H_{*}BP\2 \otimes H_{*}\Sigma^{qk}l_{k}
\arrow[r, "1 \otimes \overline{\theta_{k}}" ]  
&
H_{*}BP\2^{\otimes 3} \arrow[r, "\mu \otimes 1"]
&
H_{*}BP\2^{\otimes 2}
\end{tikzcd}
\]

induces an isomorphism. We found it more efficient to use a relative Adams spectral sequence instead for several reasons. First, our approach makes it more straightforward to employ the $BP\2$-module structure of $BP\2 \smash l_{k}$ when analyzing the potential obstructions. Furthermore, using a relative Adams spectral sequence saves us the trouble of confirming that the composite above really does induce an isomorphism. Additionally, we found it necessary to use other relative Adams spectral sequences to prove \cref{thm:bp split} at the primes $2$ and $3$, so we needed to include most of the discussion and background in \cref{sec:BP2mod} regardless of which approach we chose.     
\end{subsubsection}

To construct the splitting of \cref{klippenstein}, Klippenstein used an Adams spectral sequence
\begin{equation} E_{2}^{s,t} = Ext_{E(1)_{*}}\big(\Sigma^{d(I)}H_{*}H\Z_{i_{n}}, H_{*} BP\n  \big) \Rightarrow [H\Z_{i_{n}}, bu \smash BP\n ]\end{equation} 

to lift an analogous class of maps \[\{ \theta_{I}: \Sigma^{d(I)} H_{*}H\Z_{i_{n}} \rightarrow bu \smash BP\n | I = (i_{1}, \ldots, i_{n}) \in \N^{n} \}\] to the level of spectra. Just as in our situation, there are many potential obstructions to lifting the maps $\theta_{I}$. To analyze these potential obstructions, Klippenstein used an auxiliary spectral sequence called the universal coefficient spectral sequence.

\section{Auxiliary spectral sequences}\label{spec:aux}
\subsection{The universal coefficient spectral sequence}\label{sec:ucss}
We will start by recalling Robinson's universal coefficient spectral sequence. 
\begin{theorem}\cite[p.~9]{robinson}
Let $R$ be a ring spectrum, and let $G$ be an $R$-module spectrum. Then for any connective ring spectrum $X$, there exists a universal coefficient spectral sequence 
\[ Ext_{R_{*}}^{*,*}\big( R_{*}X, G_{*} \big) \Longrightarrow [X, G] .\]
\end{theorem}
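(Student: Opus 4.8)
The plan is to build this spectral sequence in the same way one constructs universal-coefficient and K\"unneth spectral sequences in a structured category of module spectra: realize a free resolution of the $R_{*}$-module $R_{*}X$ geometrically in $R$-module spectra, and then apply an $R$-linear function spectrum. The first step is to pass from spectra to $R$-modules. Since $G$ is an $R$-module spectrum, the free--forgetful adjunction between spectra and $R$-modules (with left adjoint $X \mapsto R \smash X$) gives a natural equivalence of mapping spectra $F(X,G) \simeq F_{R}(R \smash X, G)$, where $F_{R}$ denotes the $R$-module function spectrum; passing to homotopy groups, $[X,G] \cong [R \smash X, G]_{R}$. So it suffices to produce a spectral sequence of the stated form with abutment $[R \smash X, G]_{R}$.

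Next I would construct a geometric free resolution of $R \smash X$ inside $\mathrm{Mod}_{R}$. Set $Y_{0} = R \smash X$, and inductively, given $Y_{s}$, choose a wedge of suspensions of $R$, say $P_{s}$, together with an $R$-module map $P_{s} \to Y_{s}$ that is surjective on homotopy (lift a generating set of the $R_{*}$-module $\pi_{*}Y_{s}$ to $R$-module maps $\Sigma^{n}R \to Y_{s}$ and take their wedge), and put $Y_{s+1} = \mathrm{fib}(P_{s} \to Y_{s})$. Then $\pi_{*}Y_{s+1} = \ker(\pi_{*}P_{s} \to \pi_{*}Y_{s})$, and since each $\pi_{*}P_{s}$ is a free $R_{*}$-module the resulting short exact sequences splice to a free $R_{*}$-resolution
\[ \cdots \longrightarrow \pi_{*}P_{1} \longrightarrow \pi_{*}P_{0} \longrightarrow \pi_{*}(R \smash X) = R_{*}X \longrightarrow 0 . \]
Applying $F_{R}(-,G)$ to the tower of cofiber sequences $P_{s} \to Y_{s} \to \Sigma Y_{s+1}$ would yield an exact couple, hence a spectral sequence; because $P_{s}$ is a wedge of suspensions of $R$ one has $\pi_{*}F_{R}(P_{s},G) \cong Hom_{R_{*}}(\pi_{*}P_{s}, G_{*})$, and one then checks that $d_{1}$ is the map induced by the resolution differential, so that (with the usual Adams indexing)
\[ E_{2}^{s,t} \cong Ext_{R_{*}}^{s,t}(R_{*}X, G_{*}). \]
Equivalently one could package $P_{\bullet}$ as a cosimplicial object and take the Bousfield--Kan spectral sequence of $F_{R}(P_{\bullet}, G) \simeq F(X,G)$; the two descriptions agree.

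Finally I would identify the abutment and address convergence. The tower sits under $F_{R}(Y_{0},G) = F_{R}(R \smash X, G) \simeq F(X,G)$, so it induces a filtration on $[X,G]$ whose associated graded is $E_{\infty}$. The hard part — and the real content of Robinson's theorem — will be convergence: one must arrange the $P_{s}$, hence the $Y_{s}$, to be increasingly highly connected and control the resulting $\lim^{1}$-terms so that the filtration is complete and Hausdorff and the spectral sequence converges strongly; this is where connectivity of $X$ (and of $R$) is used, and in practice it is secured by finiteness or vanishing-line hypotheses (e.g.\ Boardman's criteria), just as the paper does for the companion Adams spectral sequence. The identifications of $E_{1}$ and $E_{2}$ are routine once the geometric resolution is in place, and no ring structure on $X$ is needed for the construction itself — it would only be relevant for endowing the spectral sequence with a multiplicative structure.
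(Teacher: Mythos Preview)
The paper does not give its own proof of this theorem; it simply quotes Robinson \cite[p.~9]{robinson} and uses the result as a black box. Your sketch is correct and is essentially the standard construction---build a geometric free $R$-module resolution of $R\smash X$ by iteratively choosing wedges of shifted copies of $R$ surjecting on homotopy, apply $F_{R}(-,G)$, and read off $E_{2}$ as $Ext_{R_{*}}(R_{*}X,G_{*})$ via the adjunction $[X,G]\cong[R\smash X,G]_{R}$. This is, in modern language, exactly Robinson's argument (he builds the same tower, though without the benefit of a point-set category of $R$-modules), so there is nothing substantively different to compare. Your remarks on convergence are appropriate: connectivity of $X$ and $R$ lets one arrange the tower so that the $Y_{s}$ become increasingly connected, which forces strong convergence; and you are right that the ring structure on $X$ plays no role in the construction and is not actually needed for the statement as used in the paper.
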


Note that $bu \smash BP\n$ is naturally a $bu$-module, so Klippenstein was able to use a universal coefficient spectral sequence of the following form:

\begin{equation}\label{klipp:ucss}E_{2}^{u,v} = Ext_{bu_{*}}^{u,v}\big(\Sigma^{d(I)}bu_{*}H\Z_{k }, bu_{*}BP\n  \big) \Rightarrow [BP\2 \smash H\Z_{k}, bu \smash BP\n ]_{v-u}^{BP\2}.\end{equation}

Recall that \[[H\Z_{k}, bu \smash BP\n ] \cong [BP\2 \smash H\Z_{k}, bu \smash BP\n ]^{BP\2},\]  so the spectral sequence (\ref{klipp:ucss}) converges to the same target as (\ref{klipp:ass}). Although they converge to the same target, these two spectral sequences have very different filtrations. The $E_{2}$-page of the Adams spectral sequence (\ref{klipp:ass}) has nonzero classes in arbitrarily high filtration $s$. However, the ring $bu_{*}$ has global dimension $2$, which tells us that $Ext_{bu_{*}}^{u,*} = 0$ for all $u \ge 3$. Note that the differential $d_{r}^{UCSS}$ increases the homological degree $u$ by $r$. This implies that every class on the line $Ext_{bu_{*}}^{1,*}$ survives the spectral sequence.

In turn, this tells us that for each nonzero class on the $(u=1)$-line of the universal coefficient spectral sequence, there must be a corresponding nonzero class in $Ext_{A_{*}}^{*,*}\big(\Sigma^{d(I)}H_{*}H\Z_{i_{n}}, H_{*}bu \otimes H_{*}BP\n  \big) $ which survives the Adams spectral sequence. By directly computing both $E_{2}$-pages, Klippenstein showed that the the the potential obstructions to lifting $\varphi_{I}$ must be among the corresponding classes that survive the Adams spectral sequence.

We will adapt this technique, comparing the universal coefficient spectral sequence

\begin{equation}\label{ucss}
E_{2}^{u,v} = Ext_{BP\2_{*}}\big( BP\2_{*}l_{k}, BP\2_{*}BP\2 \big) \Longrightarrow [BP\2 \smash l_{k}, BP\2 \smash BP\2 ]^{BP\2} 
\end{equation}

to the Adams spectral sequence (\ref{main:relASS}) to show that the potential obstructions in (\ref{main:relASS}) survive the spectral sequence. However, our situation is immediately more complicated than the $bu$-analogue for two reasons. First, $BP\2_{*}$ has global dimension $3$ rather than $2$, leaving more room for non-zero differentials in the universal coefficient spectral sequence. Secondly, both $Ext_{BP\2_{*}}^{*,*}$ and $Ext_{E(2)_{*}}^{*,*}$ are far more complicated to compute than their height one analogues. So instead of computing these $Ext$ terms and directly comparing them, we will use another spectral sequence called the \textit{hypercohomology spectral sequence} as a bridge between them. 

\subsection{The hypercohomology spectral sequence}\label{sec:hss}
We will start by recalling the hypercohomology spectral sequence, a construction of Cartan-Eilenberg \cite{cartan_eilenberg}. Let $A$ be a complex of modules over a ring $\Lambda_{1}$, and let $C$ be a complex of modules over a ring $\Lambda_{2}$. Let $T(-,-): Mod_{\Lambda_{1}}^{op} \times Mod_{\Lambda_{2}} \rightarrow Ab$ be an additive functor such that $T(A,-)$ is covariant, $T(-,C)$ is contravariant, and $T(A,C)$ is a module over a ring $\Lambda$. Let $X$ be a projective resolution of $A$ over $\Lambda_{1}$, and $Y$ an injective resolution of $C$ over $\Lambda_{2}$, where  $X^{p, q}$ denotes the $q^{th}$ stage of the projective resolution of $A^{p}$ and $Y_{p, q}$ denotes the $q^{th}$ stage of the injective resolution of $C^{p}$. Then we can consider the double complex $T^{\bullet,\bullet}(X,Y)$ where
\[T^{p,q}(X,Y) = \bigoplus\limits_{p_{1} + p_{2} = p, q_{1} + q_{2} = q}T(X^{p_{1}, q_{2}}, Y_{p_{1}, q_{2}}).\] The double complex has two filtrations: the first filtration 

\[F_{I}^{p} := \bigoplus_{r \ge p} \bigoplus_{q} T^{r,q}(X,Y) \]

and the second filtration 

\[F_{II}^{q} := \bigoplus_{p} \bigoplus_{r \ge q} T^{p,r}(X,Y) .\]

In certain cases, spectral sequences can be associated to these filtrations. 

\begin{definition}\cite[p.324]{cartan_eilenberg}\label{def:regular}
A filtration $F$ of a complex $D$ is said to be regular if there exists a function $u(n)$ such that $H^{n}F^{p}(D) = 0$ for all $p > u(n)$.
\end{definition}

By definition, $X^{p,q}$ and $Y_{p,q}$ are both zero whenever $q<0$. So $H^{p+q}T^{p,*}(X,Y) = 0$ when $p>n$. Therefore, the first filtration is regular with respect to the total cohomology of the complex \cite[p.369]{cartan_eilenberg}.\\

If the second filtration is also regular, then each filtration induces an associated spectral sequence converging to $H^{*}\R T(A,C)$. Specifically, these are the spectral sequences associated to the double complex $T(X,Y)$. We will use the spectral sequence associated to the second filtration.

\begin{proposition}\cite[p.370]{cartan_eilenberg}\label{prop:c_e}

Let $A,C$ be complexes as described above. If the second filtration $F_{II}^{q}$ is regular, then the following spectral sequence is convergent: \[\bigoplus_{p_{1} - p_{2} = p}H^{q}\R T \big(H^{p_{1}}(A), H^{p_{2}}(C) \big) \overset{q}{\Longrightarrow} H^{p+q}\R T(A,C) .\]

\end{proposition}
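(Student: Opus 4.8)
The final statement to prove is \Cref{prop:c_e}, the convergence of the Cartan–Eilenberg hypercohomology spectral sequence associated to the second filtration. Let me sketch a proof plan.

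\bigskip

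The plan is to invoke the standard machinery for spectral sequences of double complexes, so the real content is bookkeeping: identifying the $E_2$-page and checking the hypotheses that guarantee convergence. First I would set up the double complex $T^{\bullet,\bullet}(X,Y)$ precisely, where $X$ is a Cartan–Eilenberg projective resolution of the complex $A$ over $\Lambda_1$ and $Y$ a Cartan–Eilenberg injective resolution of $C$ over $\Lambda_2$, and take $\mathrm{Tot}$ to compute $H^*\mathbf{R}T(A,C)$ by definition. The first filtration $F_I$ was already observed in the excerpt to be regular with respect to the total cohomology (because $X^{p,q}=Y_{p,q}=0$ for $q<0$, the column cohomology $H^{p+q}T^{p,*}(X,Y)$ vanishes for $p>n$ in total degree $n$); by hypothesis the second filtration $F_{II}$ is also regular. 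Regularity of both filtrations is exactly the condition in \cite[Ch.~XV]{cartan_eilenberg} ensuring that both associated spectral sequences converge to $H^*\mathrm{Tot}\,T(X,Y) = H^*\mathbf{R}T(A,C)$; this is where I would cite the relevant Cartan–Eilenberg convergence theorem rather than reprove it.

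\bigskip

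The heart of the argument is computing the $E_2$-term of the spectral sequence of the second filtration and recognizing it as $\bigoplus_{p_1-p_2=p} H^*\mathbf{R}T(H^{p_1}(A),H^{p_2}(C))$. I would run the usual two-step computation: first take cohomology in one direction. Because $Y$ is a (Cartan–Eilenberg) injective resolution, applying $T(X^p,-)$ and taking cohomology in the $Y$-direction yields, stage by stage, $\mathbf{R}T$ evaluated on the cohomology objects $H^q(C)$ — using that $T(X^p,-)$ sends the injective resolution of $C$ to something computing right-derived functors, together with the fact that $X^p$ is projective so $T(X^p,-)$ is exact and commutes appropriately. Then taking cohomology in the remaining ($X$) direction, and using that $X$ is a projective resolution of $A$ so that $H^{p_1}$ of the $X$-complex recovers $H^{p_1}(A)$ up to a projective resolution, produces $\mathbf{R}T(H^{p_1}(A),H^{p_2}(C))$. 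The indexing convention $p = p_1 - p_2$ (contravariance in the first variable) and the sign/grading conventions for $q$ need to be tracked carefully so that the differential $d_r$ raises $q$ by $r$ as claimed, which is what we use later when exploiting a vanishing line in $q$.

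\bigskip

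The main obstacle I anticipate is not conceptual depth but the careful verification that the spectral sequence of the double complex $T(X,Y)$ genuinely has the indicated $E_2$-page, i.e.\ that taking cohomology of the double complex in the appropriate order interacts correctly with $\mathbf{R}T$ when $A$ and $C$ are themselves complexes rather than single modules. One has to be slightly careful about whether $\mathbf{R}T(H^{p_1}(A),-)$ means first resolving $H^{p_1}(A)$ or using the truncation of $X$, and to confirm these agree; the cleanest route is to appeal directly to the hyperderived-functor formalism in \cite[Ch.~XV, \S7]{cartan_eilenberg}, where precisely this spectral sequence is constructed and its $E_2$-page identified, and simply to check that our $T$, $\Lambda_1$, $\Lambda_2$ satisfy the stated hypotheses (additivity, variance, and — crucially — regularity of $F_{II}$, which is the only nonautomatic input and is assumed). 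So the proof will largely consist of matching our situation to the Cartan–Eilenberg setup and citing \Cref{prop:c_e}'s source, with the remark on regularity of $F_I$ from the paragraph above supplying the one hypothesis that is not assumed but verified.
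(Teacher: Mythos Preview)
The paper does not give its own proof of this proposition at all: it is stated with the citation \cite[p.370]{cartan_eilenberg} and then immediately applied. Your proposal correctly identifies that the content is classical double-complex machinery and that the honest thing to do is to match the setup to the Cartan--Eilenberg hyperderived-functor formalism and cite it; this is exactly what the paper does (minus your helpful sketch of why the $E_2$-page and convergence come out as stated).
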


Now we will apply this spectral sequence to our situation. Let $E(m)$ be the exterior algebra $E(m) = E(Q_{0}, Q_{1}, \ldots, Q_{m} )$, and let $P(m)$ be the polynomial algebra $P(m)=  \F_{p}[v_{0}, v_{1}, \ldots v_{m}]$. Then the right derived functor $\R Hom_{E(m)}(\F_{p}, -)$ can be represented by the chain complex\\ $Hom_{E(m)}(Q^{\bullet}, -)$, where $Q^{\bullet}$ is a projective resolution of $\F_{p}$ over $E(m)$. 

This chain complex is a module over the ring $\R Hom_{E(m)}(\F_{p}, \F_{p})$. Note that \[P(m) \cong \R Hom_{E(m)}(\F_{p}, \F_{p}).\] Let \[G: \D(E(m)) \rightarrow \D(P(m))\] denote the derived functor $G(-) = \R Hom_{E(m)}(\F_{p}, -)$.

We will need to use the following property of $P(m)$.

\begin{proposition}\cite[Cor~19.7]{eisenbud2013commutative}\label{prop:dim}
Let $L$ be an ordinary module over $P(m)$. Then the global dimension of $L$ is at most $m$.
\end{proposition}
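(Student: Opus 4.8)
The plan is to recognize \cref{prop:dim} as the instance of Hilbert's Syzygy Theorem applied to the polynomial algebra $P(m)$, which is precisely the content of \cite[Cor~19.7]{eisenbud2013commutative}. First I would observe that $P(m) = \F_{p}[v_{0}, v_{1}, \ldots, v_{m}]$ is a Noetherian regular ring, being a polynomial algebra over the field $\F_{p}$. The cited corollary then asserts that every (not necessarily graded, i.e.\ ``ordinary'') $P(m)$-module $L$ admits a projective resolution of bounded length, and reading off the bound from the number of polynomial generators yields the claim. So the one-line proof is simply to invoke \cite[Cor~19.7]{eisenbud2013commutative}; for the reader's convenience one could instead give the argument in situ.

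A self-contained proof would proceed by induction on the number of polynomial generators: write $P(m)$ as an iterated polynomial extension $\F_{p}[v_{0}][v_{1}]\cdots[v_{m}]$ and use the classical fact that $\operatorname{gl.dim} R[x] = \operatorname{gl.dim} R + 1$ for a Noetherian ring $R$, with base case $\operatorname{gl.dim} \F_{p} = 0$ since every $\F_{p}$-vector space is free. This shows the global dimension of $P(m)$ is finite, and since the projective dimension of any module never exceeds the global dimension of its ring, one gets the desired uniform bound on $\operatorname{pd}_{P(m)}(L)$. Alternatively, since $v_{0}, \ldots, v_{m}$ form a regular sequence, the Koszul complex resolves $\F_{p}$ by a finite free complex, and regularity of $P(m)$ propagates this finiteness to all modules.

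I do not anticipate any real obstacle here: this is a standard property of polynomial rings over a field. The only care needed is bookkeeping --- making sure the bound is recorded in the normalization needed downstream in \cref{sec:hss}, where it will be used to guarantee that the second filtration in the hypercohomology spectral sequence (\cref{prop:c_e}) is regular, since the relevant injective resolutions then have bounded length.
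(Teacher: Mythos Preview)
Your approach matches the paper's exactly: the paper offers no proof beyond the citation to \cite[Cor~19.7]{eisenbud2013commutative}, and your one-line invocation of Hilbert's Syzygy Theorem is precisely that. One caveat worth flagging: your self-contained inductive argument actually yields the bound $m+1$, not $m$, since $P(m)=\F_{p}[v_{0},\ldots,v_{m}]$ has $m+1$ polynomial generators --- this off-by-one is already present in the paper's statement and is harmless for the application in \cref{lem:hss}, which only needs finiteness of the projective dimension to make the second filtration regular.
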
 

We also need to note that the functor $G: \D(E(m)) \rightarrow \D(P(m))$ preserves the internal grading $t$.

\begin{proposition}\label{prop:internal_degree}
The functor $G: \D(E(m)) \rightarrow \D(P(m))$ preserves the internal degree $t$.
\end{proposition}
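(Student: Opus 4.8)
The plan is to unwind the construction of $G$ and check that it is compatible with the internal grading at every step. Recall that $G(-) = \R\,Hom_{E(m)}(\F_p, -)$ is represented by $Hom_{E(m)}(Q^\bullet, -)$, where $Q^\bullet$ is a projective resolution of $\F_p$ over $E(m)$. First I would fix a \emph{graded} projective resolution $Q^\bullet \to \F_p$; since $E(m) = E(Q_0, \dots, Q_m)$ is a graded algebra with $|Q_i| = 2p^i - 1$ (or the appropriate internal degree) and $\F_p$ is concentrated in degree $0$, the minimal resolution is manifestly graded — each $Q^s$ is a direct sum of internal-degree shifts of $E(m)$, namely $Q^s \cong \bigoplus E(m)\langle \text{sums of } s \text{ of the } |Q_i|\rangle$. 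The key point is that this resolution can be, and will be, chosen in the category of \emph{internally graded} $E(m)$-modules.

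Next I would recall that for graded modules $L, L'$ over a graded algebra, $Hom(L, L')$ has a natural internal grading in which $Hom^t(L, L')$ consists of the degree-$t$ homomorphisms (those raising internal degree by $t$), and that this is functorial: a degree-preserving map of complexes induces a degree-preserving map on these $Hom$-complexes. Applying this to $Hom_{E(m)}(Q^\bullet, M)$ for an internally graded $E(m)$-module $M$, the resulting cochain complex carries an internal grading $t$, the differentials preserve $t$ (they are induced by the $E(m)$-linear, degree-preserving differentials of $Q^\bullet$), and hence the cohomology $H^s\big(Hom_{E(m)}(Q^\bullet, M)\big) = Ext^{s,t}_{E(m)}(\F_p, M)$ is internally graded. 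This is exactly the statement that $G$ lands in $\D(P(m))$ with its internal grading $t$, where the $P(m) = \F_p[v_0, \dots, v_m]$-action (coming from $\R Hom_{E(m)}(\F_p,\F_p) \cong P(m)$, with $|v_i|$ matching $|Q_i|$) also preserves $t$ since it is induced by composition of degree-preserving maps.

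Concretely, the steps in order: (1) choose a graded projective $E(m)$-resolution $Q^\bullet$ of $\F_p$; (2) observe $Hom_{E(m)}(Q^s, M)$ is internally graded for each internally graded $M$ and each $s$, with differentials and the $P(m)$-module structure preserving $t$; (3) conclude $G(M) = H^*\big(Hom_{E(m)}(Q^\bullet, M)\big)$ is an internally graded object of $\D(P(m))$, functorially in $M$; (4) check that a map $f\colon M \to M'$ of internally graded $E(m)$-modules induces the expected degree-$0$ map $G(f)$, since $f$ induces a degree-preserving chain map $f_*\colon Hom_{E(m)}(Q^\bullet, M) \to Hom_{E(m)}(Q^\bullet, M')$. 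None of these steps is a genuine obstacle — the content is entirely bookkeeping — so I do not expect a hard part; the only thing to be careful about is making sure the chosen resolution $Q^\bullet$ really is graded (which it is, by minimality, since $\F_p$ is concentrated in a single internal degree), and that the identification $\R Hom_{E(m)}(\F_p, \F_p) \cong P(m)$ is made with the grading that assigns $v_i$ the internal degree $|Q_i|$, so that the $P(m)$-action visibly respects $t$.
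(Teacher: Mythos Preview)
Your proposal is correct and takes essentially the same approach as the paper: represent $G$ by the chain-level functor $Hom_{E(m)}(Q^\bullet,-)$ for a graded projective resolution $Q^\bullet$ of $\F_p$, and observe that the induced map $f_* \colon Hom_{E(m)}(Q^\bullet, M) \to Hom_{E(m)}(Q^\bullet, N)$ has internal degree $|f|$. The paper's proof is terser --- it only records the morphism-level check --- while you also spell out why the objects $G(M)$ themselves inherit an internal grading and why the $P(m)$-action respects it, but this is the same argument with more bookkeeping made explicit.
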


\begin{proof}
Consider the functor $G: \D(E(m)) \rightarrow \D(P(m))$. Let $f: M \rightarrow N$ be an $E(m)$-module map. Let $Q^{\bullet}$ be a projective resolution of $\F_{p}$ over $E(m)$. Then $G(f): G(M) \rightarrow G(N)$ can be represented by the chain complex homomorphism pictured below.

\[ 
\begin{tikzcd}[ampersand replacement=\&]
\cdots \arrow[r]
\&
Hom_{E(m)}(Q^{r}, M) \arrow[r] \arrow[d, "f_{*}"]
\&
Hom_{E(m)}(Q^{r+1}, M) \arrow[r] \arrow[d, "f_{*}"]
\&
\cdots\\
\cdots \arrow[r]
\&
Hom_{E(m)}(Q^{r}, N) \arrow[r]
\&
Hom_{E(m)}(Q^{r+1}, N) \arrow[r]
\&
\cdots\\
\end{tikzcd}
\]

Each $f_{*}: Hom_{E(m)}(P^{r}, M) \rightarrow Hom_{E(m)}(P^{r+1}, M)$ has degree $|f|$, so $G(f)$ has degree $|f|$. 

\end{proof}

Now we can state a special case of \cref{prop:c_e}

\begin{lemma}\label{lem:hss}
Let $M, N$ be $E(2)_{*}$-comodules. Let $G(-)$ denote the right derived functor $\R Hom_{E(2)_{*}}(\F_{p}, -)$,\\ and let $\P(2)$ denote $G(\F_{p})$. Then there exists a spectral sequence 
\[ \bigoplus \limits_{ \substack{r= r_{2} - r_{1} \\ t = t_{2} - t_{1} } }Ext_{\P(2)}^{u}( \big( Ext_{E(2)_{*}}^{r_{1}, t_{1}}(\F_{p}, M), Ext_{E_(2)_{*}}^{r_{2}, t_{2}}(\F_{p}, N) \big) \Rightarrow H^{ r + u, t}\R Hom_{\P(2)}\big( G(M), G(N) ) \big).\]
\end{lemma}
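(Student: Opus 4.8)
The plan is to deduce \cref{lem:hss} as a direct application of the Cartan–Eilenberg hypercohomology spectral sequence (\cref{prop:c_e}) to a carefully chosen functor $T$ and a pair of input complexes. Specifically, I would set $\Lambda_1 = \Lambda_2 = \P(2)$, take $T(-,-) = \Hom_{\P(2)}(-,-)$ (covariant in the second variable, contravariant in the first), and feed in the complexes $A = G(M)$ and $C = G(N)$, viewed as objects of the derived category $\D(\P(2))$. Then by construction $\R T(A,C) = \R\Hom_{\P(2)}(G(M),G(N))$, so the abutment of \cref{prop:c_e} is $H^{p+q}\R\Hom_{\P(2)}(G(M),G(N))$, which — after renaming $(p,q)$ to $(r,u)$ — is exactly the claimed target, once we also track the internal grading $t$.

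The key input I would verify is the identification of the cohomology of the complexes $A$ and $C$: by definition of $G$ as the derived functor $\R\Hom_{E(2)_*}(\F_p,-)$, one has
\[
H^{r_1}(A) = H^{r_1}\R\Hom_{E(2)_*}(\F_p, M) = \Ext^{r_1}_{E(2)_*}(\F_p, M),
\]
and similarly $H^{r_2}(C) = \Ext^{r_2}_{E(2)_*}(\F_p, N)$; here I am using the equivalence of categories (\cref{cor:1:equiv}) together with the corollary identifying $\Ext_{E(2)_*}$ with $\Ext_{E(2)}$ to make sense of these $\Ext$-groups on either side. Plugging these into the $E_2$-term $\bigoplus_{p_1 - p_2 = p} H^*\R T(H^{p_1}(A), H^{p_2}(C))$ of \cref{prop:c_e}, and noting that $H^u\R T$ here is $H^u\R\Hom_{\P(2)} = \Ext^u_{\P(2)}$ since $\P(2)$ is an ordinary ring, gives precisely the $E_2$-page
\[
\bigoplus_{r = r_2 - r_1}\Ext^u_{\P(2)}\big(\Ext^{r_1}_{E(2)_*}(\F_p,M),\ \Ext^{r_2}_{E(2)_*}(\F_p,N)\big).
\]
The bookkeeping with the internal degree $t$ is handled by \cref{prop:internal_degree}, which tells us $G$ preserves the grading $t$, so the $\bigoplus$ over $t = t_2 - t_1$ is legitimate and the spectral sequence is compatible with the $t$-grading throughout.

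The main obstacle — and the one genuine verification needed — is the regularity hypothesis on the second filtration required by \cref{prop:c_e}. The first filtration's regularity is already noted in the text (the projective and injective resolutions vanish in negative degrees). For the second filtration $F_{II}^q$, I would argue as follows: the resolution $Y$ is an injective resolution over $\P(2)$, and by \cref{prop:dim} (applied with $m=2$) the ring $\P(2) = P(2) = \F_p[v_0,v_1,v_2]$ has global dimension at most $2$, so every $\P(2)$-module admits an injective resolution of length $\le 2$ — hence $Y_{p,q} = 0$ for $q > 2$, which forces $H^{p+q}F_{II}^q = 0$ for $q$ large relative to $n = p+q$, giving regularity. (One should also use that $M$ and $N$, and hence their images under $G$, are finitely generated in each degree, so the relevant $\Ext$-groups are finite and the double complex is well-behaved degreewise.) Once regularity is in hand, \cref{prop:c_e} applies verbatim and the lemma follows; no further computation is required.
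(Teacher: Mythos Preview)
Your proposal is correct and follows essentially the same route as the paper's own proof: both apply \cref{prop:c_e} with $T=\mathrm{Hom}_{\P(2)}(-,-)$, inputs $A=G(M)$ and $C=G(N)$, verify regularity of the second filtration via the finite global dimension bound of \cref{prop:dim}, and then invoke \cref{cor:1:equiv} to rewrite the cohomologies $H^{r}G(-)$ as $Ext_{E(2)_{*}}^{r}(\F_{p},-)$. Your write-up is in fact slightly more explicit than the paper's (you spell out the role of \cref{prop:internal_degree} in tracking the internal grading $t$), but the argument is substantively identical.
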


\begin{proof}
Recall that $M$ and $N$ have an induced $E(2)$-module structure as described in \cref{cor:1:equiv}. Let $X$ be a projective resolution of the complex $G(M)$, and let $Y$ be an injective resolution of $G(N)$. By \cref{prop:dim}, the resolutions $X$ and $Y$ can be chosen so that $X^{*,q}$ and $Y^{*,q}$ are both zero for all $q>m$. So $H^{n}F_{II}^{*,q} = 0$ for all $q>m$, and the filtration is regular. So we have the following spectral sequence described in \cref{prop:c_e}:

\[\bigoplus_{p_{1} - p_{2} = p}H^{q}\R Hom_{P(m)} \big(H^{p_{1}}G(M), H^{p_{2}}G(N) \big) {\Longrightarrow} H^{p+q}\R Hom_{P(m)}(G(M),G(N)) .\]

Finally, we can apply \cref{cor:1:equiv} to exchange $Ext_{E(m)}$ for $Ext_{E(m)_{*}}$. So we can express the hypercohomology spectral sequence in the form in which we will use it:

\begin{equation}\label{kmhss}
\begin{split}\bigoplus \limits_{ \substack{r= r_{2} - r_{1} \\ t = t_{2} - t_{1} } }Ext_{\P(2)}^{q}( \big( Ext_{E(2)_{*}}^{r_{1}, t_{1}}(\F_{p}, H_{*}l_{k}), Ext_{E(2)_{*}}^{r_{2}, t_{2}}(\F_{p}, H_{*}BP\2) \big)\\
\Longrightarrow H^{ r + q, t}\R Hom_{\P(2)}\big( G(H_{*}l_{k}), G(H_{*}BP\2 ) \big).
\end{split}
\end{equation}\end{proof}

Now we will use Koszul duality to show that we can rewrite $H^{s,t}\R Hom_{P(m)}\big(G(M), G(N) \big)$ as $Ext_{E(m)}^{s,t}(M,N)$ under certain conditions. First we will recall some definitions.

\begin{definition}
Let $R$ be a differential graded algebra, and let $\D(R)$ denote the derived category of differential graded modules over $R$. A derived module $C$ over $R$ is said to be coherent if $H_{*}C$ is finite. Let $\D(R)^{f}$ denote the full subcategory of $\D(R)$ generated by coherent derived $E$-modules.  
\end{definition}

\begin{definition}\label{def:thick}\cite[p.9]{avramov2010homology}
A full subcategory $\mathcal{C}$ of $\D(R)$ is said to be thick if it satisfies the following properties.
	\begin{enumerate}
		\item $\mathcal{C}$ is additive.
		\item $\mathcal{C}$ is closed under direct summands.
		\item If 
		\[ C' \rightarrow C \rightarrow C''  \] is an exact triangle in $\D(R)$ such that two of the three modules are contained in $\mathcal{C}$, then so is the third. 
	\end{enumerate}
\end{definition}

\begin{definition}\cite[2.1.4]{avramov2010homology}
If $C$ is a derived module over $\D(R)$, then $Thick_{R}(C)$ denotes the intersection of all thick subcategories containing the derived module $C$. 
\end{definition}

Koszul duality can be used to compare modules over $E(m)$ to modules over $P(m)$. Specifically, we will use the following two results.

\begin{lemma}\cite[ Remark~7.5]{avramov2010homology}\label{b:equiv}
Let $R$ be a derived algebra over a field $k$. Then the subcategories $Thick_{R}(k)$ and $\D(R)^{f}$ are equivalent.
\end{lemma}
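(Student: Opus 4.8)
The plan is to prove that these two full subcategories of $\D(R)$ have the same objects, which is what ``equivalent'' amounts to here, by establishing the two inclusions $Thick_{R}(k) \subseteq \D(R)^{f}$ and $\D(R)^{f} \subseteq Thick_{R}(k)$ separately.

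For the first inclusion I would check directly that $\D(R)^{f}$ is itself a thick subcategory of $\D(R)$ in the sense of \cref{def:thick}. Additivity is immediate, and closure under direct summands holds because the homology of a summand is a summand of the homology, hence still finite. Closure under exact triangles follows from the long exact sequence in homology: in a triangle $C' \rightarrow C \rightarrow C''$ in which two of the three terms have finite total homology, the homology of the third is trapped between finite groups and vanishes outside a bounded range of degrees, hence is finite. Since $H_{*}(k) = k$ is finite, $k$ lies in $\D(R)^{f}$; as $Thick_{R}(k)$ is by definition the smallest thick subcategory containing $k$, it is contained in $\D(R)^{f}$.

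The substantive inclusion is $\D(R)^{f} \subseteq Thick_{R}(k)$, which I would prove by d\'{e}vissage on $d = \dim_{k} H_{*}(C)$. If $d = 0$ then $C \simeq 0$, which lies in every thick subcategory. If $d > 0$, use that in the cases of interest $R$ is connective with $H_{0}(R) = k$ (true for $E(m)$ and for $P(m)$): the associated $t$-structure on $\D(R)$ yields, for the lowest degree $n$ with $H_{n}(C) \neq 0$, a truncation map $C \rightarrow H_{n}(C)[n]$ inducing an isomorphism on $H_{n}$. Since $H_{n}(C)$ is a finite-dimensional module over $H_{0}(R) = k$, it is a finite sum of shifts of $k$, so $H_{n}(C)[n] \in Thick_{R}(k)$. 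Completing this to an exact triangle $C' \rightarrow C \rightarrow H_{n}(C)[n]$, the long exact sequence gives $\dim_{k} H_{*}(C') = d - \dim_{k} H_{n}(C) < d$, so $C' \in Thick_{R}(k)$ by the inductive hypothesis, and therefore $C \in Thick_{R}(k)$ by the exact triangle axiom.

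The main obstacle is precisely this d\'{e}vissage step: one must exhibit an arbitrary object with finite homology as a finite iterated cone assembled from suspensions of $k$. This is transparent when $R$ is a connective graded augmented algebra, because truncation functors are available and one can peel off homology one degree at a time; for a genuinely general differential graded algebra it instead requires building a semifree resolution of $C$ with only finitely many cells (a minimal-model or finite-filtration argument). Since the only algebras $R$ that occur in this paper are $E(m)$ and $P(m)$, the connective case is all that is needed, and one may alternatively simply cite \cite[Remark~7.5]{avramov2010homology}.
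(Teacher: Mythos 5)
The paper offers no argument for this lemma at all---it is quoted directly from Avramov--Iyengar---so your proof is necessarily a different route: you supply the standard d\'{e}vissage that the citation stands in for. Your first inclusion is complete and correct: $\D(R)^{f}$ is thick (the two-out-of-three step via the long exact sequence is exactly right) and contains $k$, so it contains $Thick_{R}(k)$ by minimality. The second inclusion is also the right idea, with one refinement worth making explicit. The induction needs $H_{n}(C)\in Thick_{R}(k)$, and you obtain this from ``$H_{0}(R)=k$''; whether that holds for $E(m)$ depends on taking the internal grading of $E(Q_{0},\dots,Q_{m})$ as the homological grading of the DG algebra. If instead $E(m)$ is regarded as an ordinary graded algebra concentrated in homological degree $0$ (so that $\D(E(m))$ is the ordinary derived category of graded modules, which is closer to how the paper uses it), then $H_{0}(R)=E(m)\neq k$, and the truncation step only produces a finite-dimensional $E(m)$-module $H_{n}(C)$, not a sum of shifts of $k$. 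The fix is immediate and makes the argument grading-agnostic: $E(m)$ and $P(m)$ are graded-local with residue field $k$, and the augmentation ideal acts nilpotently on any finite-dimensional module, so $H_{n}(C)$ has a finite filtration whose subquotients are shifts of $k$ and hence lies in $Thick_{R}(k)$; the truncation triangles then finish the induction exactly as you describe. You are also right to distrust the hypothesis ``derived algebra over a field'' as stated---for a general DGA the inclusion $\D(R)^{f}\subseteq Thick_{R}(k)$ can fail (for instance when $H_{0}(R)$ is not local)---so restricting to $E(m)$ and $P(m)$, or deferring to the precise hypotheses of the cited remark, is the correct move.
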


\begin{theorem}\cite[Remark 7.4]{avramov2010homology}\cite[Thm~4.1]{carlson_iyengar}\label{a:thm:kosz:equiv}
The assignment \[G: \D\big(E(m)\big) \rightarrow \D\big(P(m)\big)\]
\[ C \mapsto  \R Hom_{E(m)} (\F_{p}, C) \]
is an exact functor. Furthermore, $G$ restricts to an equivalence

\[ G: Thick_{E(m)}\big(k \big) \rightarrow Thick_{P(m)}\big(P(m)\big) .\]

\end{theorem}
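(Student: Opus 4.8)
The plan is to prove the two assertions in turn: that $G = \R Hom_{E(m)}(\F_{p},-)$ is a well-defined exact functor $\D(E(m)) \to \D(P(m))$, and — granting the first — that it restricts to the claimed equivalence by a thick-subcategory reduction that collapses the whole statement onto a single computation of $\R Hom_{E(m)}(\F_{p},\F_{p})$. For the first assertion I would resolve $\F_{p} = k$ over $E(m) = E(Q_{0}) \otimes \cdots \otimes E(Q_{m})$ by the tensor product of the standard $2$-periodic free resolutions of $\F_{p}$ over the exterior factors $E(Q_{i})$ — that is, by the Koszul resolution — and apply $Hom_{E(m)}(-,\F_{p})$ to it. This yields the differential graded algebra $\R Hom_{E(m)}(\F_{p},\F_{p})$ whose underlying complex is $P(m) = \F_{p}[v_{0},\dots,v_{m}]$ with \emph{zero} differential, so $\R Hom_{E(m)}(\F_{p},\F_{p})$ is strictly equal to $P(m)$. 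Consequently, for every $C \in \D(E(m))$ the object $G(C) = \R Hom_{E(m)}(\F_{p}, C)$ is a genuine differential graded $P(m)$-module via composition, so $G$ really lands in $\D(P(m))$ and $G(\F_{p}) \simeq P(m)$, the free module of rank one; exactness is then the standard fact that $\R Hom_{E(m)}(\F_{p}, -)$ commutes with suspension and carries exact triangles to exact triangles.

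Next I would show $G$ is fully faithful on $Thick_{E(m)}(\F_{p})$. For $C, D \in \D(E(m))$ let $\gamma_{C,D} \colon \R Hom_{E(m)}(C, D) \to \R Hom_{P(m)}(GC, GD)$ be the comparison map induced by $G$; the aim is that $\gamma_{C,D}$ is a quasi-isomorphism whenever $C, D \in Thick_{E(m)}(\F_{p})$. Since both sides are exact (triangulated) in each variable, the full subcategory of those $C \in Thick_{E(m)}(\F_{p})$ for which $\gamma_{C,\F_{p}}$ is a quasi-isomorphism is thick; once the base case is in hand it contains $\F_{p}$, hence equals $Thick_{E(m)}(\F_{p})$, and fixing such a $C$ and rerunning the same argument in the second variable gives $\gamma_{C,D}$ a quasi-isomorphism for all $D \in Thick_{E(m)}(\F_{p})$ as well. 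The base case $\gamma_{\F_{p},\F_{p}}$ falls out of the first paragraph: $\R Hom_{P(m)}(G\F_{p}, G\F_{p}) = \R Hom_{P(m)}(P(m), P(m)) = P(m) = \R Hom_{E(m)}(\F_{p},\F_{p})$, and $\gamma_{\F_{p},\F_{p}}$ is the map of differential graded algebras induced by functoriality, which is compatible with the $P(m)$-module structure on each side and is therefore the identity of $P(m)$.

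To finish I would identify the essential image with $Thick_{P(m)}(P(m))$. Because $G$ is exact and $G(\F_{p}) = P(m)$, the $G$-preimage of $Thick_{P(m)}(P(m))$ is a thick subcategory of $\D(E(m))$ containing $\F_{p}$, so $G\big(Thick_{E(m)}(\F_{p})\big) \subseteq Thick_{P(m)}(P(m))$. Conversely, the essential image of a fully faithful exact functor is itself a thick subcategory — closed under cones because $G$ is exact, and closed under retracts because full faithfulness transports idempotents back into the (thick) source $Thick_{E(m)}(\F_{p})$ — and it contains $G(\F_{p}) = P(m)$, hence all of $Thick_{P(m)}(P(m))$. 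Therefore $G$ restricts to the asserted equivalence $Thick_{E(m)}(\F_{p}) \to Thick_{P(m)}(P(m))$.

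The one step carrying genuine content is the identification of $\R Hom_{E(m)}(\F_{p},\F_{p})$ with $P(m)$ as a differential graded algebra, together with the compatibility of $G$ with this module structure — this is exactly the Koszul self-duality of the exterior/symmetric pair, and it is what makes the base case $\gamma_{\F_{p},\F_{p}}$ an equivalence. Everything downstream is formal; alternatively the statement may simply be quoted from the general differential graded Koszul duality of \cite{avramov2010homology} and \cite{carlson_iyengar}, which is the route the paper takes.
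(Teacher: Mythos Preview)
Your proposal is correct, and in fact it does strictly more than the paper does: the paper offers no proof of this theorem at all, citing it directly from \cite{avramov2010homology} and \cite{carlson_iyengar}. You correctly anticipate this in your final paragraph. Your sketch---Koszul resolution to identify $\R Hom_{E(m)}(\F_{p},\F_{p})$ with $P(m)$ as a dg algebra, then a thick-subcategory reduction to the base case $\gamma_{\F_{p},\F_{p}}$ for full faithfulness, then the formal essential-image argument---is the standard and correct way to establish this instance of Koszul duality, and nothing in it is problematic.
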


We can use \cref{b:equiv} to restate \cref{a:thm:kosz:equiv} as follows.

\begin{theorem}\label{thm:kosz:equiv}
The assignment \[G: \D\big(E(m)\big) \rightarrow \D\big(P(m)\big)\]
\[ C \mapsto  \R Hom_{E(m)} (\F_{p}, C) \]
is an exact functor. 

Furthermore, $G$ restricts to an equivalence

\[ G: \mathcal{D}\big(E(m)\big)^{f} \rightarrow Thick_{P(m)}\big(P(m)\big) .\]
\end{theorem}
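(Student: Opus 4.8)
The functor half of the statement --- that $G=\R Hom_{E(m)}(\F_p,-)$ is an exact functor $\D(E(m))\to\D(P(m))$ --- is already supplied verbatim by \cref{a:thm:kosz:equiv}, so nothing new is needed there. The sole purpose of this ``restatement'' is to rename the ends of the equivalence $G\colon Thick_{E(m)}(k)\to Thick_{P(m)}(P(m))$ of \cref{a:thm:kosz:equiv} in terms of the finiteness condition actually used in \cref{sec:hss}, i.e.\ membership in $\D(-)^{f}$. The plan is therefore to transport each end of the equivalence of \cref{a:thm:kosz:equiv} along \cref{b:equiv}.

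First I would apply \cref{b:equiv} with $R=E(m)$: since $E(m)=E(Q_0,\dots,Q_m)$ is a derived algebra over the field $k=\F_p$, that lemma identifies $Thick_{E(m)}(k)$ with $\D(E(m))^{f}$, and because $E(m)$ is finite dimensional over $\F_p$ the condition ``$H_*C$ finite'' coincides with ``$H_*C$ finitely generated over $E(m)$''. Composing this identification with \cref{a:thm:kosz:equiv} already gives an equivalence $G\colon\D(E(m))^{f}\to Thick_{P(m)}(P(m))$. For the target I would then check that $Thick_{P(m)}(P(m))$ is exactly the category of bounded complexes with finitely generated homology: $P(m)$ itself has homology finitely generated over $P(m)$, and by \cref{prop:dim} every finitely generated $P(m)$-module admits a finite free resolution, so --- using that $\D(P(m))^{f}$ is thick in the sense of \cref{def:thick} and built from such modules by finitely many exact triangles (truncation towers) --- one gets $Thick_{P(m)}(P(m))=\D(P(m))^{f}$. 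Feeding this in yields the clean equivalence $G\colon\D(E(m))^{f}\to\D(P(m))^{f}$ (and, if one prefers to leave the target of \cref{a:thm:kosz:equiv} untouched, simply $G\colon\D(E(m))^{f}\to Thick_{P(m)}(P(m))$).

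The step that demands the most care is not deep: it is pinning down which finiteness hypothesis is in force on each side --- finite dimensionality over $\F_p$ on the $E(m)$-side versus finite generation over the ring on the $P(m)$-side --- and confirming that $E(m)$ and $P(m)$ both satisfy the hypotheses of \cref{b:equiv}, so that the two occurrences of $\D(-)^{f}$ genuinely name the categories at the ends of the equivalence of \cref{a:thm:kosz:equiv}. Once those conventions are fixed, the proof is the short composition of equivalences sketched above, with no genuine computation involved.
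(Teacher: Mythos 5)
Your first paragraph is exactly right, and it is in fact all the paper does: \cref{thm:kosz:equiv} is a word-for-word repetition of \cref{a:thm:kosz:equiv}, and the paper offers no argument beyond the sentence announcing that \cref{b:equiv} lets one ``restate'' it. So for the statement as written, citing \cref{a:thm:kosz:equiv} is the entire proof, and you have that.

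The extra translation in your second paragraph, however, contains an identification that is incorrect and that contradicts \cref{b:equiv}. Applying \cref{b:equiv} to $R = P(m)$ identifies $\D(P(m))^{f}$ with $Thick_{P(m)}(\F_p)$ --- the thick subcategory generated by the residue field --- not with $Thick_{P(m)}(P(m))$. These two subcategories are genuinely different: $Thick_{P(m)}(P(m))$ is the category of perfect complexes (equivalently, by \cref{prop:dim} and regularity, bounded complexes with degreewise finitely generated homology), whereas $\D(P(m))^{f}$ consists of complexes whose total homology is finite, i.e.\ finite-dimensional over $\F_p$. The ring $P(m)$ itself lies in the former but not the latter, so $Thick_{P(m)}(P(m)) \neq \D(P(m))^{f}$. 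This is the essential asymmetry of Koszul duality: $G$ carries the ``finite homology'' condition on the $E(m)$-side (where, since $E(m)$ is finite-dimensional over $\F_p$, finite homology and finitely generated homology coincide, so $Thick_{E(m)}(k) = \D(E(m))^{f}$ is legitimate) to the ``perfect complex'' condition on the $P(m)$-side, and conversely would carry $Thick_{E(m)}(E(m))$ to $Thick_{P(m)}(k) = \D(P(m))^{f}$. Your parenthetical fallback --- leaving the target as $Thick_{P(m)}(P(m))$ --- is the correct statement; the clean-looking equivalence $\D(E(m))^{f} \to \D(P(m))^{f}$ is false as stated. None of this affects the theorem itself, which never mentions $\D(P(m))^{f}$, but the mistaken identification would cause trouble if used downstream.
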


Recall that although $H_{*}BP\2$ is not a coherent module, it is an infinite sum of coherent modules. We will use the fact that this sum is finite in each topological degree $t$ to show the following corollary to \cref{thm:kosz:equiv}. 

\mhss

\begin{proof}
Recall from \cref{eiso} that $H_{*}BP\2$ splits as a sum of finite $E(2)$-modules, that is,

\[ H_{*}BP\2 \cong_{E(2)} \bigoplus\limits_{m=0}^{\infty}\Sigma^{qk} H_{*}l_{m} .\]

Note that $H_{t}BP\2$ is finite in each degree $t$, so  $G(H_{*}BP\2) \cong G\big(\bigoplus\limits_{k=0}^{\infty}H_{*}l_{k}\big) \cong \bigoplus\limits_{k=0}^{\infty}G(H_{*}\Sigma^{qk}l_{k})$.

It follows that
\[ \R Hom^{u,t}\big(G(H_{*}l_{k}), G(H_{*}BP\2)  \big) \cong \R Hom^{u,t}\big(G(H_{*}l_{k}), \bigoplus\limits_{n=0}^{\infty}\Sigma^{qn}G(H_{*}l_{n})  \big).\]

Likewise, note that $G^{s}(H_{*}l_{k})$ is connective and finite for each degree $s$. It follows that 

\[ \R Hom^{u,t}\big(G(H_{*}l_{k}), \bigoplus\limits_{n=0}^{\infty}\Sigma^{qn}G(H_{*}l_{n})  \big) \cong \bigoplus\limits_{n=0}^{\infty}\Sigma^{qn} \R Hom^{u,t}\big(G(H_{*}l_{k}), G(H_{*}l_{n})  \big).\]

By \cref{thm:kosz:equiv},
\[ \R Hom^{s,t}\big(G(H_{*}l_{k}), G(H_{*}l_{n}) \big) \cong Ext_{E(2)}^{s,t}(H_{*}l_{k}, H_{*}l_{n}).\]

So by \cref{eiso}, \[ \R Hom^{s,t}\big(G(H_{*}l_{k}), G(H_{*}BP\2) \big) \cong Ext_{E(2)_{*}}^{s,t}\big(H_{*}l_{k}, H_{*}BP\2 \big).\]

\end{proof}

So indeed the hypercohomology spectral sequence (\ref{kmhss}) converges to the $E_{2}$-page of the Adams spectral sequence (\ref{main:relASS}).

\section{Constructing a square of spectral sequences}\label{square}

\subsection{Culver's splitting}\label{sec:dominic:splitting}

In this section, we start by recalling Culver's spliting of $BP \smash BP\2$, as well as the analogous splittings of $BP\2 \smash l_{k}$. Then we will use Baker-Lazarev's relative Adams spectral sequence in the category of $BP\2$-modules to show that these are in fact $BP\2$-module splittings.

We start by recalling the following theorem of Margolis.

\begin{theorem}\cite[Thm~2]{margolis_article}\label{thm:marg:4}
Let $Y$ be a bounded below, locally finite spectrum. Then there exists a pair of spectra unique up to equivalence $C$, $V$ such that $V$ is the Eilenberg-Maclane spectrum of an $\F_{p}$-vector space, $H^{*}C$ contains no free $A$-summands, and 

\[ Y \simeq C \vee V .\]
\end{theorem}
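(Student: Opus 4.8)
The plan is to prove this the way Margolis does \cite{margolis_article, margolis_book}: reduce to an algebraic splitting of the $A$-module $H^{*}Y$ and then realize that splitting topologically. Throughout, ``$A$-module'' means a bounded below, locally finite graded left module over the mod $p$ Steenrod algebra $A$; since $Y$ is bounded below and locally finite, $H^{*}Y$ is such a module.

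\emph{Step 1 (the algebraic splitting).} First I would invoke the structure theory of modules over the Steenrod algebra: every bounded below, locally finite $A$-module $N$ decomposes as $N\cong F\oplus M$ with $F$ free and $M$ containing no free $A$-summand, this decomposition is unique up to isomorphism, and in fact the category of such modules satisfies a Krull--Schmidt theorem in which the free modules are exactly the injective objects (the analogue for $A$ of \cref{free:proj:inj}). This is the one genuinely non-formal ingredient, and it is where I expect the main difficulty to lie — it rests on Margolis's detailed analysis of $A$. Applied to $N=H^{*}Y$, it produces a free summand $F$; being bounded below and locally finite, $F$ is realized as $F\cong H^{*}V$ for a generalized Eilenberg--MacLane spectrum $V=\bigvee_{i}\Sigma^{n_{i}}H\F_{p}$ with $n_{i}\to\infty$, so $V$ is bounded below and of finite type.

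\emph{Step 2 (realizing it).} Since $V$ is a wedge of suspensions of $H\F_{p}$ and $Y$ is bounded below, the wedge agrees with the product in the relevant range and $[Y,\Sigma^{n}H\F_{p}]=H^{n}(Y;\F_{p})$, so $[Y,V]\cong\mathrm{Hom}_{A}(H^{*}V,H^{*}Y)$ via $g\mapsto g^{*}$. Choose $r\colon Y\to V$ corresponding to the inclusion $F=H^{*}V\hookrightarrow H^{*}Y$ of the free summand, and let $W$ be its fibre, giving a cofibre sequence
\[ W\longrightarrow Y\xrightarrow{\,r\,}V\xrightarrow{\,\partial\,}\Sigma W .\]
Because $r^{*}$ is a split monomorphism, the long exact sequence in cohomology breaks into short exact sequences $0\to H^{*}V\xrightarrow{r^{*}}H^{*}Y\to H^{*}W\to 0$, so $H^{*}W\cong M$ contains no free $A$-summand, and the connecting map $\partial^{*}$ vanishes. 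Now using that free $A$-modules are injective — so that the Adams spectral sequence computing $[V,Z]$ for $Z$ bounded below and locally finite collapses onto its $0$-line, yielding $[V,Z]\cong\mathrm{Hom}_{A}(H^{*}Z,H^{*}V)$ — I identify $\partial\in[V,\Sigma W]$ with $\partial^{*}=0$. Hence $\partial$ is null, $r$ admits a section, and $Y\simeq W\vee V$; setting $C=W$ gives the desired splitting.

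\emph{Step 3 (uniqueness).} Suppose $Y\simeq C\vee V\simeq C'\vee V'$ with $V,V'$ generalized Eilenberg--MacLane spectra and $H^{*}C,H^{*}C'$ free-summand-free. Then $H^{*}V$ and $H^{*}V'$ are free $A$-modules, and each must be \emph{the} maximal free summand of $H^{*}Y$ (otherwise the complement would contain a free summand), so by the uniqueness in Step 1, $H^{*}V\cong H^{*}V'$ and $H^{*}C\cong H^{*}C'$. A bounded below, finite-type spectrum with free $A$-module cohomology is a generalized Eilenberg--MacLane spectrum — map in the free generators and check that the resulting map is a mod $p$ cohomology isomorphism, hence an equivalence — so $V\simeq V'$. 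A Whitehead-type argument then compares the two splittings through the automorphism of $H^{*}Y$ relating them (Krull--Schmidt), using again that a mod $p$ cohomology isomorphism of bounded below, finite-type spectra is an equivalence, to conclude $C\simeq C'$; I would defer the routine but slightly fiddly bookkeeping in this last point.
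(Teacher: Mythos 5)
The paper does not prove this statement at all: it is quoted verbatim from Margolis \cite{margolis_article} and used as a black box, so there is no internal proof to compare yours against. Your reconstruction is the standard Margolis argument — split $H^{*}Y$ algebraically into its maximal free summand plus a reduced complement, realize the free summand by a generalized Eilenberg--MacLane spectrum, and use injectivity of free $A$-modules to show the connecting map of the resulting cofibre sequence is null — and I find it essentially sound. Two points deserve explicit flags beyond what you say. First, in Step 2 the identification $[V,Z]\cong\mathrm{Hom}_{A}(H^{*}Z,H^{*}V)$ for $V$ an \emph{infinite} wedge of suspensions of $H\F_{p}$ needs both that the suspension degrees tend to infinity (so the wedge agrees with the product against bounded-below targets) and the convergence of the mod~$p$ Adams spectral sequence with the non-finite source $H\F_{p}$; this holds for the $p$-completion of a bounded-below finite-type target, which is harmless here since the paper works $p$-completely throughout, but it is not free. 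Second, the deferred end of Step 3 is not purely bookkeeping: to get $C\simeq C'$ you must first realize the abstract isomorphism $H^{*}C\cong H^{*}C'$ by an actual map of spectra (again an Adams spectral sequence or obstruction argument, using that the "cross terms" through $H^{*}V$ are stably trivial and that a stable self-equivalence of a reduced bounded-below locally finite $A$-module is an honest isomorphism), and only then apply the mod~$p$ Whitehead theorem. Both points rest on the same non-formal Margolis machinery you correctly isolate in Step 1, so I would call them items to be cited rather than gaps in the strategy.
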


Culver combined \cref{thm:marg:4} above with an analysis of the Adams spectral sequence 
\[Ext_{A_{*}}(\F_{p}, H_{*}(BP\2 \smash BP\2) \Longrightarrow \pi_{*}BP\2 \smash BP\2 \]
to show the following result. 

\begin{theorem}\cite[Cor~3.22]{dominic_odd}\label{culver}
At each prime $p$, there exists a splitting unique up to equivalence
\begin{equation}
BP\2 \smash BP\2 \simeq C \vee V
\end{equation}
where 
\begin{enumerate}
    \item $V$ is the Eilenberg-Maclane spectrum of a mod-$p$ vector space, 
    \item and $Ext_{A_{*}}(\F_{p},H_{*}C)$ is $v_{2}$-torsion free.
\end{enumerate}    
\end{theorem}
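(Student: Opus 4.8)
The plan is to produce the splitting abstractly from Margolis's theorem and then to identify its two pieces by computing the relevant Adams $E_2$-term. Since $BP\2$ is connective with homotopy and homology finitely generated in each degree, $BP\2\smash BP\2$ is bounded below and locally finite, so \cref{thm:marg:4} applies to $Y=BP\2\smash BP\2$ and yields a decomposition $BP\2\smash BP\2\simeq C\vee V$, unique up to equivalence, in which $V$ is the Eilenberg--MacLane spectrum of a graded $\F_p$-vector space $W$ and $H^*C$ contains no free $A$-module summand. This already gives assertion (1) and the uniqueness clause, so it remains only to show that $Ext_{A_*}(\F_p,H_*C)$ is $v_2$-torsion free.

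Because homology and $Ext$ are additive,
\[Ext_{A_*}^{*,*}\big(\F_p,H_*(BP\2\smash BP\2)\big)\cong Ext_{A_*}^{*,*}\big(\F_p,H_*C\big)\oplus Ext_{A_*}^{*,*}\big(\F_p,H_*V\big)\]
as modules over $Ext_{A_*}(\F_p,H_*BP\2)\cong\F_p[v_0,v_1,v_2]$, the module structure coming from the left unit $BP\2\to BP\2\smash BP\2$. Now $H_*V$ is a direct sum of suspensions of $A_*$, so the summand $Ext_{A_*}(\F_p,H_*V)$ is concentrated in homological degree $0$ and is therefore pure $v_2$-torsion. Hence it suffices to prove that the $v_2$-torsion submodule of $Ext_{A_*}(\F_p,H_*(BP\2\smash BP\2))$ lies in homological degree $0$ and coincides with the summand $Ext_{A_*}(\F_p,H_*V)$.

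To get a handle on this $E_2$-term, combine $H_*BP\2\cong A//E(2)_*$ (\cref{hom:BPn}) with the diagonal $A_*$-coaction on $H_*BP\2\otimes H_*BP\2$, the shearing isomorphism of \cref{belmont:shear:2} (with $\Gamma=A_*$, $\Sigma=E(2)_*$, so $\Phi=A//E(2)_*$), and the cotensor change-of-rings isomorphism $Ext_{A_*}(\F_p,A_*\square_{E(2)_*}N)\cong Ext_{E(2)_*}(\F_p,N)$, to obtain an isomorphism of $\F_p[v_0,v_1,v_2]$-modules
\[Ext_{A_*}^{*,*}\big(\F_p,H_*(BP\2\smash BP\2)\big)\cong Ext_{E(2)_*}^{*,*}\big(\F_p,A//E(2)_*\big)\]
under which free $A_*$-comodule summands of the left-hand homology match free (equivalently injective, equivalently cofree) $E(2)_*$-comodule summands of $A//E(2)_*$, since $A_*\square_{E(2)_*}(-)$ is exact and both preserves and reflects freeness. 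Applying \cref{eiso} with $n=2$ splits the right-hand side as $\bigoplus_{k\ge0}Ext_{E(2)_*}(\F_p,\Sigma^{qk}B_1(k))$, reducing the problem to the finite $E(2)$-modules $B_1(k)$. Now, working with $E(2)=E(Q_0,Q_1,Q_2)$-modules via \cref{cor:1:equiv}, decompose each $B_1(k)$ into indecomposable $E(2)$-modules and compute $Ext_{E(2)}(\F_p,-)$ of each summand as an $\F_p[v_0,v_1,v_2]$-module, using change of rings along the sub-Hopf-algebras of $E(2)$ together with the fact that the $v_2$-localization of $Ext_{E(2)}(\F_p,-)$ is governed by the $Q_2$-Margolis homology. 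The goal is to show that for these particular modules the $v_2$-torsion in $Ext_{E(2)}(\F_p,B_1(k))$ lies in homological degree $0$ and is exactly the contribution of the free $E(2)$-summands of $B_1(k)$, the complementary summands contributing $v_2$-torsion-free $Ext$. Summing over $k$ and tracing back through the previous two steps, the $v_2$-torsion of $Ext_{A_*}(\F_p,H_*(BP\2\smash BP\2))$ is then exactly the contribution of its free $A_*$-comodule summands, i.e. $Ext_{A_*}(\F_p,H_*V)$, whence $Ext_{A_*}(\F_p,H_*C)$ is $v_2$-torsion free.

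The main obstacle is this last step: it requires a sufficiently explicit grip on the $E(2)$-module structure of the height-one Brown--Gitler comodules $B_1(k)$ — at a minimum their $Q_2$-Margolis homology and their free $E(2)$-summands — in order to pin down the $v_2$-torsion, and then a careful matching of the resulting homological-degree-$0$ torsion with the free-$A$-summand bookkeeping supplied by \cref{thm:marg:4}. The preceding steps are formal, resting only on Margolis's splitting theorem, the shearing isomorphism, change of rings, and \cref{eiso}.
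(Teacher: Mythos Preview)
The paper does not give its own proof of this theorem: it is stated with the citation \cite[Cor~3.22]{dominic_odd} and attributed to Culver, with only the one-line description that Culver ``combined \cref{thm:marg:4} \ldots\ with an analysis of the Adams spectral sequence'' for $BP\2\smash BP\2$. So there is no proof in the paper to compare against directly. That said, the paper's Section~\ref{sec:leftiso} adapts and extends Culver's argument (to rule out $v_0$- and $v_1$-torsion as well), so one can infer the shape of the original proof from there.

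Your first step---apply \cref{thm:marg:4} to $BP\2\smash BP\2$, then split the Adams $E_2$-term and reduce via change of rings to $Ext_{E(2)_*}(\F_p,H_*BP\2)$---is exactly what Culver does and what the paper indicates. The point where you diverge is in the analysis of the $v_2$-torsion. You propose to decompose via \cref{eiso} into the pieces $B_1(k)$ and then attack each indecomposable $E(2)$-summand by appeal to $Q_2$-Margolis homology; you correctly flag this as the unfinished step. Culver's argument, as reproduced and generalized in the paper (Propositions~\ref{prop:bss}--\ref{bss:torsion} and the surrounding material), is organized differently and is what actually makes the step go through. One first peels off the free $E(2)$-summand $S$ generated by monomials of length $\ge 3$ (this is $\overline{H_*V}$), leaving a complement $R$ with no free summands. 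Then one runs the $v_2$-Bockstein spectral sequence with $E_1=Ext_{E(Q_0,Q_1)}(\F_p,R)[v_2]$ and observes that the differentials have odd $(t-s)$-degree; hence it suffices to show $Ext_{E(Q_0,Q_1)}(\F_p,R)$ is concentrated in even $(t-s)$. That parity statement is obtained by decomposing $R$ into weight components, each of which has one-dimensional $Q_0$- and $Q_1$-Margolis homology, and then invoking the Adams--Priddy classification of invertible $E(Q_0,Q_1)$-modules (\cref{pic}, \cref{marg:aux}) to read off the parity.

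So your plan is on the right track, but the missing ingredient is not a brute-force classification of the indecomposable $E(2)$-summands of $B_1(k)$; it is the Bockstein/parity mechanism together with the Adams--Priddy invertible-module computation. Without that, the final step you describe would be hard to carry out, whereas with it the argument becomes a clean Margolis-homology calculation on weight components, as in \cref{marg:w1:n}--\cref{ext:BP:0}.
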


Substituting $BP\2 \smash l_{k}$ for $BP\2 \smash BP\2$ in the proof of Corollary 3.22 of \cite{dominic_odd} yields an analogous splitting for $BP\2 \smash l_{k}$, as stated in the following theorem.

\begin{theorem}\label{thm:dominic:splitting2}
If $X = BP\2$ or $X = l_{k}$, then then there exists a splitting unique up to equivalence

\[ BP\2 \smash X \simeq C_{X} \vee V_{X}\]

such that $V_{X}$ is a sum (of finite type) of suspensions of $\F_{p}$-Eilenberg-Maclane spectra, and $Ext_{A_{*}}(\F_{p}, H_{*}C_{X})$ is $v_{2}$-torsion free and concentrated in even $(t-s)$-degrees.
\end{theorem}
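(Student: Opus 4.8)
The plan is to imitate the proof of \cite[Cor~3.22]{dominic_odd} nearly verbatim, substituting $BP\2 \smash l_k$ for $BP\2 \smash BP\2$ and checking that each input used in that proof has a valid analogue for $l_k$. First I would invoke \cref{thm:marg:4}: since $l_k$ is a finite spectrum, $BP\2 \smash l_k$ is bounded below and locally finite, so Margolis's theorem gives a splitting $BP\2 \smash l_k \simeq C_{l_k} \vee V_{l_k}$ with $V_{l_k}$ the Eilenberg--MacLane spectrum of an $\F_p$-vector space and $H^*C_{l_k}$ containing no free $A$-summands, unique up to equivalence. This already establishes existence and uniqueness of the splitting; the remaining work is to identify the two $Ext$-theoretic properties of $C_{l_k}$.

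Next I would reduce the $Ext$ statements to the $E(2)_*$-level via change-of-rings. Because $H_*BP\2 \cong A//E(2)_*$, we have $H_*(BP\2 \smash l_k) \cong A//E(2)_* \otimes H_*l_k$, and using the cotensor/tensor bookkeeping from \cref{ext:shear} (the shearing isomorphism \cref{belmont:shear:2} and \cref{tensorcotensor}) together with the standard change-of-rings isomorphism, $Ext_{A_*}(\F_p, H_*(BP\2 \smash l_k)) \cong Ext_{E(2)_*}(\F_p, H_*l_k)$. One then decomposes $H_*l_k$ as an $E(2)_*$-comodule into its free summand $\overline{H_*V_{l_k}}$ and a complement $\overline{H_*C_{l_k}}$ containing no free summands (this decomposition exists by \cref{free:proj:inj} and the structure theory for modules over the exterior algebra $E(2)$, exactly as in \cite[Prop~3.7]{dominic_odd}), so that $H_*C_{l_k} \cong H_*BP\2 \otimes \overline{H_*C_{l_k}}$ and $H_*V_{l_k} \cong H_*BP\2 \otimes \overline{H_*V_{l_k}}$, with $\overline{H_*V_{l_k}}$ free. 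Freeness of $\overline{H_*V_{l_k}}$ forces $Ext_{A_*}(\F_p,H_*V_{l_k})$ to be a sum of suspensions of $\F_p$, confirming that $V_{l_k}$ is as claimed; and the $v_2$-torsion-freeness of $Ext_{A_*}(\F_p,H_*C_{l_k}) \cong Ext_{E(2)_*}(\F_p,\overline{H_*C_{l_k}})$ follows from Culver's analysis of the $E(2)_*$-Adams $E_2$-page, which is purely about the module structure of the finite summand with no free part and so transfers directly.

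The one genuinely new ingredient, and the step I expect to be the main obstacle, is the claim that $Ext_{A_*}(\F_p,H_*C_{l_k})$ is concentrated in even $(t-s)$-degrees. For $BP\2 \smash BP\2$ this is known because $\pi_*C$ is $v_2$-torsion free and concentrated in even degrees (a fact recorded in the introduction and proved by Culver); for $l_k$ one must argue analogously. The strategy is to note that $B_1(k) = H_*l_k$ is generated by monomials in $\bar\xi_i$ and $\bar\tau_0,\bar\tau_1$ of bounded weight, compute the $E(2)_*$-comodule (equivalently $E(2)$-module) structure explicitly, and extract the non-free part; since $E(2)$ is an exterior algebra on the odd-degree classes $Q_0,Q_1,Q_2$ whose internal degrees are all of the form $2p^i-1$, one checks that the relevant $Ext$ groups—built from the cohomology of $E(2)$ acting on a finite module whose parity pattern matches that of $\overline{H_*C}$—land in even total degree. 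Concretely I would compare $\overline{H_*C_{l_k}}$ with a truncation of $\overline{H_*C}$, using the fact that $l_k$ is a Brown--Gitler subspectrum of $BP\1$ and that $H_*l_k$ sits compatibly inside $H_*BP\1 \subset H_*BP\2$, so the splitting for $l_k$ is induced by restricting the splitting for $BP\2$; the parity statement then descends from Culver's. Verifying that this restriction behaves well with respect to the $E(2)_*$-comodule decomposition is the technical heart of the argument, but it is exactly the content alluded to by the phrase ``Substituting $BP\2 \smash l_k$ for $BP\2 \smash BP\2$ in the proof of Corollary 3.22 of \cite{dominic_odd}.''
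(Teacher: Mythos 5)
Your proposal matches the paper's approach, which is precisely the one-line observation that Culver's proof of \cite[Cor~3.22]{dominic_odd} goes through with $BP\2 \smash l_{k}$ in place of $BP\2 \smash BP\2$: Margolis's theorem (\cref{thm:marg:4}) supplies the splitting and its uniqueness, and the evenness and $v_{2}$-torsion-freeness of $Ext$ descend because $\Sigma^{qk}H_{*}l_{k}$ is an $E(2)_{*}$-comodule summand of $H_{*}BP\2$ by \cref{eiso}. One small correction: the containment runs $H_{*}BP\2 \subset H_{*}BP\langle 1 \rangle$, not the reverse as you wrote, but this is harmless since the identification you actually need is the $E(2)$-module isomorphism $\Sigma^{qk}B_{1}(k) \cong M_{2}(pk)$ of \cref{lem:2:iso} rather than a literal inclusion.
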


Now we will use the relative Adams spectral sequence of \cref{prop:bakerlazarev2} to show that these are in fact $BP\2$-module splittings.

\begin{proposition}\label{prop:free}
The $E(2)_{*}$-comodule $H_{*}^{BP\2}V_{X}$ is free and of finite type.  
\end{proposition}

\begin{proof}
The summand $V_{X}$ is a sum of mod-$p$ Eilenberg-MacClane spectra (of finite type), so $H_{*}V_{X}$ is a free $A_{*}$-comodule of finite type. By \cref{lem:kunneth}, 
\[A//E(2)_{*} \otimes H_{*}^{BP\2}V_{X} \cong H_{*}V_{X}.\]

It follows that $H_{*}^{BP\2}V_{X}$ is a free $E(2)_{*}$-comodule of finite type.
\end{proof}

\begin{proposition}\label{thm:bp split}
The splitting 
\[ BP\2 \smash X \simeq C_{X} \vee V_{X}\]
is $BP\2$-module splitting. We will denote the summands $C_{BP\2}$ (resp. $V_{BP\2}$) by $C$(resp. $V$), and we will denote $C_{l_{k}}$ (resp. $V_{l_{k}}$) by $C_{k}$ (resp. $V_{k}$). 
\end{proposition}

\begin{proof}
By \cref{prop:free}, $H_{*}^{BP\2}V_{X}$ is a free summand of $H_{*}V_{X}$, with natural injection and projection maps \[ i: H_{*}^{BP\2}V_{X} \rightarrow H_{*}X \]

and 

\[j:  H_{*}X \rightarrow H_{*}^{BP\2}V_{X} .\]

We can use Baker-Lazarev's adaptation of the Adams spectral sequence (\cref{prop:bakerlazarev2}) to lift these maps to the level of spectra. By \cref{prop:bakerlazarev2}, there exist conditionally convergent spectral sequences

\[ Ext_{E(2)_{*}}^{s,t}(H_{*}^{BP\2}V_{X}, H_{*}X) \Longrightarrow [V_{X}, BP\2 \smash X]^{BP\2}\]
\[ Ext_{E(2)_{*}}^{s,t}(H_{*}X, H_{*}^{BP\2}V_{X}) \Longrightarrow [BP\2 \smash X, V_{X}]^{BP\2}.\]

Note that $H_{*}^{BP\2}V_{X}$ is a free $E(2)_{*}$-comodule, so by \cref{free:proj:inj}, $H_{*}^{BP\2}V_{X}$ is also injective. So both spectral sequences are concentrated on the $s=0$ line. By \cite[Thm~7.1]{boardman1999conditionally}, this implies that the spectral sequences are strongly convergent. So the maps $i$ and $j$ indeed lift to $BP\2$-module maps 
\[ V_{X} \underset{\widetilde{i}}{\rightarrow} BP\2 \smash X \underset{\widetilde{j}}{\rightarrow} V_{X} .\]
Note that $\widetilde{j} \circ \widetilde{i}$ induce an isomorphism on $H_{*}^{BP\2}$-homology. By \cref{whitehead}, this implies that $j \circ i$ is an equivalence up to $p$-completion. So indeed \[ BP\2 \smash X \simeq C_{X} \vee V_{X}\]
is a $BP\2$-module splitting.

\end{proof}

\subsection{Further analysis of the relative Adams spectral sequence for $BP\2_{*}X$}\label{sec:leftiso}
The goal of this section is to show that there are no hidden extensions in the Adams spectral sequences converging to $BP\2_{*}BP\2$ and $BP\2_{*}l_{k}$. 

Consider $BP\2 \smash X$, where $X$ is $BP\2$ or $l_{k}$. Recall from \cref{thm:dominic:splitting2} that $BP\2 \smash X$ splits as
\[ BP\2 \smash X \simeq C_{X} \vee V_{X},\]

where $V_{X}$ is a sum of suspensions mod-$p$ Eilenberg-Maclane spectra, and $\pi_{*}C_{X}$ is $v_{2}$-torsion free.
We will denote the summands $C_{BP\2}$ (resp. $V_{BP\2}$) by $C$(resp. $V$), and we will denote $C_{l_{k}}$ (resp. $V_{l_{k}}$) by $C_{k}$ (resp. $V_{k}$). 

Note that \[ BP\2_{*}X \simeq [BP\2, BP\2 \smash X]^{BP\2},\]
so there exists an Adams spectral sequence 
\[ Ext_{H_{*}^{BP\2}H}\big(H_{*}^{BP\2}BP\2, H_{*}^{BP\2}BP\2 \smash X \big) \Longrightarrow BP\2_{*}X.\]

Using \cref{lem:kunneth}, we can rewrite this as 
\[ Ext_{E(2)_{*}}\big(\F_{p}, H_{*}^{BP\2}X \big) \Longrightarrow BP\2_{*}X.\]

(Note that this is just the $E_{2}$-page of the ordinary Adams spectral sequence for $BP\2_{*}X$, after the classical change-of-rings). 

The splitting above implies that the Adams spectral sequence converging to $BP\2_{*}BP\2$ splits as the following two separate spectral sequences:

\begin{equation}\label{eqn:assC} E_{2}^{s,t} = Ext_{E(2)_{*}}^{s,t}\big(\F_{p}, H_{*}^{BP\2}C\big) \Longrightarrow \pi_{t-s}C  \end{equation}
and 
\begin{equation}\label{eqn:assV} E_{2}^{s,t} =  Ext_{E(2)_{*}}^{s,t}\big(\F_{p}, H_{*}^{BP\2}V \big) \Longrightarrow \pi_{t-s}V .\end{equation}

We already know that $\pi_{*}V \cong \oplus \Sigma^{?} H\F_{p}$, so no hidden extensions are possible in (\ref{eqn:assV}). However, showing that there are no hidden extensions in the spectral sequence converging to $\pi_{*}C$ is a lot more work. In \cite[Thm~2.1]{dominic_even} and \cite[Thm~3.19]{dominic_odd}, Culver showed that $Ext_{E(2)_{*}}(\F_{p}, H_{*}^{BP\2}C)$ is $v_{2}$-torsion free, that is, $v_{2}^{r}x \neq 0$ for all $x \neq 0 \in Ext_{E(2)_{*}}(\F_{p}, H_{*}^{BP\2}C)$ and $r \in \N$. While there is certainly torsion in the form of equations like $v_{0}x + v_{1}y + v_{2}z = 0$, we can adapt his proof to show that  $Ext_{E(2)_{*}}(\F_{p}, H_{*}^{BP\2}C)$ is $v_{0}$ and $v_{1}$-torsion free in the same manner (that is, $v_{0}^{r}x \neq 0$ and $v_{1}^{r}x \neq 0$ for all $x \neq 0 \in Ext_{E(2)_{*}}(\F_{p}, H_{*}^{BP\2}C)$ and $r \in \N$). This will imply that there is no room for hidden extensions in (\ref{eqn:assC}).

Recall that by the equivalence of categories of \cref{cor:1:equiv}, 
\[ Ext_{E(2)_{*}}(\F_{p}, H_{*}^{BP\2}C) \cong Ext_{E(2)}(\F_{p}, H_{*}^{BP\2}C).  \]

It will be easier to look at things from the $E(2)$-module perspective in this section, since we will use Margolis homology extensively later on. First we will discuss the $v_{i}$-Bockstein spectral sequences.
\begin{proposition}\label{prop:bss}
Let $M$ be an $E(2)$-module. Let $(i,j,h)$ be any permutation of $(0,1,2)$. There exists a $v_{i}$-Bockstein spectral sequence

\[ ^{v_{i}BSS}E_{1}^{*,*,*} = Ext_{E(Q_{j}, Q_{h})}\big(\F_{p}, M\big) [v_{i}] \Longrightarrow Ext_{E(2)}\big(\F_{p}, M\big)  \]

where \[^{v_{i}BSS}E_{1}^{s,t,r} = Ext_{E(Q_{j}, Q_{h})}^{s,t} \big( \F_{p},M \big)\{v_{i}^{r}\}\]

and the differential $d_{m}$ has the form 
\[d_{m}: E_{m}^{ s,t,r } \rightarrow E_{m}^{ s-k+1,t- |Q_{i}|k, r + k }.\] 

\end{proposition}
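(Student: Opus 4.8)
The plan is to build the $v_i$-Bockstein spectral sequence as the spectral sequence of the $v_i$-adic filtration on a Koszul complex computing $Ext_{E(2)}(\F_p,M)$. The structural input is the tensor factorization $E(2)=E(Q_j,Q_h)\otimes_{\F_p}E(Q_i)$ of Hopf algebras, valid since $E(2)$ is an exterior algebra on the primitives $Q_0,Q_1,Q_2$. Because the minimal free resolution of $\F_p$ over a tensor product of augmented $\F_p$-algebras is the tensor product of the minimal resolutions of the factors, the Koszul resolution $K_\bullet\to\F_p$ over $E(2)$ splits as $K'_\bullet\otimes K''_\bullet$, with $K'_\bullet\to\F_p$ the Koszul resolution over $E(Q_j,Q_h)$ and $K''_\bullet$ the $2$-periodic resolution $\cdots\to E(Q_i)\xrightarrow{Q_i}E(Q_i)\xrightarrow{Q_i}E(Q_i)\to\F_p$. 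Applying $Hom_{E(2)}(-,M)$ and the standard identifications then presents $Ext_{E(2)}(\F_p,M)$ as the cohomology of the cochain complex
\[ \big(M\otimes\F_p[v_j,v_h]\otimes\F_p[v_i],\ \delta'+\delta''\big), \]
where (with the usual internal grading) $v_l$ is the polynomial class in cohomological degree $1$ and internal degree $|Q_l|$ dual to the Koszul generator for $Q_l$, the summand $\delta'$ is the Koszul differential in the variables $v_j,v_h$ assembled from the $Q_j$- and $Q_h$-actions on $M$, and $\delta''$ carries $m\,\omega\mapsto(Q_i m)\,v_i\,\omega$; both summands have cohomological degree $+1$ and internal degree $0$, and separately square to zero.

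Now I would filter by powers of $v_i$: $F^r=v_i^r\cdot\big(M\otimes\F_p[v_j,v_h,v_i]\big)$. This is a decreasing, exhaustive, multiplicative filtration, and it is finite in each cohomological degree --- the $v_i$-exponent of a monomial is bounded by its homological degree --- so the associated spectral sequence converges strongly to $Ext_{E(2)}(\F_p,M)$ for every $E(2)$-module $M$, with no further hypotheses. Of the two summands of the total differential, $\delta''$ strictly raises the $v_i$-filtration while $\delta'$ preserves it, so the induced differential on the associated graded is $\delta'$ acting on the base factor. Hence $(E_0,d_0)\cong\big(M\otimes\F_p[v_j,v_h],\delta'\big)\otimes\F_p[v_i]$, and since $\F_p[v_i]$ is free and $\big(M\otimes\F_p[v_j,v_h],\delta'\big)$ is precisely the Koszul complex computing $Ext_{E(Q_j,Q_h)}(\F_p,M)$, taking homology gives
\[ {}^{v_iBSS}E_1\;\cong\;Ext_{E(Q_j,Q_h)}(\F_p,M)\otimes\F_p[v_i]\;=\;Ext_{E(Q_j,Q_h)}(\F_p,M)[v_i], \]
with the trigrading ${}^{v_iBSS}E_1^{s,t,r}=Ext_{E(Q_j,Q_h)}^{s,t}(\F_p,M)\{v_i^r\}$ read off directly from the filtration. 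By \cref{cor:1:equiv} the same statement may be phrased for $E(2)_*$-comodules, the form used in the rest of the paper; note also that $d_1$ is the honest ``$v_i$-Bockstein'' induced by $\delta''$.

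For the differential: by the general behaviour of the spectral sequence of a filtered complex, $d_m$ raises the $v_i$-filtration degree by exactly $m$, so $r\mapsto r+m$. Since the total differential has cohomological degree $+1$ and internal degree $0$, while the class represented by $v_i^r x$ with $x\in Ext^{s,t}_{E(Q_j,Q_h)}(\F_p,M)$ sits in total bidegree $(s+r,\,t+r|Q_i|)$ inside $Ext_{E(2)}(\F_p,M)$, comparing total degrees before and after forces the base bidegree of $d_m$ to change by $(-(m-1),\,-m|Q_i|)$; that is, $d_m\colon E_m^{s,t,r}\to E_m^{\,s-m+1,\ t-m|Q_i|,\ r+m}$, exactly as stated. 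The construction is then complete, and the one point demanding real care is not any of the spectral-sequence machinery (which is routine) but the trigrading bookkeeping: fixing internal-degree conventions so that the cochain complex is genuinely $(M\otimes\F_p[v_j,v_h,v_i],\delta'+\delta'')$ with the indicated degrees, and confirming that it is precisely $\delta''$ --- and no part of $\delta'$ --- that raises the $v_i$-filtration, so that the $E_1$-page comes out as $Ext_{E(Q_j,Q_h)}(\F_p,M)[v_i]$ rather than something larger.
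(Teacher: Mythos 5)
Your construction is correct and complete: factoring $E(2)\cong E(Q_j,Q_h)\otimes E(Q_i)$, presenting $Ext_{E(2)}(\F_p,M)$ via the tensor-product Koszul complex, and filtering by powers of $v_i$ is exactly the standard argument, and your degree bookkeeping (including $d_m\colon E_m^{s,t,r}\to E_m^{s-m+1,\,t-m|Q_i|,\,r+m}$ and the finite-in-each-degree filtration giving strong convergence for arbitrary $M$) checks out. The paper's own proof is a one-line deferral to Culver's construction of the $v_2$-Bockstein spectral sequence with the generators permuted, which rests on the same filtration; you have simply written that argument out in full.
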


\begin{proof}
This can be proved in exactly the same way as Culver's \cite[Corollary~3.11]{dominic_odd}, substituting $v_{i}$ for $v_{2}$ and $E(Q_{j}, Q_{h})$ for $E(Q_{0}, Q_{1})$ in the appropriate places.
\end{proof}

We will use the $v_{i}$-Bockstein spectral sequence to show that there is no $v_{i}$-torsion in $Ext_{E(2)_{*}}^{s,t}(\F_{p}, H_{*}^{BP\2}C)$ if $Ext_{E(Q_{j}, Q_{h})}^{s,t}(\F_{p}, H_{*}^{BP\2}C)$ is concentrated in even $t-s$ degrees.

\begin{proposition}\label{bss:torsion}
Let $M$ be an $E(2)$-module. Suppose that $Ext_{E(Q_{j}, Q_{h})}^{s,t}(\F_{p}, M)$ is concentrated in even $t-s$ degrees. Then there is no $v_{i}$-torsion in $Ext_{E(2)}^{s,t}\big(\F_{p}, M\big)$.
\end{proposition}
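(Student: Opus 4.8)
The plan is to run the $v_i$-Bockstein spectral sequence of \cref{prop:bss} and argue that it collapses at $E_1$ for parity reasons, which forces $Ext_{E(2)}(\F_p, M)$ to be a free module over $\F_p[v_i]$ and hence $v_i$-torsion free. Recall that the spectral sequence has the form
\[ {}^{v_i BSS}E_1^{s,t,r} = Ext_{E(Q_j,Q_h)}^{s,t}(\F_p, M)\{v_i^r\} \Longrightarrow Ext_{E(2)}^{s,t}(\F_p, M), \]
with $d_m: E_m^{s,t,r} \to E_m^{s-m+1,\, t-|Q_i|m,\, r+m}$. The first thing I would do is record the parity of $|Q_i|$: since $Q_i$ has internal degree $|Q_i| = 2p^i - 1$, which is odd, a differential $d_m$ shifts $t-s$ by $-|Q_i|m - (m-1) \cdot 0$... more precisely $d_m$ changes $t$ by $-|Q_i|m$ and $s$ by $-m+1$, so it changes $t-s$ by $-|Q_i|m + m - 1 = -(|Q_i|-1)m - 1$. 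Since $|Q_i| - 1 = 2p^i - 2$ is even, this change in $t-s$ is congruent to $-1 \pmod 2$, i.e. every $v_i$-Bockstein differential reverses the parity of $t-s$.

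Now by hypothesis $Ext_{E(Q_j,Q_h)}^{s,t}(\F_p, M)$ is concentrated in even $t-s$ degree. Multiplying by $v_i^r$ changes $t-s$ by $r|Q_i|$, which is $r$ times an odd number, so on the $E_1$-page the class $x\{v_i^r\}$ lives in $t-s$ degree congruent to $r \pmod 2$. Therefore the entire $E_1$-page is concentrated in degrees $t-s \equiv r \pmod 2$. Since each differential $d_m$ both increases $r$ by $m$ and reverses the parity of $t-s$, a nonzero $d_m$ on $x\{v_i^r\}$ would land in a class with $r' = r+m$ but $t-s$ parity equal to $(r+1) \not\equiv r' \pmod 2$ unless $m$ is odd — wait, one must be careful here: $m$ and the parity shift interact, so I would instead argue directly that the target of $d_m$ sits in filtration $r+m$ and $t-s$ degree of parity $r+1$, whereas every nonzero class on $E_1$ (hence on every $E_m$, being a subquotient) in filtration $r+m$ has $t-s$ parity $r+m$; these agree iff $m$ is odd. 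So only odd-length differentials are a priori possible, and I would need to rule those out too — the cleanest way is to observe that the parity obstruction already kills $d_1$, and then note that on $E_2$ the surviving classes still satisfy "$t-s \equiv r \pmod 2$" (a subquotient inherits the grading), so the same argument applies to $d_2$, and inductively: at each stage $E_m$ is concentrated in $t-s \equiv r \pmod 2$, and $d_m$ would require the target parity $r+1$ to equal $r+m$, i.e. $m$ odd; but for $m$ odd, I instead use that $d_m$ changes $t-s$ by an \emph{odd} number while changing $r$ by the \emph{odd} number $m$ — hmm, that doesn't immediately contradict. Let me restructure: the key invariant is $(t-s) - r \bmod 2$, which is $0$ throughout $E_1$ (even minus — wait $r$ can be anything). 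Actually $(t-s) + r$: on $E_1$, $t-s \equiv r$, so $(t-s) - r \equiv 0$; $d_m$ changes $t-s$ by $-(|Q_i|-1)m - 1 \equiv -1$ and changes $r$ by $+m$, so it changes $(t-s)-r$ by $-1 - m$; for this to preserve the invariant $0$ we need $m$ odd. The real point, then, is simpler: I should track $(t-s) + r \bmod 2$ or directly note $|Q_i|$ is \emph{odd}, so $v_i^r$ shifts $t-s$ by $r \cdot (\text{odd})$, giving $E_1$ concentrated where $(t-s) \equiv r$; a $d_m$ sends filtration $r$ to $r+m$ and must (being nonzero) land in $(t-s) \equiv r+m$, but its source-determined target has $t-s \equiv (\text{source }t-s) - 1 \equiv r - 1$, so we need $r - 1 \equiv r + m$, i.e. $m \equiv -1 \equiv 1 \pmod 2$. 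For odd $m \geq 1$: the differential $d_1$ is odd-length and would change $(t-s)$ by exactly $-|Q_i|$ — wait no, $d_m$ changes $t$ by $-|Q_i|m$ and $s$ by $1-m$, so changes $t-s$ by $-|Q_i|m + m - 1$; for $m=1$ this is $-|Q_i| + 1 - 1 = -|Q_i|$, odd. So $d_1$ changes $t-s$ by an odd amount and $r$ by $1$: source has $t-s \equiv r$, target has $t-s \equiv r + 1$ and filtration $r+1$, consistent! So $d_1$ is \emph{not} obstructed by this argument.

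I realize the parity argument needs the correct invariant, and the honest statement is: $d_m$ changes $t-s$ by $-(|Q_i|-1)m - 1$ which, since $|Q_i|-1$ is \emph{even}, equals an odd number; and it changes $r$ by $m$. The surviving subquotient at each page is concentrated where $t-s \equiv r + (\text{const})$ — but the constant can only be pinned down if differentials preserve it, which is what we're trying to show. The clean fix: since on $E_1$ the \emph{bidegree} $(t-s, r)$ satisfies $t-s \equiv r \pmod 2$ and $d_1$ would go from $(e, r)$ with $e \equiv r$ to $(e - |Q_i|, r+1)$ with $e - |Q_i| \equiv e - 1 \equiv r - 1 \equiv r+1 \pmod 2$ — so $d_1$ \emph{does} respect the parity pattern and is not excluded. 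Thus a pure parity argument fails, and the hard part will be genuinely ruling out the Bockstein differentials. The correct approach, I believe, is to mirror Culver's argument for $v_2$-torsion-freeness directly (\cite[\S 3]{dominic_odd}): use the structure of $Ext_{E(Q_j,Q_h)}(\F_p, M)$ together with the explicit identification of the $E_1$-page as an $\F_p[v_i]$-module and a counting/Poincaré-series argument showing the $v_i$-BSS must collapse, so that $Ext_{E(2)}(\F_p,M) \cong Ext_{E(Q_j,Q_h)}(\F_p,M)[v_i]$ as $\F_p[v_i]$-modules, which is manifestly $v_i$-torsion free. I expect the main obstacle to be this collapse step — establishing that no $v_i$-Bockstein differential can be nonzero — and that it will require invoking the even-concentration hypothesis in a more structural way than a naive degree count, exactly as in Culver's treatment.
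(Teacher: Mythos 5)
There is a genuine gap, and it stems from a bookkeeping error that leads you to abandon the correct argument. You write that ``multiplying by $v_i^r$ changes $t-s$ by $r|Q_i|$, which is $r$ times an odd number.'' This is the mistake: $v_i$ lives in $Ext^{1,|Q_i|}$, so it contributes $1$ to $s$ as well as $|Q_i|=2p^i-1$ to $t$, and its $(t-s)$-degree is $|Q_i|-1 = 2p^i-2$, which is \emph{even}. (Equivalently, in the tri-grading of \cref{prop:bss} the indices $(s,t)$ already refer to the $Ext_{E(Q_j,Q_h)}$-degrees of $x$ alone, so $x\{v_i^r\}$ sits in $(t-s)$-degree $|x|_{t-s}$ regardless of $r$.) With the correct accounting, the entire $E_1$-page is concentrated in even $(t-s)$-degree, independent of the filtration $r$. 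The differential $d_m$ sends $E_m^{s,t,r}$ to $E_m^{s-k+1,\,t-|Q_i|k,\,r+k}$ and hence shifts $(t-s)$ by $(1-|Q_i|)k-1$, which is odd because $|Q_i|$ is odd; equivalently, in total degree it raises $s+r$ by $1$ and preserves $t+r|Q_i|$. Either way, every differential takes even $(t-s)$-degree to odd $(t-s)$-degree, the target groups vanish by hypothesis, and the spectral sequence collapses at $E_1$. This is exactly the paper's proof, and it is a pure parity argument --- your conclusion that ``a pure parity argument fails'' is incorrect.

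The consequence is that your proposed fallback --- mirroring Culver's structural argument with an explicit $\F_p[v_i]$-module identification and a Poincar\'e-series count --- is both unnecessary and, as written, not a proof: you never identify what would force the collapse, you only name the obstacle. To repair the proposal, delete the fallback, fix the parity of $|v_i|_{t-s}$, and run the one-line degree count above. (One further small point worth making explicit, which both you and the paper elide: collapse at $E_1$ gives that $E_\infty \cong Ext_{E(Q_j,Q_h)}(\F_p,M)[v_i]$ is free over $\F_p[v_i]$, and since the pairing with $v_i$ is compatible with the Bockstein filtration, a nonzero $x$ detected by $[x]$ has $v_i^n x$ detected by $v_i^n[x]\neq 0$, whence $v_i^n x \neq 0$.)
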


\begin{proof}
Observe that the differential $d_{m}$ has $t-s$ degree $ (1-|Q_{i}|)k - 1$, which is always odd. So if $Ext_{E(Q_{j}, Q_{h})}^{s,t}\big(\F_{p}, M)\big)$ is concentrated in even $(t-s)$-degrees, then there are no nonzero differentials in the $v_{i}$-BSS, and so the spectral sequence collapses at the $E_{1}$-page. Thus $v_{i}^{n}x \neq 0$ for all $n \in \N$ and $x \neq 0 \in Ext_{E(2)}^{s,t}\big(\F_{p}, M \big)$.
\end{proof}

So if we can show that $Ext_{E(Q_{j}, Q_{h})}^{s,t}(\F_{p}, H_{*}^{BP\2}C)$ is concentrated in even $(t-s)$-degrees, then we will have proven that there is no $v_{i}$-torsion in $Ext_{E(2)}^{s,t}\big(\F_{p}, H_{*}^{BP\2}C\big)$. We will spend most of the remainder of this section proving that $Ext_{E(Q_{j}, Q_{h})}^{s,t}(\F_{p}, H_{*}^{BP\2}C)$ is indeed concentrated in even $(t-s)$-degrees.

In \cref{sec:3.5.1}, we will produce a splitting 
\[ H_{*}^{BP\2}C \cong_{E(Q_{j}, Q_{h})} S_{i} \oplus R_{i} \]
such that $S_{i}$ is a free $E(Q_{j}, Q_{h})$-module and and $R_{i}$ has no free summands, for each permutation $(i,j,h)$ of $\{0,1,2\}$ (\cref{split:Ri}). It follows quickly that $Ext_{E(Q_{j}, Q_{h})}(\F_{p}, S_{i})$ is concentrated in even $t-s$ degree. This reduces the problem to showing that $Ext_{E(Q_{j}, Q_{h})}(\F_{p}, R_{i})$ is also concentrated in even degrees. In \cref{sec:classification}, we will recall Adams-Priddy's usage of Margolis homology to classify invertible modules over an exterior algebra on two generators. In \cref{sec:ext:inv}, we will recall Adams-Priddy's computation of the Ext groups of these invertible modules. Next we will analyze the Margolis homology of particular submodules of $H_{*}BP\2$: in \cref{sec:marg:BP2:odd}, we will handle the odd-primary case, and in \cref{sec:marg:BP2:even}, we will present the analogous results at the prime $2$. These computations will allow us to decompose $H_{*}BP\2$ in terms of invertible modules. In \cref{sec:ext:analysis}, we will combine the results of \cref{sec:classification} and \cref{sec:ext:inv} with our computations from \cref{sec:marg:BP2:odd} and \cref{sec:marg:BP2:even} to show that $Ext_{E(Q_{j}, Q_{h})}(\F_{p}, R_{i})$ is indeed concentrated in even $t-s$ degrees. In \cref{sec:nhe}, we will combine these computations with \cref{bss:torsion} to conclude that there are no hidden extensions in the Adams spectral sequence converging to $BP\2_{*}BP\2$, as well as the one converging to $BP\2_{*}l_{k}$ for all $k \in \N$.

\subsubsection{Decomposing $H_{*}^{BP\2}C$}\label{sec:3.5.1}
We start by recalling the notion of length.

\begin{definition}\cite[Definition~3.1]{dominic_odd}
Let $x$ be a monomial in $A_{*}$.
If $p$ is odd, then we can write $x$ in the form 
\[ x = \bar{\xi}_{1}^{i_{1}}\bar{\xi}_{2}^{i_{2}}\cdots \bar{\xi}_{r}^{i_{r}}\bar{\tau}_{0}^{\epsilon_{0}}\bar{\tau}_{1}^{\epsilon_{1}}\cdots \bar{\tau}_{s}^{\epsilon_{s}} \in A_{*}.\]
Then the \textit{length} of the monomial, denoted $\ell(x)$, is defined to be \[\ell(x) = \epsilon_{0} + \epsilon_{1}  + \cdots + \epsilon_{s}.\]

If $p=2$, then we can write $x$ in the form 
\[ x = \bar{\xi}_{1}^{2j_{1} + \epsilon_{1}}\bar{\xi}_{2}^{j_{2}+ \epsilon_{2}}\cdots \bar{\xi}_{r}^{j_{r} + \epsilon_{r}},\]
where $0 \le \epsilon_{i} \le 1$.
Then the \textit{length} of the monomial, denoted $\ell(x)$, is defined to be \[\ell(x) = \epsilon_{1} + \epsilon_{2}  + \cdots + \epsilon_{r}.\]

\end{definition}

\begin{remark}\label{remark:length}
Consider a monomial $x$. Note that the length of every nonzero summand of $Q_{j}x$ is exactly $l(x)-1$.  
\end{remark}

\begin{proposition}\cite[Proposition~3.7,~Corollary~3.22]{dominic_odd}\cite[Proposition~2.22, Section~2.6]{dominic_even}

Let $S$ be the $E(2)$-submodule of $H_{*}BP\2$ generated by monomials of length at least $3$, ie

\[ S:= E(2)\{ x \in H_{*}BP\2| \ \ell(x) \ge 3  \}. \]
Then
\[H_{*}^{BP\2}V \cong S .\]
\end{proposition}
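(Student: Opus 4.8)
The plan is to prove that $\overline{H_*V} \cong S$ as $E(2)$-modules, where $S = E(2)\{x \in H_*BP\langle 2\rangle : \ell(x) \ge 3\}$, by showing that $S$ is a free $E(2)$-module and that its complement $\overline{H_*C}$ (the $E(2)$-submodule spanned by monomials of length $\le 2$) contains no free summands, after which the uniqueness clause in Margolis's splitting (Theorem~\ref{thm:marg:4}, equivalently the uniqueness in Culver's splitting Theorem~\ref{culver}) forces the identification. So the first thing I would do is verify that, as a graded $\F_p$-vector space, $H_*BP\langle 2\rangle$ decomposes as a direct sum of the span of length-$\le 2$ monomials and the span of length-$\ge 3$ monomials, and that the latter is closed under the $E(2)$-action: since each $Q_j$ lowers length by exactly $1$ on monomials (Remark~\ref{remark:length}), the submodule $E(2)\{x : \ell(x) \ge 3\}$ is contained in the span of monomials of length $\ge 3 - 3 = 0$, so a priori it could spill into lower length. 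The key point is that one must check $S$ as \emph{defined} (the submodule generated by length-$\ge 3$ generators) actually equals the span of \emph{all} monomials of length $\ge 3$, or at least identify it precisely; I expect the argument to proceed by picking the generators of $S$ to be exactly the monomials of length exactly $3$ of the form $\bar\xi_1^{i_1}\cdots\bar\xi_r^{i_r}\bar\tau_a\bar\tau_b\bar\tau_c$ (with $a<b<c$, $a,b,c \ge 3$ since we are in $A//E(2)_*$) and showing these are $E(2)$-free.

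The crux is the freeness of $S$. The plan is to exhibit an explicit $E(2)$-basis: a set of monomials $\{g_\alpha\}$ in $H_*BP\langle 2\rangle$ such that every monomial of length $\ge 3$ is uniquely $g_\alpha$ times a squarefree monomial in $Q_0, Q_1, Q_2$. Concretely, given a monomial $x = \bar\xi_1^{i_1}\cdots\bar\tau_{a_1}\bar\tau_{a_2}\cdots\bar\tau_{a_\ell}$ with $\ell \ge 3$ and $3 \le a_1 < a_2 < \cdots < a_\ell$, one uses the computation $Q_j(\bar\tau_k) = \bar\xi_{k-j}^{p^j}$ recorded in the excerpt to ``peel off'' the action: the three largest $\bar\tau$-indices $a_{\ell-2}, a_{\ell-1}, a_\ell$ should be expressible via applying $Q_{j}$'s to a generator where those $\bar\tau$'s have been replaced by the corresponding $\bar\xi$-powers. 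This is exactly the kind of bookkeeping Culver does in \cite{dominic_odd}; the cited Proposition~3.7 and Corollary~3.22 presumably already contain it, so I would invoke those and only sketch the module-theoretic packaging. The main obstacle will be making the ``peeling'' well-defined and bijective — one must be careful that applying $Q_j$ to a $\bar\tau_k$ produces a $\bar\xi$-power that might collide with $\bar\xi$-exponents already present, so the choice of which $\bar\tau$'s to convert and the resulting generator must be canonical (e.g., always convert the top three $\bar\tau$-indices, and record the $\bar\xi$-exponents additively). Getting a genuine $E(2)$-module basis rather than just a spanning set is where the real work lives.

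Once freeness of $S$ is in hand, the complementary piece $\overline{H_*C}$ (spanned by monomials of length $0$, $1$, or $2$) must be shown to have no free $E(2)$-summands. For this I would compute Margolis homologies: a finitely generated (or, here, degreewise-finite) $E(2)$-module has a free summand iff all three Margolis homology groups $H(M; Q_0)$, $H(M; Q_1)$, $H(M; Q_2)$ fail to vanish in a matching way; more precisely, by Adams--Priddy-type criteria (which the paper is about to invoke in \cref{sec:classification}), a degreewise-finite module over an exterior algebra is free iff all its Margolis homologies vanish, and it has no free summand iff the module's ``stable'' part is captured by nonvanishing Margolis homology. The concrete claim to establish is that $\overline{H_*C}$ has nonzero Margolis homology with respect to each $Q_j$ (indeed one computes $H(H_*BP\langle 2\rangle; Q_j)$ and checks the length-$\le 2$ part carries it), which prevents any free summand from splitting off. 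Finally, since $H_*BP\langle 2\rangle \cong \overline{H_*C} \oplus S$ with $S$ free and $\overline{H_*C}$ free-summand-free, and since by Culver's Theorem~\ref{culver} (via Margolis's uniqueness Theorem~\ref{thm:marg:4}) such a decomposition is unique, we conclude $S \cong \overline{H_*V}$ and $\overline{H_*C}$ matches Culver's $\overline{H_*C}$. I would present this last step as a one-line appeal to uniqueness. The anticipated hard part, to repeat, is the explicit free-basis construction for $S$; the Margolis-homology argument for $\overline{H_*C}$ is more routine but still requires identifying the Margolis homology of $H_*BP\langle 2\rangle$ under each $Q_j$, which I would either cite from \cite{dominic_odd} or compute using the differential $Q_j(\bar\tau_k) = \bar\xi_{k-j}^{p^j}$ directly.
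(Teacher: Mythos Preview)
Your overall strategy is correct and matches what the paper (via its citation of Culver) intends: show $S$ is a free $E(2)$-module, show the complementary summand has no free $E(2)$-summand, and invoke uniqueness of the free/no-free-summand decomposition. The paper itself gives no proof here, deferring entirely to \cite{dominic_odd}, so your reconstruction is the right thing to do. Two points need correction, however.

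First, a set-theoretic slip: you identify $\overline{H_*C}$ with the $E(2)$-submodule $L_{\le 2}$ spanned by monomials of length $\le 2$. This is not a complement to $S$. For instance, $Q_0(\bar\tau_3\bar\tau_4\bar\tau_5) \in S$ is a nonzero linear combination of length-$2$ monomials, so $L_{\le 2}\cap S \neq 0$. The correct complement is the quotient $H_*BP\langle 2\rangle/S$, which is merely a \emph{quotient} of $L_{\le 2}$ (via the surjection $L_{\le 2}\hookrightarrow H_*BP\langle 2\rangle \twoheadrightarrow H_*BP\langle 2\rangle/S$). The paper's subsequent sentence ``if $x$ is a monomial in $\overline{H_*C}$ then $\ell(x)\le 2$'' should be read in this quotient sense.

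Second, and more seriously, your Margolis-homology criterion for ``no free summand'' is wrong: nonvanishing Margolis homology does \emph{not} preclude free summands (consider $\F_p \oplus E(2)$). The criterion you quote goes only one way: vanishing of all $\M_*(M;Q_j)$ implies $M$ is free for bounded-below $M$. The correct argument here is far simpler and is exactly what the paper uses in the analogous \cref{split:Ri}: since every element of $H_*BP\langle 2\rangle/S$ is represented by a linear combination of length-$\le 2$ monomials, and since $Q_0Q_1Q_2$ lowers length by $3$ (Remark~\ref{remark:length}), we have $Q_0Q_1Q_2 = 0$ on $H_*BP\langle 2\rangle/S$. A free summand would require an element with $Q_0Q_1Q_2 x \neq 0$, so there is none. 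This replaces your entire Margolis-homology paragraph with one line. Your identification of the freeness of $S$ as the substantive step is correct; the ``collision'' worry is real but is handled in Culver's explicit basis argument (the analogue the paper cites for \cref{split:Ri}).
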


So if $x$ is a monomial in $H_{*}^{BP\2}C$, then $l(x) \le 2$. 

\begin{proposition}
Let $(i,j,h)$ be a permutation of $\{0,1,2\}$. Let $S_{i}$ be the $E(Q_{j}, Q_{h})$-submodule of $H_{*}^{BP\2}C$ generated by monomials of length exactly $2$, that is, 
\[ S_{i}:= E(Q_{j}, Q_{h})\{ x \in H_{*}BP\2| \ \ell(x) = 2  \}. \]

Then $S_{i}$ is a free $E(Q_{j}, Q_{h})$ module.
\end{proposition}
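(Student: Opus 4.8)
The plan is to deduce freeness from the vanishing of Margolis homology. By the standard criterion for bounded below, locally finite modules over an exterior sub-Hopf-algebra of the Steenrod algebra (see \cite{margolis_book}), such a module over $E(Q_{j},Q_{h})$ is free if and only if both single-generator Margolis homologies $H(-;Q_{j})$ and $H(-;Q_{h})$ vanish. Since $S_{i}$ is a submodule of $H_{*}BP\2$, it is bounded below and locally finite, so it suffices to show $H(S_{i};Q_{j})=H(S_{i};Q_{h})=0$.

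The key tool is the grading by \emph{length}. By \cref{remark:length}, each of $Q_{j},Q_{h}$ carries a monomial of length $\ell$ to a sum of monomials of length $\ell-1$; since $\overline{H_{*}C}$ has a basis of length-$\le 2$ monomials of $H_{*}BP\2$ (from the structural results of \cite{dominic_odd} recalled above), it is a length-graded $E(Q_{j},Q_{h})$-module concentrated in lengths $\{0,1,2\}$, and $\overline{H_{*}V}=S$, being generated over $E(2)$ by monomials with $Q_{i}$ lowering length, is length-graded too; hence the splitting $H_{*}BP\2\cong_{E(2)}\overline{H_{*}C}\oplus\overline{H_{*}V}$ is one of length-graded $E(Q_{j},Q_{h})$-modules. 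The submodule $S_{i}$ is itself length-graded, concentrated in lengths $\{0,1,2\}$: it is spanned by the length-$2$ monomials of $\overline{H_{*}C}$ together with the monomials occurring in their images under $Q_{j}$, $Q_{h}$, $Q_{j}Q_{h}$. Crucially, $S_{i}$ and $\overline{H_{*}C}$ have the \emph{same} top-length piece, $(S_{i})_{2}=(\overline{H_{*}C})_{2}$, because $S_{i}$ is generated by \emph{all} length-$2$ monomials of $\overline{H_{*}C}$. For $\lambda\in\{j,h\}$, the operator $Q_{\lambda}$ thus turns each of $S_{i}$ and $\overline{H_{*}C}$ into a three-term complex $0\to(\,\cdot\,)_{2}\xrightarrow{Q_{\lambda}}(\,\cdot\,)_{1}\xrightarrow{Q_{\lambda}}(\,\cdot\,)_{0}\to 0$, whose homology computes $H(-;Q_{\lambda})$.

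Next I would record that the $Q_{\lambda}$-Margolis homology of $H_{*}BP\2\cong A//E(2)_{*}$ is concentrated in length $0$. Indeed, writing $A//E(2)_{*}\cong\F_{p}[\bar\xi_{1},\bar\xi_{2},\dots]\otimes E(\bar\tau_{3},\bar\tau_{4},\dots)$, the operator $Q_{\lambda}$ is the $\F_{p}[\bar\xi_{\bullet}]$-linear derivation with $Q_{\lambda}(\bar\tau_{k})=\bar\xi_{k-\lambda}^{p^{\lambda}}$ for $k\ge 3$, so the $Q_{\lambda}$-complex is the Koszul complex on the sequence $(\bar\xi_{3-\lambda}^{p^{\lambda}},\bar\xi_{4-\lambda}^{p^{\lambda}},\dots)$, which is a regular sequence (powers of distinct polynomial generators, and finite in each internal degree); hence its homology lies in homological degree, i.e.\ length, $0$. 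Because $\overline{H_{*}V}$ is $E(2)$-free, hence free over $E(Q_{j},Q_{h})$ (as $E(2)=E(Q_{j},Q_{h})\otimes_{\F_{p}}E(Q_{i})$), hence $Q_{\lambda}$-acyclic, the length-graded splitting gives a length-preserving isomorphism $H(\overline{H_{*}C};Q_{\lambda})\cong H(H_{*}BP\2;Q_{\lambda})$, so $H(\overline{H_{*}C};Q_{\lambda})$ is concentrated in length $0$; equivalently the three-term $Q_{\lambda}$-complex of $\overline{H_{*}C}$ is exact at lengths $1$ and $2$.

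The conclusion for $S_{i}$ is then immediate. At length $2$, $Q_{\lambda}$ is injective on $(S_{i})_{2}=(\overline{H_{*}C})_{2}$. At length $1$, if $z\in(S_{i})_{1}\subseteq(\overline{H_{*}C})_{1}$ satisfies $Q_{\lambda}z=0$, then exactness of the $\overline{H_{*}C}$-complex gives $z=Q_{\lambda}w$ with $w\in(\overline{H_{*}C})_{2}=(S_{i})_{2}$. At length $0$, each generator of $(S_{i})_{0}$ has the form $Q_{j}Q_{h}m=\pm Q_{\lambda}\big(Q_{\lambda'}m\big)$ with $Q_{\lambda'}m\in(S_{i})_{1}$, where $m$ runs over length-$2$ monomials and $\lambda'$ is the other element of $\{j,h\}$; so $Q_{\lambda}$ maps $(S_{i})_{1}$ onto $(S_{i})_{0}$. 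Hence the three-term $Q_{\lambda}$-complex of $S_{i}$ is exact, $H(S_{i};Q_{\lambda})=0$ for $\lambda\in\{j,h\}$, and $S_{i}$ is free. I expect the only non-formal inputs to be the Margolis-homology computation of $A//E(2)_{*}$ and, more delicately, the verification (drawn from \cite{dominic_odd}) that $\overline{H_{*}C}$ really does carry a length-graded monomial basis compatible with that of $H_{*}BP\2$; alternatively one could avoid Margolis homology entirely and check directly that the length-$2$ monomials of $\overline{H_{*}C}$ form a free $E(Q_{j},Q_{h})$-basis, at the cost of more intricate bookkeeping with the $Q$-action on Culver's monomials.
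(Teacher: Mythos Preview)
Your argument is correct. The paper's own proof is simply a pointer: it says the statement is completely analogous to Proposition~3.12 of \cite{dominic_odd}, with $E(Q_{j},Q_{h})$ substituted for $E(Q_{0},Q_{1})$, and gives no further detail. Culver's original argument for the $(i,j,h)=(2,0,1)$ case is a direct combinatorial verification that the length-$2$ monomials freely generate $S_{2}$ over $E(Q_{0},Q_{1})$; your route instead deduces freeness from the vanishing of the $Q_{j}$- and $Q_{h}$-Margolis homologies, exploiting the Koszul-complex computation of $\M_{*}(H_{*}BP\langle 2\rangle,Q_{\lambda})$ (all in length~$0$) together with the observation $(S_{i})_{2}=(\overline{H_{*}C})_{2}$. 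This is a genuinely different and somewhat more conceptual approach: it treats all three permutations uniformly and avoids tracking explicit monomials under the $Q$-operations, at the cost of invoking the Adams--Margolis freeness criterion and the Margolis homology of $H_{*}BP\langle 2\rangle$ as black boxes. As you correctly flag, both approaches rest on the same structural input from \cite{dominic_odd}, namely that $\overline{H_{*}C}$ carries a length-graded monomial description compatible with that of $H_{*}BP\langle 2\rangle$; once that is granted, your three-term exactness argument at lengths $2$, $1$, $0$ goes through cleanly.
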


\begin{proof}
This is completely analogous to the proof of Proposition 3.12 of \cite{dominic_odd} (for odd primes) and Corollary 2.24 of \cite{dominic_even} (for $p=2$). To prove the $S_{i}$ case, substitute $E(Q_{j}, Q_{h})$ for $E(Q_{0}, Q_{1})$.
\end{proof}

\begin{corollary}\label{split:Ri}
Let $R_{i}$ be the quotient of $H_{*}C$ by $S_{i}$, that is $R_{i}$ is an $E(Q_{j}, Q_{h})$ module such that the following sequence is exact:

\[ 0 \rightarrow S_{i} \rightarrow H_{*}C \rightarrow R_{i} \rightarrow 0 .\]

Then there exists a splitting of $E(Q_{j}, Q_{h})$-modules

\[ H_{*}C \cong_{E(Q_{j}, Q_{h})} S_{i} \oplus R_{i}.\]

Furthermore, the summand $R_{i}$ contains no free summands.
\end{corollary}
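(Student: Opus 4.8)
The plan is to obtain the splitting from injectivity of the free module $S_i$ over the Frobenius algebra $E(Q_j,Q_h)$, and then to exclude free summands of $R_i$ by a socle (length) argument; both steps are essentially formal once the structural statements recalled just above are in hand.

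\emph{The splitting.} Since $E(Q_j,Q_h)$ is an exterior algebra on two (odd-degree) generators, it is a finite-dimensional connected graded Frobenius algebra, hence self-injective, and it is Artinian, so any direct sum of shifted copies of $E(Q_j,Q_h)$ is again injective. (Equivalently, one may invoke the analogue of \cref{free:proj:inj} for $E(Q_j,Q_h)$, since $S_i$ is finitely generated in each degree.) Thus the free module $S_i$ is injective as an $E(Q_j,Q_h)$-module, and an injective submodule is always a direct summand, so the short exact sequence $0 \to S_i \to \overline{H_*C} \to R_i \to 0$ of the statement splits, giving $\overline{H_*C} \cong_{E(Q_j,Q_h)} S_i \oplus R_i$.

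\emph{$R_i$ has no free summands.} It suffices to show that the top operation $Q_jQ_h$ of $E(Q_j,Q_h)$ acts as zero on $R_i$: a nonzero free $E(Q_j,Q_h)$-module contains an element $z$ with $Q_jQ_h z \neq 0$, so no such module can occur as a direct summand of a module annihilated by $Q_jQ_h$. Let $\bar y \in R_i$ and lift it to $y \in \overline{H_*C}$. Expanding $y$ in the monomial basis of $\overline{H_*C}$, every basis monomial occurring has length $\leq 2$; those of length exactly $2$ lie in $S_i$ by the definition of $S_i$, so after subtracting an element of $S_i$ we may assume every monomial summand of $y$ has length $\leq 1$. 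By \cref{remark:length}, each of $Q_j$ and $Q_h$ lowers the length of every nonzero summand by exactly $1$, so each monomial summand of $Q_jQ_h y$ would have length $\leq -1$; since there are no monomials of negative length, $Q_jQ_h y = 0$, hence $Q_jQ_h \bar y = 0$. As $\bar y$ was arbitrary, $Q_jQ_h$ annihilates $R_i$, and therefore $R_i$ has no nonzero free $E(Q_j,Q_h)$-summand. This runs parallel to Culver's proof of Proposition~3.12 of \cite{dominic_odd}.

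\emph{Where the difficulty lies.} Neither step is a genuine obstacle: ``free $\Rightarrow$ injective $\Rightarrow$ split'' is standard over a Frobenius algebra, and the vanishing of $Q_jQ_h$ on $R_i$ is pure length bookkeeping via \cref{remark:length}. The only input that must be taken from the structure theory recalled above rather than reproved here is that $\overline{H_*C}$ has a monomial basis consisting of classes of length $\leq 2$ and that its length-$2$ monomials already generate $S_i$ --- i.e., the content of the preceding two propositions (the analogues of Culver's Propositions~3.7 and~3.12). Granting that, the present corollary is formal.
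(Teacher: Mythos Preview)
Your proof is correct and follows essentially the same route as the paper's: invoke free $\Rightarrow$ injective over the exterior algebra (the paper cites \cref{free:proj:inj}, which is stated for $E(2)$ but comes from Margolis's general result for such algebras) to split the sequence, then use the length-lowering property of \cref{remark:length} to see that $Q_jQ_h$ kills $R_i$ and hence no free summand can occur. Your version is slightly more explicit in choosing a lift with only length $\leq 1$ monomials before applying the length argument, whereas the paper simply asserts that monomials in $R_i$ have length at most one; but this is a difference in exposition, not in strategy.
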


\begin{proof}
The module $S_{i}$ is free, so \cref{free:proj:inj} tells us that $S_{i}$ is also injective. So the exact sequence splits. Consider a monomial $x \in R_{i}$. The length of $x$ is at most one, so \cref{remark:length} tells us that $Q_{j}Q_{h}x = 0$. So there are no monomials that could generate a free summand in $R_{i}$.
\end{proof}

Now we will analyze $Ext_{E(Q_{j}, Q_{h})}^{s,t}(\F_{p}, S_{i})$.

\begin{proposition}\label{ext:Si}
The term $Ext_{E(Q_{j}, Q_{h})}^{s,t}\big( \F_{p}, S_{i} \big)$ is concentrated in degrees $(s,t)$ such that $s=0$ and $t$ is even.
\end{proposition}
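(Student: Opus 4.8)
The plan is to leverage the freeness of $S_{i}$ over $E(Q_{j},Q_{h})$, just established, together with the self-injectivity of a finite exterior Hopf algebra, so that $Ext^{s}$ vanishes for $s>0$ and $Ext^{0}$ reduces to a socle computation; the parity statement will then follow from bookkeeping of the internal degrees $|Q_{i}| = 2p^{i}-1$, $|\bar{\xi}_{i}| = 2(p^{i}-1)$ and $|\bar{\tau}_{i}| = 2p^{i}-1$.

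First I would note that $S_{i}$ is a submodule of $H_{*}BP\langle 2 \rangle \cong A//E(2)_{*}$ and hence is finitely generated in each degree, so the analogue of \cref{free:proj:inj} for the exterior algebra $E(Q_{j},Q_{h})$ on two generators applies and shows that the free module $S_{i}$ is also injective. Therefore $Ext^{s,t}_{E(Q_{j},Q_{h})}(\F_{p},S_{i}) = 0$ for every $s>0$, and the proposition is reduced to showing that $Ext^{0,t}_{E(Q_{j},Q_{h})}(\F_{p},S_{i}) \cong Hom_{E(Q_{j},Q_{h})}(\F_{p},S_{i})$ is concentrated in even $t$.

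For that, I would write $S_{i} \cong \bigoplus_{\alpha}\Sigma^{n_{\alpha}}E(Q_{j},Q_{h})$ as a free module. Since $E(Q_{j},Q_{h})$ is connected graded with augmentation ideal $\bar{I} = (Q_{j},Q_{h})$, the multiset of generator degrees $\{n_{\alpha}\}$ is exactly the set of internal degrees in which the indecomposable quotient $S_{i}/\bar{I}S_{i}$ is nonzero. As $S_{i} = E(Q_{j},Q_{h})\{x : \ell(x) = 2\}$ by construction, $S_{i}/\bar{I}S_{i}$ is spanned by the images of the length-$2$ monomials, and every such monomial $\bar{\xi}_{1}^{a_{1}}\bar{\xi}_{2}^{a_{2}}\cdots\bar{\tau}_{c}\bar{\tau}_{d}$ has even internal degree: its polynomial part is even, and the two exterior factors contribute $|\bar{\tau}_{c}| + |\bar{\tau}_{d}| = 2(p^{c}+p^{d})-2$, which is even. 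Hence every $n_{\alpha}$ is even. Finally, since the socle of a direct sum is the direct sum of the socles, $Hom_{E(Q_{j},Q_{h})}(\F_{p},S_{i}) = \bigoplus_{\alpha} Hom_{E(Q_{j},Q_{h})}(\F_{p},\Sigma^{n_{\alpha}}E(Q_{j},Q_{h}))$, and each summand is a single copy of $\F_{p}$ in internal degree $n_{\alpha} + |Q_{j}| + |Q_{h}| = n_{\alpha} + 2(p^{j}+p^{h}) - 2$, which is even because $n_{\alpha}$ is. This gives the claim.

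No step here is technically hard; the one deserving attention is the identification of the free-generator degrees $n_{\alpha}$ with the internal degrees supporting $S_{i}/\bar{I}S_{i}$, and hence with degrees of length-$2$ monomials, after which the parity assertion is a one-line computation. One could additionally record (via \cref{remark:length}) that $Q_{j}$ and $Q_{h}$ lower length, so that $\bar{I}S_{i}$ is concentrated in length $\le 1$ and the length-$2$ monomials are in fact a genuine free basis of $S_{i}$; but this refinement is not needed for the statement.
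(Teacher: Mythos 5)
Your proof is correct and follows essentially the same route as the paper: both arguments use that $S_{i}$ is free (hence injective) over $E(Q_{j},Q_{h})$ to confine $Ext$ to $s=0$, then identify $Ext^{0}$ with the socle generated by $Q_{j}Q_{h}x$ over the even-degree length-$2$ monomial generators. Your version merely supplies more detail (the Nakayama-style identification of the free generators and the explicit parity bookkeeping); the only cosmetic quibble is a sign convention on $|Q_{j}|+|Q_{h}|$, which does not affect the parity conclusion.
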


\begin{proof}
Recall that ${S_{i}} \cong \bigoplus\limits_{x \in J} E(Q_{j}, Q_{h})\{x\}$, where $J$ is the set of monomials in $H_{*}^{BP\2}C$ having length exactly $2$. It follows that \[Ext_{E(Q_{j}, Q_{h})}^{s,t}(\F_{p}, S_{i}) \cong \bigoplus\limits_{x \in J} \Sigma^{|Q_{0}Q_{1}x|}\F_{p}.\] Monomials of even length have even dimension, so $|Q_{0}Q_{1}x|$ is also even. So indeed ${Ext_{E(Q_{j}, Q_{h})}^{s,t}\big( \F_{p}, S_{i} \big)}$ is concentrated in degrees $(s,t)$ such that $s= 0$ and $t$ is even.
\end{proof}

In order to show that $Ext^{s,t}_{E(Q_{j}, Q_{h}})\big( \F_{p}, R_{i} \big)$ is also concentrated in even degrees, we will first need to recall some results of Adams and Priddy on the classification of invertible modules over an exterior algebra on two generators. 

\subsubsection{The classification of invertible modules}\label{sec:classification}

We will start by recalling Margolis's stable category of modules over an exterior algebra \cite[p.205]{margolis_book}. 
Let $M,N$ be graded modules over an exterior algebra $E$. A homomorphism $f:M \rightarrow N$ is said to be \textit{stably trivial} if there exists a projective $E$-module $P$ such that $f$ factors through $P$.
A pair of homomorphisms $g,h: M \rightarrow N$ is said to be \textit{stably equivalent} (denoted $g\sim h$) if $g-h$ is stably trivial.

\begin{definition}\cite[p.205]{margolis_book}
Let $Stab(E)$ denote the stable category of $E$-modules. That is, $Stab(E)$ is the category whose objects are the $E$-modules, and whose morphisms are $Hom_{E}(M,N)/\sim$.
\end{definition}

An equivalence class $M$ in $Stable(E)$ is said to be \textit{invertible} if there exists an equivalence class $N$ such that $M \otimes N$ is stably equivalent to $\F_{p}$. The invertible stable equivalence classes and their tensor product form the Picard group of $Stable(E)$, denoted $Pic \big( Stable(E) \big)$.

\begin{theorem}\cite[Theorem~3.6]{adams_priddy}\label{pic}
Let $E(\alpha, \beta)$ be a graded exterior algebra on two generators over $\F_{p}$ such that $|\alpha| < |\beta|$. Let $\Sigma$ denote the graded $E(\alpha, \beta)$-module such that \[
\Sigma_{t} = \begin{cases}
\F_{p} & \text{ if $t=1$}\\
0 & \text{ if $t \neq 1$}\\
\end{cases}
.\]
Let $I$ denote the augmentation ideal of $E(\alpha, \beta)$.
Then the Picard group $Pic\Big(Stable\big(E(\alpha, \beta \big)\Big)$ is 
\[Pic\Big(Stable\big(E(\alpha, \beta \big)\Big) \cong \Z\{\Sigma\} \oplus \Z\{I\}.\]
\end{theorem}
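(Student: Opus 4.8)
The plan is to run the argument of Adams and Priddy, for which Margolis homology is the only real tool; throughout one works in the stable category of bounded-below, locally finite $E(\alpha,\beta)$-modules. For $\gamma \in \{\alpha,\beta\}$ and an $E(\alpha,\beta)$-module $M$, recall the Margolis homology $H(M;\gamma) := \ker(\gamma\colon M\to M)/\operatorname{im}(\gamma\colon M\to M)$, with its internal grading. Two inputs drive everything: (i) Margolis's criterion — $M$ is free, equivalently projective, equivalently injective by \cref{free:proj:inj}, if and only if $H(M;\alpha)=H(M;\beta)=0$; and (ii) a Künneth isomorphism $H(M\otimes N;\gamma)\cong H(M;\gamma)\otimes_{\F_p}H(N;\gamma)$, which holds because $E(\alpha,\beta)\cong E(\alpha)\otimes E(\beta)$, $\gamma$ is primitive, and the periodic ``$\gamma$-complex'' computing $H(-;\gamma)$ is a complex of $\F_p$-vector spaces. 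If $M$ is invertible, pick $N$ with $M\otimes N\sim\F_p$; then (ii) gives $H(M;\gamma)\otimes H(N;\gamma)\cong H(\F_p;\gamma)=\F_p$, so $H(M;\alpha)$ and $H(M;\beta)$ are each one-dimensional, concentrated in a single internal degree. Setting $\phi(M)=\big(\deg H(M;\alpha),\ \deg H(M;\beta)\big)$ gives, by (ii), a well-defined group homomorphism $\phi\colon Pic\big(Stable(E(\alpha,\beta))\big)\to\Z\oplus\Z$. A direct computation gives $\phi(\Sigma)=(1,1)$ (shift everything by one) and, using the $\F_p$-basis $\{\alpha,\beta,\alpha\beta\}$ of $I$, $\ker(\alpha)/\operatorname{im}(\alpha)=\F_p\{\alpha\}$ and $\ker(\beta)/\operatorname{im}(\beta)=\F_p\{\beta\}$, so $\phi(I)=(|\alpha|,|\beta|)$. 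Since $|\alpha|\neq|\beta|$, the vectors $(1,1)$ and $(|\alpha|,|\beta|)$ are $\Q$-linearly independent, so $\Sigma$ and $I$ generate a free rank-two subgroup of $Pic$, and $\phi$ carries it isomorphically onto the lattice
\[ L = \{(a,b)\in\Z^{2} \ :\ |\beta|-|\alpha| \text{ divides } b-a\}. \]

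The substantive step, which I expect to be the main obstacle, is the structural classification: \emph{every invertible $E(\alpha,\beta)$-module is stably equivalent to $\Sigma^{\otimes m}\otimes I^{\otimes n}$ for a unique pair $(m,n)$}. The technical backbone here is the Margolis/Adams--Priddy comparison lemma: a map $f\colon M\to N$ of bounded-below, locally finite modules that induces isomorphisms on both $H(-;\alpha)$ and $H(-;\beta)$ is a stable equivalence, since its mapping cone then has vanishing Margolis homology and hence is free by criterion (i), so $f$ becomes invertible in $Stable(E(\alpha,\beta))$. Given an invertible $M$, one first splits off its free summands (allowed by \cref{free:proj:inj}, since free = injective) to assume $M$ is reduced, then tensors with a suitable $\Sigma^{\otimes m}$ to normalize $\deg H(M;\alpha)=0$. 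Restricting to $E(\alpha)$ and to $E(\beta)$ separately, $M$ decomposes over each as a single trivial summand plus a free part; analyzing how the two actions interlock forces a reduced invertible module to be a finite ``lightning flash'' of the standard shape, whose stable isomorphism type is pinned down by one integer. Each zig-zag of such a flash changes the difference of the internal degrees of its two Margolis classes by $\pm(|\beta|-|\alpha|)$, and the resulting modules are precisely the tensor powers of $I$ (with $I^{-1}$ given, up to internal shift, by $E(\alpha,\beta)$ modulo its socle). This both produces the comparison maps needed to conclude $M\sim\Sigma^{\otimes m}\otimes I^{\otimes n}$ and shows the only constraint on $\phi(M)$ beyond lying in $\Z^2$ is the congruence defining $L$.

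Combining the pieces: the classification shows $\phi$ is injective (the proposed generators realize $\Z$-linearly independent lattice vectors, so distinct $(m,n)$ give distinct classes) and has image exactly $L$, while the generators $\Sigma,I$ lie in $Pic$ and surject onto $L$ under $\phi$. Hence the inclusion $\Z\{\Sigma\}\oplus\Z\{I\}\hookrightarrow Pic\big(Stable(E(\alpha,\beta))\big)$ is an isomorphism, which is the asserted statement. Everything outside the lightning-flash classification and the mapping-cone lemma — the Künneth formula, the definition and additivity of $\phi$, and the two generator computations — is formal bookkeeping; the real work, exactly as in Adams--Priddy, is in those two structural inputs in the locally finite, bounded-below setting.
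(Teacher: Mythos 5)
Your proposal is correct and follows the same route as the Adams--Priddy argument that the paper cites without reproducing: the degree homomorphism $\phi(M)=\big(\deg \M_{*}(M,\alpha),\ \deg \M_{*}(M,\beta)\big)$ built from the K\"unneth and Whitehead theorems for Margolis homology, the generator computations $\phi(\Sigma)=(1,1)$ and $\phi(I)=(|\alpha|,|\beta|)$ (independent since $|\alpha|<|\beta|$), and the classification of reduced invertible modules as lightning flashes $\Sigma^{m}\otimes I^{\otimes n}$. The one step you leave as a sketch --- the interlocking $E(\alpha)$/$E(\beta)$ analysis forcing the lightning-flash shape --- is precisely the non-formal core of \cite[Thm.~3.6]{adams_priddy}, and your description of it (split off free summands using free $=$ injective, decompose over each sub-exterior algebra as trivial plus free, conclude via the mapping-cone/Whitehead criterion) is the correct mechanism in the bounded-below, locally finite setting.
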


We can use Margolis homology to identify when a module is invertible.

\begin{definition}
Let $M$ be a module over $E(\alpha, \beta)$. The Margolis homology of $M$ with respect to $\alpha$ is denoted by $\M_{*}(M, \alpha)$, and is defined to be \[\M_{*}(M, \alpha) = \frac{ker\ \alpha:M \rightarrow M}{im\ \alpha:M \rightarrow M}.\]
\end{definition}

Margolis homology has a Whitehead Theorem and a K\"unneth Theorem, both of which we will use in this section.

\begin{proposition}(K\"unneth Theorem)\cite[p.315]{margolis_book}\label{marg:kunneth}
Let $M$ and $N$ be $E(\alpha, \beta)$-modules. Then
\[ \M_{*}(M \otimes N, \alpha) \cong \M_{*}(M, \alpha) \otimes \M_{*}(N, \alpha) .\]
\end{proposition}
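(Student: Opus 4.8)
The plan is to exploit that $\alpha$ is a \emph{primitive} element of $E(\alpha,\beta)$ of \emph{odd} degree, so that on a tensor product of $E(\alpha,\beta)$-modules it acts — up to a Koszul sign — exactly as the differential of a tensor product of complexes, and then to run the standard ``free part plus homology'' argument that proves the Künneth theorem over a field.

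First I would record the action formula. Since $\Delta\alpha=\alpha\otimes 1+1\otimes\alpha$ and $|\alpha|$ is odd, the induced action of $\alpha$ on $M\otimes N$ is $\alpha\cdot(m\otimes n)=\alpha m\otimes n+(-1)^{|m|}\,m\otimes\alpha n$ for homogeneous $m\in M$, $n\in N$. In particular $\alpha$ squares to zero on $M\otimes N$, just as it does on $M$ and on $N$, so each of $\M_{*}(M,\alpha)$, $\M_{*}(N,\alpha)$, $\M_{*}(M\otimes N,\alpha)$ is literally the homology $\ker\alpha/\operatorname{im}\alpha$ of a self-map of square zero, and $(M\otimes N,\alpha)$ is the tensor product of the differential modules $(M,\alpha)$ and $(N,\alpha)$ in the evident sense.

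Next I would prove the structure statement over the field $\F_{p}$: every $E(\alpha)$-module $L$ (obtained by restriction along $E(\alpha)\hookrightarrow E(\alpha,\beta)$) splits as $L\cong F_{L}\oplus T_{L}$ with $F_{L}$ a free $E(\alpha)$-module and $\alpha$ acting by zero on $T_{L}$, and moreover $T_{L}\cong\M_{*}(L,\alpha)$. This is elementary and needs no finiteness hypothesis: choose homogeneous $m_{i}\in L$ whose images form a basis of $L/\ker\alpha$, so that $\{\alpha m_{i}\}$ is a basis of $\operatorname{im}\alpha$; then $F_{L}:=\bigoplus_{i}E(\alpha)\{m_{i}\}$ is free, a complement $T_{L}$ to $\operatorname{im}\alpha$ inside $\ker\alpha$ satisfies $L=F_{L}\oplus T_{L}$, and $T_{L}\cong\ker\alpha/\operatorname{im}\alpha$. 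Applying this to $M$ and to $N$, and using that $\M_{*}(E(\alpha),\alpha)=0$, that $E(\alpha)\otimes E(\alpha)$ is free of rank $2$ over $E(\alpha)$ (a one-line check from the action formula), and hence that $F\otimes N'$ is free over $E(\alpha)$ for any free $F$ and any $N'$, I would expand $M\otimes N\cong(F_{M}\oplus T_{M})\otimes(F_{N}\oplus T_{N})$: the three summands involving a free factor contribute nothing to Margolis homology, while on $T_{M}\otimes T_{N}$ the operator $\alpha$ is identically zero, so $\M_{*}(T_{M}\otimes T_{N},\alpha)=T_{M}\otimes T_{N}$. Combining, $\M_{*}(M\otimes N,\alpha)\cong T_{M}\otimes T_{N}\cong\M_{*}(M,\alpha)\otimes\M_{*}(N,\alpha)$.

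The only genuinely delicate points are bookkeeping rather than conceptual: getting the Koszul sign in the coaction twist right — this is where $|\alpha|$ odd is used, and it is precisely what makes $(M\otimes N,\alpha)$ an honest tensor product of complexes rather than a twisted one — and observing that no $\mathrm{Tor}$ correction term arises and that the splitting $L\cong F_{L}\oplus T_{L}$ is valid for arbitrary (not necessarily finite type) modules, both because we work over a field. If one prefers, the freeness of $E(\alpha)\otimes N'$ can alternatively be read off from the module version of the shearing isomorphism recalled above, but the explicit rank-$2$ computation keeps the argument self-contained.
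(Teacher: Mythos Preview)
Your argument is correct. Note, however, that the paper does not supply its own proof of this proposition: it is stated with a citation to Margolis's book and then used as a black box throughout \S\ref{sec:classification}--\S\ref{sec:marg:BP2}. Your route --- restrict along $E(\alpha)\hookrightarrow E(\alpha,\beta)$, use primitivity of $\alpha$ to identify $(M\otimes N,\alpha)$ with the tensor product of the two differential $\F_p$-modules, split each factor as free-plus-trivial over $E(\alpha)$, and observe that the free pieces contribute nothing --- is the standard proof of K\"unneth over a field, and it goes through cleanly here since in the paper's applications the generators $Q_j,Q_h$ have odd degree $2p^j-1,\,2p^h-1$, which is exactly what makes the Koszul sign work out so that $\alpha^2=0$ on the tensor product.
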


\begin{proposition}(Whitehead Theorem)\cite[Thm~18.3]{margolis_book}\label{marg:whitehead}
Let $M$ and $N$ be connective $E(\alpha, \beta)$-modules. Consider an $E(\alpha, \beta)$-module homomorphism $f: M \rightarrow N$. The map $f$ is a stable equivalence if and only if $f$ induces an isomorphism on both $\alpha$ and $\beta$-Margolis homology.
\end{proposition}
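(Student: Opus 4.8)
The plan is to deduce the statement from two facts: that $\M_*(-,\alpha)$ and $\M_*(-,\beta)$ descend to homological functors on the stable category $Stab(E(\alpha,\beta))$, and that a bounded below $E(\alpha,\beta)$-module with both Margolis homologies zero is free. Granting these, the forward implication is formal and the converse reduces to showing the cofiber of $f$ is free. Throughout write $E=E(\alpha,\beta)$ and assume (harmlessly) that $|\alpha|<|\beta|$.

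For the first fact, a direct computation in the $\F_p$-basis $1,\alpha,\beta,\alpha\beta$ of $E$ gives $\ker(\alpha\cdot\colon E\to E)=\langle\alpha,\alpha\beta\rangle=\operatorname{im}(\alpha\cdot\colon E\to E)$, so $\M_*(E,\alpha)=0$, and likewise $\M_*(E,\beta)=0$; hence both functors vanish on free modules. Consequently any $E$-linear map factoring through a free module induces zero on $\M_*(-,\alpha)$ and $\M_*(-,\beta)$, so these functors are constant on stable equivalence classes and descend to $Stab(E)$; in particular a stable equivalence induces isomorphisms on both, which is the "only if" direction. Moreover, restricting scalars along $E(\alpha)\hookrightarrow E$ turns a short exact sequence of $E$-modules into a short exact sequence of complexes with differential $\alpha\cdot$ (the maps are $\alpha$-linear), producing a natural long exact sequence in $\M_*(-,\alpha)$ --- equivalently, the long exact sequence of Tate cohomology of $\F_p[\alpha]/(\alpha^2)$ --- and similarly for $\beta$.

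For the "if" direction, suppose $f\colon M\to N$ induces isomorphisms on $\M_*(-,\alpha)$ and $\M_*(-,\beta)$. Since $E$ is self-injective, $M$ embeds in a free $E$-module $F$; the injection $(f,\iota)\colon M\to N\oplus F$ differs from $f$ by a map through $F$, so it is a stable equivalence if and only if $f$ is, and it induces the same maps on Margolis homology. Let $C=\operatorname{coker}(f,\iota)$, a bounded below $E$-module. The two long exact sequences applied to $0\to M\to N\oplus F\to C\to 0$, together with the hypothesis on $f_*$, force $\M_*(C,\alpha)=\M_*(C,\beta)=0$. By the second fact, $C$ is free, hence projective by \cref{free:proj:inj}, so the sequence splits; therefore $C\simeq 0$ in $Stab(E)$ and $f$ is a stable equivalence.

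It remains to prove the second fact, which I expect to be the technical heart. If $\M_*(C,\beta)=0$ then $\ker(\beta\cdot)=\operatorname{im}(\beta\cdot)$ on $C$, so choosing a graded vector-space complement $W$ to $\beta C$ in $C$ realizes $C\cong E(\beta)\otimes_{\F_p}W$ as $E(\beta)$-modules; thus $C$ is free over $E(\beta)$ and $W\cong C/\beta C=:\overline C$. The module $\overline C$ inherits an $E(\alpha)$-action ($\alpha$ preserves $\beta C$ since $\alpha$ and $\beta$ graded-commute), and multiplication by $\beta$ is an $E(\alpha)$-linear isomorphism $\overline C\xrightarrow{\ \cong\ }\beta C$ raising internal degree by $|\beta|$. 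Feeding $\M_*(C,\alpha)=0$ into the long exact sequence of $0\to\beta C\to C\to\overline C\to 0$ shows the connecting map $\M_t(\overline C,\alpha)\to\M_{t+|\alpha|}(\beta C,\alpha)\cong\M_{t+|\alpha|-|\beta|}(\overline C,\alpha)$ is an isomorphism for every $t$; since $|\alpha|-|\beta|<0$ and $\overline C$ is bounded below, iterating forces $\M_*(\overline C,\alpha)=0$, so $\overline C$ is free over $E(\alpha)$ by the same complement argument. Assembling the two decompositions --- most cleanly by identifying $C$ with the induced module $E\otimes_{E(\alpha)}\overline C$ and using that $E$ is free over $E(\alpha)$ --- then shows $C$ is free over $E$. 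The points requiring care are the passage from "$C$ is $E(\beta)$-free and $\overline C$ is $E(\alpha)$-free" to an honest $E$-module isomorphism $C\cong E\otimes V$, the precise internal-degree bookkeeping that makes the self-similarity of $\M_*(\overline C,\alpha)$ collapse under connectivity (this is exactly where $|\alpha|<|\beta|$ enters), and the sign conventions from graded-commutativity of $\alpha$ and $\beta$, which are harmless but must be tracked when identifying $\beta C$ with a degree shift of $\overline C$.
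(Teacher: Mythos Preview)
The paper does not give its own proof of this proposition: it is stated with a citation to \cite[Thm~18.3]{margolis_book} and used as a black box, so there is nothing in the paper to compare your argument against.

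That said, your outline is the standard Adams--Margolis argument and is essentially correct. The reduction to showing that a bounded-below $E(\alpha,\beta)$-module $C$ with $\M_*(C,\alpha)=\M_*(C,\beta)=0$ is free is exactly right, and your use of the connecting homomorphism together with the inequality $|\alpha|<|\beta|$ to force $\M_*(\overline C,\alpha)=0$ is the key step. The one place that deserves a line more of justification is the final assembly: knowing $C$ is free over $E(\beta)$ and $\overline C=C/\beta C$ is free over $E(\alpha)$ does not formally give an $E$-module isomorphism $C\cong E\otimes V$. The clean way to finish is to note that since $\overline C$ is $E(\alpha)$-free (hence projective), the quotient $C\twoheadrightarrow\overline C$ admits an $E(\alpha)$-linear section $s$, yielding an $E$-module map $E\otimes_{E(\alpha)}\overline C\to C$; this map is the identity modulo $\beta$, and a map of bounded-below free $E(\beta)$-modules which is an isomorphism modulo $\beta$ is an isomorphism. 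You flag exactly this point as ``requiring care,'' so you are aware of it, but as written the sentence ``identifying $C$ with the induced module $E\otimes_{E(\alpha)}\overline C$'' presupposes the section $s$ without naming it.
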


\begin{lemma}\label{inv:marg}\cite[Proposition~3.5]{adams_priddy} 
Consider a graded exterior algebra on two generators $E(\alpha, \beta)$ over a field $\F_{p}$. Let $M$ be a finitely generated graded module over $E(\alpha, \beta)$. Then $M$ is stably equivalent to an invertible module if and only if both $\M_{*}(M, \alpha)$ and $\M_{*}(M, \beta)$ are one-dimensional vector spaces over $\F_{p}$.
\end{lemma}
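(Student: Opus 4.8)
The plan is to treat the two implications separately; the forward one is a short formal argument and essentially all the content is in the converse.

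For the forward direction, I would first note that any projective $E(\alpha,\beta)$-module has vanishing $\alpha$- and $\beta$-Margolis homology — this is immediate for $E(\alpha,\beta)$ itself, hence for free modules by additivity of $\M_{*}$, hence for their summands — so that a stably trivial map induces $0$ on $\M_{*}(-,\alpha)$ and $\M_{*}(-,\beta)$ and a stable equivalence induces isomorphisms on both. Thus if $M$ is stably equivalent to an invertible module $L$, with stable inverse $L'$, then $\M_{*}(M,\gamma)\cong\M_{*}(L,\gamma)$ for $\gamma\in\{\alpha,\beta\}$, and the Künneth theorem (\cref{marg:kunneth}) applied to $L\otimes L'\sim\F_{p}$ gives $\M_{*}(L,\gamma)\otimes\M_{*}(L',\gamma)\cong\F_{p}$. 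Since $M$ is finitely generated over the finite-dimensional algebra $E(\alpha,\beta)$, all of these are finite-dimensional graded $\F_{p}$-vector spaces, and such a tensor product is one-dimensional exactly when both factors are; hence $\M_{*}(M,\alpha)$ and $\M_{*}(M,\beta)$ are one-dimensional.

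For the converse, suppose $\M_{*}(M,\alpha)\cong\Sigma^{a}\F_{p}$ and $\M_{*}(M,\beta)\cong\Sigma^{b}\F_{p}$; after a shift assume $M$ is connective, and write $|\alpha|<|\beta|$ as in \cref{pic}. I would first use \cref{free:proj:inj} to strip off all free summands, reducing to the case that $M$ has none (this leaves the Margolis homologies unchanged). The strategy is then: (i) exhibit an invertible module $L$ with $\M_{*}(L,\alpha)\cong\M_{*}(M,\alpha)$ and $\M_{*}(L,\beta)\cong\M_{*}(M,\beta)$; (ii) construct an $E(\alpha,\beta)$-module map $L\to M$ (or $M\to L$) inducing isomorphisms on both $\M_{*}(-,\alpha)$ and $\M_{*}(-,\beta)$; and (iii) invoke the Whitehead theorem (\cref{marg:whitehead}) to upgrade this to a stable equivalence, so that $M$ is stably equivalent to the invertible module $L$. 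For (i), by \cref{pic} every invertible module is stably of the form $\Sigma^{c}\otimes I^{\otimes k}$ with $c,k\in\Z$ (taking $I^{-1}$ to be the stable inverse of the augmentation ideal), and \cref{marg:kunneth} gives $\M_{*}(\Sigma^{c}\otimes I^{\otimes k},\alpha)\cong\Sigma^{c+k|\alpha|}\F_{p}$ and $\M_{*}(\Sigma^{c}\otimes I^{\otimes k},\beta)\cong\Sigma^{c+k|\beta|}\F_{p}$, so one needs $c,k$ solving $c+k|\alpha|=a$ and $c+k|\beta|=b$; solvability requires $(|\beta|-|\alpha|)\mid(b-a)$, a compatibility I would establish as part of the reduction (a Poincaré-series comparison of $P_{M}(t)$ with $P_{\M_{*}(M,\alpha)}(t)$ modulo $1+t^{|\alpha|}$, and likewise for $\beta$, already forces $a\equiv b\bmod 2$, and a finer bookkeeping of how the two Margolis homologies sit inside $M$ pins down the full divisibility).

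Step (ii) is the part I expect to be the main obstacle. The issue is that a single element of $M$ generating $\M_{*}(M,\alpha)$ carries no a priori information about $\M_{*}(M,\beta)$, so one must work to produce a comparison map that detects both simultaneously; I would attempt this by lifting a generator of $\M_{*}(M,\alpha)$ to a cycle, extending it over a small presentation of $L$ using that $M$ has no free summands, and then checking by a nondegeneracy/dimension count that the resulting map is also a $\M_{*}(-,\beta)$-isomorphism, correcting by a stably trivial map if needed — or, alternatively, by an induction on $\dim_{\F_{p}}M$ that at each stage splits off a shifted copy of $E(\alpha,\beta)$ or of a spherical module $\Sigma^{c}\otimes I^{\otimes k}$ and verifies that the Margolis homologies remain one-dimensional. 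In either case, once the comparison map is in hand, the Whitehead theorem finishes the argument; by contrast, the forward direction and the reduction to modules without free summands are routine.
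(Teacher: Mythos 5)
This lemma is quoted in the paper directly from Adams--Priddy (their Proposition~3.5); the paper gives no proof of it, so your attempt can only be measured against the standard argument. Your forward direction is correct and complete: projectives have vanishing Margolis homology, so stable equivalences induce isomorphisms on $\M_{*}(-,\alpha)$ and $\M_{*}(-,\beta)$, and the K\"unneth isomorphism applied to $L\otimes L'\sim\F_{p}$ forces both factors to be one-dimensional.

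The converse, however, has a genuine gap, and it sits exactly where you flagged it plus one place you did not. First, step (i) is not merely a ``compatibility I would establish as part of the reduction'': the divisibility $(|\beta|-|\alpha|)\mid(b-a)$ is logically downstream of the theorem. Indeed, \cref{pic} identifies $Pic$ with the subgroup of $\Z^{2}$ of Margolis-homology degrees generated by $(1,1)$ and $(|\alpha|,|\beta|)$, and in Adams--Priddy the computation of $Pic$ is itself deduced from the criterion you are trying to prove; so using \cref{pic} to manufacture the comparison target $L=\Sigma^{c}\otimes I^{\otimes k}$ is circular unless you prove the divisibility independently, and your Poincar\'e-series argument only yields the congruence $a\equiv b\pmod 2$ (evaluate $P_{M}(t)\equiv t^{a}\pmod{1+t^{|\alpha|}}$ at $t=-1$), not divisibility by $|\beta|-|\alpha|$. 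Second, step (ii) is left as a hope. Both problems disappear if you follow the route of Adams--Priddy: after discarding free summands, take the candidate inverse to be the $\F_{p}$-linear dual $DM$ rather than some $\Sigma^{c}I^{\otimes k}$, and consider the evaluation map $e\colon M\otimes DM\to\F_{p}$. By \cref{marg:kunneth} and the duality $\M_{*}(DM,\gamma)\cong \big(\M_{*}(M,\gamma)\big)^{*}$, the source has one-dimensional Margolis homology for $\gamma=\alpha,\beta$, concentrated in degree $0$ and generated by the class of $u\otimes u^{*}$ with $u$ a generator of $\M_{*}(M,\gamma)$; since $e(u\otimes u^{*})=1$ and every element of $\F_{p}$ is a cycle and no element is a boundary, $e$ induces an isomorphism on both Margolis homologies, so \cref{marg:whitehead} gives $M\otimes DM\sim\F_{p}$ and $M$ is invertible. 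No prior identification of $M$ inside $Pic$, and no divisibility statement, is needed.
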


So if a finitely generated graded module $M$ has one-dimensional Margolis homology, then $M \cong_{E(\alpha, \beta)}\Sigma^{a} \otimes I^{\otimes b} \oplus F $, where $a,b \in \Z$ and $F$ is a free module. Furthermore, we can use the Margolis homology of $M$ to determine $a$ and $b$. 

Note that the tensor product $\Sigma \otimes M$ produces an $E(\alpha, \beta)$ module which is identical to $M$, except that each element of $\Sigma \otimes M$ is one degree higher than the corresponding element of $M$. So we will denote $\Sigma^{a} \otimes M$ by $\Sigma^{a}M$.

\begin{proposition}\label{marg:I}\cite[Pf.~of~Theorem~3.6]{adams_priddy}
The Margolis homology of $\Sigma^{a}I^{\otimes b}$ is 
\[ \M_{*}(\Sigma^{a}I^{\otimes b}, \alpha) \cong \Sigma^{a + |\alpha| b}\F_{p} \]
\[ \M_{*}(\Sigma^{a}I^{\otimes b}, \beta) \cong \Sigma^{a + |\beta|b}\F_{p} .\]
\end{proposition}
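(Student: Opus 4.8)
The plan is to compute the $\alpha$- and $\beta$-Margolis homology of $I$ directly, and then bootstrap to $\Sigma^a I^{\otimes b}$ using the suspension shift and the Künneth theorem (\cref{marg:kunneth}). First I would record the structure of $I$, the augmentation ideal of $E(\alpha,\beta)$ with $|\alpha|<|\beta|$: as a graded $\F_p$-vector space $I$ has basis $\{\alpha,\beta,\alpha\beta\}$ in degrees $|\alpha|$, $|\beta|$, $|\alpha|+|\beta|$, with $\alpha$ acting by $\alpha\mapsto 0$, $\beta\mapsto \alpha\beta$, $\alpha\beta\mapsto 0$ (and symmetrically for $\beta$). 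A direct computation of $\ker\alpha/\operatorname{im}\alpha$ then gives $\ker\alpha=\langle\alpha,\alpha\beta\rangle$ and $\operatorname{im}\alpha=\langle\alpha\beta\rangle$, so $\M_*(I,\alpha)\cong\F_p$ concentrated in degree $|\alpha|$; symmetrically $\M_*(I,\beta)\cong\F_p$ in degree $|\beta|$. In our normalization this says $\M_*(I,\alpha)\cong\Sigma^{|\alpha|}\F_p$ and $\M_*(I,\beta)\cong\Sigma^{|\beta|}\F_p$, which is the $a=0$, $b=1$ case.

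Next I would handle the two elementary operations. For suspension, $\Sigma^a M$ is $M$ with every element shifted up by $a$ and the module action unchanged, so $\M_*(\Sigma^a M,\alpha)\cong\Sigma^a\M_*(M,\alpha)$ and likewise for $\beta$ — this is immediate from the definition since the shift commutes with $\ker$ and $\operatorname{im}$. For the tensor power, apply \cref{marg:kunneth} inductively: $\M_*(I^{\otimes b},\alpha)\cong\M_*(I,\alpha)^{\otimes b}\cong(\Sigma^{|\alpha|}\F_p)^{\otimes b}\cong\Sigma^{|\alpha| b}\F_p$, using that tensoring over $\F_p$ with $\Sigma^{|\alpha|}\F_p$ just shifts degree by $|\alpha|$; similarly $\M_*(I^{\otimes b},\beta)\cong\Sigma^{|\beta| b}\F_p$. (For negative $b$ one uses that $I$ is invertible in $\mathrm{Stable}(E(\alpha,\beta))$ per \cref{pic}, so $I^{\otimes -1}$ is defined up to stable equivalence; its Margolis homology is the "inverse" one-dimensional space in degree $-|\alpha|$ respectively $-|\beta|$, which one checks by applying the Künneth theorem to $I^{\otimes b}\otimes I^{\otimes -b}\sim\F_p$ and using that Margolis homology is unchanged by stable equivalence, \cref{marg:whitehead}.)

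Combining the two operations gives
\[
\M_*(\Sigma^a I^{\otimes b},\alpha)\cong\Sigma^a\M_*(I^{\otimes b},\alpha)\cong\Sigma^{a+|\alpha| b}\F_p,
\qquad
\M_*(\Sigma^a I^{\otimes b},\beta)\cong\Sigma^{a+|\beta| b}\F_p,
\]
as claimed. I do not expect a serious obstacle here; the only point requiring a little care is the negative-exponent case, where "$I^{\otimes b}$" must be interpreted in the stable category, and one must invoke stable invariance of Margolis homology (via \cref{marg:whitehead} and the fact that free modules have vanishing Margolis homology) to make the Künneth bookkeeping legitimate. Everything else is a direct unwinding of definitions together with \cref{marg:kunneth}.
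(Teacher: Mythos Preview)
Your proposal is correct and follows essentially the same route as the paper: establish the base case $\M_*(I,\alpha)\cong\Sigma^{|\alpha|}\F_p$ and $\M_*(I,\beta)\cong\Sigma^{|\beta|}\F_p$, then apply the K\"unneth formula (\cref{marg:kunneth}) together with the suspension shift. The only minor difference is that you obtain the base case by direct inspection of $\ker\alpha/\operatorname{im}\alpha$ on the three-dimensional module $I$, whereas the paper extracts it from the long exact sequence in Margolis homology associated to $0\to I\to E(\alpha,\beta)\to \F_p\to 0$; your explicit treatment of negative $b$ via invertibility and stable invariance is also fine (the paper handles $I^{-1}$ separately in \cref{def:J}).
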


\begin{proof}
The algebra $E(\alpha, \beta)$ acts trivially on the module $\Sigma$, so the Margolis homology of $\Sigma$ is 
\[ \M_{*}(\Sigma, \alpha) \cong \Sigma \F_{p} \]
\[ \M_{*}(\Sigma, \beta) \cong \Sigma \F_{p} .\]

Consider the long exact sequence
\[ \cdots \rightarrow M_{*}\big(E(\alpha, \beta), \alpha\big) \rightarrow \M_{*}\big(\F_{p}, \alpha\big) \rightarrow \Sigma^{-|\alpha|}\M_{*}\big(I, \alpha\big) \rightarrow \Sigma^{-|\alpha|}M_{*}\big(E(\alpha, \beta), \alpha\big) \rightarrow  \cdots .\]

Since the Margolis homology of $E(\alpha, \beta)$ is trivial, it follows that the Margolis homology of the augmentation ideal $I$ is
\[ \M_{*}(I, \alpha) \cong \Sigma^{|\alpha|} \F_{p} \]
\[ \M_{*}(I, \beta) \cong \Sigma^{|\beta|} \F_{p} .\]
By the K\"unneth formula (\cref{marg:kunneth}),
\[ \M_{*}(\Sigma^{a}I^{\otimes b}, \alpha) \cong \Sigma^{a + |\alpha|b}\F_{p} \]
\[ \M_{*}(\Sigma^{a}I^{\otimes b}, \beta) \cong \Sigma^{a + |\beta|b}\F_{p} .\]
\end{proof} 

It will be useful to have an explicit description of $I^{-1}$.

\begin{lemma}\label{def:J} \cite[Proof~of~Lemma~3.5]{adams_priddy}
Let $J$ be the module defined by the short exact sequence 
\begin{equation}\label{ses:J} 0 \rightarrow \F_{p} \rightarrow E(\alpha, \beta) \rightarrow J \rightarrow 0 .
\end{equation}
Then $J = I^{-1}$. 
\end{lemma}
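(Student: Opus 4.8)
The plan is to verify directly that $I \otimes J$ is stably equivalent to $\F_p$ by computing Margolis homologies and invoking the Whitehead theorem (\cref{marg:whitehead}). First I would compute the Margolis homology of $J$ using the short exact sequence (\ref{ses:J}). Since $E(\alpha,\beta)$ has trivial Margolis homology with respect to both $\alpha$ and $\beta$, the long exact sequences in $\M_*(-,\alpha)$ and $\M_*(-,\beta)$ associated to (\ref{ses:J}) collapse to isomorphisms
\[ \M_*(J,\alpha) \cong \Sigma^{1-|\alpha|}\M_*(\F_p,\alpha) \cong \Sigma^{-|\alpha|}\F_p, \qquad \M_*(J,\beta) \cong \Sigma^{-|\beta|}\F_p, \]
where the degree shift comes from the connecting homomorphism (note $J \cong \Sigma^{-1}$-shift of the cokernel, and one tracks internal degrees carefully — this is the one place where a sign/shift bookkeeping slip is easy). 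In particular $J$ has one-dimensional Margolis homology in each case, so by \cref{inv:marg} it is stably equivalent to an invertible module.

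Next I would apply the Künneth theorem (\cref{marg:kunneth}) together with \cref{marg:I}. Recall $\M_*(I,\alpha) \cong \Sigma^{|\alpha|}\F_p$ and $\M_*(I,\beta)\cong\Sigma^{|\beta|}\F_p$. Hence
\[ \M_*(I \otimes J, \alpha) \cong \M_*(I,\alpha)\otimes\M_*(J,\alpha) \cong \Sigma^{|\alpha|}\F_p \otimes \Sigma^{-|\alpha|}\F_p \cong \F_p, \]
and likewise $\M_*(I\otimes J,\beta) \cong \F_p$, both concentrated in internal degree $0$. Since $I \otimes J$ is connective and finitely generated, and the unit map $\F_p \to I\otimes J$ (or rather the natural comparison map realizing this identification) induces an isomorphism on both Margolis homologies, \cref{marg:whitehead} gives that $I \otimes J$ is stably equivalent to $\F_p$. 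Therefore $J$ is a stable inverse of $I$, i.e. $J = I^{-1}$ in $Pic(Stable(E(\alpha,\beta)))$.

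The main obstacle is entirely bookkeeping rather than conceptual: one must pin down the exact internal-degree shifts in the connecting maps of the long exact Margolis sequences so that the shifts in $\M_*(I,-)$ and $\M_*(J,-)$ genuinely cancel to give $\F_p$ in degree $0$, and one must exhibit (or at least name) an explicit $E(\alpha,\beta)$-module map $\F_p \to I\otimes J$ inducing the asserted Margolis isomorphisms so that \cref{marg:whitehead} applies — invertibility from \cref{inv:marg} alone only shows $I\otimes J \simeq \Sigma^a I^{\otimes b}$ for some $a,b$, and one still needs the Margolis computation to conclude $a = b = 0$. Alternatively, and perhaps more cleanly, one can argue purely in $Pic(Stable(E(\alpha,\beta)))\cong \Z\{\Sigma\}\oplus\Z\{I\}$ (\cref{pic}): the Margolis homology computation identifies the class of $J$ as $-|\alpha|$ in the $\Sigma$-coordinate... wait, rather: the pair of Margolis degrees $(|\alpha|,|\beta|)$ for $I$ and $(-|\alpha|,-|\beta|)$ for $J$ are negatives, and since the assignment $M \mapsto (\deg\M_*(M,\alpha),\deg\M_*(M,\beta))$ is an injective homomorphism on the Picard group (by \cref{marg:I} it sends $\Sigma^aI^{\otimes b}$ to $(a+|\alpha|b,\,a+|\beta|b)$, which is injective in $(a,b)$), the classes of $I$ and $J$ must be inverse. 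This route avoids constructing the explicit comparison map and is the one I would ultimately write up.
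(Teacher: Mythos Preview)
Your proposal is correct, and the route you ultimately settle on---reading off the class of $J$ in $Pic(Stable(E(\alpha,\beta)))$ via the injective homomorphism $\Sigma^aI^{\otimes b}\mapsto(a+|\alpha|b,\,a+|\beta|b)$ from \cref{marg:I} and \cref{pic}---is exactly the paper's argument: compute $\M_*(J,\alpha)\cong\Sigma^{-|\alpha|}\F_p$ and $\M_*(J,\beta)\cong\Sigma^{-|\beta|}\F_p$ from the long exact sequence, invoke \cref{inv:marg}, and solve for $(a,b)=(0,-1)$. The only small addition in the paper is the observation that $J$, being a quotient of $E(\alpha,\beta)$, has no free summands, which upgrades the stable equivalence $J\simeq I^{-1}$ to an actual module isomorphism; you may want to include that sentence.
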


\begin{proof}
Applying Margolis homology to the short exact sequence (\ref{ses:J}) induces a long exact sequence 
\[ \cdots \rightarrow \Sigma^{|\alpha|}\M_{*}\big(J, \alpha\big) \rightarrow \M_{*}\big(\F_{p}, \alpha\big) \rightarrow \M_{*}\big(E(\alpha, \beta), \alpha\big) \rightarrow \cdots .\]
Recall that $\M_{*}\big(E(\alpha, \beta), \alpha\big) = 0$. So \[\Sigma^{|\alpha|}\M_{*}\big(J, \alpha\big) \cong \M_{*}\big(\F_{p}, \alpha\big).\]
Thus $\M_{*}\big(J, \alpha\big) \cong \Sigma^{-|\alpha|} \F_{p}$. By the same argument, $\M_{*}\big(J, \beta \big) \cong \Sigma^{-|\beta|} \F_{p}$. It follows from \cref{inv:marg} that $J$ is an invertible module (up to stable equivalence). Observe that $J$ is a quotient of $E(\alpha, \beta)$, so it has no free summands. By \cref{pic}, $J$ is isomorphic to $\Sigma^{a}I^{b}$ for some integers $a$ and $b$. It follows from \cref{marg:I} that $a = 0$ and $b=-1$. So indeed $J$ is $I^{-1}$.
\end{proof} 

\subsubsection{The $Ext$ groups of invertible modules}\label{sec:ext:inv}
Now we will recall some results on the $Ext$ groups of invertible modules.
\begin{corollary}\label{ext:J}
If $b \le 0$, then \[ Ext_{E(\alpha, \beta)}^{s,t}\big( \F_{p}, I^{\otimes b} \big) \cong Ext_{E(\alpha, \beta)}^{s - b,t}\big( \F_{p}, \F_{p} \big) \text{ for $s > 0$}  .\]
\end{corollary}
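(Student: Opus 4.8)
The plan is to induct on $-b \geq 0$ using the short exact sequence defining $J = I^{-1}$ from \cref{def:J}, tensored up to the appropriate power. The base case $b = 0$ is trivial, since $I^{\otimes 0} = \F_p$ and the asserted isomorphism is the identity $Ext^{s,t}_{E(\alpha,\beta)}(\F_p,\F_p) \cong Ext^{s,t}_{E(\alpha,\beta)}(\F_p,\F_p)$ for $s > 0$. For the inductive step, suppose the claim holds for some $b \le 0$ and consider $b - 1$. Tensoring the short exact sequence $0 \to \F_p \to E(\alpha,\beta) \to J \to 0$ of \cref{def:J} with the flat $\F_p$-module $I^{\otimes b}$ (over a field everything is flat, so exactness is preserved) yields a short exact sequence of $E(\alpha,\beta)$-modules
\[ 0 \to I^{\otimes b} \to E(\alpha,\beta) \otimes I^{\otimes b} \to I^{\otimes(b-1)} \to 0, \]
using $J \otimes I^{\otimes b} = I^{-1} \otimes I^{\otimes b} = I^{\otimes(b-1)}$ up to stable equivalence (and $Ext$ only sees the stable equivalence class for $s > 0$). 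Note that $E(\alpha,\beta) \otimes I^{\otimes b}$ is a free $E(\alpha,\beta)$-module.

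Next I would feed this short exact sequence into the long exact sequence for $Ext_{E(\alpha,\beta)}(\F_p, -)$. Because $E(\alpha,\beta) \otimes I^{\otimes b}$ is free, hence projective, hence injective by \cref{free:proj:inj} (it is finitely generated in each degree since $I^{\otimes b}$ is, for $b \le 0$), its higher $Ext$ groups vanish: $Ext^{s,t}_{E(\alpha,\beta)}(\F_p, E(\alpha,\beta) \otimes I^{\otimes b}) = 0$ for $s > 0$. Therefore the connecting homomorphism gives an isomorphism
\[ Ext^{s,t}_{E(\alpha,\beta)}\big(\F_p, I^{\otimes(b-1)}\big) \xrightarrow{\ \cong\ } Ext^{s+1,t}_{E(\alpha,\beta)}\big(\F_p, I^{\otimes b}\big) \qquad \text{for } s > 0. \]
Combining this with the inductive hypothesis $Ext^{s+1,t}_{E(\alpha,\beta)}(\F_p, I^{\otimes b}) \cong Ext^{s+1-b,t}_{E(\alpha,\beta)}(\F_p,\F_p)$ gives $Ext^{s,t}_{E(\alpha,\beta)}(\F_p, I^{\otimes(b-1)}) \cong Ext^{s-(b-1),t}_{E(\alpha,\beta)}(\F_p,\F_p)$ for $s > 0$, which closes the induction. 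One should double-check the internal grading $t$ is preserved throughout: the maps in the short exact sequence are degree-preserving, so the connecting homomorphism and all identifications respect $t$.

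The main obstacle I anticipate is the bookkeeping around stable equivalence versus honest isomorphism. The identity $J = I^{-1}$ only holds in $Stable(E(\alpha,\beta))$, and similarly $J \otimes I^{\otimes b} \cong I^{\otimes(b-1)}$ is a stable equivalence, possibly differing from an isomorphism by free summands. I would handle this by invoking the standard fact that for $s > 0$, $Ext^{s,t}_{E(\alpha,\beta)}(\F_p, -)$ kills free (equivalently projective/injective) modules and hence factors through $Stable(E(\alpha,\beta))$; so replacing a module by a stably equivalent one does not change the relevant $Ext$ groups. The restriction to $s > 0$ in the statement is precisely what makes this work — at $s = 0$ the free summands would contribute. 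A secondary point to verify is that $I^{\otimes b}$ for $b \le 0$ is finitely generated in each degree so that \cref{free:proj:inj} applies to $E(\alpha,\beta) \otimes I^{\otimes b}$; this follows since $I$ and its inverse $J$ are finite-dimensional and $E(\alpha,\beta)$ is finite-dimensional over $\F_p$.
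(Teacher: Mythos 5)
Your proposal is correct and follows essentially the same route as the paper: both arguments tensor the defining short exact sequence $0 \to \F_p \to E(\alpha,\beta) \to J \to 0$ with powers of $J = I^{-1}$, use that the middle term is free (hence injective, so its higher $Ext$ vanishes) to read off the dimension-shifting isomorphism from the long exact sequence, and induct. Your explicit handling of the stable-equivalence bookkeeping (that $Ext^{s>0}$ factors through $Stable(E(\alpha,\beta))$ and so is insensitive to the free summands by which $J\otimes I^{\otimes b}$ may differ from $I^{\otimes(b-1)}$) is a point the paper passes over silently, and is a welcome addition.
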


\begin{proof}
The short exact sequence (\ref{ses:J}) induces a long exact sequence

\[ \cdots \rightarrow Ext_{E(\alpha, \beta)}^{s}\big( \F_{p}, E(\alpha, \beta)\big) \rightarrow Ext_{E(\alpha, \beta)}^{s}\big( \F_{p}, J \big) \rightarrow Ext_{E(\alpha, \beta)}^{s+1}\big( \F_{p}, \F_{p} \big) \rightarrow \cdots .\]

By \cref{free:proj:inj}, the module $E(\alpha, \beta)$ is injective, so $Ext_{E(\alpha, \beta)}^{s}\big( \F_{p}, E(\alpha, \beta)\big) = 0$ for all $s > 0$. So 

\[Ext_{E(\alpha, \beta)}^{s}\big( \F_{p}, J \big) \cong Ext_{E(\alpha, \beta)}^{s+1}\big( \F_{p}, \F_{p} \big)\]

for all $s > 0$. Tensoring the short exact sequence (\ref{ses:J}) with $J^{\otimes k}$ yields a short exact sequence 
\[ 0 \rightarrow J^{\otimes k} \rightarrow F \rightarrow J^{\otimes k+1} \rightarrow 0 ,\]
where $F := E(\alpha, \beta) \otimes J^{\otimes k}$ is a free module. So by induction,

\[Ext_{E(\alpha, \beta)}^{s}\big( \F_{p}, J^{\otimes k+1} \big) \cong Ext_{E(\alpha, \beta)}^{s+1}\big( \F_{p}, J^{\otimes k} \big)\]

for $s > 0$, for all $k \in \N$. That is, for all $k \ge 0$
\[Ext_{E(\alpha, \beta)}^{s}\big( \F_{p}, J^{\otimes k} \big) \cong Ext_{E(\alpha, \beta)}^{s+b}\big( \F_{p}, \F_{p} \big) \text { for $s>0$}.\]

By \cref{def:J}, it follows that for all $b \le 0$,
\[Ext_{E(\alpha, \beta)}^{s}\big( \F_{p}, I^{\otimes b} \big) \cong Ext_{E(\alpha, \beta)}^{s-b}\big( \F_{p}, \F_{p} \big) \text { for $s>0$}.\]
\end{proof}

We will use the following criteria for showing that the $Ext^{s,t}$ groups of a module over $E(Q_{j}, Q_{h})$ are zero when $s > 0$ and $(t-s)$ is odd. 

\begin{proposition}\label{marg:aux}
Consider a graded $E(Q_{j}, Q_{h})$-module $M$. Suppose without loss of generality that $j < h$. If the Margolis homology of $M$ is
\[ \M_{*}\big( M, Q_{j} \big) \cong \F_{p}\{ x \} \]
\[ \M_{*}\big( M, Q_{h} \big) \cong \F_{p}\{ y \} \]
where $|x|$ is even and $|y| > |x| > 0$, 
then $Ext_{E(Q_{j}, Q_{h})}^{s,t}\big(\F_{p}, M\big)$ is concentrated in even $(t-s)$-degrees for all $s > 0$. 
\end{proposition}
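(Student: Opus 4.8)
The plan is to use Margolis homology to identify $M$ up to stable equivalence, then appeal to the $Ext$ computations already in hand. By hypothesis $\M_*(M,Q_j)$ and $\M_*(M,Q_h)$ are both one-dimensional, so \cref{inv:marg} tells us that $M$ is stably equivalent to an invertible module, and by \cref{pic} together with the Margolis-homology computation of \cref{marg:I} we can write $M \cong_{E(Q_j,Q_h)} \Sigma^a I^{\otimes b} \oplus F$ for a free module $F$ and integers $a,b$ determined by comparing $\M_*(M,Q_j) \cong \Sigma^{a + |Q_j| b}\F_p = \Sigma^{|x|}\F_p$ and $\M_*(M,Q_h) \cong \Sigma^{a + |Q_h| b}\F_p = \Sigma^{|y|}\F_p$. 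Solving these two equations gives $b = (|y| - |x|)/(|Q_h| - |Q_j|)$ and $a = |x| - |Q_j| b$. First I would check that this forces $b \le 0$: since $|x|,|y| > 0$ with $|x|$ even and the $Q_i$ have odd degree, and the module $M$ in our application ($R_i$, which has no free summands and arises from $\overline{H_*C}$) is connective with its bottom class in low degree, the "$\Sigma^a I^{\otimes b}$" must be $I^{\otimes b}$ with $b \le 0$ — i.e. a nonnegative power of $J = I^{-1}$ — shifted by a small nonnegative $a$; any positive $b$ would push the bottom Margolis class to negative degree or otherwise contradict connectivity. This is the one place where the inequality $|y| > |x| > 0$ and the parity of $|x|$ actually get used, so I'd want to state it carefully.

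Once $M \simeq \Sigma^a J^{\otimes (-b)} \oplus F$ with $-b \ge 0$ and $a \ge 0$ even (note $a = |x| - |Q_j|b$ is even plus an even multiple of an odd number times $(-b)$... actually $a = |x| + |Q_j|(-b)$, which has the parity of $-b$, so I'd track this), the computation reduces to $Ext_{E(Q_j,Q_h)}^{s,t}(\F_p, \Sigma^a J^{\otimes(-b)})$ for $s > 0$, since the free summand $F$ contributes nothing in positive cohomological degree by \cref{free:proj:inj}. By \cref{ext:J} (applied with the exterior generators $\alpha = Q_j$, $\beta = Q_h$) we have, for $s > 0$,
\[
Ext_{E(Q_j,Q_h)}^{s,t}(\F_p, \Sigma^a I^{\otimes b}) \cong \Sigma^a Ext_{E(Q_j,Q_h)}^{s - b, t}(\F_p, \F_p).
\]
Now $Ext_{E(Q_j,Q_h)}^{*,*}(\F_p,\F_p)$ is a polynomial algebra $\F_p[v_j, v_h]$ with $v_j$ in bidegree $(1, |Q_j|+1)$ and $v_h$ in bidegree $(1,|Q_h|+1)$; since $|Q_j|$ and $|Q_h|$ are odd, each polynomial generator has even $(t-s)$-degree, hence every class of $Ext_{E(Q_j,Q_h)}(\F_p,\F_p)$ lies in even $(t-s)$. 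The internal shift by $a$ changes $t$ by $a$ and the homological shift by $-b$ changes $s$ by $-b$ and leaves $t$ fixed; so the $(t-s)$-degree of a class in $Ext^{s,t}(\F_p,\Sigma^aI^{\otimes b})$ differs from that of the corresponding class in $Ext^{s+b,t-a}(\F_p,\F_p)$ — wait, I must recompute — by $a + b$. Thus I need $a + b$ even. From $a = |x| + |Q_j|(-b)$ and $|x|$ even, $|Q_j|$ odd, we get $a \equiv -b \equiv b \pmod 2$, so $a + b \equiv 2b \equiv 0 \pmod 2$. Good — that closes it.

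The main obstacle I anticipate is the bookkeeping around which power of $I$ appears and with what sign, together with justifying that $b \le 0$ (equivalently, that $M$ is a sum of a free module and a nonnegative power of $J$) purely from the stated Margolis-homology hypotheses plus connectivity — \cref{ext:J} is only stated for $b \le 0$, so I cannot afford the other case, and I should make sure the hypotheses of \cref{marg:aux} genuinely rule it out (they do, via the inequalities $|y| > |x| > 0$ and parity of $|x|$, in the connective setting where this proposition is applied). The rest — invoking \cref{inv:marg}, \cref{pic}, \cref{marg:I}, \cref{ext:J}, \cref{free:proj:inj} and tracking the evenness of the polynomial generators of $Ext_{E(Q_j,Q_h)}(\F_p,\F_p)$ — is routine once that is pinned down.
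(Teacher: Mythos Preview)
Your approach is exactly the paper's: identify $M$ stably as $\Sigma^{a}I^{\otimes b}$ via \cref{inv:marg}, \cref{pic}, and \cref{marg:I}; apply \cref{ext:J}; then check the parity of the shift. The parity bookkeeping you do at the end ($a = |x| - b|Q_j|$ with $|x|$ even and $|Q_j|$ odd forces $a\equiv b \pmod 2$, so the $(t-s)$-shift is even) is precisely the paper's argument.

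The one place you hesitate---the sign of $b$---is where you diverge, and your workaround via connectivity is both unnecessary and not licensed by the stated hypotheses. The paper settles it directly from the degree convention in force: the $Q_i$ here act on \emph{homology} and lower internal degree (cf.\ the formulas in \cref{sec:BPn}, where $Q_j(\bar\tau_k)=\bar\xi_{k-j}^{p^j}$ drops degree by $2p^j-1$), so $|Q_j|=-(2p^j-1)<0$. With $j<h$ this gives $|Q_h|<|Q_j|<0$, hence $|Q_h|-|Q_j|<0$; since $|y|-|x|>0$, the equation
\[
b\bigl(|Q_h|-|Q_j|\bigr)=|y|-|x|
\]
forces $b<0$ outright, and \cref{ext:J} applies as stated. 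No appeal to connectivity of $M$ is needed, and you should drop it---the proposition does not assume it, and your sketch of why ``positive $b$ would contradict connectivity'' does not actually go through under the positive-degree convention you seem to be using for $Q_j$.
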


\begin{proof}
By \cref{inv:marg} and \cref{pic}, the module $M$ is stably equivalent to $\Sigma^{a}I^{b}$ for some $a,b \in \N$. By \cref{marg:I}, 

\[ a + b|Q_{j}| = |x| \]
\[ a + b|Q_{h}| = |y| .\]

It follows that 
\begin{equation}\label{eqn:b} b(|Q_{h}| - |Q_{j}|) = |y|-|x| \end{equation}
\begin{equation}\label{eqn:a} a = |x| - b|Q_{j}|. \end{equation}

Note that $|Q_{h}| < |Q_{j}| < 0$, so $|Q_{h}| - |Q_{j}|$ is negative. Since $|y|-|x|$ is positive, \cref{eqn:b} implies that $b$ must also be negative. So we can apply \cref{ext:J} to get 
\[Ext_{E(Q_{j}, Q_{h})}^{s,t}\big( \F_{p}, \Sigma^{a}I^{b} \big) \cong Ext_{E(Q_{j}, Q_{h})}^{s-b,t-a}\big( \F_{p}, \F_{p} \big) \text{ for all $s>0$}.\] 

Recall that \[Ext_{E(Q_{j}, Q_{h})}^{s,t}\big( \F_{p}, \F_{p} \big) \cong \F_{p}[v_{j}, v_{h}].\] 

Note that $|v_{j}|_{(s,t)} = (1, 2p^{j}-1)$, so $Ext_{E(Q_{j}, Q_{h})}^{s,t}\big( \F_{p}, \F_{p} \big)$ is concentrated in even $(t-s)$-degrees.

Furthermore, since $|x|$ is even and $|Q_{j}|$ is odd, \cref{eqn:a} tells us that $b$ has the same parity as $a$.

Thus $Ext_{E(Q_{j}, Q_{h})}^{s,t}\big( \F_{p}, \Sigma^{a}I^{b} \big)$ must also be concentrated in even $t-s$ degrees. Recall that \[Ext_{E(Q_{j}, Q_{h})}^{s,t}\big( \F_{p}, M \big) \cong Ext_{E(Q_{j}, Q_{h})}^{s,t}\big( \F_{p}, \Sigma^{a}I^{b} \big) \]
for all $s > 0$, so indeed $Ext_{E(Q_{j}, Q_{h})}^{s,t}\big( \F_{p}, M \big) $ is concentrated in even $(t-s)$-degrees for all $s>0$.
\end{proof}

Next we will analyze the Margolis homology of $H_{*}BP\2$ and use it to decompose $H_{*}BP\2$ in terms of invertible $E(Q_{j}, Q_{h})$
-modules. The description differs slightly at $p=2$ from other primes, so we will first describe the odd-primary situation and then do $p=2$.
\subsubsection{Analyzing the Margolis homology of $H_{*}BP\2$: the odd-primary case}\label{sec:marg:BP2:odd}

Let $W_{1}$ denote the $E(Q_{1}, Q_{2})$-subalgebra of $H_{*}BP\2$ generated by the exterior elements, that is,

\[ W_{1} = E(\bar{\tau}_{3}, \bar{\tau}_{4},\bar{\tau}_{5},\ldots ) \otimes \F_{p}[ \bar{\xi}_{1}^{p^{2}}] \otimes \F_{p}[ \bar{\xi}_{2}^{p}, \bar{\xi}_{3}^{p}, \ldots ] .\]

Let $W_{o}$ be the $E(Q_{0}, Q_{2})$-subalgebra of $H_{*}BP\2$ generated by exterior elements $\bar{\tau}_{k}$ such that $k$ is odd, that is,
\[ W_{o} = E(\bar{\tau}_{3}, \bar{\tau}_{5},\ldots ) \otimes \F_{p}[\bar{\xi}_{1}^{p^{2}}] \otimes \F_{p}[ \bar{\xi}_{3}, \bar{\xi}_{5}, \ldots ] .\]

Likewise, let $W_{e}$ be the $E(Q_{0}, Q_{2})$-subalgebra of $H_{*}BP\2$ generated by exterior elements $\bar{\tau}_{k}$ such that $k$ is even:
\[ W_{e} = E(\bar{\tau}_{4}, \bar{\tau}_{6},\cdots ) \otimes \F_{p}[\bar{\xi}_{2}^{9}] \otimes \F_{p}[ \bar{\xi}_{4}, \bar{\xi}_{6}, \ldots ] .\]

Let $T_{k}(x_{1}, x_{2},\ldots ) = \F_{p}[x_{1}, x_{2}, \ldots]\Big{/}(x_{1}^{p^{k}}, x_{2}^{p^{k}},\ldots)$. 

The following lemma is an adaptation of Lemma 3.15 of \cite{dominic_odd}. 

\begin{lemma}\label{lem:2.30}
As an $E(Q_{1}, Q_{2})$-module,
\[ H_{*}BP\2 \cong W_{1} \otimes T_{2}(\bar{\xi}_{1}) \otimes T_{1}(\bar{\xi}_{2}, \bar{\xi}_{3}, \ldots) .\] 

As an $E(Q_{0}, Q_{2})$-module, 
\[ H_{*}BP\2 \cong W_{e} \otimes W_{o} \otimes T_{2}(\bar{\xi}_{1},\bar{\xi}_{2}) .\] 

\end{lemma}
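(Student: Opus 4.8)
The statement asserts two $E(Q_i,Q_j)$-module decompositions of $H_*BP\langle 2\rangle \cong \F_p[\bar\xi_1,\bar\xi_2,\ldots]\otimes E(\bar\tau_3,\bar\tau_4,\ldots)$. Both should follow the same template as Culver's Lemma~3.15 in \cite{dominic_odd}: identify a free sub/quotient structure coming from the exterior part, then realize the polynomial part as a tensor of truncated polynomial algebras on which the relevant $Q_i$'s act trivially. I would prove each decomposition separately.

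**The $E(Q_1,Q_2)$-decomposition.** Recall the action: $Q_1(\bar\tau_k)=\bar\xi_{k-1}^p$ for $k\ge 1$, $Q_2(\bar\tau_k)=\bar\xi_{k-2}^{p^2}$ for $k\ge 2$, and $Q_1,Q_2$ kill all $\bar\xi_i$. So the generators $\bar\tau_3,\bar\tau_4,\ldots$ map under $Q_1$ onto $\bar\xi_2^p,\bar\xi_3^p,\ldots$ and under $Q_2$ onto $\bar\xi_1^{p^2},\bar\xi_2^{p^2},\ldots$. The first step is to check that $W_1 := E(\bar\tau_3,\bar\tau_4,\ldots)\otimes \F_p[\bar\xi_1^{p^2}]\otimes\F_p[\bar\xi_2^p,\bar\xi_3^p,\ldots]$ is a sub-$E(Q_1,Q_2)$-algebra: this is immediate since the $Q_i$-images of the exterior generators all land in the stated polynomial subalgebra. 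The second step is the tensor splitting $\F_p[\bar\xi_1,\ldots] \cong \F_p[\bar\xi_1^{p^2}]\otimes\F_p[\bar\xi_2^p,\ldots]\otimes T_2(\bar\xi_1)\otimes T_1(\bar\xi_2,\bar\xi_3,\ldots)$ as algebras, where $T_k(x_1,\ldots) = \F_p[x_i]/(x_i^{p^k})$; here I use the standard identification $\F_p[x]\cong \F_p[x^{p^k}]\otimes \F_p[x]/(x^{p^k})$. Since $Q_1,Q_2$ act trivially on every $\bar\xi_i$, the factors $T_2(\bar\xi_1)$ and $T_1(\bar\xi_2,\ldots)$ are trivial $E(Q_1,Q_2)$-modules, and combining the two steps gives $H_*BP\langle 2\rangle \cong W_1\otimes T_2(\bar\xi_1)\otimes T_1(\bar\xi_2,\bar\xi_3,\ldots)$ as $E(Q_1,Q_2)$-modules.

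**The $E(Q_0,Q_2)$-decomposition.** Here $Q_0(\bar\tau_k)=\bar\xi_k^{1}$ (i.e. $Q_0\bar\tau_k=\bar\xi_k$, the $j=0$ case) and $Q_2(\bar\tau_k)=\bar\xi_{k-2}^{p^2}$. The key observation is that $Q_0$ and $Q_2$ shift the index of $\bar\tau_k$ by an \emph{even} amount (by $0$ and by $-2$ respectively, in the sense of which $\bar\xi$ indices appear), so the even-indexed exterior generators $\bar\tau_4,\bar\tau_6,\ldots$ and the odd-indexed ones $\bar\tau_3,\bar\tau_5,\ldots$ can be handled by disjoint sub-algebras $W_e$ and $W_o$. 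First I would verify that $W_o = E(\bar\tau_3,\bar\tau_5,\ldots)\otimes\F_p[\bar\xi_1^{p^2}]\otimes\F_p[\bar\xi_3,\bar\xi_5,\ldots]$ and $W_e = E(\bar\tau_4,\bar\tau_6,\ldots)\otimes\F_p[\bar\xi_2^{p^2}]\otimes\F_p[\bar\xi_4,\bar\xi_6,\ldots]$ are each closed under $Q_0,Q_2$ (noting $\bar\xi_2^9 = \bar\xi_2^{p^2}$ at $p=3$ — I would state this more carefully for general $p\ge 5$ and write $\bar\xi_2^{p^2}$ rather than $\bar\xi_2^9$ throughout). Then the algebra tensor factorization $\F_p[\bar\xi_1,\bar\xi_2,\bar\xi_3,\ldots]\cong \F_p[\bar\xi_1^{p^2}]\otimes\F_p[\bar\xi_2^{p^2}]\otimes\F_p[\bar\xi_3,\bar\xi_5,\ldots]\otimes\F_p[\bar\xi_4,\bar\xi_6,\ldots]\otimes T_2(\bar\xi_1,\bar\xi_2)$ lets me peel off $W_e\otimes W_o$ and leaves the trivial $E(Q_0,Q_2)$-module $T_2(\bar\xi_1,\bar\xi_2)$, giving $H_*BP\langle 2\rangle\cong W_e\otimes W_o\otimes T_2(\bar\xi_1,\bar\xi_2)$.

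**Main obstacle.** The delicate point in both cases is \emph{bookkeeping}: checking that the chosen polynomial subalgebra is exactly the span of the $Q_i$-images of the chosen exterior generators, with no overlaps and no omissions, so that $W_1$ (resp.\ $W_e\otimes W_o$) really is a sub-algebra over the exterior algebra and the complementary polynomial generators are genuinely $Q_i$-acyclic. In the $E(Q_0,Q_2)$ case one must be careful that $Q_0\bar\tau_k=\bar\xi_k$ and $Q_2\bar\tau_k=\bar\xi_{k-2}^{p^2}$ don't interact badly between the even and odd families — they do not, because $Q_0$ preserves index parity and $Q_2$ shifts by $2$, also preserving parity — but making this precise is where the argument really lives. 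Since Culver's Lemma~3.15 does essentially this computation for the analogous $E(Q_0,Q_1)$-statement, I expect the cleanest write-up is to invoke that lemma's method verbatim, substituting $(Q_1,Q_2)$ and $(Q_0,Q_2)$ for $(Q_0,Q_1)$ and adjusting the Frobenius powers ($p^2$ in place of $p$ where $Q_2$ replaces $Q_1$), exactly as other propositions in this section are proved.
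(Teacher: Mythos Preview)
Your proposal is correct and follows exactly the approach the paper intends: the paper states this lemma without proof, remarking only that it is ``an adaptation of Lemma 3.15 of \cite{dominic_odd},'' and your sketch spells out precisely that adaptation (verify the $Q_i$-images of the exterior generators land in the chosen polynomial subalgebra, then tensor off the trivially-acted-upon truncated polynomial factors). You also correctly flag the typo $\bar\xi_2^{9}$ for $\bar\xi_2^{p^2}$ in the definition of $W_e$.
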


We will need to use the Margolis homology of $H_{*}BP\2$.

\begin{theorem}\cite[Theorem~2.18]{dominic_odd}\label{marg:BP2}
The Margolis homology of $H_{*}BP\2$ is
\[ \M_{*}(H_{*}BP\2, Q_{0}) \cong \F_{p}[\bar{\xi_{1}}, \bar{\xi_{2}}] \]
\[ \M_{*}(H_{*}BP\2, Q_{1}) \cong \F_{p}[\bar{\xi_{1}}] \otimes T_{1}(\bar{\xi_{2}}, \bar{\xi_{3}}, \ldots) \]
\[ \M_{*}(H_{*}BP\2, Q_{2}) \cong T_{2}(\bar{\xi_{1}},\bar{\xi_{2}}, \bar{\xi_{3}}, \ldots) .\]
\end{theorem}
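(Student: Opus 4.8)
The plan is to compute each $\M_*(H_*BP\2, Q_i)$ by regrouping the tensor factorization of $H_*BP\2$ into $Q_i$-stable pieces and then applying the Künneth theorem for Margolis homology. Recall from \cref{hom:BPn} that $H_*BP\2 \cong A//E(2)_* \cong \F_p[\bar{\xi}_1, \bar{\xi}_2, \ldots] \otimes E(\bar{\tau}_3, \bar{\tau}_4, \ldots)$, and from \cref{sec:BPn} that $Q_i\bar{\xi}_k = 0$ for all $k$ while $Q_i\bar{\tau}_k = \bar{\xi}_{k-i}^{\,p^i}$ whenever $i \le k$ (and $Q_i\bar{\tau}_k = 0$ otherwise). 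Each $Q_i$ is a derivation, so $H_*BP\2$ — a tensor product over its polynomial and exterior generators — may be reorganized as an $E(Q_i)$-module by pairing every exterior generator $\bar{\tau}_k$ (with $k \ge 3$) with the polynomial generator $\bar{\xi}_{k-i}$ onto which $Q_i$ sends it. Since $\bar{\tau}_k$ ranges over $k \ge 3$, the polynomial generators $\bar{\xi}_j$ with $j \ge 3-i$ all get paired, while those with $1 \le j \le 2-i$ remain unpaired; writing $P_i$ for the polynomial algebra on the unpaired generators (so $P_0 = \F_p[\bar{\xi}_1, \bar{\xi}_2]$, $P_1 = \F_p[\bar{\xi}_1]$, $P_2 = \F_p$), we get
\[ H_*BP\2 \;\cong_{E(Q_i)}\; P_i \;\otimes\; \bigotimes_{k \ge 3}\big( \F_p[\bar{\xi}_{k-i}] \otimes E(\bar{\tau}_k) \big). \]

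Next I would compute Margolis homology factor by factor. On $P_i$ the operator $Q_i$ is zero, so $\M_*(P_i, Q_i) = P_i$. On a factor $\F_p[\bar{\xi}_{k-i}] \otimes E(\bar{\tau}_k)$, $Q_i$ is the derivation with $Q_i(\bar{\tau}_k\bar{\xi}_{k-i}^{\,n}) = \bar{\xi}_{k-i}^{\,n + p^i}$; inspecting this gives $\ker Q_i = \F_p[\bar{\xi}_{k-i}]$ and $\operatorname{im} Q_i = \bar{\xi}_{k-i}^{\,p^i}\,\F_p[\bar{\xi}_{k-i}]$, so
\[ \M_*\big( \F_p[\bar{\xi}_{k-i}] \otimes E(\bar{\tau}_k),\, Q_i \big) \;\cong\; \F_p[\bar{\xi}_{k-i}]\big/\big(\bar{\xi}_{k-i}^{\,p^i}\big) \;=\; T_i(\bar{\xi}_{k-i}). \]
Every factor above is connective and equals $\F_p$ in degree $0$, so the decomposition is degreewise finite and the Künneth theorem for Margolis homology (\cref{marg:kunneth}, extended to the evident infinite tensor products, cf.\ \cite[Ch.~18]{margolis_book}) yields $\M_*(H_*BP\2, Q_i) \cong P_i \otimes \bigotimes_{k \ge 3} T_i(\bar{\xi}_{k-i})$. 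For $i = 0$ we have $T_0(\bar{\xi}_k) = \F_p$, so this is $\F_p[\bar{\xi}_1, \bar{\xi}_2]$; for $i = 1$ the shifted index $k-1$ runs over $\{2, 3, \ldots\}$, giving $\F_p[\bar{\xi}_1] \otimes T_1(\bar{\xi}_2, \bar{\xi}_3, \ldots)$; for $i = 2$ the shifted index $k-2$ runs over $\{1, 2, \ldots\}$, giving $T_2(\bar{\xi}_1, \bar{\xi}_2, \ldots)$, which are exactly the three claimed formulas.

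The main thing to be careful about is purely combinatorial: keeping track of the index shift $k \mapsto k-i$ and correctly determining which polynomial generators remain unpaired (this is where the three cases differentiate), together with the routine check that the infinite tensor factorization is legitimate so that Künneth can be applied one internal degree at a time. Since this theorem is \cite[Theorem~2.18]{dominic_odd}, one may of course simply cite Culver; the argument above is the natural reproduction, and it runs in parallel with — and slightly refines — the module-level decompositions recorded in \cref{lem:2.30}.
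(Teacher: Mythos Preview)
Your proof is correct. The paper does not actually prove this statement: it is stated as \cite[Theorem~2.18]{dominic_odd} and used as a black box, with no argument given. You supply the standard direct computation---pair each exterior generator $\bar{\tau}_k$ with the polynomial generator $\bar{\xi}_{k-i}$ it hits under $Q_i$, compute the Margolis homology of each two-generator factor as a truncated polynomial algebra, and assemble via K\"unneth---which is exactly the argument one would expect (and is essentially Culver's). Your handling of the infinite tensor product via degreewise finiteness is the right justification, and your final remark already acknowledges that a bare citation suffices here.
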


Next we will determine the Margolis homology of $W_{1}$.

\begin{proposition}\label{marg:R1}
The Marolis homology of $W_{1}$ is

\[ \M_{*}\big( W_{1}, Q_{1} \big) \cong \F_{p}[\bar{\xi}_{1}^{p^{2}}] \]
\[ \M_{*}\big( W_{1}, Q_{2} \big) \cong T_{1}( \bar{\xi}_{2}^{p}, \bar{\xi}_{3}^{p}, \ldots  ).\]
\end{proposition}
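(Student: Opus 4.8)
The plan is to compute $\M_{*}(W_{1},Q_{1})$ and $\M_{*}(W_{1},Q_{2})$ directly: view $W_{1}$ as a complex with differential $Q_{i}$, split it into a tensor product of elementary $E(Q_{i})$-subcomplexes adapted to that differential, compute the homology of each factor by hand, and reassemble with the Künneth theorem \cref{marg:kunneth}. The first step is to record the $Q_{1}$- and $Q_{2}$-action on the algebra generators. From the formulas for the $E(n)$-action on $A//E(n)_{*}$ recalled in \cref{sec:BPn}, each $Q_{i}$ is a derivation with $Q_{i}(\bar{\xi}_{j}) = 0$ and, for $k \ge i$, $Q_{i}(\bar{\tau}_{k}) = \bar{\xi}_{k-i}^{\,p^{i}}$. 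Hence on $W_{1} = E(\bar{\tau}_{3},\bar{\tau}_{4},\dots)\otimes\F_{p}[\bar{\xi}_{1}^{p^{2}}]\otimes\F_{p}[\bar{\xi}_{2}^{p},\bar{\xi}_{3}^{p},\dots]$ we have $Q_{1}(\bar{\tau}_{k}) = \bar{\xi}_{k-1}^{p}$ and $Q_{2}(\bar{\tau}_{k}) = \bar{\xi}_{k-2}^{\,p^{2}} = (\bar{\xi}_{k-2}^{p})^{p}$ for all $k \ge 3$, while both operators annihilate every polynomial generator.

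For the $Q_{1}$-computation I would pair each exterior generator $\bar{\tau}_{k}$ with the polynomial generator $\bar{\xi}_{k-1}^{p}$ it hits; since $\{\bar{\xi}_{k-1}^{p}:k\ge 3\} = \{\bar{\xi}_{j}^{p}:j\ge 2\}$, this gives a $Q_{1}$-complex decomposition
\[ W_{1} \;\cong\; \F_{p}[\bar{\xi}_{1}^{p^{2}}]\;\otimes\;\bigotimes_{k\ge 3}\Big(E(\bar{\tau}_{k})\otimes\F_{p}[\bar{\xi}_{k-1}^{p}]\Big), \]
with $Q_{1}$ acting as zero on the first factor and as $\bar{\tau}_{k}\mapsto\bar{\xi}_{k-1}^{p}$ on the $k$-th factor. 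A factor $E(\bar{\tau})\otimes\F_{p}[y]$ with differential $\bar{\tau}\mapsto y$ has homology $\F_{p}$ (kernel $\F_{p}[y]$, image the ideal $(y)$), so \cref{marg:kunneth} gives $\M_{*}(W_{1},Q_{1}) \cong \F_{p}[\bar{\xi}_{1}^{p^{2}}]$. For $Q_{2}$ the essential point — and really the only subtle one — is that for $k\ge 4$ the class $Q_{2}(\bar{\tau}_{k}) = (\bar{\xi}_{k-2}^{p})^{p}$ is the $p$-th power of the algebra generator $\bar{\xi}_{k-2}^{p}$ of the relevant polynomial factor, whereas $Q_{2}(\bar{\tau}_{3}) = \bar{\xi}_{1}^{p^{2}}$ is a first power of the generator of $\F_{p}[\bar{\xi}_{1}^{p^{2}}]$. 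Pairing accordingly produces a $Q_{2}$-complex decomposition
\[ W_{1} \;\cong\; \Big(E(\bar{\tau}_{3})\otimes\F_{p}[\bar{\xi}_{1}^{p^{2}}]\Big)\;\otimes\;\bigotimes_{k\ge 4}\Big(E(\bar{\tau}_{k})\otimes\F_{p}[\bar{\xi}_{k-2}^{p}]\Big). \]
The first factor has homology $\F_{p}$, and for $k\ge 4$ the complex $E(\bar{\tau})\otimes\F_{p}[y]$ with differential $\bar{\tau}\mapsto y^{p}$ has homology $\F_{p}[y]/(y^{p}) = T_{1}(y)$ (kernel $\F_{p}[y]$, image the ideal $(y^{p})$). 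Applying \cref{marg:kunneth} and re-indexing $\{\bar{\xi}_{k-2}^{p}:k\ge 4\} = \{\bar{\xi}_{j}^{p}:j\ge 2\}$ yields $\M_{*}(W_{1},Q_{2}) \cong \bigotimes_{j\ge 2}T_{1}(\bar{\xi}_{j}^{p}) = T_{1}(\bar{\xi}_{2}^{p},\bar{\xi}_{3}^{p},\dots)$, as claimed.

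As a sanity check I would note that these two identities are consistent with \cref{lem:2.30} and \cref{marg:BP2}: tensoring the $E(Q_{i})$-module decomposition of $H_{*}BP\langle 2\rangle$ from \cref{lem:2.30} with \cref{marg:kunneth} and substituting the values above recovers $\M_{*}(H_{*}BP\langle 2\rangle,Q_{i})$ after absorbing the (Margolis-homology-trivial) truncated-polynomial factors; one could in fact run this backwards to deduce \cref{marg:R1}, but the direct decomposition above is cleaner since it avoids justifying a cancellation of graded vector spaces. The main obstacle throughout is purely bookkeeping — keeping track of which exterior generator maps to which polynomial generator, and in particular that $Q_{2}$ lands on a $p$-th power (forcing the height-one truncation $T_{1}$) rather than a first power; there is no conceptual difficulty once the decompositions are set up correctly.
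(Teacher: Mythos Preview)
Your argument is correct, but it takes a different route from the paper's. The paper argues \emph{indirectly}: it starts from the $E(Q_{1},Q_{2})$-module splitting $H_{*}BP\2 \cong W_{1}\otimes T_{2}(\bar\xi_{1})\otimes T_{1}(\bar\xi_{2},\bar\xi_{3},\dots)$ of \cref{lem:2.30}, observes that the truncated-polynomial factors carry the trivial $Q_{1}$- and $Q_{2}$-action (so their Margolis homology is themselves), applies K\"unneth, and then reads off $\M_{*}(W_{1},Q_{i})$ by comparing against the already-known values $\M_{*}(H_{*}BP\2,Q_{i})$ from \cref{marg:BP2}. In other words the paper \emph{divides out} the trivial factors from a known answer, which is exactly the ``run it backwards'' alternative you mention in your sanity-check paragraph. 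Your approach instead decomposes $W_{1}$ itself into a tensor product of elementary two-stage complexes $E(\bar\tau_{k})\otimes\F_{p}[y]$ and computes each factor directly. Your route is more self-contained (it does not rely on \cref{marg:BP2}) and makes the mechanism for the $T_{1}$-truncation completely explicit; the paper's route is shorter once \cref{marg:BP2} is in hand, at the cost of a mild cancellation-of-graded-vector-spaces step that you rightly flag as needing justification.
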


\begin{proof}
By the K\"unneth formula for Margolis homology (\cref{marg:kunneth}),  
\[\M_{*}\big( H_{*}BP\2, Q_{1} \big) \cong \M_{*}\big( W_{1}, Q_{1} \big) \otimes \M_{*}\big( T_{2}(\bar{\xi}_{1}), Q_{1} \big) \otimes \M_{*}\big( T_{1}(\bar{\xi}_{2}, \bar{\xi}_{3,}, \ldots), Q_{1} \big).\]
Since the $Q_{1}$-action on $T_{2}(\bar{\xi}_{1})$ and $T_{1}(\bar{\xi}_{2}, \bar{\xi}_{3,}, \ldots)$ is trivial, it follows that \[\M_{*}\big( T_{2}(\bar{\xi}_{1}), Q_{1} \big) \cong T_{2}(\bar{\xi}_{1})\] \[ \M_{*}\big( T_{1}(\bar{\xi}_{2}, \bar{\xi}_{3}, \ldots), Q_{1} \big)   \cong T_{1}(\bar{\xi}_{2}, \bar{\xi}_{3}, \ldots).\] 

Combining this with \cref{marg:BP2}, we see that \[ \M_{*}\big( H_{*}BP\2, Q_{1} \big) \cong \M_{*}\big( W_{1}, Q_{1} \big) \otimes  T_{2}(\bar{\xi}_{1}) \otimes  T_{1}(\bar{\xi}_{2}, \bar{\xi}_{3}, \ldots)  \cong \F_{p}[\bar{\xi}_{1}] \otimes T_{1}(\bar{\xi}_{2}, \bar{\xi}_{3}, \ldots) . \]

It follows that
\[ \M_{*}\big( W_{1}, Q_{1} \big) \cong \F_{p}[\bar{\xi}_{1}^{p^{2}}] .\]

Likewise, note that the $Q_{2}$-action on $T_{2}(\bar{\xi}_{1})$ and $T_{1}(\bar{\xi}_{2}, \bar{\xi}_{3,}, \ldots)$ is trivial. So by the same argument, 
\[ \M_{*}\big( W_{1}, Q_{2} \big) \cong T_{1}(\bar{\xi}_{2}^{p}, \bar{\xi}_{3}^{p}, \ldots ) .\]

\end{proof}

Note that both the $Q_{0}$ and $Q_{2}$-actions on $T_{2}(\bar{\xi}_{1}) \otimes T_{1}(\bar{\xi}_{3}, \bar{\xi}_{5}, \ldots)$ and on $T_{2}(\bar{\xi}_{2}) \otimes T_{1}(\bar{\xi}_{4}, \bar{\xi}_{6}, \ldots)$ are trivial. So we can use the same argument as in the proof of \cref{marg:R1} to compute the Margolis homology of $W_{o}$ and $W_{e}$.

\begin{proposition}\label{marg:ReRo}
The Margolis homology of $W_{e}$ and $W_{o}$ is 
\[\M_{*}\big(W_{e},Q_{0} \big) \cong  \F_{p}[\bar{\xi}_{2}^{p^{2}}]\]
\[\M_{*}\big(W_{e},Q_{2} \big) \cong T_{2}( \bar{\xi}_{4}, \bar{\xi}_{6}, \ldots )  \]
\[\M_{*}\big(W_{o},Q_{0} \big) \cong  \F_{p}[\bar{\xi}_{1}^{p^{2}}]\]
\[\M_{*}\big(W_{o},Q_{2} \big) \cong T_{2}( \bar{\xi}_{3}, \bar{\xi}_{5}, \ldots ) . \]
\end{proposition}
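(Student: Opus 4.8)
The plan is to mirror the proof of \cref{marg:R1}, now using the $E(Q_{0},Q_{2})$-module decomposition $H_{*}BP\2 \cong W_{e} \otimes W_{o} \otimes T_{2}(\bar{\xi}_{1},\bar{\xi}_{2})$ of \cref{lem:2.30}, the K\"unneth formula for Margolis homology (\cref{marg:kunneth}), and the Margolis homology of $H_{*}BP\2$ recorded in \cref{marg:BP2}. The one elementary ingredient used repeatedly is that, for an exterior generator $\bar{\tau}$ and a polynomial generator $z$, the $Q$-Margolis homology of $E(\bar{\tau}) \otimes \F_{p}[z]$, where $Q$ is the derivation with $Q\bar{\tau} = z^{m}$ and $Qz = 0$, equals $\F_{p}[z]/(z^{m})$, since $\ker Q = \F_{p}[z]$ and $\operatorname{im} Q = z^{m}\F_{p}[z]$; in particular it is $\F_{p}$ when $m = 1$ and $T_{2}(z)$ when $m = p^{2}$.

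For $Q_{0}$: recall $Q_{0}\bar{\tau}_{k} = \bar{\xi}_{k}$ and $Q_{0}\bar{\xi}_{i} = 0$. Thus $W_{o} = E(\bar{\tau}_{3},\bar{\tau}_{5},\dots) \otimes \F_{p}[\bar{\xi}_{1}^{p^{2}}] \otimes \F_{p}[\bar{\xi}_{3},\bar{\xi}_{5},\dots]$ decomposes as an $E(Q_{0})$-module into $\F_{p}[\bar{\xi}_{1}^{p^{2}}]$ (trivial $Q_{0}$-action) tensored with the pieces $E(\bar{\tau}_{2j+1}) \otimes \F_{p}[\bar{\xi}_{2j+1}]$, $j \ge 1$, each with $Q_{0}$-Margolis homology $\F_{p}$; by \cref{marg:kunneth}, $\M_{*}(W_{o},Q_{0}) \cong \F_{p}[\bar{\xi}_{1}^{p^{2}}]$, and the same computation gives $\M_{*}(W_{e},Q_{0}) \cong \F_{p}[\bar{\xi}_{2}^{p^{2}}]$. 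As a check, since $Q_{0}$ acts trivially on $T_{2}(\bar{\xi}_{1},\bar{\xi}_{2})$, \cref{marg:kunneth} and \cref{lem:2.30} force $\M_{*}(W_{e},Q_{0}) \otimes \M_{*}(W_{o},Q_{0}) \otimes T_{2}(\bar{\xi}_{1},\bar{\xi}_{2}) \cong \M_{*}(H_{*}BP\2,Q_{0}) \cong \F_{p}[\bar{\xi}_{1},\bar{\xi}_{2}]$, consistent with the above.

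For $Q_{2}$: here $Q_{2}\bar{\tau}_{k} = \bar{\xi}_{k-2}^{p^{2}}$, so in $W_{o}$ one groups $\bar{\tau}_{3}$ with $\bar{\xi}_{1}^{p^{2}}$ and, for each $j \ge 1$, $\bar{\tau}_{2j+3}$ with $\bar{\xi}_{2j+1}$; this exhibits $W_{o}$ as an $E(Q_{2})$-module in the form $\bigl(E(\bar{\tau}_{3}) \otimes \F_{p}[\bar{\xi}_{1}^{p^{2}}]\bigr) \otimes \bigotimes_{j \ge 1}\bigl(E(\bar{\tau}_{2j+3}) \otimes \F_{p}[\bar{\xi}_{2j+1}]\bigr)$. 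The first factor has $Q_{2}$-Margolis homology $\F_{p}$ (case $m = 1$), and for each $j \ge 1$ the factor $E(\bar{\tau}_{2j+3}) \otimes \F_{p}[\bar{\xi}_{2j+1}]$ has $Q_{2}$-Margolis homology $T_{2}(\bar{\xi}_{2j+1})$ (case $m = p^{2}$), so \cref{marg:kunneth} gives $\M_{*}(W_{o},Q_{2}) \cong T_{2}(\bar{\xi}_{3},\bar{\xi}_{5},\dots)$. The identical computation for $W_{e}$, grouping $\bar{\tau}_{4}$ with $\bar{\xi}_{2}^{p^{2}}$ and $\bar{\tau}_{2j+4}$ with $\bar{\xi}_{2j+2}$, gives $\M_{*}(W_{e},Q_{2}) \cong T_{2}(\bar{\xi}_{4},\bar{\xi}_{6},\dots)$.

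The only point requiring genuine care is that, unlike \cref{marg:R1} where $W_{1}$ was the single unknown and one could simply \emph{divide} out the known factors, here the K\"unneth formula applied to $H_{*}BP\2 \cong W_{e} \otimes W_{o} \otimes T_{2}(\bar{\xi}_{1},\bar{\xi}_{2})$ only determines the product $\M_{*}(W_{e},Q_{i}) \otimes \M_{*}(W_{o},Q_{i})$ and not the two factors separately; this is why I would carry out the two small direct K\"unneth computations above instead of arguing by cancellation. Beyond that the whole argument is bookkeeping — keeping track of which $\bar{\tau}_{k}$ hits which power of which $\bar{\xi}_{\ell}$ under $Q_{0}$ versus $Q_{2}$ — and involves no convergence or extension subtleties, since everything is the Margolis homology of an explicitly given module.
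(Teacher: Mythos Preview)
Your proof is correct. The approach is close in spirit to the paper's, but differs in one useful way worth noting.

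The paper's argument (stated just before the proposition) is to run the same ``division'' trick as in \cref{marg:R1}: use the K\"unneth formula together with the known Margolis homology of $H_{*}BP\2$ from \cref{marg:BP2} and the triviality of the $Q_{0}$- and $Q_{2}$-actions on the $T$-factors, then cancel to read off $\M_{*}(W_{o},Q_{i})$ and $\M_{*}(W_{e},Q_{i})$. This works because $H_{*}BP\2$ splits as an $E(Q_{0},Q_{2})$-module into an ``odd'' part $W_{o}\otimes T_{2}(\bar{\xi}_{1})$ and an ``even'' part $W_{e}\otimes T_{2}(\bar{\xi}_{2})$ (the operations $Q_{0}$ and $Q_{2}$ preserve the parity of the subscript), so one can divide in each parity separately. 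The paper's sketch leaves this parity-separation step implicit.

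You instead compute $\M_{*}(W_{o},Q_{i})$ and $\M_{*}(W_{e},Q_{i})$ directly, by factoring $W_{o}$ and $W_{e}$ themselves into elementary pieces $E(\bar{\tau})\otimes\F_{p}[z]$ and applying K\"unneth. This is self-contained (it does not rely on \cref{marg:BP2}) and sidesteps the issue you raise about two unknown factors. Your observation that the naive division argument from \cref{marg:R1} does not immediately determine $W_{e}$ and $W_{o}$ individually is well taken; the resolution is exactly the odd/even decomposition above, which the paper does not spell out. Either route is fine, and your direct computation is arguably the cleaner of the two.
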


Note that all monomials in $W_{1}$ have weight divisible by $p^{3}$. Let $W_{1}(k)$ denote the weight $p^{3}k$ component of $W_{1}$. The actions of $Q_{1}$ and $Q_{2}$ on $H_{*}BP\2$ are weight-preserving, so \begin{equation}\label{marg:decomp:R1}
\M_{*}(W_{1}, Q_{j}) \cong \bigoplus\limits_{k=0}^{\infty} \M_{*}( W_{1}(k), Q_{j} )
\end{equation}

for $j=1,2$.

Likewise, the actions of $Q_{0}$ and $Q_{2}$ on $H_{*}BP\2$ are weight-preserving, and every monomial in $W_{o}$ is divisible by $p^{3}$, so we will let $W_{o}(n)$ denote the weight $p^{3}n$ component of $W_{o}$. Note that every monomial in $W_{e}(n)$ is divisible by $p^{4}$, so it will be more convenient to let $ W_{e}(n)$ denote the $p^{4}n$ component of $W_{e}(n)$. Then we can use the decompositions 
\begin{equation}\label{marg:decomp:Re}\M_{*}(W_{e}, Q_{j}) \cong \bigoplus\limits_{n=0}^{\infty} \M_{*}( W_{e}(n), Q_{j} )\end{equation}

\begin{equation}\label{marg:decomp:Ro} \M_{*}(W_{o}, Q_{j}) \cong \bigoplus\limits_{n=0}^{\infty} \M_{*}( W_{o}(n), Q_{j} )\end{equation}
for $i=0,2$.\\

Next we will compute the Margolis homology of these submodules.

\begin{proposition}\label{marg:w1:n}
The Margolis homology of $W_{1}(n)$ is

\[ \M_{*}(W_{1}(n), Q_{1}) \cong
\F_{p}\{ \bar{\xi}_{1}^{p^{2}n} \} 
.\]

\[ \M_{*}(W_{1}(n), Q_{2}) \cong
\F_{p}\{ (\bar{\xi}_{2}^{n_{0}}\bar{\xi}_{3}^{n_{1}}\cdots \bar{\xi}_{r+2}^{n_{r}})^{p} \} \]

where  $n_{0} + n_{1}p + \cdots + n_{r}p^{r}$ is the $p$-adic expansion of $n$. 
\end{proposition}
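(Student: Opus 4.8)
The plan is to read off \cref{marg:w1:n} from the total Margolis homology computation of \cref{marg:R1}, using the weight decomposition \eqref{marg:decomp:R1} and simply bookkeeping the weight of each basis monomial. Since $W_{1} = \bigoplus_{k} W_{1}(k)$ with $W_{1}(k)$ the weight-$p^{3}k$ summand, and since $Q_{1}$ and $Q_{2}$ preserve the weight filtration, \eqref{marg:decomp:R1} identifies $\M_{*}(W_{1}(n), Q_{j})$ with the weight-$p^{3}n$ part of $\M_{*}(W_{1}, Q_{j})$. So it suffices to take the $\F_{p}$-basis of $\M_{*}(W_{1}, Q_{j})$ furnished by \cref{marg:R1} and determine in which weight each basis element sits.

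For $Q_{1}$: $\M_{*}(W_{1}, Q_{1}) \cong \F_{p}[\bar{\xi}_{1}^{p^{2}}]$ has basis $\{\bar{\xi}_{1}^{p^{2}m} \mid m \ge 0\}$. Since $wt(\bar{\xi}_{1}) = p$, the monomial $\bar{\xi}_{1}^{p^{2}m}$ has weight $p^{3}m$, so it lies in $W_{1}(m)$; hence $\M_{*}(W_{1}(n), Q_{1}) \cong \F_{p}\{\bar{\xi}_{1}^{p^{2}n}\}$. For $Q_{2}$: $\M_{*}(W_{1}, Q_{2}) \cong T_{1}(\bar{\xi}_{2}^{p}, \bar{\xi}_{3}^{p}, \ldots) = \F_{p}[\bar{\xi}_{2}^{p},\bar{\xi}_{3}^{p},\ldots]\big/\big((\bar{\xi}_{2}^{p})^{p}, (\bar{\xi}_{3}^{p})^{p},\ldots\big)$, with $\F_{p}$-basis the monomials $(\bar{\xi}_{2}^{a_{0}}\bar{\xi}_{3}^{a_{1}}\cdots\bar{\xi}_{r+2}^{a_{r}})^{p}$ subject to $0 \le a_{j} \le p-1$. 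Since $wt(\bar{\xi}_{j+2}) = p^{j+2}$, such a monomial has weight $\sum_{j} a_{j} p^{\,j+3} = p^{3}\sum_{j} a_{j} p^{j}$, so it lies in $W_{1}(n)$ for $n = \sum_{j} a_{j} p^{j}$. The constraint $0 \le a_{j} \le p-1$ means $\sum_{j} a_{j} p^{j}$ is exactly the base-$p$ expansion of $n$, and conversely every $n$ is realized by a unique such tuple, namely $(a_{j}) = (n_{j})$. Therefore the weight-$p^{3}n$ part is one-dimensional, spanned by $(\bar{\xi}_{2}^{n_{0}}\bar{\xi}_{3}^{n_{1}}\cdots\bar{\xi}_{r+2}^{n_{r}})^{p}$, which is the second claim.

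The only point requiring care — the ``main obstacle,'' modest as it is — is the observation that the truncation present in $T_{1}(\bar{\xi}_{2}^{p}, \bar{\xi}_{3}^{p}, \ldots)$ is precisely what pins the exponent vector of a weight-$p^{3}n$ basis monomial to the vector of base-$p$ digits of $n$; without the truncation, many monomials of each weight would survive, and the weight components would not be one-dimensional. Once this is noted, the computations for $W_{e}(n)$ and $W_{o}(n)$ in the next proposition will follow by the identical bookkeeping, using \cref{marg:ReRo} and the decompositions \eqref{marg:decomp:Re} and \eqref{marg:decomp:Ro} in place of \cref{marg:R1} and \eqref{marg:decomp:R1}.
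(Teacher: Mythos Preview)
Your proof is correct and follows essentially the same approach as the paper: both use the weight decomposition \eqref{marg:decomp:R1} to identify $\M_{*}(W_{1}(n), Q_{j})$ with the weight-$p^{3}n$ part of $\M_{*}(W_{1}, Q_{j})$ from \cref{marg:R1}, then compute the weight of each basis monomial, noting for $Q_{2}$ that the truncation in $T_{1}$ forces the exponent vector to be the $p$-adic digit sequence of $n$.
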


\begin{proof}
The decomposition (\ref{marg:decomp:R1}) tells us that the Margolis homology of $W_{1}(n)$ should consist of the weight $p^{3}n$ component of the Margolis homology of $W_{1}$. 
Note that $wt(\bar{\xi}_{1}^{p^{2}m}) = p^{3}m$. Combining this with \cref{marg:R1}, we see that 
\[ \M_{*}(W_{1}(n), Q_{1}) \cong
\F_{p}\{ \bar{\xi}_{1}^{p^{2}n}\}. \]

Suppose that $x$ is a nonzero monomial in $\M_{*}(W_{1}, Q_{2})$. Then $x$ has the form \[x = \bar{\xi}_{2}^{m_{0}p}
\bar{\xi}_{3}^{m_{1}p}\cdots \bar{\xi}_{r+2}^{m_{r}p}\] where $0 \le m_{i} < p$ for all $0 \le i \le r$. 

Note that \[wt\big( \bar{\xi}_{2}^{m_{0}p}
\bar{\xi}_{3}^{m_{1}p}\cdots \bar{\xi}_{r+2}^{m_{r}p} \big) = p^{3}( m_{0} + m_{1}p + \ldots m_{r}p^{r} ),\]
so the weight of $x$ is exactly $p^{3}m$ where $m_{0} + m_{1}p + \cdots + m_{r}p^{r}$ is the $p$-adic expansion of $m$. 

So by \cref{marg:decomp:R1} and \cref{marg:R1}, 
\[ \M_{*}(W_{1}(n), Q_{2}) \cong
\F_{p}\{ (\bar{\xi}_{2}^{n_{0}}\bar{\xi}_{3}^{n_{1}}\cdots \bar{\xi}_{r+2}^{n_{r}})^{p} \} \]

where  $n_{0} + n_{1}p + \cdots + n_{r}p^{r}$ is the $p$-adic expansion of $n$. 
\end{proof}

\begin{proposition}\label{marg:W:eo:n}
The $Q_{0}$-Margolis homologies of $W_{o}(n)$ and $W_{e}(n)$ are
\[ \M_{*}\big(W_{o}(n), Q_{0} \big) \cong
 \F_{p}\{ \bar{\xi}_{1}^{p^{2}n} \}  
\]

\[ \M_{*}\big(W_{e}(n), Q_{0} \big) \cong
\F_{p}\{ \bar{\xi}_{2}^{p^{2}n} \}
.\]

Suppose that $n \in \N$, with $p$-adic expansion
$n = n_{0} + n_{1}p + \cdots n_{r}p^{r}$. Let  $m_{i} = n_{2i} + pn_{2i+1}$, and $s = \lfloor r/2 \rfloor$. Then
\[ \M_{*}\big(W_{e}(n), Q_{2} \big) \cong
 \F_{p}\{ \bar{\xi}_{4}^{m_{0}}\bar{\xi}_{6}^{m_{1}} \cdots \bar{\xi}_{2s+4}^{m_{s}} \}.
\]

Likewise,
\[ \M_{*}\big(W_{o}(n), Q_{2} \big) \cong
 \F_{p}\{ \bar{\xi}_{3}^{m_{0}}\bar{\xi}_{5}^{m_{1}} \cdots \bar{\xi}_{2s+3}^{m_{s}} \}
.\] 

\end{proposition}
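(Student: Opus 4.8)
The plan is to run the argument of \cref{marg:w1:n} four times, once for each of $(W_{o}, Q_{0})$, $(W_{e}, Q_{0})$, $(W_{o}, Q_{2})$, $(W_{e}, Q_{2})$. The two tools are: first, the weight decompositions (\ref{marg:decomp:Ro}) and (\ref{marg:decomp:Re}), which (since the $Q_{j}$-actions on $H_{*}BP\2$ preserve weight) identify $\M_{*}(W_{o}(n), Q_{j})$ with the weight-$p^{3}n$ summand of $\M_{*}(W_{o}, Q_{j})$ and $\M_{*}(W_{e}(n), Q_{j})$ with the weight-$p^{4}n$ summand of $\M_{*}(W_{e}, Q_{j})$ --- recalling that $W_{o}(n)$ and $W_{e}(n)$ were defined to be the weight-$p^{3}n$ and weight-$p^{4}n$ components respectively; and second, the total Margolis homologies computed in \cref{marg:ReRo}. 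So in each case I only have to extract the monomials of the correct weight from a known polynomial or truncated-polynomial algebra.

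First, the $Q_{0}$ cases. From \cref{marg:ReRo}, $\M_{*}(W_{o}, Q_{0}) \cong \F_{p}[\bar{\xi}_{1}^{p^{2}}]$, and $wt(\bar{\xi}_{1}^{p^{2}k}) = p^{3}k$, so the weight-$p^{3}n$ part is $\F_{p}\{\bar{\xi}_{1}^{p^{2}n}\}$. Likewise $\M_{*}(W_{e}, Q_{0}) \cong \F_{p}[\bar{\xi}_{2}^{p^{2}}]$ with $wt(\bar{\xi}_{2}^{p^{2}k}) = p^{4}k$, so the weight-$p^{4}n$ part is $\F_{p}\{\bar{\xi}_{2}^{p^{2}n}\}$. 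These are immediate.

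Next, the $Q_{2}$ cases. From \cref{marg:ReRo}, $\M_{*}(W_{o}, Q_{2}) \cong T_{2}(\bar{\xi}_{3}, \bar{\xi}_{5}, \ldots)$, whose monomials are $\prod_{i \ge 0} \bar{\xi}_{2i+3}^{a_{i}}$ with $0 \le a_{i} < p^{2}$ and weight $\sum_{i} a_{i}p^{2i+3} = p^{3}\sum_{i} a_{i}(p^{2})^{i}$. Intersecting with weight $p^{3}n$ therefore selects precisely the monomial whose exponent vector $(a_{0}, a_{1}, \ldots)$ is the base-$p^{2}$ expansion of $n$. The small bookkeeping step is to rewrite that base-$p^{2}$ expansion in terms of the base-$p$ digits: grouping $n = \sum_{j} n_{j}p^{j}$ into pairs gives $n = \sum_{i}(n_{2i} + p\, n_{2i+1})(p^{2})^{i}$, so $a_{i} = m_{i} = n_{2i} + p\, n_{2i+1}$; one checks $0 \le m_{i} \le (p-1) + p(p-1) = p^{2}-1$, so this is a legitimate base-$p^{2}$ expansion, and the top nonzero index is $s = \lfloor r/2 \rfloor$. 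This gives the generator $\bar{\xi}_{3}^{m_{0}}\bar{\xi}_{5}^{m_{1}}\cdots\bar{\xi}_{2s+3}^{m_{s}}$. The $W_{e}$ case is word-for-word the same with $\M_{*}(W_{e}, Q_{2}) \cong T_{2}(\bar{\xi}_{4}, \bar{\xi}_{6}, \ldots)$: a monomial $\prod_{i} \bar{\xi}_{2i+4}^{a_{i}}$ has weight $p^{4}\sum_{i} a_{i}(p^{2})^{i}$, and intersecting with weight $p^{4}n$ again forces $a_{i} = m_{i}$, giving $\bar{\xi}_{4}^{m_{0}}\bar{\xi}_{6}^{m_{1}}\cdots\bar{\xi}_{2s+4}^{m_{s}}$ (the subscripts are shifted by $4$ instead of $3$, hence $\bar{\xi}_{2i+4}$ versus $\bar{\xi}_{2i+3}$).

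There is no real obstacle here: the only thing demanding any care is the elementary number theory identifying the base-$p^{2}$ digits of $n$ with $m_{i} = n_{2i} + p\, n_{2i+1}$ and keeping the $\bar{\xi}$-subscripts straight. Everything else is a direct transcription of the proof of \cref{marg:w1:n}.
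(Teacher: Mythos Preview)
Your proposal is correct and follows essentially the same approach as the paper: both combine the weight decompositions (\ref{marg:decomp:Ro}), (\ref{marg:decomp:Re}) with \cref{marg:ReRo} to reduce to extracting the fixed-weight monomials, then identify that monomial via the base-$p^{2}$ expansion of $n$ (rewritten as $m_{i}=n_{2i}+p\,n_{2i+1}$). If anything, you are slightly more explicit than the paper in checking $0\le m_{i}<p^{2}$ and in pinning down $s=\lfloor r/2\rfloor$.
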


\begin{proof}
First note that $wt( \bar{\xi}_{1}^{p^{2}n}) = p^{3}n$ and $wt( \bar{\xi_{2}}^{p^{2}n}) = p^{4}n$, so it follows immediately from combining \cref{marg:ReRo} with (\ref{marg:decomp:Re}) and (\ref{marg:decomp:Ro}) that indeed \[ \M_{*}\big(W_{o}(n), Q_{0} \big) \cong
 \F_{p}\{ \bar{\xi}_{1}^{p^{2}n} \}  
\]

\[ \M_{*}\big(W_{e}(n), Q_{0} \big) \cong
\F_{p}\{ \bar{\xi}_{2}^{p^{2}n} \}
.\]

Consider a monomial $x \in \M_{*}(W_{e}, Q_{2})$. It follows from \cref{marg:ReRo} that $x$ is of the form 
\[x = \bar{\xi}_{4}^{m_{0}}\bar{\xi}_{6}^{m_{1}} \cdots \bar{\xi}_{2r+4}^{m_{s}}, \]

where $ 0 \le m_{i} < p^{2}$ for all $0 \le i \le s$. Note that $wt(x)$ must be divisible by $p^{4}$, so $wt(x) = p^{4}n$ for some $n \in \N$. So 
\[ m_{0} + m_{1}p^{2} + \cdots + m_{s}p^{2s} = n .\]

Suppose that $n$ has $p$-adic expansion $n = n_{0} + n_{1}p + \cdots n_{r}p^{r}$. Then $m_{i} = n_{2i} + pn_{2i+1}$. So there is exactly one monomial of weight $p^{4}n$ for each $n$, and

\[ \M_{*}\big(W_{e}(n), Q_{2} \big) \cong
 \F_{p}\{ \bar{\xi}_{4}^{m_{0}}\bar{\xi}_{6}^{m_{1}} \cdots \bar{\xi}_{2s+4}^{m_{s}} \}.
\]

Likewise, 
\[ wt(\bar{\xi}_{3}^{m_{0}}\bar{\xi}_{5}^{m_{1}} \cdots \bar{\xi}_{2s+3}^{m_{s}}) = p^{3}n, \]

and so
\[ \M_{*}\big(W_{o}(k), Q_{2} \big) \cong
 \F_{p}\{ \bar{\xi}_{3}^{m_{0}}\bar{\xi}_{5}^{m_{1}} \cdots \bar{\xi}_{2s+3}^{m_{s}} \}
.\]

\end{proof}

\subsubsection{Analyzing the Margolis homology of $H_{*}BP\2$: the $2$-primary case}\label{sec:marg:BP2:even}

Here, we record the $2$-primary analogues of the results in \cref{sec:marg:BP2:odd}. Each computation and proof is exactly analogous to its odd-primary version, so our exposition is significantly less detailed than in the previous subsection. 

Recall from \cref{action:even} that at the prime $2$,

\[ H_{*}BP\2 \cong \F_{p}[\bar{\xi}_{1}^{2}, \bar{\xi}_{2}^{2}, \bar{\xi}_{3}^{2}, \bar{\xi}_{4}, \ldots] .\]

At the prime $2$, the action of $E(2)$ on $H_{*}BP\2$ is given by
\[Q_{j}\bar{\xi}_{k} =  \begin{cases}  \bar{\xi}_{k-j-1}^{2^{j+1}} & \text{ if } j < k\\
 0 & \text { if } j \ge k\end{cases}.\]

As in the previous section, let $T_{k}(x_{1}, x_{2}, \ldots) = \F_{2}[x_{1}, x_{2}, \ldots]/(x_{1}^{2^{k}}, x_{2}^{2^{k}}, \ldots)$.

Let $W_{1}$ denote the $E(Q_{1}, Q_{2})$-algebra 
\[ W_{1} = T_{1}(\bar{\xi}_{4},\bar{\xi}_{5},\ldots) \otimes \F_{2}[\bar{\xi}_{2}^{4}, \bar{\xi}_{3}^{4},\ldots ] \otimes \F_{2}[\bar{\xi}_{1}^{8}] .\] 

Let $W_{o}$ denote the $E(Q_{0}, Q_{2})$-algebra 
\[ T_{1}(\bar{\xi}_{5}, \bar{\xi}_{7}, \ldots) \otimes \F_{2}[\bar{\xi}_{4}^{2}, \bar{\xi}_{6}^{2}, \ldots ] \otimes \F_{2}[\bar{\xi}_{2}^{8}] ,\]

and let $W_{e}$ denote the $E(Q_{0}, Q_{2})$-algebra
\[ T_{1}(\bar{\xi}_{4}, \bar{\xi}_{6}, \ldots) \otimes \F_{2}[\bar{\xi}_{3}^{2}, \bar{\xi}_{5}^{2}, \ldots ] \otimes \F_{2}[\bar{\xi}_{1}^{8}] .\]

The following lemma is an adaptation of \cite[Lemma~2.31]{dominic_even}. It is the $2$-primary versions of \cref{lem:2.30}.

\begin{lemma}\label{marg:even:one}
As an $E(Q_{1}, Q_{2})$-module, 

\[ H_{*}BP\2 \cong W_{1} \otimes T_{3}(\bar{\xi}_{1}^{2}) \otimes  T_{1}(\bar{\xi_{2}}^{2}, \bar{\xi}_{3}^{2}, \ldots).\]

As an $E(Q_{0}, Q_{2})$-module, 
\[ H_{*}BP\2 \cong W_{e} \otimes W_{o} \otimes T_{3}(\bar{\xi}_{1}^{2}, \bar{\xi}_{2}^{2}).\]

\end{lemma}

\begin{lemma}\cite[Proposition~2.14]{dominic_even}\label{marg:even:two}
    At the prime $2$, the Margolis homology of $H_{*}BP\2$ is
\[ \M_{*}(H_{*}BP\2, Q_{0}) \cong \F_{2}[\bar{\xi}_{1}^{2}, \bar{\xi}_{2}^{2} ] \]
\[ \M_{*}(H_{*}BP\2, Q_{1}) \cong \F_{2}[\bar{\xi}_{1}^{2}] \otimes E(\bar{\xi}_{2}^{2}, \bar{\xi}_{3}^{2}, \ldots) \]
\[ \M_{*}(H_{*}BP\2, Q_{2}) \cong T_{2}(\bar{\xi}_{1}^{2}, \bar{\xi}_{2}^{2}, \ldots) .\]

\end{lemma}

Combining \cref{marg:even:one} and \cref{marg:even:two}, we can determine the Margolis homology of $W_{1}$, $W_{e}$, and $W_{o}$. The proof is completely analogous to that of \cref{marg:R1}. 

\begin{proposition}
    At the prime $2$, the Margolis homology of $W_{1}$ is
    \[ \M_{*}(W_{1}, Q_{1}) \cong \F_{p}[\bar{\xi}_{1}^{8}] \]
        \[ \M_{*}(W_{1}, Q_{2}) \cong T_{1}(\bar{\xi}_{2}^{4}, \bar{\xi}_{3}^{4}, \ldots) .\]

\end{proposition}

\begin{proposition}
    At the prime $2$, the Margolis homology of $W_{e}$ and $W_{o}$ is 
     \[ \M_{*}(W_{e}, Q_{0}) \cong \F_{p}[\bar{\xi}_{1}^{8}] \]
          \[ \M_{*}(W_{e}, Q_{2}) \cong T_{2}(\bar{\xi}_{3}^{2}, \bar{\xi}_{5}^{2}, \ldots) \]
        \[ \M_{*}(W_{o}, Q_{0}) \cong \F_{p}[\bar{\xi}_{2}^{8}] \]
        \[ \M_{*}(W_{o}, Q_{2}) \cong T_{2}(\bar{\xi}_{4}^{2}, \bar{\xi}_{6}^{2}, \ldots) .\]
\end{proposition}

Note that all monomials in $W_{1}$ have weight divisible by $8$. So we will define $W_{1}(k)$ to be the weight $8k$ component of $W_{1}$. Note also that $W_{e}$ consists solely of monomials of weight divisible by $8$ and $W_{o}$ of monomials divisible by 16, so we will let $W_{e}(k)$ be the weight $8k$ component and $W_{o}(k)$ the weight $16k$ component of $W_{o}$. Recall that the action of $E(2)$ on $H_{*}BP\2$ is weight-preserving, so we get a decomposition of the form
\[ \M_{*}(W_{1}, Q_{j}) \cong \bigoplus\limits_{n=0}^{\infty} \M_{*}(W_{1}(n), Q_{j}) \]
for $j=1,2$, as well as decompositions
\[ \M_{*}(W_{e}, Q_{j}) \cong \bigoplus\limits_{n=0}^{\infty} \M_{*}(W_{e}(n), Q_{j}) \]
\[ \M_{*}(W_{o}, Q_{j}) \cong \bigoplus\limits_{n=0}^{\infty} \M_{*}(W_{o}(n), Q_{j}) \]
for $j=0,2$.

Next we will compute the Margolis homology of these submodules. The computations are exactly analogous to those for the odd-primary versions (\cref{marg:w1:n}, \cref{marg:W:eo:n}).

\begin{proposition}
At the prime $2$, the Margolis homology of $W_{1}(n)$ is

\[ M_{*}(W_{1}(n), Q_{1}) \cong F_{2}\{\bar{\xi}_{1}^{8n} \}\]
\[ M_{*}(W_{1}(n), Q_{2}) \cong F_{2}\{ (\bar{\xi}_{2}^{n_{0} } \bar{\xi}_{3}^{n_{1} } \cdots \bar{\xi}_{r}^{n_{r} } )^{4} \} \]

where $n = n_{0} + 2n_{1} + \cdots + 2^{r}n_{r}$ is the $2$-adic decomposition of $n$. 
\end{proposition}

\begin{proposition}\label{marg:W:eo:n:pis2}
At the prime $2$, the $Q_{0}$-Margolis homologies of $W_{o}(n)$ and $W_{e}(n)$ are
\[ \M_{*}\big(W_{o}(n), Q_{0} \big) \cong
 \F_{p}\{ \bar{\xi}_{1}^{8n} \}  
\]

\[ \M_{*}\big(W_{e}(n), Q_{0} \big) \cong
\F_{p}\{ \bar{\xi}_{2}^{8n} \}
.\]

Suppose that $n \in \N$, with $p$-adic expansion
$n = n_{0} + n_{1}p + \cdots n_{r}p^{r}$. Let  $m_{i} = n_{2i} + pn_{2i+1}$, and $s = \lfloor r/2 \rfloor$. Then
\[ \M_{*}\big(W_{e}(n), Q_{2} \big) \cong
 \F_{p}\{ \bar{\xi}_{4}^{2m_{0}}\bar{\xi}_{6}^{2m_{1}} \cdots \bar{\xi}_{2s+4}^{2m_{s}} \}.
\]

Likewise,
\[ \M_{*}\big(W_{o}(n), Q_{2} \big) \cong
 \F_{p}\{ \bar{\xi}_{3}^{2m_{0}}\bar{\xi}_{5}^{2m_{1}} \cdots \bar{\xi}_{2s+3}^{2m_{s}} \}
.\] 

\end{proposition}

\subsubsection{The $Ext$ groups of $H_{*}BP\2$ on subalgebras of $E(2)$}\label{sec:ext:analysis}

Now we can return to working at an arbitrary prime $p$.

\begin{proposition}\label{ext:W:1}
For all $s > 0$ and $t-s$ odd, $Ext_{E(Q_{1}, Q_{2})}^{s,t}(\F_{p}, W_{1}) = 0$.
\end{proposition}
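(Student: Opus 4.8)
The plan is to split $W_1$ into its weight-graded summands and apply \cref{marg:aux} to each of them. Every monomial of $W_1$ has weight divisible by $p^3$, so $W_1 = \bigoplus_{n=0}^{\infty} W_1(n)$ as $E(Q_1,Q_2)$-modules, with $W_1(n)$ the weight-$p^3 n$ component; each $W_1(n)$ is finite-dimensional. Since $\F_{p}$ has a resolution by finitely generated free $E(Q_1,Q_2)$-modules (for instance the minimal one, whose $s$-th term has rank $\dim Ext_{E(Q_1,Q_2)}^{s}(\F_{p},\F_{p}) < \infty$), the functor $Ext_{E(Q_1,Q_2)}^{s,t}(\F_{p},-)$ commutes with this direct sum. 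So it suffices to show that $Ext_{E(Q_1,Q_2)}^{s,t}(\F_{p}, W_1(n))$ is concentrated in even $(t-s)$-degrees for every $n$ and every $s > 0$.

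For $n = 0$ this is immediate, since $W_1(0) = \F_{p}$ and $Ext_{E(Q_1,Q_2)}^{*,*}(\F_{p},\F_{p}) \cong \F_{p}[v_1,v_2]$ with $v_i$ in bidegree $(1,2p^i-1)$, so that every class lies in even $(t-s)$-degree. For $n \ge 1$, \cref{marg:w1:n} says that $\M_*(W_1(n),Q_1)$ and $\M_*(W_1(n),Q_2)$ are one-dimensional, spanned by $x = \bar{\xi}_1^{p^2 n}$ and $y = (\bar{\xi}_2^{n_0}\bar{\xi}_3^{n_1}\cdots\bar{\xi}_{r+2}^{n_r})^p$ respectively, where $n = n_0 + n_1 p + \cdots + n_r p^r$ is the $p$-adic expansion of $n$. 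To invoke \cref{marg:aux} with $(j,h) = (1,2)$ I must verify that $|x|$ is even and that $|y| > |x| > 0$. The former holds because $|x| = p^{2}n\,|\bar{\xi}_{1}| = 2(p-1)p^{2}n$ is even, and it is positive since $n \ge 1$. For the inequality, a short computation gives $|y| = 2p(p^2 n - \sigma)$ with $\sigma = n_0 + \cdots + n_r$ the digit sum of $n$, so that $|y| - |x| = 2p(pn - \sigma) > 0$ because $\sigma \le n < pn$. Hence \cref{marg:aux} applies and $Ext_{E(Q_1,Q_2)}^{s,t}(\F_{p}, W_1(n))$ is concentrated in even $(t-s)$-degrees for all $s > 0$.

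Summing over $n$, the isomorphism $Ext_{E(Q_1,Q_2)}^{s,t}(\F_{p}, W_1) \cong \bigoplus_{n} Ext_{E(Q_1,Q_2)}^{s,t}(\F_{p}, W_1(n))$ shows that $Ext_{E(Q_1,Q_2)}^{s,t}(\F_{p}, W_1)$ is concentrated in even $(t-s)$-degrees for all $s > 0$; equivalently, it vanishes whenever $s > 0$ and $t-s$ is odd, as claimed.

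This argument is essentially bookkeeping layered on \cref{marg:aux}. The only steps that need attention are the degenerate case $n = 0$, where $|x| = 0$ and \cref{marg:aux} does not literally apply so one argues directly, and the routine degree checks showing $|x|$ is even and $|y| > |x| > 0$ for $n \ge 1$; I do not anticipate any genuine obstacle.
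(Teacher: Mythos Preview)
Your proof is correct and follows essentially the same approach as the paper: decompose $W_1$ by weight and apply \cref{marg:aux} to each $W_1(n)$ using \cref{marg:w1:n}. You are in fact slightly more careful than the paper, treating the degenerate case $n=0$ separately (where $|x|=0$ so \cref{marg:aux} does not literally apply) and explicitly verifying the inequality $|y|>|x|$.
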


\begin{proof}
We will state the proof for odd primes. The argument at $p=2$ is exactly analogous. Recall from \cref{marg:decomp:R1} that
\[\M_{*}(W_{1}, Q_{j}) \cong \bigoplus\limits_{k=0}^{\infty} \M_{*}( W_{1}(k), Q_{j} ) \] 
for $j = 0,2$.
The Margolis homology of $W_{1}(n)$ is of the form
\[ \M_{*}(W_{1}(n), Q_{1}) \cong
\F_{p}\{ \bar{\xi}_{1}^{p^{2}n} \} 
.\]

\[ \M_{*}(W_{1}(n), Q_{2}) \cong
\F_{p}\{ (\bar{\xi}_{2}^{n_{0}}\bar{\xi}_{3}^{n_{1}}\cdots \bar{\xi}_{r+2}^{n_{r}})^{p} \} \]

where  $n_{0} + n_{1}p + \cdots + n_{r}p^{r}$ is the $p$-adic expansion of $k$. Note that $|\bar{\xi}_{1}^{p^{2}n}|$ is even, and $|(\bar{\xi}_{2}^{n_{0}}\bar{\xi}_{3}^{n_{1}}\cdots \bar{\xi}_{r+2}^{n_{r}})^{p}| > |\bar{\xi}_{1}^{p^{2}n}|$. So by \cref{marg:aux}, $Ext_{E(Q_{1}, Q_{2})}^{s,t}(\F_{p}, W_{1}(n))$ is concentrated in even $t-s$ degrees for all $n \in \N$ and $s > 0$. It follows that at odd primes, $Ext_{E(Q_{1}, Q_{2})}^{s,t}(\F_{p}, W_{1})$ is concentrated in even $t-s$ degrees for all $s > 0$. 
\end{proof}

We have an analogous result for $W_{e} \otimes W_{o}$. 

\begin{proposition}\label{ext:W:eo}
For all $s > 0$ and $t-s$ odd, $Ext_{E(Q_{0}, Q_{2})}^{s,t}(\F_{p}, W_{e} \otimes W_{o}) = 0$.
\end{proposition}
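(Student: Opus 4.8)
The plan is to prove \cref{ext:W:eo} by exactly the strategy that worked for \cref{ext:W:1}, the one new ingredient being that $W_e \otimes W_o$ must be decomposed using a \emph{pair} of weight gradings, one for each tensor factor, rather than a single weight. First I would note that $Q_0$ and $Q_2$ act on $W_e \otimes W_o$ by the Leibniz rule and preserve the internal weight, and that $W_e$ and $W_o$ are each closed under $Q_0$ and $Q_2$; hence each $W_e(m) \otimes W_o(n)$ is an $E(Q_0, Q_2)$-submodule and
\[ W_e \otimes W_o \cong_{E(Q_0, Q_2)} \bigoplus_{m, n \ge 0} W_e(m) \otimes W_o(n). \]
Since $\F_p$ admits a resolution by finitely generated free $E(Q_0, Q_2)$-modules, $Ext_{E(Q_0, Q_2)}^{s,t}(\F_p, -)$ distributes over this direct sum, so it suffices to treat each summand $W_e(m) \otimes W_o(n)$ individually.

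For a fixed pair $(m, n)$ I would compute the $Q_0$- and $Q_2$-Margolis homology of $W_e(m) \otimes W_o(n)$ by combining the Künneth theorem (\cref{marg:kunneth}) with \cref{marg:W:eo:n}. In each case this is a one-dimensional $\F_p$-vector space: $\M_*(W_e(m) \otimes W_o(n), Q_0)$ is spanned by $x = \bar{\xi}_2^{p^2 m}\bar{\xi}_1^{p^2 n}$, and $\M_*(W_e(m) \otimes W_o(n), Q_2)$ is spanned by the product $y$ of the two $Q_2$-Margolis generators of \cref{marg:W:eo:n} (a genuine monomial, since one lives in even-indexed $\bar{\xi}$'s and the other in odd-indexed ones). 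One-dimensionality of both Margolis homologies puts me in the situation of \cref{marg:aux} (with $j = 0$, $h = 2$), so it remains only to check its degree hypotheses: that $|x|$ is even, which is clear since $|\bar{\xi}_i| = 2(p^i - 1)$; and that $|y| > |x| > 0$ whenever $(m, n) \neq (0, 0)$.

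The degree inequality is the only place needing an actual computation rather than a citation, and it is the step I would flag as the "hard" one, though it is really bookkeeping. Using $wt(W_e(m)) = p^4 m$ and $wt(W_o(n)) = p^3 n$ together with the explicit generators of \cref{marg:W:eo:n}, I would compare $|x|$ and $|y|$ digit-by-digit against the $p$-adic expansions of $m$ and $n$; the inequality reduces to the elementary fact that, for a fixed weight contribution, a monomial supported on higher-indexed generators has strictly larger topological degree, because $p^{j} - 1 > p^{j} - p^{j-1}$. This yields $|y| \ge |x|$, with equality exactly when $m = n = 0$, and $|x| > 0$ in the remaining cases, so \cref{marg:aux} applies and $Ext_{E(Q_0, Q_2)}^{s,t}(\F_p, W_e(m) \otimes W_o(n))$ is concentrated in even $(t-s)$-degrees for $s > 0$. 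The exceptional pair $(m, n) = (0, 0)$ I would handle directly: there $W_e(0) \otimes W_o(0) \cong \F_p$ and $Ext_{E(Q_0, Q_2)}^{*,*}(\F_p, \F_p) \cong \F_p[v_0, v_2]$ with $|v_i|_{(s,t)} = (1, 2p^i - 1)$, already concentrated in even $(t-s)$-degree. Summing over all $(m, n)$ shows $Ext_{E(Q_0, Q_2)}^{s,t}(\F_p, W_e \otimes W_o)$ is concentrated in even $(t-s)$-degrees for all $s > 0$, and in particular vanishes for $s > 0$ with $t - s$ odd.
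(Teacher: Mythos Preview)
Your proposal is correct and follows essentially the same route as the paper: decompose $W_e \otimes W_o$ by the bigrading into pieces $W_e(m) \otimes W_o(n)$, compute each piece's Margolis homology via K\"unneth and \cref{marg:W:eo:n}, and feed the result into \cref{marg:aux}. You are in fact slightly more careful than the paper in two places: you treat the degenerate summand $(m,n)=(0,0)$ separately (where the strict inequality $|x|>0$ in \cref{marg:aux} fails), and you indicate how the degree inequality $|y|>|x|$ is actually verified rather than simply asserting it.
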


\begin{proof}
We will state the proof for odd primes. The argument at $p=2$ is exactly analogous. \cref{marg:decomp:Re} and \cref{marg:decomp:Ro} tells us that
\[ W_{e} \otimes W_{o} \cong \bigoplus\limits_{n,n' \in \N} W_{e}(n) \otimes W_{o}(n').\]

Consider a pair $n,n' \in \N$, with $p$-adic expansions $n = n_{0} + n_{1}p + \cdots + n_{r}p^{r}$ and $n' = n'_{0} + n'_{1}p + \cdots + n'_{r}p^{r'}$ \cref{marg:W:eo:n} tells us that the Margolis homology of $W_{e}(n) \otimes W_{e}(o)$ is
\[ \M_{*}\big(W_{e}(n) \otimes W_{o}(n'), Q_{0} \big) \cong
\F_{p}\{ \bar{\xi}_{2}^{p^{2}n} \otimes \bar{\xi}_{1}^{p^{2}n'} \}
\]

\[ \M_{*}\big(W_{e}(n) \otimes W_{o}(n') \big) \cong
 \F_{p}\{ \bar{\xi}_{4}^{m_{0}}\bar{\xi}_{6}^{m_{1}} \cdots \bar{\xi}_{2r+4}^{m_{r}} \otimes \bar{\xi}_{3}^{m'_{0}}\bar{\xi}_{5}^{m'_{1}} \cdots \bar{\xi}_{2r+3}^{m'_{r}} , \}
\]

where $m_{i} = n_{i} + pm_{i+1}$.
Note that $|\bar{\xi}_{2}^{p^{2}n} \otimes \bar{\xi}_{1}^{p^{2}n'} |$ is even, and \[|\bar{\xi}_{4}^{m_{0}}\bar{\xi}_{6}^{m_{1}} \cdots \bar{\xi}_{2r+4}^{m_{r}} \otimes \bar{\xi}_{3}^{m'_{0}}\bar{\xi}_{5}^{m'_{1}} \cdots \bar{\xi}_{2r+3}^{m'_{r}}| > |\bar{\xi}_{2}^{p^{2}n'} \otimes \bar{\xi}_{1}^{p^{2}n}  |.\]

So it follows from \cref{marg:aux} that $Ext_{E(Q_{0}, Q_{2})}^{s,t}\big(\F_{p}, W_{e}(n) \otimes W_{o}(n')\big) = 0$ for all $t-s$ odd and $s > 0$.
\end{proof} 

\begin{proposition} \label{ext:BP:0}
Let $0 \le j < h \le 2$. Then for all $s > 0$ and $t-s$ odd, \[Ext^{s,t}_{E(Q_{j}, Q_{h})}(\F_{p}, H_{*}BP\2) = 0.\]
\end{proposition}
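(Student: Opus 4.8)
The plan is to establish the vanishing one pair $(Q_j,Q_h)$ at a time, in each case peeling $H_{*}BP\2$ apart as a (typically infinite) direct sum of copies of one of the building-block modules analyzed just above, suspended only by \emph{even} amounts, and then quoting the corresponding $Ext$ computation. The uniform reason the suspensions are even is that every $\bar{\xi}_i$ has even internal degree $2(p^i-1)$, so any truncated polynomial algebra on the $\bar{\xi}_i$'s is concentrated in even degrees and, carrying the trivial $E(Q_j,Q_h)$-action (recall $Q_j\bar{\xi}_k=0$), splits off as a sum of even suspensions of $\F_p$.

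For the pair $(Q_1,Q_2)$: \cref{lem:2.30} gives $H_{*}BP\2\cong W_1\otimes T_2(\bar{\xi}_1)\otimes T_1(\bar{\xi}_2,\bar{\xi}_3,\ldots)$ as $E(Q_1,Q_2)$-modules, and by the remark above this is $\bigoplus_\alpha \Sigma^{2c_\alpha}W_1$. Choosing a degreewise finitely generated free resolution of $\F_p$ over $E(Q_1,Q_2)$ lets $Ext$ pass through the sum, so $Ext_{E(Q_1,Q_2)}^{s,t}(\F_p,H_{*}BP\2)\cong\bigoplus_\alpha Ext_{E(Q_1,Q_2)}^{s,t-2c_\alpha}(\F_p,W_1)$, and \cref{ext:W:1} kills each summand when $s>0$ and $t-s$ is odd (note $(t-2c_\alpha)-s\equiv t-s\pmod 2$). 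The pair $(Q_0,Q_2)$ goes through identically with the second splitting of \cref{lem:2.30}, $H_{*}BP\2\cong (W_e\otimes W_o)\otimes T_2(\bar{\xi}_1,\bar{\xi}_2)$, together with \cref{ext:W:eo}.

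For the remaining pair $(Q_0,Q_1)$ — the one controlling $v_2$-torsion — I would run the same play with the auxiliary algebra $W_{01}:=E(\bar{\tau}_3,\bar{\tau}_4,\ldots)\otimes\F_p[\bar{\xi}_2^p]\otimes\F_p[\bar{\xi}_3,\bar{\xi}_4,\ldots]$. Since $Q_0\bar{\tau}_k=\bar{\xi}_k$ and $Q_1\bar{\tau}_k=\bar{\xi}_{k-1}^p$ for $k\ge 3$, this is a sub-$E(Q_0,Q_1)$-module, and one gets $H_{*}BP\2\cong W_{01}\otimes\F_p[\bar{\xi}_1]\otimes T_1(\bar{\xi}_2)$ with the last two factors trivial and evenly graded; the weight decomposition $W_{01}\cong\bigoplus_n W_{01}(n)$ then exhibits pieces with one-dimensional Margolis homologies $\M_*(W_{01}(n),Q_0)\cong\F_p\{\bar{\xi}_2^{pn}\}$ and $\M_*(W_{01}(n),Q_1)\cong\F_p\{\bar{\xi}_3^{n_0}\bar{\xi}_4^{n_1}\cdots\}$ (with $n=\sum n_ip^i$), whose generators lie in even degrees, and with the $Q_0$-generator in strictly lower degree than the $Q_1$-generator (same weight but strictly fewer $\bar{\xi}$-factors), so \cref{marg:aux} applies exactly as in \cref{ext:W:1} — the weight-zero piece being the trivial $Ext_{E(Q_0,Q_1)}(\F_p,\F_p)=\F_p[v_0,v_1]$, already concentrated in even $t-s$. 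Alternatively, this third case is subsumed in Culver's proof that $Ext_{E(2)_*}(\F_p,\overline{H_*C})$ is $v_2$-torsion free \cite{dominic_odd}, so one may simply cite it.

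I do not expect a serious obstacle: given \cref{lem:2.30}, the Margolis computations \cref{marg:w1:n} and \cref{marg:W:eo:n}, and the invertible-module criterion \cref{marg:aux}, the statement is largely a matter of assembly. The two things to watch are the $(Q_0,Q_1)$ case — which needs the extra module $W_{01}$ and its (entirely parallel) Margolis homology, unless one cites \cite{dominic_odd} — and the mild point that $Ext$ commutes with the infinite direct sums appearing here, which is fine because $\F_p$ admits a degreewise finitely generated free resolution over the finite algebra $E(Q_j,Q_h)$ and every module in sight is of finite type.
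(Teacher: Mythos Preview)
Your proposal is correct and follows essentially the same approach as the paper: for the pairs $(Q_1,Q_2)$ and $(Q_0,Q_2)$ you use exactly the decompositions of \cref{lem:2.30} and quote \cref{ext:W:1} and \cref{ext:W:eo}, just as the paper does. The only difference is in the $(Q_0,Q_1)$ case: the paper simply cites \cite[3.18]{dominic_odd}, whereas you spell out a parallel argument via an auxiliary module $W_{01}$ before noting the citation as an alternative --- either route is fine, and the paper takes the shorter one.
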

\begin{proof}
We will state the proof for odd primes, but the arguments at $p=2$ are exactly analogous. The case $j=0$, $h=1$ follows from \cite[3.18]{dominic_odd}. 

Next we will consider the case $j=1, h=2$. Recall that \[H_{*}BP\2 \cong_{E(Q_{1}, Q_{2})} W_{1} \otimes T_{2}(\bar{\xi}_{1}) \otimes T_{1}(\bar{\xi}_{2}, \bar{\xi}_{3}, \ldots) .\]
Note that $E(Q_{1}, Q_{2})$ acts trivially on $T_{2}(\bar{\xi}_{1}) \otimes T_{1}(\bar{\xi}_{2}, \bar{\xi}_{3}, \ldots)$, so 
\[ H_{*}BP\2 \cong_{E(Q_{1}, Q_{2})} \bigoplus\limits_{x \in \mathcal{J}} \Sigma^{|x|}W_{1},\] 
where $\mathcal{J}$ is the set of all monomials in $T_{2}(\bar{\xi}_{1}) \otimes T_{1}(\bar{\xi}_{2}, \bar{\xi}_{3}, \ldots)$. Note that all monomials in $\mathcal{J}$ have even degree, so indeed by \cref{ext:W:1},
 $Ext^{s,t}_{E(Q_{1}, Q_{2})}(\F_{p}, H_{*}BP\2)$ is concentrated in even $t-s$ degrees for $s>0$. 
 
Finally we will check the $j=0, h=2$ case. Recall from \cref{lem:2.30} that \[H_{*}BP\2 \cong_{E(Q_{0}, Q_{2})} W_{o} \otimes W_{e} \otimes T_{2}(\bar{\xi}_{1}, \bar{\xi}_{2}).\]

Note that $E(Q_{0}, Q_{2})$ acts trivially on $T_{2}(\bar{\xi}_{1}, \bar{\xi}_{2})$, so 
\[ H_{*}BP\2 \cong_{E(Q_{0}, Q_{2})} \bigoplus\limits_{x \in \mathcal{I}} \Sigma^{|x|}W_{1},\] 
where $\mathcal{I}$ is the set of all monomials in $T_{2}(\bar{\xi}_{1}, \bar{\xi}_{2})$. Note that all monomials in $\mathcal{I}$ have even degree, so indeed by \cref{ext:W:eo},
 $Ext^{s,t}_{E(Q_{0}, Q_{2})}(\F_{p}, H_{*}BP\2)$ is concentrated in even $t-s$ degrees for $s>0$. 

\end{proof} 

\begin{theorem} \label{ext:total:C}
Let $(i,j,h)$ be any permutation of $(0,1,2)$. Then for all $t-s$ odd, \[Ext_{E(Q_{j}, Q_{h})}^{s,t}(\F_{p}, H_{*}^{BP\2}C) = 0.\]
\end{theorem}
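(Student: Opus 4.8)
The plan is to deduce \cref{ext:total:C} from \cref{ext:BP:0} together with the decomposition $\overline{H_{*}C}\cong_{E(Q_{j},Q_{h})}S_{i}\oplus R_{i}$ of \cref{split:Ri} and the computation of $Ext(\F_{p},S_{i})$ in \cref{ext:Si}. For $s>0$ the argument is quick: Culver's splitting (\cref{culver}) gives $H_{*}BP\2\cong_{E(2)_{*}}\overline{H_{*}C}\oplus\overline{H_{*}V}$, and a direct sum decomposition of $E(2)_{*}$-comodules is in particular a direct sum decomposition of $E(Q_{j},Q_{h})$-modules, so $\overline{H_{*}C}$ is an $E(Q_{j},Q_{h})$-direct summand of $H_{*}BP\2$. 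Hence $Ext_{E(Q_{j},Q_{h})}^{s,t}(\F_{p},\overline{H_{*}C})$ is a direct summand of $Ext_{E(Q_{j},Q_{h})}^{s,t}(\F_{p},H_{*}BP\2)$, which vanishes for $s>0$ and $t-s$ odd by \cref{ext:BP:0}. This already settles the theorem for all $s>0$; note that this is where the real work lives, since \cref{ext:BP:0} is itself proved from the Margolis-homology computations of \cref{marg:w1:n} and \cref{marg:W:eo:n}, the Adams--Priddy classification of invertible $E(Q_{j},Q_{h})$-modules, and \cref{marg:aux}, via the $E(Q_{j},Q_{h})$-module splittings of $H_{*}BP\2$ into shifts of $W_{1}$, respectively $W_{e}\otimes W_{o}$.

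It remains to handle $s=0$, i.e.\ to show that the $E(Q_{j},Q_{h})$-socle $Hom_{E(Q_{j},Q_{h})}(\F_{p},\overline{H_{*}C})$ is concentrated in even internal degrees. Here I would use the splitting $\overline{H_{*}C}\cong_{E(Q_{j},Q_{h})}S_{i}\oplus R_{i}$: the $S_{i}$ summand is covered by \cref{ext:Si}, which gives $Ext_{E(Q_{j},Q_{h})}^{0,t}(\F_{p},S_{i})=0$ for $t$ odd, so the only point is the socle of $R_{i}$. By construction (\cref{split:Ri}), $R_{i}$ is spanned by the monomials of $H_{*}BP\2$ of length at most $1$. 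The length-$0$ monomials are the pure $\bar{\xi}$-monomials; these have even degree and lie in the socle since $Q_{j}\bar{\xi}_{i}=0$. The length-$1$ monomials each involve a single $\bar{\tau}_{k}$ and hence have odd degree, so it suffices to check that no nonzero odd-degree element $y=\sum_{k}f_{k}\bar{\tau}_{k}$, with $f_{k}$ polynomials in the $\bar{\xi}$'s, can satisfy $Q_{j}y=Q_{h}y=0$. Using $Q_{j}(f_{k}\bar{\tau}_{k})=\pm f_{k}\bar{\xi}_{k-j}^{p^{j}}$, the relation $Q_{j}y=0$ is exactly a syzygy among the pairwise coprime monomials $\bar{\xi}_{k-j}^{p^{j}}$, so $y$ is a polynomial combination of the elementary syzygies $\bar{\xi}_{b-j}^{p^{j}}\bar{\tau}_{a}-\bar{\xi}_{a-j}^{p^{j}}\bar{\tau}_{b}$; one then checks, using $Q_{h}(\bar{\xi}_{b-j}^{p^{j}}\bar{\tau}_{a}-\bar{\xi}_{a-j}^{p^{j}}\bar{\tau}_{b})=\bar{\xi}_{b-j}^{p^{j}}\bar{\xi}_{a-h}^{p^{h}}-\bar{\xi}_{a-j}^{p^{j}}\bar{\xi}_{b-h}^{p^{h}}$ and that the variables moved by $Q_{j}$ are disjoint from those moved by $Q_{h}$, that no such nonzero combination lies in $\ker Q_{h}$. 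Hence the socle of $R_{i}$ is concentrated in even degrees, completing the $s=0$ case.

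Combining the two cases yields $Ext_{E(Q_{j},Q_{h})}^{s,t}(\F_{p},\overline{H_{*}C})=0$ for every odd $t-s$ and every $s\ge 0$, as required. The genuine difficulty has already been absorbed into \cref{ext:BP:0}; the remaining ingredients are the purely formal observation that $\overline{H_{*}C}$ is an $E(Q_{j},Q_{h})$-summand of $H_{*}BP\2$ and the elementary socle computation for $R_{i}$ in the $s=0$ case, and it is the latter that I expect to be the only place in this argument requiring any real care.
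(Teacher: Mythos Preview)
Your argument for $s>0$ is exactly the paper's: pass to the summand $\overline{H_{*}C}$ of $H_{*}BP\2$ and invoke \cref{ext:BP:0}. The divergence is at $s=0$, and there your direct syzygy computation does not go through.

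The specific claim that fails is that no nonzero length-one element $y=\sum_{k\ge 3}f_{k}\bar{\tau}_{k}$ of $H_{*}BP\2$ can satisfy $Q_{j}y=Q_{h}y=0$. Take $(j,h)=(0,1)$ and set
\[
y \;=\; (\bar{\xi}_{4}^{\,p+1}-\bar{\xi}_{3}^{\,p}\bar{\xi}_{5})\,\bar{\tau}_{3}
\;+\;(\bar{\xi}_{2}^{\,p}\bar{\xi}_{5}-\bar{\xi}_{4}^{\,p}\bar{\xi}_{3})\,\bar{\tau}_{4}
\;+\;(\bar{\xi}_{3}^{\,p+1}-\bar{\xi}_{2}^{\,p}\bar{\xi}_{4})\,\bar{\tau}_{5}.
\]
A direct check gives $Q_{0}y=Q_{1}y=0$, so your asserted conclusion is false. (In fact $y=-Q_{0}Q_{1}(\bar{\tau}_{3}\bar{\tau}_{4}\bar{\tau}_{5})$, which shows what is really going on: such socle elements exist, but they come from the free part.) The underlying issue is that you have identified $R_{i}$ with the span of length-$\le 1$ monomials; it is not that span but a \emph{quotient}, and working in the quotient the condition ``$Q_{j}y=0$'' means $Q_{j}y\in S_{i}$, not $Q_{j}y=0$ in $H_{*}BP\2$. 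Your syzygy analysis computes the latter, and the example above shows the two are genuinely different.

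The paper's route at $s=0$ avoids this entirely: having already established vanishing for $s>0$, any odd-degree class $x\in Ext^{0}_{E(Q_{j},Q_{h})}(\F_{p},R_{i})$ satisfies $v_{j}x=v_{h}x=0$ (since $v_{j}x,v_{h}x$ land in odd $(t-s)$-degree with $s=1$), and one then argues that such a class must arise from a free $E(Q_{j},Q_{h})$-summand, contradicting \cref{split:Ri}. So the $s=0$ step is deduced \emph{from} the $s>0$ vanishing rather than by an independent socle computation; your approach tries to prove more than is true and cannot be repaired without essentially reintroducing the quotient structure you were hoping to avoid.
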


\begin{proof}

Recall that $H_{*}BP\2 \cong_{E(2)} H_{*}^{BP\2}C \oplus \overline{H_{*}V}$, and that $\overline{H_{*}V}$ is a free $E(2)$-module. So 
\[Ext_{E(Q_{j}, Q_{h})}^{s,t}(\F_{p}, H_{*}^{BP\2}C) \cong Ext_{E(Q_{j}, Q_{h})}^{s,t}(\F_{p}, H_{*}BP\2) \]
for all $s > 0$. So by \cref{ext:BP:0}, $Ext_{E(Q_{j}, Q_{h})}^{s,t}(\F_{p}, H_{*}^{BP\2}C)$ is concentrated in even degrees for all $s > 0$. 
So all that is left to check is that there are no classes in odd $t$-degree on the $(s=0)$-line. 

Recall from \cref{split:Ri} that \[H_{*}^{BP\2}C \cong_{E(Q_{j}, Q_{h})} S_{i} \oplus R_{i} .\] 
\cref{ext:Si} tells us that $Ext^{s,t}_{E(Q_{j}, Q_{h})}(\F_{p}, S_{i})$ is concentrated in even $(t-s)$-degree. So all that is left is to show that $Ext^{0,t}_{E(Q_{j}, Q_{h})}(\F_{p}, R_{i})$ contains no classes in odd $t$-degree.

Suppose towards a contradiction that there was a nonzero class $x \in Ext^{0,t}_{E(Q_{j}, Q_{h})}(\F_{p}, R_{i})$ such that $|x|_{t}$ is odd. Note that $|v_{j}|_{t-s}$ and $|v_{h}|_{t-s}$ are even. Since we have already shown that there are no classes in odd $t-s$ degree above the $(s=0)$-line in $Ext_{E(2)_{*}}^{s,t}(\F_{p}, H_{*}^{BP\2}C)$, it follows that $v_{j}x = v_{h}x = 0$. Such a class could only come from a free summand in $R_{i}$. But we already know from \cref{split:Ri} that $R_{i}$ contains no free summands. So indeed there are no classes in odd $t$ degree on the $(s=0)$-line of $Ext_{E(2)_{*}}^{s,t}(\F_{p}, H_{*}^{BP\2}C)$.
\end{proof}

\subsubsection{Checking for hidden extensions}\label{sec:nhe}

\begin{theorem}\label{b:h:e}
There are no hidden extensions in the Adams spectral sequence
\[ Ext_{E(2)_{*}} ^{s,t}\big( \F_{p}, H_{*}BP\2 \big) \Longrightarrow BP\2_{t-s}BP\2.\]

\end{theorem}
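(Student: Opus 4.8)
The plan is to reduce to Culver's splitting $BP\2 \smash BP\2 \simeq C \vee V$ (\cref{culver}) and handle the two summands separately, assembling the homological‑algebra facts already proved in this section. Recall that, via the $E(2)_{*}$-comodule decomposition $H_{*}BP\2 \cong_{E(2)_{*}} \overline{H_{*}C} \oplus \overline{H_{*}V}$, the Adams spectral sequence $Ext_{E(2)_{*}}^{s,t}(\F_p, H_{*}BP\2) \Rightarrow BP\2_{t-s}BP\2$ splits as a direct sum of (\ref{eqn:assC}) and (\ref{eqn:assV}); moreover this decomposition is one of modules over $Ext_{E(2)_{*}}(\F_p,\F_p) \cong \F_p[v_0,v_1,v_2]$, so the question of hidden $v_i$-extensions splits accordingly. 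Since $\overline{H_{*}V}$ is a free $E(2)_{*}$-comodule, $Ext_{E(2)_{*}}^{s,t}(\F_p,\overline{H_{*}V})$ is concentrated in homological degree $s=0$, so (\ref{eqn:assV}) lives in a single Adams filtration and admits no hidden extensions. It therefore remains to rule out hidden $v_0$-, $v_1$-, and $v_2$-extensions in (\ref{eqn:assC}).

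Next I would record two structural facts about the $E_2$-page $Ext_{E(2)_{*}}^{s,t}(\F_p,\overline{H_{*}C})$ of (\ref{eqn:assC}). First, it collapses: by \cref{thm:dominic:splitting2} together with the change-of-rings identification $Ext_{A_{*}}(\F_p,H_{*}C) \cong Ext_{E(2)_{*}}(\F_p,\overline{H_{*}C})$, this $E_2$-page is concentrated in even $(t-s)$-degrees, and since every Adams differential $d_r$ lowers $t-s$ by one, all differentials vanish and $E_2 = E_\infty$. Second, it is $v_i$-torsion free for $i=0,1,2$: for each permutation $(i,j,h)$ of $(0,1,2)$, \cref{ext:total:C} shows $Ext_{E(Q_j,Q_h)}^{s,t}(\F_p,\overline{H_{*}C})$ is concentrated in even $(t-s)$-degrees, whence \cref{bss:torsion} gives that $Ext_{E(2)_{*}}^{s,t}(\F_p,\overline{H_{*}C})$ has no $v_i$-torsion.

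With these in hand the conclusion follows from the standard leading-term argument. Each generator $v_i \in BP\2_{*} \cong \Z_p[v_1,v_2]$ (with $v_0 = p$) is detected in Adams filtration exactly $1$ by the polynomial generator $v_i \in Ext_{E(2)_{*}}^{1,*}(\F_p,\F_p)$. Let $x \in \pi_{*}C$ be nonzero and detected by $\bar x \in E_\infty^{s,t} \setminus \{0\}$. By multiplicativity of the Adams spectral sequence, $v_i x$ has Adams filtration $\ge s+1$ and its image in $E_\infty^{s+1}$ equals $v_i \bar x$; since $E_\infty = E_2$ is $v_i$-torsion free, $v_i \bar x \neq 0$, so $v_i x$ has Adams filtration exactly $s+1$ and is detected by $v_i \bar x$. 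This is precisely the statement that no $v_i$-extension in (\ref{eqn:assC}) is hidden, and combining with the $V$-summand gives \cref{b:h:e}. The same reasoning applies verbatim, using the analogous splitting in \cref{thm:dominic:splitting2}, to the Adams spectral sequence converging to $BP\2_{*}l_{k}$.

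The main obstacle has in fact already been cleared: the substantive input is \cref{ext:total:C}, whose proof occupies \cref{sec:marg:BP2} and rests on the Margolis-homology computation of $H_{*}BP\2$ and the Adams--Priddy classification of invertible $E(\alpha,\beta)$-modules. Given that, the present argument is essentially bookkeeping, the one point deserving care being the implication ``$v_i$-torsion-free collapsing $E_\infty$'' $\Rightarrow$ ``no hidden $v_i$-extension'': it does use that $v_i$ has Adams filtration exactly $1$, so that $v_i$-multiplication cannot raise filtration by more than one unless its leading term vanishes.
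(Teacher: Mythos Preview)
Your proof is correct and follows essentially the same route as the paper: split along $C \vee V$, dispose of the $V$-summand trivially, and for the $C$-summand combine \cref{ext:total:C} with \cref{bss:torsion} to obtain $v_i$-torsion freeness of the (collapsing) $E_2$-page, which rules out hidden $v_i$-extensions. You have made explicit the leading-term/filtration argument that the paper leaves implicit in the sentence ``So indeed there are no hidden extensions,'' and your justification for the $V$-summand (concentration in $s=0$) is a slight variant of the paper's (that $V$ is an $H\F_p$-wedge), but both are valid.
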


\begin{proof}
Recall that $H_{*}BP\2 \cong_{E(2)} H_{*}^{BP\2}C \oplus H_{*}^{BP\2}V$. First we will check that there are no hidden extensions in $Ext_{E(2)_{*}}^{s,t}(\F_{p},H_{*}^{BP\2}C)$. Let $0 \le i < j \le 2$. \cref{ext:total:C} tells us that for all $t-s$ odd, \[Ext_{E(Q_{j}, Q_{h})}^{s,t}(\F_{p}, H_{*}^{BP\2}C) = 0.\] By \cref{bss:torsion}, this implies that there is no $v_{i}$-torsion in $Ext_{E(2)_{*}}(\F_{p},H_{*}^{BP\2}C)$. That is, if $x \in Ext_{E(2)_{*}}(\F_{p},H_{*}^{BP\2}C)$, then $v_{i}^{r}x \neq 0$ for $i=0,1,2$ and $r \in \N$. So indeed there are no hidden extensions in $Ext_{E(2)_{*}}^{s,t}(\F_{p},H_{*}^{BP\2}C)$.  Recall that $V$ is the Eilenberg-Maclane space for an $\F_{p}$-vector space, so there are also no hidden extensions in $Ext_{E(2)_{*}}^{s,t}(\F_{p},H_{*}^{BP\2}V)$. 
\end{proof}

We also want to show that there are no hidden extensions in the Adams spectral sequence 
\[ Ext_{E(2)_{*}} \big( \F_{p}, H_{*}l_{k} \big) \Longrightarrow BP\2_{*}l_{k}.\]

\begin{theorem}\label{l:h:e}
There are no hidden extensions in the Adams spectral sequence 
\[ Ext_{E(2)_{*}}^{s,t} \big( \F_{p}, H_{*}l_{k} \big) \Longrightarrow BP\2_{t-s}l_{k}.\] 
\end{theorem}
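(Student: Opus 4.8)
The plan is to run the argument of \cref{b:h:e} a second time, transporting the relevant $Ext$-vanishing from $H_{*}BP\2$ to $H_{*}l_{k}$ by means of the $E(2)_{*}$-comodule splitting of \cref{eiso}. First I would invoke \cref{thm:dominic:splitting2}: the splitting $BP\2 \smash l_{k} \simeq C_{k} \vee V_{k}$, together with the $E(2)_{*}$-comodule decomposition $H_{*}l_{k} \cong \overline{H_{*}C_{k}} \oplus \overline{H_{*}V_{k}}$ with $\overline{H_{*}V_{k}}$ free, splits the Adams spectral sequence into the direct sum of $Ext_{E(2)_{*}}(\F_{p},\overline{H_{*}C_{k}}) \Rightarrow \pi_{*}C_{k}$ and $Ext_{E(2)_{*}}(\F_{p},\overline{H_{*}V_{k}}) \Rightarrow \pi_{*}V_{k}$. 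Since $\overline{H_{*}V_{k}}$ is free, hence injective by \cref{free:proj:inj}, the second spectral sequence lives entirely on the line $s=0$ and carries no hidden extensions (consistently with $V_{k}$ being a wedge of suspensions of $H\F_{p}$), so it remains to treat the $C_{k}$-summand.

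For that summand I would first observe that $\overline{H_{*}C_{k}}$ has no free $E(2)$-module summand: by change of rings $Ext_{E(2)_{*}}(\F_{p},\overline{H_{*}C_{k}}) \cong Ext_{A_{*}}(\F_{p}, H_{*}C_{k})$, which is $v_{2}$-torsion free by \cref{thm:dominic:splitting2}, whereas a free $E(2)$-summand would contribute (via \cref{free:proj:inj}) a nonzero $Ext^{0}$ class annihilated by $v_{2}$; the same reasoning shows $\overline{H_{*}C}$ is reduced. Now \cref{eiso} exhibits $\Sigma^{qk}H_{*}l_{k}$ as an $E(2)_{*}$-comodule summand of $H_{*}BP\2 \cong \overline{H_{*}C}\oplus\overline{H_{*}V}$; splitting off the free part $\overline{H_{*}V}$ and matching reduced parts by Krull--Schmidt (applied degreewise, everything being bounded below and locally finite) identifies $\Sigma^{qk}\overline{H_{*}C_{k}}$ with a summand of $\overline{H_{*}C}$. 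Restricting the $E(2)$-action to each subalgebra $E(Q_{j},Q_{h}) \subseteq E(2)$, it follows that $Ext_{E(Q_{j},Q_{h})}^{s,t}(\F_{p},\overline{H_{*}C_{k}})$ is a summand of $Ext_{E(Q_{j},Q_{h})}^{s,t}(\F_{p},\overline{H_{*}C})$, which by \cref{ext:total:C} vanishes whenever $t-s$ is odd. Hence $Ext_{E(Q_{j},Q_{h})}^{s,t}(\F_{p},\overline{H_{*}C_{k}})$ is concentrated in even $t-s$ degree for every permutation $(i,j,h)$ of $(0,1,2)$.

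Then I would feed this into the $v_{i}$-Bockstein spectral sequence of \cref{prop:bss}: by the argument of \cref{bss:torsion}, concentration of $Ext_{E(Q_{j},Q_{h})}(\F_{p},\overline{H_{*}C_{k}})$ in even $t-s$ degree forces the $v_{i}$-BSS to collapse for each $i=0,1,2$, so $Ext_{E(2)_{*}}(\F_{p},\overline{H_{*}C_{k}})$ is $v_{0}$-, $v_{1}$- and $v_{2}$-torsion free and is itself concentrated in even $t-s$ degree (its $E_{1}$-page being even in $t-s$). Because the Adams differentials lower $t-s$ by one, the $C_{k}$-spectral sequence collapses at $E_{2}$; and since $E_{\infty}=E_{2}$ is $v_{i}$-torsion free for $i=0,1,2$, multiplication by $p$, $v_{1}$ and $v_{2}$ on any permanent cycle is detected in the expected filtration, so there are no hidden extensions. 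Combined with the first paragraph this establishes \cref{l:h:e}.

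I expect the main obstacle to be the step identifying $\Sigma^{qk}\overline{H_{*}C_{k}}$ with a summand of $\overline{H_{*}C}$: \cref{eiso} is only an $E(2)_{*}$-comodule (not $A_{*}$-comodule) isomorphism, so one must argue carefully that it nonetheless transports the reduced part faithfully, which rests both on the torsion-freeness input above and on a Krull--Schmidt argument for bounded-below, locally finite $E(2)$-modules. An alternative that sidesteps this is to reprove the length-based decomposition of \cref{split:Ri}, the computation of \cref{ext:Si}, and the Margolis-homology analysis of \cref{sec:marg:BP2} verbatim for $\overline{H_{*}C_{k}}$ in place of $\overline{H_{*}C}$ --- exactly as \cref{thm:dominic:splitting2} was obtained by substituting $l_{k}$ for $BP\2$ in Culver's argument --- and then apply \cref{bss:torsion} as above.
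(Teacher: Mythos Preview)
Your proposal is correct and follows essentially the same route as the paper: split off the $V_{k}$-summand, then use \cref{eiso} to identify $\Sigma^{qk}\overline{H_{*}C_{k}}$ as an $E(2)$-module summand of $\overline{H_{*}C}$ (the paper writes $\overline{H_{*}C}\cong\bigoplus_{k}\Sigma^{qk}\overline{H_{*}C_{k}}$), and deduce the absence of $v_{i}$-torsion from \cref{ext:total:C} via \cref{bss:torsion}. The paper's version is considerably terser---it simply asserts that $\overline{H_{*}C_{k}}$ has no free summands and passes silently over the Krull--Schmidt step you rightly flag---so your more explicit justification of these points is an improvement rather than a deviation.
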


\begin{proof}
Recall from \cref{thm:bp split} that $BP\2 \smash l_{k}$ splits as 
\[ BP\2 \smash l_{k} \simeq C_{k} \vee V_{k}, \]
where $V_{k}$ is a sum of mod-$p$ Eilenberg-Maclane spectra and $C_{k}$ is $v_{2}$-torsion free. So the Adams spectral sequence converging to $BP\2_{*}l_{k}$ splits as the following two spectral sequences:
\[ Ext_{E(2)_{*}} \big( \F_{p}, H_{*}^{BP\2}C_{k} \big) \Longrightarrow \pi_{*}C_{k}\] 
\[ Ext_{E(2)_{*}} \big( \F_{p}, H_{*}^{BP\2}V_{k} \big) \Longrightarrow \pi_{*}V_{k}.\] 

Since $V_{k}$ is a sum of mod-$p$ Eilenberg-Maclane spectra, there are no hidden extensions in the spectral sequence converging to $\pi_{*}V_{k}$. All that is left to check is that there are no hidden extensions in the component converging to $\pi_{*}C_{k}$.

Recall from \cref{thm:dominic:splitting2} that 
\[H_{*}l_{k} \cong H_{*}^{BP\2}C_{k} \oplus H_{*}^{BP\2}V_{k},\]

where $H_{*}^{BP\2}V_{k}$ is a free $E(2)$-module and $H_{*}^{BP\2}C_{k}$ contains no free summands. 

Recall from \cref{eiso} that \[ H_{*}BP\2 \cong_{E(2)} \bigoplus\limits_{k=0}^{\infty} \Sigma^{qk} H_{*}l_{k} .\] 
Since $H_{*}^{BP\2}C_{k}$ and $H_{*}^{BP\2}C$ both contain no free summands (and $H_{*}^{BP\2}V_{k}$ and $H_{*}^{BP\2}V$ consist entirely of free summands), it follows that 
\[ H_{*}^{BP\2}C \cong \bigoplus_{k=0}^{\infty}H_{*}^{BP\2}C_{k}.\]

So \[ Ext_{E(2)_{*}}^{s,t} \big( \F_{p}, H_{*}^{BP\2}C \big) \cong \bigoplus_{k=0}^{\infty}Ext_{E(2)_{*}}^{s,t} \big( \F_{p}, H_{*}^{BP\2}C_{k} \big).  \]
It follows from \cref{ext:total:C} that there is no $v_{i}$-torsion in $Ext_{E(2)_{*}}^{*,*} \big( \F_{p}, H_{*}^{BP\2}C_{k} \big)$. So there is no room for hidden extensions in $Ext_{E(2)_{*}}^{*,*} \big( \F_{p}, H_{*}^{BP\2}C_{k} \big)$.
\end{proof}

\subsection{An isomorphism of $E_{2}$-pages}\label{sec:35}
\subsubsection{The Adams spectral sequence is multiplicative}
Next, we will show that the $E_{2}$-pages of the hypercohomology spectral sequence and universal coefficient spectral sequences are isomorphic in the cases that we are interested in. We will need to use the following multiplicative properties of the Adams spectral sequence.

Specifically, we will use the following theorem of Adams.

\begin{proposition}\cite[Thm~2.3.3]{green_book}\label{prop:rav_ass_mult}
Let $Y'$, $Y''$, and $Y$ be connective spectra whose $H\F_{p}$-homology is of finite type, with a pairing 

\[\alpha: Y^{'} \smash Y^{''} \rightarrow Y.\]

Then $\alpha$ induces a natural pairing on the Adams spectral sequences
\[\alpha: {E_{r}^{ASS}(S^{0},Y^{'})} \otimes {E_{r}^{ASS}(S^{0},Y^{''})} \Longrightarrow {E_{r}^{ASS}(S^{0},Y)}\]

with the following properties:
\begin{enumerate}
    \item The pairing on $E_{r}$ induces the pairing on $E_{r+1}$.
    \item The pairing on $E_{\infty}$ corresponds to 
    \[\alpha_{*}: \pi_{*}Y' \otimes \pi_{*}Y'' \rightarrow \pi_{*}Y.\]
    \item Let $c: Ext_{A_{*}}(\F_{p}; H_{*}Y')  \otimes Ext_{A_{*}}(\F_{p}, H_{*}Y'') \rightarrow Ext_{A_{*}}(\F_{p}, H_{*}Y'\otimes H_{*}Y'')$ denote the external cup product. Let $\beta$ be the composite
    \[\beta: H_{*}Y' \otimes H_{*}Y'' \rightarrow H_{*}(Y' \smash Y'') \overset{\alpha_{*}}{\rightarrow} E_{*}Y,\]
    
    where the first map is the K\"unneth isomorphism. Then the composite 
    
    \begin{equation} Ext_{A_{*}}(\F_{p}, H_{*}Y')  \otimes Ext_{A_{*}}(\F_{p}, H_{*}Y'') \overset{c}{\rightarrow} Ext_{A_{*}}(\F_{p}, H_{*}Y'\otimes H_{*}Y'') \overset{\beta_{*}}{\rightarrow } Ext_{A_{*}}(\F_{p}, H_{*}Y)\end{equation}\label{cup:comp}
    
    is the pairing induced on $E_{2}$ by $\alpha$.
    
\end{enumerate}
\end{proposition}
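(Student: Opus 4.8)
This is the classical multiplicativity theorem for the $H\F_{p}$-based Adams spectral sequence, due to Adams; the full argument is carried out in \cite[Thm~2.3.3]{green_book}. The plan is to produce the pairing at the level of Adams resolutions and then follow it page by page, ending with the $E_{2}$-identification in (3).

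First I would recall the setup. For a connective spectrum $Z$ of finite type, the Adams spectral sequence is the homotopy spectral sequence of an Adams tower $\cdots \to Z_{2} \to Z_{1} \to Z_{0}=Z$, in which each cofiber $K_{s}$ is a wedge of suspensions of $H\F_{p}$ and $H_{*}Z_{s} \to H_{*}K_{s}$ is injective; equivalently, it is the totalization tower of the cosimplicial spectrum $[s] \mapsto H\F_{p}^{\smash(s+1)} \smash Z$. Under the finite-type hypothesis its $E_{2}$-page is $Ext_{A_{*}}(\F_{p},H_{*}Z)$ and it converges strongly to $\pi_{*}$ of the $p$-completion of $Z$.

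Next I would build the pairing. Fix Adams towers for $Y'$, $Y''$, and $Y$. The levelwise smash product of the first two is a tower over $Y' \smash Y''$ whose associated spectral sequence has $E_{2}$-page the tensor product of the two cobar resolutions; combining the multiplication $H\F_{p}\smash H\F_{p} \to H\F_{p}$ with $\alpha$ and the Eilenberg--Zilber (shuffle) map produces a map of towers from this smash-product tower to the Adams tower of $Y$ lying over $\alpha$. Applying $\pi_{*}$ gives, for each $r$, a pairing $E_{r}^{ASS}(S^{0},Y') \otimes E_{r}^{ASS}(S^{0},Y'') \to E_{r}^{ASS}(S^{0},Y)$; since it comes from a map of filtered spectra it is a map of differential modules, hence induces the pairing on $E_{r+1}$ --- this is (1). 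Passing to associated graded objects and using that the Adams filtration on $\pi_{*}$ is multiplicative identifies the $E_{\infty}$-pairing with $\alpha_{*}$, which is (2). For (3) I would observe that on the cobar complex the external cup product $c$ is represented by concatenation of cobar words, which is exactly the shuffle map used to build the tower pairing; hence the pairing induced on $E_{2} = Ext_{A_{*}}$ equals the composite of $c$ with the map on $Ext$ induced by $\beta$ (Künneth followed by $\alpha_{*}$). A final comparison-of-resolutions argument shows the whole pairing is independent of the chosen towers.

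The main obstacle is that the smash product of two Adams towers is \emph{not} literally an Adams tower: its cofibers are wedges of copies of $H\F_{p}\smash H\F_{p}$, not of $H\F_{p}$, so one cannot invoke functoriality directly. The step to get right is therefore the construction of the comparison map from the smash-product tower to a genuine Adams tower for $Y$ inducing $\alpha$ on the base; this relies on the fact that the smash-product tower still realizes a relatively injective resolution of $H_{*}(Y'\smash Y'')$ and that such resolutions admit lifts of a given map on $H_{*}$, unique up to chain homotopy. Once that map is in hand, the remaining verifications --- compatibility with differentials, the $E_{\infty}$ and $E_{2}$ identifications, and independence of the choices --- are routine bookkeeping. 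Alternatively, one can work cosimplicially throughout, where the canonical Eilenberg--Zilber pairing of $H\F_{p}$-Amitsur complexes makes these choice issues disappear.
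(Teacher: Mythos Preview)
The paper does not prove this proposition at all: it is quoted verbatim as \cite[Thm~2.3.3]{green_book} and used as a black box, with no argument given. Your sketch is a reasonable outline of the classical Adams/Ravenel proof via pairing of Adams resolutions, and nothing in it conflicts with how the paper uses the result; but there is no ``paper's own proof'' to compare against.
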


Let $\mu: BP\2 \smash BP\2 \rightarrow BP\2$ denote the usual multiplication. It follows that the $BP\2$-module action \[BP\2 \smash BP\2 \smash X \overset{\mu \smash 1}{\longrightarrow} BP\2 \smash X\] induces a natural pairing on the Adams spectral sequence 
\[ E_{r}(S^{0}, BP\2) \otimes E_{r}(S^{0}, BP\2 \smash X) \rightarrow E_{r}(S^{0}, BP\2 \smash X),  \]

such that the action on the $E_{2}$-terms is the composite described in (\ref{cup:comp}), and the action of the $E_{\infty}$-page corresponds to the action 
\[BP\2_{*} \otimes BP\2_{*}X \overset{\mu_{*} \otimes 1}{\longrightarrow} BP\2_{*}X .\]

Recall that the $E_{2}$-page of the Adams spectral sequence $E_{r}(S^{0}, BP\2)$ is isomorphic to $Ext_{E(2)}(\F_{p}, \F_{p})$, and that the $E_{2}$-page of the Adams spectral sequence $E_{r}(S^{0}, BP\2\smash X)$ is isomorphic to $Ext_{E(2)}(\F_{p}, H_{*}X)$. We will use the following property of the pairing discussed above.  
\begin{proposition}
Let $X = l_{k}$ or $BP\2$. Consider the Adams spectral sequences
\[Ext_{E(2)}(\F_{p}, \F_{p}) \Longrightarrow BP\2_{*} \]
\[Ext_{E(2)}(\F_{p}, H_{*}X) \Longrightarrow BP\2_{*}X .\]
The action on the $E_{2}$-page
\[ Ext_{E(2)}(\F_{p}, \F_{p}) \otimes Ext_{E(2)}(\F_{p}, H_{*}X) \rightarrow Ext_{E(2)}(\F_{p}, H_{*}X) \]
determines the action 
\[ BP\2_{*} \otimes BP\2_{*}X \rightarrow BP\2_{*}X \]
up to hidden extensions.
\end{proposition}

\subsubsection{Comparing the associated graded modules as vector spaces.}

Consider the Adams spectral sequence \[Ext_{E(2)}(\F_{p}, H_{*}BP\2) \Longrightarrow BP\2_{*}.\]
The associated graded algebra to the Adams filtration on $BP\2_{*}$ is isomorphic to $\P(2)$. Let $gr_{*}: Mod_{BP\2_{*}} \rightarrow Mod_{\P(2)}$ denote the functor sending a module over $BP\2_{*}$ to its associated graded module over $\P(2)$.\\

\begin{proposition}
Let $gr BP\2_{*}X$ denote the associated graded module to the Adams filtration on $BP\2_{*}X$. If $X = BP\2$ or $l_{k}$, then there exists an $\F_{p}$-vector space isomorphism 
\[\varphi: Ext_{E(2)_{*}} ( \F_{p}, H_{*}X) \rightarrow BP\2_{*}X .\] 
\end{proposition}

\begin{proof}
Recall from \cref{thm:dominic:splitting2} that the Adams spectral sequences
\[ Ext_{E(2)_{*}}(\F_{p}, H_{*}BP\2) \Longrightarrow BP\2_{*}BP\2 \]
\[ Ext_{E(2)_{*}}(\F_{p}, H_{*}l_{k}) \Longrightarrow BP\2_{*}l_{k} \]

collapse at the $E_{2}$-page. So each nonzero class $x \in E_{2}^{*,*}$ corresponds to a unique nonzero class $\widetilde{x} \in grBP\2_{*}X$, and vice versa. 
\end{proof}

\subsubsection{Comparing the multiplicative structures.}

\begin{proposition}
Let $x,y \in Ext_{E(2)_{*}}(\F_{p}, H_{*}X) $. Let $\widetilde{x}$, $\widetilde{y}$ denote classes in $BP\2_{*}X$ such that $gr(\widetilde{x}) = \varphi(x)$ and $gr(\widetilde{y}) = \varphi(y)$. Then $p^{i_{0}}v_{1}^{i_{1}}v_{2}^{i_{2}}\widetilde{x} = \widetilde{y}$ if and only if $v_{0}^{i_{0}}v_{1}^{i_{1}}v_{2}^{i_{2}}x = y$ .
\end{proposition}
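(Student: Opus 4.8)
The plan is to deduce the proposition from the multiplicative structure of the Adams spectral sequence, the fact that $p, v_1, v_2 \in BP\2_*$ are detected by the polynomial generators $v_0, v_1, v_2$, and the absence of hidden extensions established in \cref{b:h:e} and \cref{l:h:e}. Concretely, I would first invoke \cref{prop:rav_ass_mult} (in the form recorded in the Remark following it) for the $BP\2$-module action $\mu \smash 1 \colon BP\2 \smash BP\2 \smash X \to BP\2 \smash X$: this makes the Adams spectral sequence $Ext_{E(2)_*}(\F_p, H_*X) \Rightarrow BP\2_*X$ a module over the Adams spectral sequence $Ext_{E(2)_*}(\F_p, \F_p) \cong \P(2) \Rightarrow BP\2_*$, with the action on $E_2$ the external cup product. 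In particular $v_0, v_1, v_2 \in Ext^{1,*}_{E(2)_*}(\F_p, \F_p)$ are permanent cycles, $v_0$ detects $p$ and $v_i$ detects $v_i$, and hence the monomial $v_0^{i_0}v_1^{i_1}v_2^{i_2}$ detects $p^{i_0}v_1^{i_1}v_2^{i_2}$.

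The second step is the standard dichotomy for a multiplicative spectral sequence: if $a \in BP\2_*$ is detected by $\alpha \in E_\infty = E_2$ and $b \in BP\2_*X$ is detected by $\beta \in E_2$, then either $\alpha\beta \neq 0$, in which case $ab$ is detected by $\alpha\beta$ (equivalently $gr(ab) = \alpha\beta$), or $\alpha\beta = 0$, in which case $ab$ lies in Adams filtration strictly larger than $\mathrm{filt}(a) + \mathrm{filt}(b)$. Apply this with $a = p^{i_0}v_1^{i_1}v_2^{i_2}$, $\alpha = v_0^{i_0}v_1^{i_1}v_2^{i_2}$, $b = \widetilde x$, $\beta = \varphi(x)$. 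If $v_0^{i_0}v_1^{i_1}v_2^{i_2}x = y \neq 0$ in $Ext$, the first case gives $gr(p^{i_0}v_1^{i_1}v_2^{i_2}\widetilde x) = v_0^{i_0}v_1^{i_1}v_2^{i_2}\varphi(x) = \varphi(y) = gr(\widetilde y)$, so $p^{i_0}v_1^{i_1}v_2^{i_2}\widetilde x$ is a lift of $y$ and hence agrees with $\widetilde y$ (up to the indeterminacy of the lift, absorbed by taking $\widetilde y := p^{i_0}v_1^{i_1}v_2^{i_2}\widetilde x$). If $v_0^{i_0}v_1^{i_1}v_2^{i_2}x = 0$, then $y = 0$, so $\widetilde y = 0$ by strong convergence, and $p^{i_0}v_1^{i_1}v_2^{i_2}\widetilde x = 0$ as well — this is exactly the no-hidden-extension input. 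Conversely, suppose $p^{i_0}v_1^{i_1}v_2^{i_2}\widetilde x = \widetilde y$: if $\widetilde y \neq 0$ the dichotomy forces $v_0^{i_0}v_1^{i_1}v_2^{i_2}\varphi(x) \neq 0$, equal to $gr(\widetilde y) = \varphi(y)$, and injectivity of $\varphi$ yields $v_0^{i_0}v_1^{i_1}v_2^{i_2}x = y$; if $\widetilde y = 0$ then $v_0^{i_0}v_1^{i_1}v_2^{i_2}x$, being the class of $0$ in the associated graded, vanishes, so $v_0^{i_0}v_1^{i_1}v_2^{i_2}x = 0 = y$.

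The one nontrivial ingredient is the implication ``$v_0^{i_0}v_1^{i_1}v_2^{i_2}x = 0$ in $Ext$ $\Rightarrow$ $p^{i_0}v_1^{i_1}v_2^{i_2}\widetilde x = 0$ in $BP\2_*X$'', i.e. the absence of hidden $v_i$-extensions in the Adams spectral sequence converging to $BP\2_*X$ for $X = BP\2$ and $X = l_k$. This is precisely \cref{b:h:e} and \cref{l:h:e}: via the splittings of \cref{culver} and \cref{thm:dominic:splitting2} one reduces to the summands $\overline{H_*C}$ and $\overline{H_*C_k}$, where \cref{ext:total:C} shows the relevant $Ext_{E(Q_j, Q_h)}(\F_p, -)$ groups are concentrated in even $(t-s)$-degree, and the $v_i$-Bockstein spectral sequences then force $v_i$-torsion-freeness, leaving no room for a hidden extension. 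I expect controlling hidden extensions to be the main point; the remainder is formal bookkeeping with the module structure of the Adams spectral sequence and the detection of $p, v_1, v_2$ by $v_0, v_1, v_2$.
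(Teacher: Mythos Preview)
Your proposal is correct and follows essentially the same approach as the paper: invoke the multiplicativity of the Adams spectral sequence from \cref{prop:rav_ass_mult}, use that $v_0,v_1,v_2$ detect $p,v_1,v_2$, and then appeal to \cref{b:h:e} and \cref{l:h:e} to rule out hidden extensions. The paper's proof is considerably terser and does not spell out the dichotomy or the two directions separately, but the logical content is the same.
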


\begin{proof}
As remarked above, these spectral sequences are multiplicative. So the $\P(2) \cong Ext_{E(2)_{*}}(\F_{p}, \F_{p})$-module action on the $E_{2}$-page determines the $BP\2_{*}$-action on the target, up to hidden extensions. It is well-known that the $v_{0}$-action extends so that if $v_{0}x = y$, then $p\widetilde{x} = \widetilde{y}$. Furthermore, we have shown in \cref{l:h:e} and \cref{b:h:e} that there is no room for hidden extensions in either of these spectral sequences. 
So when $X = BP\2$ or $l_{k}$, the $BP\2_{*}$-module multiplication on $BP\2_{*}X$ is completely determined by the $\P(2)$-module multiplication on $Ext_{E(2)_{*}}(\F_{p}, H_{*}X)$. That is, for any $x,y \in E_{2}^{s,t}$, $v_{0}^{i}v_{1}^{j}v_{2}^{k}x = y$ if and only if $p^{i}v_{1}^{j}v_{2}^{k}\widetilde{x} = \widetilde{y}$. 
\end{proof}

Note that $gr(BP\2_{*}) \cong \P(2)$, so we can define a functor $gr_{*}: Mod_{BP\2_{*}} \rightarrow Mod_{\P(2)}$ which sends a $BP\2_{*}$-module to its associated graded module.

\begin{proposition}\label{gr:equiv}
The functor $gr_{*}: Mod_{BP\2_{*}} \rightarrow Mod_{\P(2)}$ restricts to an equivalence of categories on finitely generated free modules. 
\end{proposition}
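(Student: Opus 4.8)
The plan is to verify the two halves of being an equivalence---essential surjectivity and full faithfulness---working throughout with matrix descriptions. First I would pin down the algebra: $BP\2_* \cong \mathbb{Z}_{(p)}[v_1,v_2]$, and the Adams filtration on it is the $I$-adic filtration for $I=(p,v_1,v_2)$; since $(p,v_1,v_2)$ is a regular sequence, $gr_I BP\2_* \cong \F_p[v_0,v_1,v_2] = \P(2)$, with $v_0$ the leading term of $p$ (this is exactly the identification of the associated graded of the Adams filtration with $\P(2)$ recorded just above). For a finitely generated free module $F=\bigoplus_i \Sigma^{d_i}BP\2_*$ I would use the induced $I$-adic filtration, placing the $i$-th summand in the evident shift, so that $gr_* F \cong \bigoplus_i \Sigma^{d_i}\P(2)$ is finitely generated and free over $\P(2)$. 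Since every finitely generated free $\P(2)$-module has this form, $gr_*$ is essentially surjective onto finitely generated free $\P(2)$-modules.

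For full faithfulness I would identify a map $F\to F'$ of finitely generated free $BP\2_*$-modules with a matrix over $BP\2_*$ whose $(i,j)$-entry lies in $(BP\2_*)_{d_i-d'_j}$, and identify $gr_*$ of this map with the matrix of leading terms over $\P(2)$. Faithfulness is then the separatedness of the filtration, $\bigcap_s I^s \cap (BP\2_*)_m = 0$, so a nonzero entry has a nonzero leading term. Fullness comes from lifting: a $\P(2)$-module map is a matrix of polynomials in $v_0,v_1,v_2$, and because such a map preserves the internal bidegree each entry is filtration-homogeneous, so lifting it by $v_0\mapsto p$, $v_i\mapsto v_i$ gives a matrix over $BP\2_*$ whose leading-term matrix is the original. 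That the leading-term assignment is multiplicative---needed for $gr_*$ to be a functor at all---holds because $\P(2)=\F_p[v_0,v_1,v_2]$ is a domain, so leading terms cannot cancel in a composite.

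The real obstacle is not this linear-algebra core but the bookkeeping with filtrations that makes the proposition usable later: to conclude that $gr_*$ carries a minimal $BP\2_*$-free resolution of $BP\2_* l_k$ to a $\P(2)$-free resolution of $gr_* BP\2_* l_k \cong Ext_{E(2)_*}(\F_p,H_*l_k)$ and matches the $\Hom$-complexes, one must place each stage of a resolution in its homologically shifted $I$-adic filtration so that the differentials stay filtration-preserving and $gr_*$ remains exact, and one must know that passing to associated graded loses no information about the target module $BP\2_* BP\2$. The second point is precisely the no-hidden-extensions input established above in \cref{b:h:e} and \cref{l:h:e}, together with the preceding two propositions identifying the $\P(2)$-structure on $gr_* BP\2_* X$ with the $BP\2_*$-structure on $BP\2_* X$; so I expect the delicate step to be organizing these filtrations and checking the resolutions are ``linear enough'' for $gr_*$ to preserve exactness, rather than the abstract categorical statement itself.
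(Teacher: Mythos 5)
Your overall strategy matches the paper's: essential surjectivity by lifting a finitely generated free $\P(2)$-module on generators $\{x_i\}$ to the free $BP\2_{*}$-module on the same generators, and full faithfulness by reducing to an identification of Hom-groups between free modules (the paper checks the rank-one case and extends additively; your matrix description is the same reduction spelled out).

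However, one step in your full-faithfulness argument fails as stated. The assignment ``matrix of leading terms'' is neither additive nor multiplicative, and the reason you give for multiplicativity --- that $\P(2)$ is a domain, so leading terms cannot cancel in a composite --- is not correct: the cancellation happens in the \emph{sums} appearing in a matrix product, not in the individual products. For instance, with $a = (p,\ 1)$ a row vector and $b = (1,\ -p+p^{2})^{T}$ a column vector, the composite is multiplication by $p^{2}$, with leading term $v_{0}^{2}$, while the product of the leading-term matrices is $v_{0}-v_{0}=0$. For the same reason ``a nonzero entry has a nonzero leading term'' does not establish faithfulness of an additive functor, and in fact no naive version of $gr_{*}$ on morphisms is faithful here: multiplication by $1$ and by $1+p$ induce the same map on associated gradeds. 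The statement that is actually true, that the paper implicitly intends, and that is what \cref{leftiso} needs, is an isomorphism of \emph{filtered} Hom-groups after passing to associated gradeds, $gr_{*}\,Hom_{BP\2_{*}}(F,G)\cong Hom_{\P(2)}(gr_{*}F,gr_{*}G)$ for $F,G$ finitely generated free, where a map raising Adams filtration by exactly $i$ contributes in filtration degree $i$. On each filtration-homogeneous component your lift $v_{0}\mapsto p$ is exactly the required inverse, and separatedness of the $(p,v_{1},v_{2})$-adic filtration gives injectivity there; so your argument is repaired by working one filtration degree at a time rather than with a single ``leading term'' of an inhomogeneous element. Your closing remarks about resolutions and the no-hidden-extension input are accurate and correctly identify where the proposition does its real work downstream.
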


\begin{proof}
First we confirm that $gr_{*}$ is essentially surjective on the subcategory of finitely generated free $\P(2)$-modules. Any finitely generated free $\P(2)$-module can be written in the form $\bigoplus\limits_{i \in I} \P(2)\{x_{i}\}$, where $I$ is a finite set. The module $\bigoplus\limits_{i \in I} BP\2_{*}\{x_{i}\}$ is also a free finitely generated $BP\2_{*}$-module, and \[gr_{*}\Big(\bigoplus\limits_{i \in I} BP\2_{*}\{x_{i}\}\Big) = \bigoplus\limits_{i \in I} \P(2)\{x_{i}\}.\] So indeed $gr_{*}$ is essentially surjective on the subcategory of finitely generated free $\P(2)$-modules.

Now we will check that $gr_{*}$ is fully faithful on the subcategory of finitely generated free $\P(2)$-modules. First, note that 
\[ Hom_{BP\2_{*}}(BP\2_{*}, BP\2_{*}) \cong Hom_{\P(2)}(gr_{*}BP\2_{*}, gr_{*}BP\2_{*}) .\]
It follows that \[ Hom_{BP\2_{*}}\big( F, G \big) \cong Hom_{\P(2)}\big( gr_{*}F, gr_{*}G \big)   \]
for all free and finitely generated $BP\2_{*}$-modules $F, G$.
\end{proof}

Now we are ready to prove the following theorem.
\begin{lemma}\label{leftiso}
There exists an isomorphism
\[ Ext_{\P(2)}^{u,*}\big(  Ext_{E(2)_{*}} ( \F_{p}, H_{*}l_{k}), Ext_{E(2)_{*}} ( \F_{p}, H_{*}BP\2) \big) \cong Ext_{BP\2_{*}}^{u,*}\big(BP\2_{*}l_{k}, BP\2_{*}BP\2  \big) .\]
\end{lemma}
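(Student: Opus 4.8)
The plan is to transport a minimal free resolution over $\P(2)$ of $Ext_{E(2)_{*}}(\F_{p}, H_{*}l_{k})$ to a free resolution over $BP\2_{*}$ of $BP\2_{*}l_{k}$, and then to compare the two $Hom$-complexes obtained by mapping that resolution into $BP\2_{*}BP\2$ on one side and into $Ext_{E(2)_{*}}(\F_{p}, H_{*}BP\2)$ on the other. Write $R = BP\2_{*}$, filtered by the Adams filtration (which here coincides with the $p$-adic filtration), so that $gr_{*}R \cong \P(2)$. For $X = l_{k}$ or $X = BP\2$ the relevant Adams spectral sequence $Ext_{E(2)_{*}}(\F_{p}, H_{*}X) \Rightarrow BP\2_{*}X$ collapses, so the propositions preceding this lemma identify the $\P(2)$-module $gr_{*}BP\2_{*}X$ with $Ext_{E(2)_{*}}(\F_{p}, H_{*}X)$; moreover \cref{b:h:e} and \cref{l:h:e} guarantee there are no hidden extensions, so the filtered $R$-module $BP\2_{*}X$ is recovered from the $\P(2)$-module $Ext_{E(2)_{*}}(\F_{p}, H_{*}X)$ with no ambiguity. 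We will also use that $gr_{*}$ restricts to an equivalence of categories on finitely generated free modules (\cref{gr:equiv}), and that $\P(2) \cong \F_{p}[v_{0}, v_{1}, v_{2}]$ has global dimension $3$.

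To lift the resolution, choose a minimal graded free resolution $0 \to P_{3} \to P_{2} \to P_{1} \to P_{0} \xrightarrow{\varepsilon} Ext_{E(2)_{*}}(\F_{p}, H_{*}l_{k}) \to 0$ over $\P(2)$ with each $P_{i}$ free of finite type. By \cref{gr:equiv} this lifts to a complex $(\widetilde{P}_{\bullet}, \widetilde{d}_{\bullet})$ of finite-type free $R$-modules with $gr_{*}(\widetilde{P}_{\bullet}, \widetilde{d}_{\bullet})$ identified with the given $\P(2)$-resolution; the relations $\widetilde{d}_{i}\widetilde{d}_{i+1} = 0$ hold since $gr_{*}$ is faithful on the relevant $Hom$-groups. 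Lifting a minimal generating set of $Ext_{E(2)_{*}}(\F_{p}, H_{*}l_{k})$ to detecting elements of $BP\2_{*}l_{k}$ gives an augmentation $\widetilde{\varepsilon}\colon \widetilde{P}_{0} \to BP\2_{*}l_{k}$, and we claim $(\widetilde{P}_{\bullet}, \widetilde{\varepsilon})$ is a free resolution of $BP\2_{*}l_{k}$. Everything here is bounded below and complete in each internal degree, so a graded Nakayama argument makes $\widetilde{\varepsilon}$ surjective; the vanishing of hidden extensions (\cref{l:h:e}) forces the quotient filtration on $BP\2_{*}l_{k}$ induced from $\widetilde{P}_{0}$ to agree with its Adams filtration, so $0 \to ker\,\widetilde{\varepsilon} \to \widetilde{P}_{0} \to BP\2_{*}l_{k} \to 0$ is strict and $gr_{*}(ker\,\widetilde{\varepsilon}) = im(gr_{*}\widetilde{d}_{1})$; iterating down the (length $\le 3$) resolution proves the claim. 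Equivalently: the filtration spectral sequence of the complex $\widetilde{P}_{\bullet}$ has $E_{1}$-page the $\P(2)$-resolution, whose homology is $Ext_{E(2)_{*}}(\F_{p}, H_{*}l_{k})$ concentrated in degree $0$, and convergence together with no hidden extensions pins $H_{*}(\widetilde{P}_{\bullet})$ to $BP\2_{*}l_{k}$ in degree $0$.

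Finally, apply $Hom_{R}(-,\, BP\2_{*}BP\2)$ to $\widetilde{P}_{\bullet}$. Since each $\widetilde{P}_{i}$ is a finite sum of shifts of $R$, the resulting cochain complex is filtered with associated graded the cochain complex $Hom_{\P(2)}\big(P_{\bullet},\, Ext_{E(2)_{*}}(\F_{p}, H_{*}BP\2)\big)$, using the identification $gr_{*}BP\2_{*}BP\2 \cong Ext_{E(2)_{*}}(\F_{p}, H_{*}BP\2)$ and compatibility of all differentials with $gr_{*}$. Passing to homology, the same degreewise completeness yields a convergent filtration spectral sequence
\[ E_{1}^{u,*} = Ext_{\P(2)}^{u}\big( Ext_{E(2)_{*}}(\F_{p}, H_{*}l_{k}),\, Ext_{E(2)_{*}}(\F_{p}, H_{*}BP\2) \big) \Longrightarrow gr_{*}\, Ext_{R}^{u}\big( BP\2_{*}l_{k},\, BP\2_{*}BP\2 \big), \]
and it remains to see this collapses at $E_{1}$: a nonzero higher differential would witness a hidden module extension in $BP\2_{*}BP\2$ not already present over $\P(2)$, which \cref{b:h:e} rules out (via the $v_{i}$-torsion-freeness of $Ext_{E(2)_{*}}(\F_{p}, \overline{H_{*}C})$ from \cref{ext:total:C} and the concentration in even degrees of \cref{thm:dominic:splitting2}). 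Thus $d_{r} = 0$ for $r \ge 2$ and the asserted isomorphism follows.

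The hard part is precisely the upgrade from $gr_{*}$-level agreement to honest isomorphisms: showing that $\widetilde{P}_{\bullet}$ is genuinely acyclic over $BP\2_{*}$ — strictness of the filtrations, which is exactly where the no-hidden-extensions results are essential — and that the comparison spectral sequence collapses. The bookkeeping is further complicated by $BP\2_{*}BP\2$ not being finitely generated over $BP\2_{*}$ (this is dealt with one internal degree at a time, where all modules appearing are finite) and by keeping the Adams and $p$-adic filtrations on $BP\2_{*}$ aligned throughout.
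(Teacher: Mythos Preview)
Your proof is correct and follows essentially the same strategy as the paper: use the equivalence $gr_{*}$ on finitely generated free modules (\cref{gr:equiv}) together with the absence of hidden extensions (\cref{b:h:e}, \cref{l:h:e}) to transport a free resolution between $BP\2_{*}$ and $\P(2)$ and compare the resulting $Hom$-complexes. The only difference is directional --- the paper starts with a projective resolution of $BP\2_{*}l_{k}$ over $BP\2_{*}$ and applies $gr_{*}$ to obtain one over $\P(2)$, whereas you start over $\P(2)$ and lift --- and you are considerably more explicit than the paper about the strictness and collapse issues (the paper simply asserts that the two $Hom$-complexes are quasi-isomorphic).
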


\begin{proof}

Consider the module $BP\2_{*}l_{k}$. We can construct a projective resolution $P^{\bullet}$ over $BP\2_{*}$ of the following form

\[
\begin{tikzcd}[ampersand replacement=\&]
0
\&
BP\2_{*}l_{k}  \arrow[l] 
\&
 \bigoplus\limits_{j}\Sigma^{r_{0}(j)}BP\2_{*} \arrow[l,"\epsilon"] 
\& 
\cdots \arrow[l, "d_{0}"] 
\&
\bigoplus\limits_{j}\Sigma^{r_{3}(j)}BP\2_{*} \arrow[l, "d_{3}"]
\& 
0 \arrow[l] \\
\end{tikzcd}
.\]

Consider the chain complex $Q^{\bullet}$ obtained by applying $gr_{*}: Mod_{BP\2_{*}} \rightarrow Mod_{\P(2)}$ to $P^{\bullet}$. By \cref{gr:equiv}, $Q^{\bullet}$ is a projective resolution of $Ext_{E(2)_{*}}(\F_{p}, H_{*}l_{k})$. Furthermore, \\ $Hom_{\P(2)}(P^{\bullet}, BP\2_{*}BP\2)$ is quasi-isomorphic to $Hom_{E(2)}\big(Q^{\bullet}, Ext_{E(2)_{*}}(\F_{p}, H_{*}BP\2)\big)$. So indeed the $Ext$ terms are isomorphic.

\end{proof}
So we have constructed the following square.
\begin{equation}\label{bigsquare}
\begin{tikzcd}[ampersand replacement=\&]
Ext_{\P(2)}\big( Ext_{E(2)_{*}} ( \F_{p}, H_{*}l_{k}), Ext_{E(2)_{*}} ( \F_{p}, H_{*}BP\2) \big) \arrow[Rightarrow, r, "HSS"] \arrow[d, phantom, "\visom"]
\&
Ext_{E(2)}\big(H_{*}l_{k}, H_{*}BP\2  \big)\arrow[d, Rightarrow, "ASS"]\\
Ext_{BP\2_{*}}\big(BP\2_{*}l_{k}, BP\2_{*} BP\2 \big)\arrow[r, Rightarrow, "UCSS"]
\&
\left[  l_{k}, BP\2 \smash BP\2 \right] \\
\end{tikzcd}
\end{equation}

\section{Analyzing the potential obstructions}\label{sec:analysis:po}
Recall that a class $x \in Ext_{E(2)_{*}}\big( H_{*}l_{k}, H_{*}BP\2 \big)$ is said to be a potential obstruction (to lifting the map $\theta_{k}$) if $x$ has degree $|x|_{s,t}$ such that $s\ge 2$ and $(t-s)$ is odd. In this section, we will show that all these potential obstructions must generate a nonzero class on the $E_{\infty}$-page of the spectral sequence. This implies that none of the potential obstructions can be boundaries for any differential $d_{r}$, and so $d_{r}(\theta_{k}) = 0$ for all $r$. Thus $\theta_{k}$ survives the Adams spectral sequence as well, allowing us to lift the necessary maps to construct the splitting of \cref{thm:main}.

Recall the Adams spectral sequence (\ref{main:relASS}) \[ Ext_{E(2)_{*}}^{s,t}\big(H_{*}l_{k}, H_{*}BP\2\big) \Longrightarrow [BP\2 \smash l_{k}, BP\2 \smash BP\2]^{BP\2}.\] 

Likewise, we can use the the $BP\2$-module splitting of $BP\2 \smash l_{k}$ constructed in \cref{thm:bp split} to further divide this spectral sequence into a summand of two more spectral sequences. 

\begin{proposition}
The Adams spectral sequence 
\[ Ext_{E(2)_{*}}^{s,t}\big(H_{*}l_{k}, H_{*}BP\2\big) \Longrightarrow [BP\2 \smash l_{k}, BP\2 \smash BP\2]^{BP\2}\] 
splits as a sum of the following four spectral sequences: 

\[Ext_{E(2)_{*}} \big( H_{*}^{BP\2}C_{k}, H_{*}^{BP\2}C \big) \Longrightarrow [C_{k}, C]^{BP\2}  \]

\[Ext_{E(2)_{*}} \big( H_{*}^{BP\2}C_{k}, H_{*}^{BP\2}V \big) \Longrightarrow [C_{k}, V]^{BP\2}  \]

\[Ext_{E(2)_{*}} \big( H_{*}^{BP\2}V_{k}, H_{*}^{BP\2}V \big) \Longrightarrow [V_{k}, V]^{BP\2}  \]

\[Ext_{E(2)_{*}} \big( H_{*}^{BP\2}V_{k}, H_{*}^{BP\2}C \big) \Longrightarrow [V_{k}, C]^{BP\2}  .\]

Furthermore, any potential obstructions to lifting $\theta_{k}$ will be contained in the summand \[Ext_{E(2)_{*}} \big( H_{*}^{BP\2}C_{k}, H_{*}^{BP\2}C \big) .\]

\end{proposition}

\begin{proof}
The $BP\2$-module splittings of $BP\2 \smash BP\2$ and $BP\2 \smash l_{k}$ constructed in \cref{thm:bp split} allows us to split this Adams spectral sequence as a sum of four separate Adams spectral sequences: \[Ext_{E(2)_{*}} \big( H_{*}^{BP\2}C_{k}, H_{*}^{BP\2}C \big) \Longrightarrow [C_{k}, C]^{BP\2}  \]

\[Ext_{E(2)_{*}} \big( H_{*}^{BP\2}C_{k}, H_{*}^{BP\2}V \big) \Longrightarrow [C_{k}, V]^{BP\2}  \]

\[Ext_{E(2)_{*}} \big( H_{*}^{BP\2}V_{k}, H_{*}^{BP\2}V \big) \Longrightarrow [V_{k}, V]^{BP\2}  \]

\[Ext_{E(2)_{*}} \big( H_{*}^{BP\2}V_{k}, H_{*}^{BP\2}C \big) \Longrightarrow [V_{k}, C]^{BP\2}  .\]

Recall from \cref{prop:free} that $H_{*}^{BP\2}V$ and $H_{*}^{BP\2}V_{k}$ are both free $E(2)_{*}$-comodules of finite type. \cref{free:proj:inj} tells us that $H_{*}^{BP\2}V$ is therefore also injective, and $H_{*}^{BP\2}V_{k}$ is also projective. So each of the summands, except for $Ext_{E(2)_{*}}^{s,t}(H_{*}C_{k}, H_{*}^{BP\2}C)$, is concentrated on the $(s=0)$-line. Recall that any potential obstructions to lifting the map $\theta_{k}:BP\2 \smash l_{k}\rightarrow BP\2 \smash BP\2$ must have $s$-degree at least $2$. So to analyze the potential obstructions, we can restrict our attention to the summand \begin{equation}\label{Cass}Ext_{E(2)_{*}}^{s,t}(H_{*}C_{k}, H_{*}^{BP\2}C) \Longrightarrow [C_{k}, C]^{BP\2}.\end{equation}\\
\end{proof}

Since the splittings of $BP\2 \smash BP \2$ and $BP\2 \smash l_{k}$ that were used above are $BP\2$-module splittings, we can apply the same splittings to the universal coefficient spectral sequence. So instead of needing the entire square (\ref{bigsquare}), we can restrict our attention to the square below in analyzing the potential obstructions.

\begin{tikzcd}[ampersand replacement=\&]\label{smallsquare}
Ext_{\P(2)}\big( Ext_{E(2)_{*}} ( \F_{p}, H_{*}^{BP\2}C_{k}), Ext_{E(2)_{*}} ( \F_{p}, H_{*}^{BP\2}C) \big) \arrow[Rightarrow, r, "HSS"] \arrow[d, phantom, "\visom"]
\&
Ext_{E(2)}\big(H_{*}^{BP\2}C_{k}, H_{*}^{BP\2}C  \big)\arrow[Rightarrow, d, "ASS"]\\
Ext_{BP\2_{*}}\big(\pi_{*}C_{k}, \pi_{*}C \big)\arrow[Rightarrow, r, "UCSS"]
\&
\left[C_{k}, C\right]^{BP\2}\\
\end{tikzcd}
Now we will analyze the $E_{2}$-page $Ext_{BP\2_{*}}\big(\pi_{*}C_{k}, \pi_{*}C \big)$ of the universal coefficient spectral sequence. We will need to use a classical result of Auslander-Buchsbaum about Noetherian local rings and their modules.

\begin{definition}
Let $R$ be a commutative ring, $I$ an ideal in $R$, and $M$ an $R$-module. The $I$-depth of $M$ is
\[ depth_{I}(M) = min\{s| Ext_{R}(R/I, M) \neq 0 \} .\]

If $R$ has a maximal ideal $\mathfrak{m}$, then the depth of $R$ is defined as \[depth(R) = depth_{\mathfrak{m}}(R).\]
\end{definition}

The depth of $R$ is at most equal to the dimension of the ring $R$. When $depth(R) = dim(R)$, the ring is said to be Cohen-Macaulay.
\begin{proposition}\cite[p.451]{eisenbud2013commutative}
Let $R$ be a regular local ring. Then $R$ is Cohen-Macaulay.
\end{proposition}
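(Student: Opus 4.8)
The plan is to prove this by the classical route: first show that a regular local ring is an integral domain, then that a regular system of parameters is an $R$-regular sequence, and finally combine the resulting lower bound on depth with the universal inequality $\mathrm{depth}(R)\le\dim(R)$ recorded in the text. Throughout write $(R,\mathfrak{m})$ for the regular local ring and $d=\dim R$, so that $\mathfrak{m}$ is generated by a regular system of parameters $x_{1},\dots,x_{d}$.

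First I would prove, by induction on $d$, that $R$ is a domain. When $d=0$, Nakayama's lemma forces $\mathfrak{m}=0$, so $R$ is a field. For $d\ge 1$, let $\mathfrak{p}_{1},\dots,\mathfrak{p}_{n}$ be the minimal primes of $R$. Since $\mathfrak{m}\ne\mathfrak{m}^{2}$ (Nakayama, as $\mathfrak{m}\ne 0$) and $\mathfrak{m}\ne\mathfrak{p}_{i}$ for every $i$ (because $\dim R\ge 1$), prime avoidance produces an element $x\in\mathfrak{m}\setminus\big(\mathfrak{m}^{2}\cup\mathfrak{p}_{1}\cup\cdots\cup\mathfrak{p}_{n}\big)$. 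Since $x\notin\mathfrak{m}^{2}$ it is part of a minimal generating set of $\mathfrak{m}$, so $\mathfrak{m}/(x)$ has $\le d-1$ generators, while $\dim R/(x)\ge d-1$; hence $R/(x)$ is regular local of dimension $d-1$, so a domain by induction, and $(x)$ is prime. Choosing a minimal prime $\mathfrak{q}\subseteq(x)$, any $y\in\mathfrak{q}$ satisfies $y=ax$ with $a\in\mathfrak{q}$ (as $x\notin\mathfrak{q}$), so $\mathfrak{q}=x\mathfrak{q}\subseteq\mathfrak{m}\mathfrak{q}$, and Nakayama gives $\mathfrak{q}=0$. Thus $R$ is a domain.

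Next I would show $x_{1},\dots,x_{d}$ is an $R$-regular sequence, again by induction on $d$ (the case $d=0$ being the empty sequence). Since $x_{1}\in\mathfrak{m}\setminus\mathfrak{m}^{2}$ it is nonzero, hence a nonzerodivisor because $R$ is a domain. The quotient $R/(x_{1})$ is regular local of dimension $d-1$ with regular system of parameters $\bar x_{2},\dots,\bar x_{d}$, so by the inductive hypothesis $\bar x_{2},\dots,\bar x_{d}$ is $R/(x_{1})$-regular; as $(x_{1},\dots,x_{d})=\mathfrak{m}\ne R$, the full sequence is $R$-regular. By the Rees characterization of depth (the $\mathrm{Ext}$-definition in the text agrees with the common length of maximal regular sequences contained in $\mathfrak{m}$), this yields $\mathrm{depth}(R)\ge d$. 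Combined with the inequality $\mathrm{depth}(R)\le\dim R=d$ stated just above, we get $\mathrm{depth}(R)=\dim R$, i.e.\ $R$ is Cohen--Macaulay.

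The main obstacle is the domain step: producing the element $x$ avoiding simultaneously $\mathfrak{m}^{2}$ and all minimal primes, checking that $R/(x)$ is again regular local of dimension \emph{exactly} $d-1$ so the induction applies, and running the Nakayama argument collapsing the minimal prime inside $(x)$. Once $R$ is known to be a domain, identifying a regular system of parameters as a regular sequence and invoking $\mathrm{depth}\le\dim$ is routine.
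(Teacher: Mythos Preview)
Your proof is correct and follows the classical route (regular local rings are domains by induction, a regular system of parameters is a regular sequence, hence $\mathrm{depth}(R)\ge\dim R$). However, there is nothing to compare: the paper does not prove this proposition at all; it simply cites it as a known fact from \cite[p.~451]{eisenbud2013commutative} and uses it as input to the Auslander--Buchsbaum argument in the proof of \cref{prop:zero_line}. So your write-up supplies a proof where the paper only gives a reference.
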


\begin{lemma}[Auslander-Buchsbaum~formula]\cite[Thm~19.9]{eisenbud2013commutative}\label{auslander-buchsbaum}
Let $R$ be a Noetherian local ring with maximal ideal $I$. If $M$ is a nonzero finitely generated $R$-module with finite projective dimension $pd(M)$, then 
$pd(M) + depth(M) = depth(R) $.
\end{lemma}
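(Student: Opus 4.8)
The plan is to prove the identity by induction on $depth(R)$, relying only on standard depth theory: prime avoidance, the long exact sequence in $\mathrm{Ext}_R^{\bullet}(k,-)$ for $k = R/I$, and the fact that projective dimension is unchanged upon quotienting by an element that is a non-zerodivisor on both $R$ and $M$. Throughout, all modules are finitely generated and of finite projective dimension, and I would use that a minimal free resolution of such a module is finite with all differential entries in $I$.

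For the base case $depth(R) = 0$, the goal is to show $pd(M) = 0$, so that $depth(M) = 0$ as well and the formula reads $0 + 0 = 0$. I would argue by contradiction: if $pd(M) = n \ge 1$, take a minimal free resolution and let $\varphi\colon R^{b_n} \to R^{b_{n-1}}$ be its last nonzero differential, which is injective with all entries in $I$. Since $depth_I(R) = 0$ means $\mathrm{Hom}_R(k,R) \ne 0$, there is $0 \ne x \in R$ with $Ix = 0$; then $\varphi$ annihilates the nonzero vector $(x,0,\dots,0)$, contradicting injectivity.

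For the inductive step, suppose $depth(R) = d \ge 1$ and the formula is known for local rings of strictly smaller depth. I would first treat the subcase $depth(M) \ge 1$: then $I$ is an associated prime of neither $R$ nor $M$, so prime avoidance produces $x \in I$ that is a non-zerodivisor on both. Passing to $\bar R = R/xR$, $\bar M = M/xM$, one has $depth_{\bar R}(\bar M) = depth_R(M) - 1$ and $depth(\bar R) = d - 1$, while reducing a minimal $R$-free resolution of $M$ modulo $x$ yields a minimal $\bar R$-free resolution of $\bar M$ (the relevant $\mathrm{Tor}$ vanishes because $x$ is a non-zerodivisor on $M$), so $pd_{\bar R}(\bar M) = pd_R(M)$; the inductive hypothesis then gives $pd_R(M) + depth_R(M) = d$. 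Next I would treat the subcase $depth(M) = 0$. Here $M$ cannot be free, since a nonzero free module has depth $d \ge 1$, so with $R^b \twoheadrightarrow M$ a minimal surjection and $K$ its kernel we get $pd(K) = pd(M) - 1$; feeding $0 \to K \to R^b \to M \to 0$ into the $\mathrm{Ext}_R^{\bullet}(k,-)$ long exact sequence and using $\mathrm{Ext}^0(k,R^b) = 0$ (as $d \ge 1$) together with $\mathrm{Ext}^0(k,M) \ne 0$ (as $depth(M) = 0$) forces $depth(K) = 1$. Since $K$ has positive depth, the subcase just proved applies to $K$ over the same ring $R$ and gives $pd(K) + depth(K) = d$, whence $pd(M) = d = depth(R) - depth(M)$.

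The point demanding the most care is the internal order of the inductive step: the $depth(M) = 0$ subcase is reduced to the $depth(M) \ge 1$ subcase \emph{for the same ring} $R$, so one must make sure the latter is established first and uses only the full statement for rings of smaller depth, keeping the argument acyclic. The two supporting lemmas — invariance of $pd$ under reduction by a regular element that is also a non-zerodivisor on $M$, and the computation $depth(K) = 1$ via the $\mathrm{Ext}(k,-)$ sequence — are routine but should be stated precisely, along with the prime-avoidance step producing $x$.
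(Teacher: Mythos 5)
The paper offers no proof of this lemma: it is quoted verbatim as \cite[Thm~19.9]{eisenbud2013commutative}, so there is nothing internal to compare against. Your argument is correct and is essentially the standard textbook proof of Auslander--Buchsbaum (and in fact close to Eisenbud's own): the base case via the socle element killing the last minimal differential, the positive-depth subcase via a simultaneous non-zerodivisor and reduction of the minimal resolution modulo $x$, and the depth-zero subcase via the first syzygy and the $\mathrm{Ext}(k,-)$ long exact sequence forcing $depth(K)=1$. The logical ordering you flag (the $depth(M)=0$ subcase invoking the $depth(M)\ge 1$ subcase for the same ring, which in turn only uses the inductive hypothesis for rings of smaller depth) is handled correctly, and the auxiliary facts you list are indeed the standard ones needed.
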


This formula yields a vanishing line on the $E_{2}$-page of the hypercohomology spectral sequence.

\begin{proposition}\label{prop:zero_line}
For all $u > 2$,
\[  Ext_{\P(2)}^{u,*}\big( Ext_{E(2)_{*}} ( \F_{p}, H_{*}^{BP\2}C_{k}), Ext_{E(2)_{*}} ( \F_{p}, H_{*}^{BP\2}C) \big) = 0. \]
\end{proposition}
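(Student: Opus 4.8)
The plan is to apply the Auslander--Buchsbaum formula (\cref{auslander-buchsbaum}) to the $\P(2)$-module $N := Ext_{E(2)_{*}}^{*,*}(\F_{p}, \overline{H_{*}C_{k}})$ occupying the source slot of the relevant $Ext$ group, combined with the $v_{i}$-torsion-freeness already established in \cref{ext:total:C}. Recall that $\P(2) \cong \F_{p}[v_{0}, v_{1}, v_{2}]$ is a (bi-)graded polynomial ring whose unique maximal homogeneous ideal is $\mathfrak{m} = (v_{0}, v_{1}, v_{2})$; passing to the localization $\P(2)_{\mathfrak{m}}$ yields a regular local ring of Krull dimension $3$, hence one which is Cohen--Macaulay of depth $3$. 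Since $l_{k}$ is a finite spectrum, $H_{*}l_{k}$ is finite-dimensional over $\F_{p}$, and therefore so is its $E(2)_{*}$-comodule summand $\overline{H_{*}C_{k}}$; consequently $N$ is a finitely generated module over $\P(2) \cong Ext_{E(2)_{*}}(\F_{p},\F_{p})$. (If $\overline{H_{*}C_{k}} = 0$ then $N = 0$ and the claim is vacuous, so we may assume $N \neq 0$.)

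First I would show $depth_{\mathfrak{m}}(N) \ge 1$. By \cref{ext:total:C}, the group $Ext_{E(Q_{j},Q_{h})}^{s,t}(\F_{p}, \overline{H_{*}C})$ vanishes whenever $t-s$ is odd, for every permutation $(i,j,h)$ of $(0,1,2)$; since $\overline{H_{*}C} \cong \bigoplus_{k} \overline{H_{*}C_{k}}$ (so that $Ext_{E(Q_{j},Q_{h})}(\F_{p}, \overline{H_{*}C_{k}})$ is a summand of $Ext_{E(Q_{j},Q_{h})}(\F_{p}, \overline{H_{*}C})$), the same vanishing holds with $\overline{H_{*}C_{k}}$ in place of $\overline{H_{*}C}$. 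Then \cref{bss:torsion} shows that $N$ has no $v_{0}$-torsion, i.e.\ that $v_{0} \in \mathfrak{m}$ acts injectively on $N$. Hence $Hom_{\P(2)}(\F_{p}, N) \subseteq \{x \in N : v_{0}x = 0\} = 0$, which is precisely the statement that $depth_{\mathfrak{m}}(N) \ge 1$.

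Next I would invoke finiteness of homological dimension together with Auslander--Buchsbaum. By \cref{prop:dim} the global dimension of $\P(2)$ is at most $3$, so $N$ has finite projective dimension, and \cref{auslander-buchsbaum} (applied to $\P(2)_{\mathfrak{m}}$) gives $pd_{\P(2)}(N) + depth_{\mathfrak{m}}(N) = depth(\P(2)) = 3$. Combined with $depth_{\mathfrak{m}}(N) \ge 1$, this forces $pd_{\P(2)}(N) \le 2$. Choosing a projective resolution of $N$ of length at most $2$ and applying $Hom_{\P(2)}(-, M)$ with $M = Ext_{E(2)_{*}}^{*,*}(\F_{p}, \overline{H_{*}C})$, we conclude that $Ext_{\P(2)}^{u,*}(N, M) = 0$ for all $u > 2$, which is the assertion.

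The only genuinely delicate point is the bookkeeping around the graded/local dichotomy: $\P(2)$ and the modules in play are (bi-)graded rather than literally local, so to use \cref{auslander-buchsbaum} as stated one either passes to $\P(2)_{\mathfrak{m}}$ (observing that the projective dimension and the $\mathfrak{m}$-depth of a finitely generated graded module are unchanged under this localization, since minimal graded free resolutions localize) or invokes the standard graded analogue of the Auslander--Buchsbaum formula. Everything else---finite generation of $N$, finiteness of its projective dimension, and injectivity of $v_{0}$ on $N$---is either immediate or already recorded in \cref{ext:total:C} and \cref{bss:torsion}; the substantive input, namely the Margolis-homology computations forcing the odd-degree vanishing of $Ext_{E(Q_{j},Q_{h})}(\F_{p},\overline{H_{*}C})$, was carried out in \cref{sec:marg:BP2}.
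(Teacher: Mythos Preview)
Your proposal is correct and follows essentially the same route as the paper: both arguments use finite generation of $Ext_{E(2)_{*}}(\F_{p},\overline{H_{*}C_{k}})$ over $\P(2)$, establish $depth \ge 1$ via torsion-freeness for some $v_{i}$, and then apply the Auslander--Buchsbaum formula to conclude $pd \le 2$. The only cosmetic differences are that the paper invokes $v_{2}$-torsion-freeness directly from \cref{thm:dominic:splitting2} rather than your $v_{0}$-argument via \cref{ext:total:C} and \cref{bss:torsion}, and that you are more explicit about the graded-versus-local bookkeeping, which the paper leaves implicit.
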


\begin{proof}

Note that $\P(2)$ is a regular local ring of dimension $3$ with maximal ideal $(v_{0}, v_{1}, v_{2})$, and so for any finitely generated $E(2)$-module $M$ over $E(2)$,

 \[depth(M) = 3 - pd(M)  .\] 
Culver's inductive computations in \cite[Section~3.3]{dominic_even}\cite[Section~5.1]{dominic_odd} tell us that the $\P(2)$-module $Ext_{E(2)_{*}}(\F_{p}, H_{*}l_{k})$ is finitely generated, so its submodule $Ext_{E(2)}(\F_{p}, H_{*}^{BP\2}C_{k} )$ must be finitely generated as well. So we can apply the Auslander-Buchsbaum formula (\cref{auslander-buchsbaum}). Note that if $M$ is $v_{2}$-torsion free, then $Hom_{\P(2)}(\F_{p}, M) = 0$. Recall from \cref{thm:dominic:splitting2} that $H_{*}^{BP\2}C_{k}$ is $v_{2}$-torsion free. So 
\[depth\big( Ext_{E(2)}(\F_{p}, H_{*}^{BP\2}C_{k} ) \big)\ge 1.\] Thus the projective dimension of $Ext_{E(2)_{*}}(\F_{p}, H_{*}^{BP\2}C_{k})$ over $\P(2)$ is at most $2$.

So indeed \[  Ext_{\P(2)}^{q}\big( Ext_{E(2)_{*}} ( \F_{p}, H_{*}^{BP\2}C_{k}), Ext_{E(2)_{*}} ( \F_{p}, H_{*}^{BP\2}C) \big) = 0 \text{ for all } q > 2 .\]
\end{proof}

Since the universal coefficient spectral sequence and hypercohomology spectral sequence have isomorphic $E_{2}$-pages, we can use this result to show that certain classes must survive the universal coefficient spectral sequence. 

\begin{proposition}
Let $x \in Ext_{BP\2_{*}}^{1,*}(\pi_{*}C_{k}, \pi_{*}C)$. Then $x$ survives the universal coefficient spectral sequence \[ ^{UCSS}E_{2}^{*,*}(C_{k}, \C) = Ext_{BP\2_{*}}^{1,*}(\pi_{*}C_{k}, \pi_{*}C) \Longrightarrow [C_{k}, C]^{BP\2}.\]
\end{proposition}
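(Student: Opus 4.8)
The plan is to run the same degree-counting argument Klippenstein uses in the $bu$-case, now powered by the vanishing line of \cref{prop:zero_line} in place of the global-dimension bound for $bu_{*}$. Recall that Robinson's universal coefficient spectral sequence is cohomologically indexed, with differential $d_{r}$ raising the homological degree $u$ by $r$: on the pages one has $d_{r}\colon {}^{UCSS}E_{r}^{u,v} \to {}^{UCSS}E_{r}^{u+r,\, v+r-1}$ for $r \ge 2$, and ${}^{UCSS}E_{2}^{u,v} = Ext_{BP\2_{*}}^{u,v}(\pi_{*}C_{k}, \pi_{*}C)$.

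First I would record that this $E_{2}$-page is concentrated in homological degrees $u \in \{0,1,2\}$. The $BP\2$-module splittings of \cref{thm:bp split} identify $\pi_{*}C_{k}$ and $\pi_{*}C$ with the modules appearing in \cref{prop:zero_line} (via the associated-graded identification $gr_{*}BP\2_{*}X \cong Ext_{E(2)_{*}}(\F_{p}, H_{*}X)$ from \cref{sec:35}), and the isomorphism of $E_{2}$-pages in \cref{leftiso}, applied to the summands $\overline{H_{*}C_{k}}$ and $\overline{H_{*}C}$, transports the vanishing $Ext_{\P(2)}^{u,*} = 0$ for $u > 2$ of \cref{prop:zero_line} to $Ext_{BP\2_{*}}^{u,*}(\pi_{*}C_{k}, \pi_{*}C) = 0$ for all $u > 2$.

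Granting that, the argument is a two-line degree count. Let $x \in {}^{UCSS}E_{2}^{1,*}$. For every $r \ge 2$ the class $d_{r}(x)$ lies in a subquotient of $Ext_{BP\2_{*}}^{1+r,*}(\pi_{*}C_{k}, \pi_{*}C)$, and $1+r \ge 3$, so $d_{r}(x) = 0$; hence $x$ is a permanent cycle. Dually, any differential hitting $x$ would originate in a subquotient of $Ext_{BP\2_{*}}^{1-r,*}(\pi_{*}C_{k}, \pi_{*}C)$ with $r \ge 2$, which vanishes since $1-r < 0$; hence $x$ is not a boundary. Being both a permanent cycle and not a boundary, $x$ represents a nonzero class on ${}^{UCSS}E_{\infty}$, i.e. $x$ survives the universal coefficient spectral sequence.

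The only point requiring care — the \emph{main obstacle}, though a mild one — is the first paragraph: one must check that the finitely-generated hypothesis behind \cref{prop:zero_line} and the $E_{2}$-isomorphism of \cref{leftiso} genuinely restrict to the $C$-summands after splitting off the free, Eilenberg--MacLane parts, so that the vanishing line applies to $Ext_{BP\2_{*}}(\pi_{*}C_{k}, \pi_{*}C)$ and not merely to $Ext_{BP\2_{*}}(BP\2_{*}l_{k}, BP\2_{*}BP\2)$. Once this bookkeeping is in place, nothing else is needed.
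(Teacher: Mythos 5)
Your proposal is correct and follows the same route as the paper: transport the vanishing line of \cref{prop:zero_line} to ${}^{UCSS}E_{2}$ via the $E_{2}$-isomorphism of \cref{leftiso} restricted to the $C$-summands, then observe that the differentials raise $u$ by $r\ge 2$, so a class in $u=1$ can neither support nor receive a nonzero differential. The bookkeeping point you flag is exactly how the paper handles it (via the restricted square for the $C$-summands), so nothing further is needed.
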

\begin{proof}
Recall from \cref{leftiso} that \[^{HSS}E_{2}(H_{*}^{BP\2}C_{k}, H_{*}^{BP\2}C) \cong {}^{UCSS}E_{2}(C_{k}, C) .\] So by \cref{prop:zero_line}, \[^{UCSS}E_{2}^{u,*}(C_{k}, C) = 0 \text{ for all $u>2$.}\]

The differential $^{UCSS}E_{r}^{u,*}: {}^{UCSS}E_{r}^{u,*} \rightarrow {}^{UCSS}E_{u}^{u+r,*}$ increases the $u$-degree of the differential by $r$. So $x$ is neither the target nor the source of a differential in the universal coefficient spectral sequence.
\end{proof}

Now we are ready to compare the $E_{2}$-page of the Adams spectral sequence to the $E_{2}$-page of the universal coefficient spectral sequence. 

\begin{proposition}\label{propiso}
The odd $(t-s)$-degree component of the $E_{2}$-page of the Adams spectral sequence is isomorphic to the $(u=1)$-line of the universal coefficient spectral sequence, that is,
\[ \bigoplus\limits_{t-s \text{ odd}} Ext_{E(2)_{*}}^{s,t} \big(  H_{*}^{BP\2}C_{k}, H_{*}^{BP\2}C \big) \cong Ext_{BP\2_{*}}^{u=1,*}(\pi_{*}C_{k}, \pi_{*}C). \]
\end{proposition}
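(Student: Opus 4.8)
The plan is to compare the two $E_{2}$-pages through the square (\ref{smallsquare}), using the hypercohomology spectral sequence (\ref{kmhss}) as the bridge, and to reduce everything to showing that this spectral sequence degenerates. Write $M_{k} = Ext_{E(2)_{*}}(\F_{p}, \overline{H_{*}C_{k}})$ and $M = Ext_{E(2)_{*}}(\F_{p}, \overline{H_{*}C})$. The $(q=1)$-line of the $E_{2}$-page of (\ref{kmhss}) is $Ext_{\P(2)}^{1,*}(M_{k}, M)$, which the left edge of (\ref{smallsquare}) (the $C$-version of \cref{leftiso}) identifies with $Ext_{BP\2_{*}}^{1,*}(\pi_{*}C_{k}, \pi_{*}C)$. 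Since (by the $C$-version of the corollary identifying the abutment of (\ref{kmhss})) this spectral sequence converges to $Ext_{E(2)_{*}}^{*,*}(\overline{H_{*}C_{k}}, \overline{H_{*}C})$, which is the $E_{2}$-page of the Adams spectral sequence, it is enough to show: (i) the $(q=1)$-line survives to $E_{\infty}$ intact, and (ii) no other line contributes in odd $(t-s)$-degree.

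Point (ii) is a parity count. Since $M_{k}$ and $M$ are concentrated in even $(t-s)$-degrees by \cref{thm:dominic:splitting2}, and $\P(2) = \F_{p}[v_{0},v_{1},v_{2}]$ is generated in even $(t-s)$-degree, a minimal free $\P(2)$-resolution of $M_{k}$ lives in even $(t-s)$-degrees; chasing this through $\R Hom_{\P(2)}(-,M)$ shows that a class in $Ext_{\P(2)}^{q}(M_{k},M)$ contributes to $Ext_{E(2)_{*}}^{s,t}(\overline{H_{*}C_{k}}, \overline{H_{*}C})$ only in a degree with $(t-s)\equiv q\pmod 2$. Since (\ref{kmhss}) is concentrated in $q\in\{0,1,2\}$ by \cref{prop:zero_line}, the odd $(t-s)$-part of the abutment comes only from $q=1$.

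Point (i) — the crux — I would get from the degeneration of all of (\ref{kmhss}), proved by a degreewise dimension count fed by the rest of the square. First, the universal coefficient spectral sequence $Ext_{BP\2_{*}}^{u,*}(\pi_{*}C_{k}, \pi_{*}C)\Rightarrow[C_{k},C]_{BP\2}$ collapses: it is concentrated in $u\in\{0,1,2\}$ (\cref{prop:zero_line}, \cref{leftiso}), so its only possible differential is $d_{2}\colon E_{2}^{0}\to E_{2}^{2}$, and that must vanish by parity, since $\pi_{*}C_{k}$ and $\pi_{*}C$ are concentrated in even degrees (\cref{thm:dominic:splitting2}), forcing $Ext_{BP\2_{*}}$ into even internal degrees while $d_{2}$ changes that parity. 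Hence $\dim_{\F_{p}}[C_{k},C]_{BP\2} = \dim_{\F_{p}}{}^{UCSS}\!E_{2} = \dim_{\F_{p}}{}^{HSS}\!E_{2}$, the last equality by \cref{leftiso}. On the other hand, (\ref{kmhss}) converges to $Ext_{E(2)_{*}}(\overline{H_{*}C_{k}}, \overline{H_{*}C})$, and the Adams spectral sequence $Ext_{E(2)_{*}}(\overline{H_{*}C_{k}}, \overline{H_{*}C})\Rightarrow[C_{k},C]_{BP\2}$ (a direct summand of the strongly convergent spectral sequence (\ref{ass})) converges strongly, so neither spectral sequence can increase total dimension and
\[ \dim{}^{HSS}\!E_{2}\;\geq\;\dim Ext_{E(2)_{*}}(\overline{H_{*}C_{k}}, \overline{H_{*}C})\;\geq\;\dim[C_{k},C]_{BP\2}\;=\;\dim{}^{HSS}\!E_{2}. \]
Thus all three are equal, so $\dim{}^{HSS}\!E_{\infty} = \dim{}^{HSS}\!E_{2}$ and (\ref{kmhss}) degenerates, giving (i); combining with (ii) yields
\[ \bigoplus_{t-s\ \mathrm{odd}} Ext_{E(2)_{*}}^{s,t}(\overline{H_{*}C_{k}}, \overline{H_{*}C})\;\cong\;Ext_{\P(2)}^{1,*}(M_{k},M)\;\cong\;Ext_{BP\2_{*}}^{1,*}(\pi_{*}C_{k}, \pi_{*}C). \]

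The main obstacle is precisely point (i): there is no parity or boundedness reason killing the $d_{2}$ of (\ref{kmhss}) directly (it is both parity-compatible and lives inside the narrow band $0\le q\le 2$), so degeneration has to be imported from the collapse of the universal coefficient spectral sequence together with the strong convergence of both auxiliary spectral sequences. The accompanying technical point is the finiteness bookkeeping needed to make the dimension count legitimate: it must be organized one internal degree $t$ at a time rather than by stem (stems are infinite-dimensional because of the $v_{0}$-towers in $Ext$ over $E(2)$), and in each internal degree one must check that all the $Ext$-groups, the homotopy groups $[C_{k},C]_{BP\2}$, and the spectral-sequence pages are finite — which follows from $\overline{H_{*}C_{k}}$ being finite-dimensional (so $M_{k}$ is a finitely generated $\P(2)$-module) together with $\overline{H_{*}C}$ being connective and locally finite.
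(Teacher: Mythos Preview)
Your argument for (ii) matches the paper's. For (i), you work much harder than necessary. The paper simply observes that since $^{HSS}E_{2}$ is concentrated in $u\in\{0,1,2\}$ by \cref{prop:zero_line} and the hypercohomology differentials $d_{r}$ shift the $Ext_{\P(2)}$-degree $u$ by $r\ge 2$, the line $u=1$ can neither emit nor receive any differential; it survives to $E_{\infty}$ for free. The only possibly nonzero differential is $d_{2}\colon(u{=}0)\to(u{=}2)$, which you correctly note is parity-compatible and lands in the band --- but it does not touch $u=1$ at all, so whether it vanishes is irrelevant to (i). With (ii) and \cref{leftiso} this gives the proposition in two lines, with no dimension count.

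Your detour through full collapse of the hypercohomology spectral sequence does prove more --- as a byproduct it forces the Adams spectral sequence for $[C_{k},C]_{BP\2}$ to collapse as well --- and that is of independent interest. But the dimension bookkeeping is more delicate than you indicate: the Adams and universal-coefficient differentials do not preserve the internal degree $t$ (they shift it by $r-1$), so a count ``one $t$ at a time'' does not directly control those two spectral sequences, while a count in fixed stem runs into the infinite $v_{0}$-towers you mention; and the comparison through \cref{leftiso} matches an $\F_{p}$-vector space against a $\Z_{p}$-module, so one must pass to associated gradeds for the $p$-adic filtration throughout before the dimensions are even comparable. None of this is fatal, but the paper's one-line argument for (i) avoids it entirely.
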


\begin{proof}

Consider the hypercohomology spectral sequence 
\[ {}^{HSS}E_{2}^{u,r,t} \big(  H_{*}^{BP\2}C_{k}, H_{*}^{BP\2}C \big) \cong \bigoplus \limits_{ \substack{r= r_{2} - r_{1} \\ t = t_{2} - t_{1} } } Ext_{\P(2)}^{u}\big( Ext_{E(2)}^{r_{1}, t_{1}}( \F_{p},  H_{*}^{BP\2}C ), Ext_{E(2)}^{r_{2}, t_{2}}( \F_{p}, H_{*}^{BP\2}C ) \big)\] \[ {\Longrightarrow} Ext_{E(2)}^{ r-u, t }( H_{*}^{BP\2}C_{k}, H_{*}^{BP\2}C). \]

Let $x \in Ext_{E(2)_{*}}^{s,t} \big(  H_{*}^{BP\2}C_{k}, H_{*}^{BP\2}C \big)$, and let $\overline{x}$ denote an element that detects $x$ in  $^{HSS}E_{2}^{u,*} \big(  H_{*}^{BP\2}C_{k}, H_{*}^{BP\2}C \big)$. By \cref{thm:dominic:splitting2}, both $Ext_{E(2)_{*}}^{r_{1},t_{1} } ( \F_{p}, H_{*}^{BP\2}C_{k})$ and $Ext_{E(2)_{*}}^{r_{2},t_{2} } ( \F_{p}, H_{*}^{BP\2}C)$ are concentrated in degrees such that $t_{2}-s_{2}$ and $t_{1}-s_{1}$ are even. So $|\overline{x}|_{t-s}$ is odd if and only if $|x|_{u}$ is odd. \\

By \cref{prop:zero_line}, $^{HSS}E_{2}^{u,*} = 0$ for all $u > 2$. So if $|x|_{t-s}$ is odd, then $x$ must lift to the $(u=1)$-line of $^{HSS}E_{2}^{u,*} \big(  H_{*}^{BP\2}C_{k}, H_{*}^{BP\2}C \big)$. Furthermore, there is no room for differentials to leave or enter the $(u=1)$-line. Since these classes are all located in the same filtration, there is no room for hidden extensions. So 

\[ \bigoplus\limits_{t-s \text{ odd}} Ext_{E(2)_{*}}^{s,t} \big(  H_{*}^{BP\2}C_{k}, H_{*}^{BP\2}C \big) \cong {}^{HSS}E_{2}^{s,t} \big(  H_{*}^{BP\2}C_{k}, H_{*}^{BP\2}C \big) . \]

Furthermore, recall from \cref{leftiso} that 
\[ {}^{HSS}E_{2}^{s,t} \big(  H_{*}^{BP\2}C_{k}, H_{*}^{BP\2}C \big) \cong {}^{UCSS}E_{2}^{u=1,*}(C_{k}, C) .\]

So indeed
\[ \bigoplus\limits_{t-s \text{ odd}} Ext_{E(2)_{*}}^{s,t} \big(  H_{*}^{BP\2}C_{k}, H_{*}^{BP\2}C \big) \cong {}^{UCSS}E_{2}^{u=1,*}(C_{k}, C) . \]
\end{proof}

Recall that the potential obstructions in the Adams spectral sequence (\ref{main:relASS}) to lifting $\theta_{k}$ are all located in odd $(t-s)$ degree, so proving the following theorem will demonstrate that the potential obstructions survive.
\oddsurvives

\begin{proof}

By \cref{propiso}, any nonzero class $y$ on the $(u=1)$-line of the universal coefficient spectral sequence $^{UCSS}E_{2}^{u,*}\big(C_{k}, C \big)$ must lift to a nonzero class $\widetilde{y}$ in $[C_{k}, C]^{BP\2}$. Recall that $\pi_{*}C_{k}$ and $\pi_{*}C$ are concentrated in even degree, so $\widetilde{y}$ will have odd $t-s$ degree. So $\widetilde{y}$ must be detected by a nonzero class $[\widetilde{y}] \in Ext_{E(2)}^{s,t}\big(H_{*}^{BP\2}C_{k}, H_{*}^{BP\2}C  \big)$ having odd $t-s$ degree. \cref{propiso} tells us that there is at most one nonzero odd-degree class in $^{ASS}E_{2}^{*,*}$ for each nonzero class in $^{UCSS}E_{2}^{1,*}$. So the entire summand 

\[ \bigoplus\limits_{t-s \text{ odd}} Ext_{E(2)_{*}}^{s,t} \big(  H_{*}^{BP\2}C_{k}, H_{*}^{BP\2}C \big) \]

survives the spectral sequence.

\end{proof}

\posurvives

It follows that $d_{r}(\theta_{k}) = 0$ for all $k$. So we can indeed lift the family of maps $\{ \theta_{k}: H_{*}l_{k} \rightarrow H_{*}BP\2 \}$ to maps of spectra $\{ \widetilde{\theta_{k}}: BP\2 \smash l_{k} \rightarrow BP\2 \smash BP\2 \}$, and we have arrived at the following theorem. 

\begin{theorem}[Main Theorem]\label{thm:main}
For all primes $p\ge 5$, there exists a splitting 
\[BP\langle 2 \rangle \smash BP\langle 2 \rangle \simeq \bigvee_{k=0}^{\infty} \Sigma^{qk}BP\langle 2 \rangle \smash l_{k}.\]
\end{theorem}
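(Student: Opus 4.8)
The plan is to assemble the obstruction-theoretic machinery of \cref{chapter:square} and \cref{chapter:conclusion} into a single lifting argument. By \cref{liftif}, it suffices to lift each $A_{*}$-comodule map $\overline{\theta_k}\colon \Sigma^{qk}H_{*}l_k \to H_{*}BP\2 \otimes H_{*}BP\2$ to a map of spectra $\widetilde{\theta_k}\colon \Sigma^{qk}l_k \to BP\2 \smash BP\2$; granted these lifts, the composite $\Theta$ of \cref{liftif} is a homotopy equivalence and the theorem follows. So the entire content is the lifting problem, which I attack with the Adams spectral sequence $(\ref{ass})$, namely $Ext_{E(2)}^{s,t}(H_{*}\Sigma^{qk}l_k, H_{*}BP\2) \Rightarrow [\Sigma^{qk}l_k, BP\2 \smash BP\2]_{t-s}$.

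First I would set up the lifting problem precisely. The map $\theta_k$ represents a class in $^{ASS}E_2^{0,0}$, and for degree reasons (the source is finite and the target connective of finite type) no differential can hit it; hence the only obstruction to lifting is that some $d_r(\theta_k)$ be nonzero. Since $|d_r|=(r,r+1)$, the \emph{potential obstructions} --- the possible targets of $d_r(\theta_k)$ --- lie in $Ext_{E(2)_{*}}^{s,s+1}(\Sigma^{qk}H_{*}l_k, H_{*}BP\2)$ with $s \ge 2$, and in particular they all lie in odd $(t-s)$-degree. Using \cref{thm:bp split} --- available precisely because $p \ge 5$, via Smith's spectrum $S^{0}/(p,v_1,v_2)$ of \cref{smith} --- I would split $BP\2 \smash BP\2 \simeq C \vee V$ and $BP\2 \smash l_k \simeq C_k \vee V_k$ as $BP\2$-modules, split the Adams spectral sequence accordingly, and note that since $\overline{H_{*}V_k}$ is free (hence injective) over $E(2)_{*}$ the potential obstructions are confined to the summand $Ext_{E(2)_{*}}(\overline{H_{*}C_{k}}, \overline{H_{*}C})$.

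Next I would run the square relating the Adams, hypercohomology, and universal coefficient spectral sequences. The three inputs are: (i) \cref{prop:zero_line}, an application of the Auslander--Buchsbaum formula to the regular local ring $\P(2)$ together with the $v_2$-torsion-freeness of $\overline{H_{*}C_{k}}$ from \cref{thm:dominic:splitting2}, which gives a vanishing line for the hypercohomology $E_2$-page above homological degree $2$; (ii) \cref{leftiso}, which identifies the hypercohomology $E_2$-page with the universal coefficient $E_2$-page, so that every class on the $(u=1)$-line of the $UCSS$ is a permanent cycle and lands in $[C_k,C]_{BP\2}$; and (iii) \cref{propiso}, which identifies the odd $(t-s)$-degree part of the Adams $E_2$-page with that same $(u=1)$-line. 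Chasing these three facts through the square shows that every odd-degree class of $Ext_{E(2)_{*}}(\overline{H_{*}C_{k}}, \overline{H_{*}C})$ survives the Adams spectral sequence, hence is a permanent cycle and therefore cannot be a boundary. In particular $d_r(\theta_k)=0$ for all $r$, so $\theta_k$ is a permanent cycle, the lift $\widetilde{\theta_k}$ exists, and \cref{liftif} produces the splitting.

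The main obstacle is not this final assembly, which is essentially formal once the square is in hand, but the identification in step (ii): \cref{leftiso} rests on the collapse of the Adams spectral sequences computing $BP\2_{*}BP\2$ and $BP\2_{*}l_k$ and on the absence of hidden extensions in them (\cref{b:h:e} and \cref{l:h:e}), which in turn require the Margolis-homology analysis of \cref{sec:marg:BP2} showing that $Ext_{E(Q_j,Q_h)}(\F_p,\overline{H_{*}C})$ is concentrated in even $(t-s)$-degree for every pair $j < h$. That computation, together with the vanishing line of \cref{prop:zero_line}, is the technical heart on which the whole lifting argument depends.
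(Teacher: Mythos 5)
Your proposal is correct and follows essentially the same route as the paper: reduce to lifting the $\overline{\theta_k}$ via \cref{liftif}, confine the potential obstructions to $Ext_{E(2)_{*}}(\overline{H_{*}C_{k}}, \overline{H_{*}C})$ using the $BP\2$-module splitting available for $p\ge 5$, and then chase \cref{prop:zero_line}, \cref{leftiso}, and \cref{propiso} through the square of spectral sequences to conclude that all odd $(t-s)$-degree classes survive, so $d_r(\theta_k)=0$. You have also correctly located the technical heart of the argument in the Margolis homology computations underlying \cref{b:h:e} and \cref{l:h:e}.
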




%
\appendix

%
\bibliographystyle{alpha}
\bibliography{references}

\end{document}